\theoremstyle{plain}
\newtheorem{thm}{Theorem}[section]
\newtheorem*{thm*}{Theorem}
\newtheorem{lem}[thm]{Lemma}
\newtheorem{prop}[thm]{Proposition}
\newtheorem{cor}[thm]{Corollary}
\theoremstyle{definition}
\newtheorem{defn}[thm]{Definition}
\newtheorem{conj}[thm]{Conjecture}
\newtheorem{exa}[thm]{Example}
\theoremstyle{remark}
\newtheorem{rem}[thm]{Remark}
\numberwithin{equation}{section}
\newcommand{\parderv}[2] {\frac{\partial #1}{\partial #2}}
\renewcommand{\d} {\mathrm{d}}
\newcommand{\ol} [1] {\overline{#1}}
\newcommand{\wh} [1]{\widehat{#1}}
\newcommand{\wt} [1]{\widetilde{#1}}
\newcommand{\mr}[1] {\mathring{#1}}
\newcommand{\ul} [1]{\underline{#1}}
\newcommand{\mbf} [1]{\mathbf{#1}}
\newcommand{\mbb} [1]{\mathbb{#1}}
\newcommand{\mc} [1]{\mathcal{#1}}
\renewcommand{\i} {\mathrm{i}}
\newcommand{\e} {\mathrm{e}}
\newcommand{\Id} {\mathrm{Id}}
\renewcommand{\bigwedge}{\scaleobj{1.2}{\wedge}}
\renewcommand{\bigodot}{\scaleobj{1.2}{\odot}}
\newcommand{\hook}{\makebox[7pt]{\rule{6pt}{.3pt}\rule{.3pt}{5pt}}\,}
\newcommand{\Riem} {\mathsf{R}}
\newcommand{\Weyl} {\mathsf{W}}
\newcommand{\Ric} {\mathsf{Ric}}
\newcommand{\Sc} {\mathsf{Sc}}
\newcommand{\WbA} {\mathsf{A}}
\newcommand{\Rho} {\mathsf{P}}
\newcommand{\Bach} {\mathsf{B}}
\newcommand{\Cot} {\mathsf{Y}}
\newcommand{\Nh} {\mathsf{N}}
\newcommand{\CM} {\mathsf{S}}
\newcommand{\FG} {\mathsf{F}\mkern-4mu\mathsf{G}}
\newcommand{\Ann} {\mathrm{Ann}}
\newcommand{\Z} {\mathbf{Z}}
\newcommand{\R} {\mathbf{R}}
\newcommand{\C} {\mathbf{C}}
\newcommand{\CP} {\mathbf{C}\mathbb{P}}
\begin{document}

	\title
{Perturbations of Fefferman spaces\\
	over almost CR manifolds}

	\begin{abstract}
 We construct a one-parameter family of Lorentzian conformal structures on the canonical circle bundle of a partially integrable contact almost Cauchy--Riemann manifold. This builds on previous work by Leitner, who generalised Fefferman's construction associated to a CR manifold to the non-involutive case. We provide characterisations of these conformal structures and show that they admit distinguished pure spinor fields.

We introduce exact `perturbations' of such Fefferman spaces by a semi-basic one-form, which can be suitably interpreted as a tuple of weighted tensors on the almost CR manifold. The resulting perturbed conformal space is an instance of a so-called nearly Robinson manifold introduced recently by Fino, Leistner and the present author.

 We investigate the existence of metrics in these conformal classes which satisfy appropriate subsystems of the Einstein equations. These metrics are defined only off cross-sections of Fefferman's circle bundle, and are conveniently expressed in terms of almost Lorentzian densities, which include Gover's almost Einstein scales as a special case. In particular, in dimensions greater than four, almost Einstein scales always arise from so-called CR--Einstein structures, almost CR analogues of Einstein metrics. We derive necessary and sufficient conditions for a perturbed Fefferman space to be conformally flat on the zero set of an almost Einstein scale.

We construct an explicit example of a CR--Einstein structure on a strictly almost CR five-manifold based on a strictly almost K\"{a}hler--Einstein four-manifold due to Nurowski and Przanowski.
	\end{abstract}
	
	\date{\today\ at \xxivtime}
	
	\author{Arman Taghavi-Chabert}\address{Institute of Physics, {\L}\'{o}d\'{z} University of Technology, W\'{o}lcza\'{n}ska 217/221, 90-005 {\L}\'{o}d\'{z}, Poland}\email{arman.taghavi-chabert@p.lodz.pl}
	
	\thanks{}
	\subjclass[2020]{Primary 53C18, 32V05, 53C50, 83C15; Secondary 53C15, 32V99, 53B30}
	\keywords{Conformal geometry, CR geometry, Lorentzian geometry, Fefferman spaces, Robinson geometry, optical geometry, congruences of null geodesics, Einstein metrics}
	
	\maketitle
	\tableofcontents

\section{Introduction}
	This article is concerned with the interaction between Lorentzian conformal geometry and Cauchy--Riemann geometry, where the latter arises as the leaf space of a foliation by null geodesics on the former. This geometric setting can be found in two distinct and independent contexts. The first of these, due to Fefferman \cite{Fefferman1976a,Fefferman1976}, associates a non-degenerate, or contact, CR structure of hypersurface type to a conformal structure of Lorentzian signature on a canonical circle bundle. There are various approaches to the construction: in the original version, Fefferman constructed a biholomorphically invariant conformal structure on a circle bundle over the boundary of a bounded strictly convex domain of the complex space $\C^{m+1}$ by means of a formal solution to the Monge--Amp\`{e}re equation. Later formulations allow for the CR manifold $(\mc{M},H,J)$ to be abstract, that is, not necessarily embeddable. Of particular relevance in this article is the construction due to Lee \cite{Lee1986}, who showed that Fefferman's conformal structure lives on the total space of a circle bundle over $(\mc{M},H,J)$, which is a certain quotient of the canonical bundle $\mc{C}$ of $\mc{M}$. \v{C}ap and Gover \cite{Cap2008} later substituted $\mc{C}$ for a certain root thereof, which allows them to work within the framework of parabolic geometry and tractor calculus. In all of these versions, one associates to each contact form $\theta$ a so-called \emph{Fefferman metric}
	\begin{align}\label{eq:Fefmet}
		\wt{g}_{\theta} & = 4 \theta \odot \wt{\omega}^{W}_{\theta} + h \, ,
	\end{align}
	where $\wt{\omega}^{W}_{\theta}$ is the induced Weyl connection one-form compatible with $\theta$, and $h$ is the (degenerate) metric induced from the Levi form of $\theta$. Conformally related contact forms yield conformally related Fefferman metrics. The conformal structure thus constructed is characterised by, among other conditions, the existence of a certain \emph{null conformal Killing field}, and the CR manifold is recovered as the leaf space of its integral curves.
	
	Fefferman's construction was later adapted by Leitner \cite{Leitner2010} to the case where $(\mc{M},H,J)$ is a partially integrable non-degenerate almost CR manifold --- when $\mc{M}$ has dimension greater than three --- and also to the case where the Weyl connection $\wt{\omega}^{W}_{\theta}$ in \eqref{eq:Fefmet} is gauged by some horizontal one-form.

	The other notion connecting Lorentzian geometry and almost Cauchy--Riemann geometry is known as \emph{almost Robinson geometry}, an idea introduced by Nurowski and Trautman in \cite{Nurowski2002} in the involutive case --- see also \cite{Hughston1988} --- and developed further by Fino, Leistner and the present author in \cite{Fino2023} in the more general case. An almost Robinson manifold is essentially a Lorentzian analogue of an almost Hermitian manifold. This simply consists of a totally null complex $(m+1)$-plane distribution $\wt{N}$ on a $(2m+2)$-dimensional Lorentzian conformal manifold $(\wt{\mc{M}},\wt{\mbf{c}})$. This distribution intersects its complex conjugate in the complexification of a real null line distribution $\wt{K}$. When $\wt{N}$ is preserved along any nowhere vanishing section $\wt{k}$ of $\wt{K}$, the integral curves of $\wt{k}$ are null geodesics, and $\wt{N}$ gives rise to an almost CR structure on the leaf space of the congruence generated by $\wt{k}$. We then say that $\wt{N}$ is \emph{nearly Robinson}. In dimension four, this setting is equivalent to a so-called \emph{optical geometry} with \emph{non-shearing congruence of null geodesics}. The terminology \emph{shear-free}, instead of `non-shearing', is frequently employed. When the CR structure is contact, the congruence is said to be \emph{twisting}. These congruences play a fundamental part in mathematical relativity, notably in the discovery of exact solutions to the vacuum Einstein field equations, such as the Kerr black hole \cite{Kerr1963}. Aspects of Penrose's twistor theory were highly influenced by these ideas \cite{Penrose1967,Mason1998}. At the same time, Robinson and Trautman \cite{Robinson1986} initiated a fruitful programme of research in that direction \cite{Tafel1985,Lewandowski1990,Lewandowski1991a,Hill2008}.

In dimensions greater than four, there is a priori no connection between non-shearing congruences and nearly Robinson structures. The present author, however, recently \cite{TaghaviChabert2022} gave a mild curvature condition on the Weyl tensor for a twisting non-shearing congruence of null geodesics to give rise to a nearly Robinson structure. In the same reference, all local Einstein metrics on Lorentzian conformal manifolds of even dimension admitting a twisting non-shearing congruence of null geodesics were found to be contained in a three-parameter family of metrics, which include the \emph{Taub--NUT--(A)dS metric} and the \emph{Fefferman--Einstein metric}. In this case, the leaf space of the congruence is characterised by the existence of a \emph{CR--Einstein structure}, which corroborates previous results in \cite{Leitner2007} for CR geometries. Reference \cite{Cap2008} by \v{C}ap and Gover sheds more light on these aspects, especially in connection with the r\^{o}le played by so-called \emph{almost Einstein scales} and their zero sets, which can be suitably identified with cross-sections of the Fefferman bundle.

Finally, there is an obvious analogy when it comes to the usefulness of Cauchy--Riemann methods in solving the Einstein equations on a Lorentzian manifold: if one wishes to find Einstein metrics on a \emph{Riemannian} manifold, then more powerful resources become available once we assume that our manifold is \emph{K\"{a}hler} --- see for instance \cite{Yau2000} for a review of these ideas.

The overall aim of this paper is to recast some classes of nearly Robinson manifolds as Fefferman spaces of some kind. More precisely, we set ourselves the following goals:
\begin{enumerate}
	\item To provide a formulation of CR--Einstein structures for non-involutive almost CR manifolds by means of the Webster calculus. These structures underlie a certain class of Einstein Lorentzian metrics to be considered here. Our investigation will also be illustrated by an example of a five-dimensional strictly almost CR manifold arising from a strictly almost K\"{a}hler--Einstein manifold.
	\item To develop Leitner's generalisation of Fefferman's construction for almost CR manifolds by using the squared norm of the Nijenhuis torsion tensor. This will lead to a one-parameter family of such conformal structures, and we will provide geometric characterisations for them, which generalise Sparling's characterisation of Fefferman spaces for CR structures \cite{Graham1987,Cap2008}.
	\item To introduce a new class of Lorentzian conformal structures arising from almost CR manifolds, as exact `perturbations' of Fefferman conformal spaces: a `perturbed' Fefferman metric takes the form $\wt{g}_{\theta,\wt{\xi}} = \wt{g}_{\theta} + 4 \theta \odot \wt{\xi}$, where $\wt{g}_{\theta}$ is a Fefferman metric for some pseudo-Hermitian form $\theta$, and $\wt{\xi}$ is a semi-basic one-form.
	\item To seek almost Einstein scales for such perturbed Fefferman spaces in dimensions greater than four, and investigate the geometric properties of their zero sets.
\end{enumerate} 

Let us re-emphasise that the focus of this article will be on conformal geometries of even dimensions \emph{greater than four}, although some results may still apply in dimension four. It turns out that the four-dimensional story is somewhat richer and more technical, and for this reason, it is treated separately in \cite{TaghaviChabert2023}.

The plan of the article is as follows. We start in Section \ref{sec:CR_geom} with a review of almost CR geometry, focussing in particular on so-called \emph{CR--Einstein} structures. In Theorem \ref{thm:alCRE}, these pseudo-Hermitian analogues of Einstein metrics are shown to be equivalent to solutions to a system of invariant differential equations on a CR scale, that is, a nowhere vanishing real-valued density corresponding to a pseudo-Hermitian structure. In Theorem \ref{thm:spalCRE}, a variation of these equations, now involving a \emph{complex-valued} density, is proposed, and a solution thereof defines a CR--Einstein scale, which turns out to be a constant multiple of the squared norm of the Nijenhuis tensor --- Proposition \ref{prop:spalCRE-N2}.

Section \ref{sec:conf_geom} contains a review of conformal geometry and so-called optical and almost Robinson geometries which are at the heart of non-shearing congruences of null geodesics. In Definitions \ref{defn:redEins}, \ref{defn:purad}, \ref{defn:aESc} and \ref{defn:puradSc} we introduce formal definitions of certain metrics with prescribed Ricci curvature, which we term \emph{weakly half-Einstein}, \emph{half-Einstein} and \emph{pure radiation} metrics, and their \emph{almost Lorentzian scale} analogues as generalisations of almost Einstein scales --- Propositions \ref{prop:scale2metric} and \ref{prop:scale2metricB}.

In Section \ref{sec:Fefferman}, we construct a one-parameter family of Lorentzian conformal structures from a strictly almost CR manifold, by modifying the original Fefferman construction --- see Definitions \ref{defn-alpha-Fefferman} and \ref{defn-alpha-Fefferman-space}. In Proposition \ref{prop:non_tw-sp} we show that these Fefferman spaces admit distinguished spinor fields satisfying a conformally invariant differential equation that reduces to the twistor equation whenever the underlying almost CR structure is involutive. We then state the characterisation of these conformal structures that arise from such Fefferman spaces in Theorem \ref{thm:Fefferman-CRA}.

Section \ref{sec:perturbed_Fefferman} begins with the formal Definition \ref{defn:perturbed_Fefferman} of perturbations of Fefferman spaces. We then provide a technical Lemma \ref{lem:NSh2Feff}, which allows to formalise the relation between perturbed Fefferman spaces and geometries endowed with twisting non-shearing congruences of null geodesics. Proposition \ref{prop:PetrovIII} provides sufficient conditions for an optical geometry to be locally conformally isometric to a perturbed Fefferman space. The section ends with Conjecture \ref{conj}, which puts forward another characterisation of certain classes of perturbed Fefferman spaces.

The focus of Section \ref{sec:al_Lor_sc} is to examine the consequences of the existence of almost Lorentzian scales on optical geometries with twisting non-shearing congruences of null geodesics. Among others, Proposition \ref{prop:density_Ric_NS} provides a geometric description of the zero set of some classes of such scales, while Propositions \ref{prop:alwkhalfE} and \ref{prop:al_half-Einstein} provide characterisations of almost (weakly) half-Einstein scales in terms of their underlying CR geometries. In Corollary \ref{cor:alt_char} we give sufficient conditions for an optical geometry to be locally conformally isometric to a perturbed Fefferman space. The CR data of perturbed Fefferman spaces admitting almost (half-)Einstein scales is described in Theorems \ref{thm:hlfEinsc} and \ref{thm:pertFeffpurad}.

Section \ref{sec:asymptotics} looks into the properties of the zero set of an almost half-Einstein scale on a perturbed Fefferman space, more particularly, Proposition \ref{prop:redE_hypsrf} in relation to its causal property, and Theorem \ref{thm:conformal_flat_asym} in relation to conformal flatness.

We comment briefly, in Section \ref{sec:sym}, on conformal symmetries and their interpretation in terms of CR invariant differential equations.

The article ends with two appendices: In Appendix \ref{app:NP} we give an example of a five-dimensional strictly almost CR manifold endowed with a CR--Einstein structure, and Appendix \ref{app:proofchi} contains the lengthy proof of Theorem \ref{thm:Fefferman-CRA}.

\noindent\textbf{Acknowledgments:} The research leading to these results has received funding from the Norwegian Financial Mechanism 2014-2021 UMO-2020/37/K/ST1/02788.

	\section{Almost CR geometry}\label{sec:CR_geom}
We first recall some background on almost CR geometry following \cite{Tanaka1975,Webster1978,Gover2005,Cap2008,Cap2010,Matsumoto2016,Case2020,Matsumoto2022} before delving into more recent notions in Section \ref{sec:gauged_Webster}, regarding gauged Webster connections, and in Section \ref{sec:CRE} regarding CR--Einstein structures for almost CR manifolds.
	\subsection{Preliminaries}
	An \emph{almost Cauchy--Riemann (CR) structure} on a smooth manifold $\mc{M}$ of dimension $2m+1$, for $m \geq 1$, consists of a pair $(H,J)$ where $H$ is a rank-$2m$ distribution and ${J}$ a bundle complex structure on ${H}$, i.e.\ ${J} \circ {J} = - {\Id}$, where ${\Id}$ is the identity map on ${H}$. The complexification ${}^\C {H}$ of ${H}$ thus splits as ${}^\C {H} = {H}^{(1,0)} \oplus {H}^{(0,1)}$, where ${H}^{(1,0)}$ and ${H}^{(0,1)}$ are the $+\i$-eigenbundle and $-\i$-eigenbundle of ${J}$ respectively, each having complex rank $m$. Throughout this article, the following additional assumptions will be made:
	\begin{itemize}
		\item $(H,J)$ is \emph{partially integrable}, that is, $[H^{(1,0)},H^{(1,0)}] \subset {}^\C H$;
		\item $H$ is \emph{contact}, that is,\ for any nowhere vanishing $\theta \in \Gamma(\Ann(H))$, the $(2m+1)$-form $\theta \wedge (\d \theta)^m$ is nowhere vanishing.
	\end{itemize}
We shall refer to the triple $(\mc{M},H,J)$ thus defined as an \emph{almost CR manifold}. When $H^{(1,0)}$ is involutive, $(H,J)$ is said to be a \emph{CR structure} and $(\mc{M},H,J)$ a \emph{CR manifold}. In the case where $H^{(1,0)}$ is not involutive, we shall refer to $(H,J)$ as a \emph{strictly almost CR structure} and $(\mc{M},H,J)$ as a \emph{strictly almost CR manifold}. In dimension three, $m=1$, an almost CR manifold is necessarily a CR manifold.

The bundle complex structure $J$ endows $H$ with an orientation. We shall assume that $\mc{M}$ is oriented so that we can fix an orientation on $\Ann(H)$. A choice of contact form $\theta \in \Gamma(\Ann(H))$, positive with respect to this orientation, will be referred to as a \emph{pseudo-Hermitian structure} for $(H,J)$. The restriction of $\d \theta$ to $H$ is non-degenerate, and is the imaginary part of a Hermitian bilinear form called the \emph{Levi form associated to $\theta$}. Its signature $(p,q)$, where $p+q=m$, is an invariant of $(H,J)$.

	We shall also assume that the canonical bundle ${\mc{C}} := \bigwedge^{m+1} \Ann({H}^{(0,1)})$ of $(H,J)$ admits a $-(m+2)$-nd root, which we shall denote ${\mc{E}}(1,0)$. More generally, we define \emph{density bundles} ${\mc{E}}(w,w') := {\mc{E}}(1,0)^w \otimes \overline{{\mc{E}}(1,0)}{}^{w'}$ for any $w, w' \in \R$ such that $w - w' \in \Z$. We note that $\overline{{\mc{E}}(w,w')} = {\mc{E}}(w',w)$. In particular, $\mc{C} = \mc{E}(-m-2,0)$. 
	
	There are nowhere vanishing canonical sections ${\bm{\theta}}$ of $T^* {\mc{M}} \otimes {\mc{E}}(1,1)$ and ${\bm{h}}$ of $({H}^{(1,0)})^* \otimes ({H}^{(0,1)})^* \otimes {\mc{E}}(1,1)$ with the property that for each real-valued $0<s \in \Gamma({\mc{E}}(1,1))$, ${\theta} = s^{-1} {\bm{\theta}}$ is a pseudo-Hermitian structure with Levi form ${h} = s^{-1}  {\bm{h}}$. We shall also call the weighted form $\bm{h}$ the \emph{Levi form} of $(H,J)$, and we refer to $s$ as a \emph{CR scale}. We shall also identify ${H}^{(1,0)}$ with $({H}^{(0,1)})^*\otimes {\mc{E}}(1,1)$ and ${H}^{(0,1)}$ with $({H}^{(1,0)})^*\otimes {\mc{E}}(1,1)$ by means of $\bm{h}$ and its inverse.
	
	A density $\sigma$ of weight $(1,0)$ determines a unique CR scale $s=\sigma \ol{\sigma}$, which in turn gives rise to a pseudo-Hermitian structure ${\theta} = s^{-1} {\bm{\theta}}$. Conversely, at any point, a CR scale determines a circle of densities of weight $(1,0)$. If $\sigma$ is such a density, then the section $\zeta := \sigma^{-(m+2)}$ of the canonical bundle satisfies
	\begin{align}\label{eq:volnorm}
				\theta \wedge (\d \theta)^m & = \i^{m^2} m! (-1)^q \theta \wedge (\ell \hook \zeta) \wedge (\ell \hook \ol{\zeta}) \, .
	\end{align}
	and is said to be \emph{volume-normalised}.	
	
	We introduce plain and barred  minuscule Greek abstract indices in the following way:
	\begin{align*}
		{\mc{E}}^{\alpha} &:= {H}^{(1,0)} \, , & {\mc{E}}^{\bar{\alpha}} & := {H}^{(0,1)}\, , & {\mc{E}}_0 & := \Ann(H) \, , \\
		{\mc{E}}_{\alpha} & := ({H}^{(1,0)})^* \, , & {\mc{E}}_{\bar{\alpha}} & := ({H}^{(0,1)})^* \, ,
		& {\mc{E}}^0 & := \left(\Ann(H)\right)^* \, .
	\end{align*}
	In effect, indices can be raised and lowered using ${\bm{h}}_{\alpha \bar{\beta}}$, e.g.\ ${v}_{\alpha} = {\bm{h}}_{\alpha \bar{\beta}} {v}^{\bar{\beta}}$. Clearly, complex conjugation on ${}^\C {H}$ changes the index type, so we shall write ${v}^{\bar{\alpha}}$ for $\overline{{v}^{\alpha}}$, and so on.

	Symmetrisation will be denoted by round brackets, and skew-symmetrisation by square brackets, e.g.\ $\lambda_{(\alpha \beta)} = \frac{1}{2} \left( \lambda_{\alpha \beta} + \lambda_{\beta \alpha} \right)$ and $\lambda_{[\alpha \beta]} = \frac{1}{2} \left( \lambda_{\alpha \beta} - \lambda_{\beta \alpha} \right)$. The trace-free part of tensors of mixed types with respect to the Levi form will be denoted by a ring, e.g. $(\lambda_{\alpha \bar{\beta}})_\circ = \lambda_{\alpha \bar{\beta}} - \frac{1}{m} \bm{h}_{\alpha \bar{\beta}} \lambda_{\gamma \bar{\delta}} \bm{h}^{\gamma \bar{\delta}}$.

	\subsection{Webster connections}
	For each choice of pseudo-Hermitian structure ${\theta}$, there is a unique vector field ${\ell}$, known as the \emph{Reeb vector field}, which satisfies ${\theta} ({\ell}) = 1$ and $\d {\theta} ( {\ell} , \cdot ) = 0$. In particular, it induces a splitting
	\begin{align*}
		{}^\C T {\mc{M}} =  {}^\C \langle \ell \rangle \oplus {H}^{(1,0)} \oplus {H}^{(0,1)} \, .
	\end{align*}
	Any choice of frame $(e_{\alpha})_{\alpha=1,\ldots,m}$ of $H^{(1,0)}$ can thus be completed to an \emph{adapted frame} $\left( \ell , {e}_\alpha , \overline{{e}}_{\bar{\beta}} \right)$, $\alpha, \bar{\beta} = 1,\ldots,m$ for ${}^\C T \mc{M}$. Similarly, we call its dual $\left( {\theta} , {\theta}^\alpha , \overline{{\theta}}{}^{\bar{\alpha}} \right)$ an \emph{adapted coframe} for ${}^\C T^* \mc{M}$. The complex-valued forms $(\theta^{\alpha})$ are then said to be \emph{admissible} for $\theta$.
	
	The structure equations obtained from this adapted coframe can then be written as
	\begin{subequations}\label{eq:structure_CR}
	\begin{align}
	\d {\theta} & = \i {h}_{\alpha \bar{\beta}} {\theta}^{\alpha} \wedge \overline{{\theta}}{}^{\bar{\beta}}  \, , \\
	\d {\theta}^{\alpha} & = {\theta}^{\beta} \wedge {\Gamma}_{\beta}{}^{\alpha} +  {\WbA}^{\alpha}{}_{\bar{\beta}}   {\theta} \wedge \overline{{\theta}}{}^{\bar{\beta}} - \tfrac{1}{2} {\Nh}_{\bar{\beta} \bar{\gamma}}{}^{\alpha}\overline{{\theta}}{}^{\bar{\beta}} \wedge \overline{{\theta}}{}^{\bar{\gamma}} \, , \label{eq:structure_CR_A}\\
	\d \overline{{\theta}}{}^{\bar{\alpha}} & = \overline{{\theta}}{}^{\bar{\beta}} \wedge {\Gamma}_{\bar{\beta}}{}^{\bar{\alpha}} + {\WbA}^{\bar{\alpha}}{}_{\beta}  {\theta} \wedge {\theta}^{\beta} - \tfrac{1}{2} {\Nh}_{\beta \gamma}{}^{\bar{\alpha}} \theta^{\beta} \wedge \theta^{\gamma} \, ,
	\end{align}
\end{subequations}
	where
	\begin{itemize}
		\item ${h}_{\alpha \bar{\beta}}$ is the Levi form of ${\theta}$, viewed as Hermitian matrix --- abstractly, this is $h_{\alpha \bar{\beta}} = s^{-1} \bm{h}_{\alpha \bar{\beta}}$ where $s$ is the CR scale of $\theta$;
	 	\item ${\Gamma}_{\beta}{}^{\alpha}$ is the connection $1$-form of the \emph{Webster connection} ${\nabla}$ on $T {\mc{M}}$ that preserves $\theta$ and $\d \theta$, i.e.\
	 	\begin{align*}
	 	{\nabla} {\theta} & = 0 \, , & {\nabla} {\theta}^\alpha & = - {\Gamma}_{\beta}{}^{\alpha} \otimes {\theta}^{\beta} \, ,  & {\nabla} \ol{{\theta}}{}^{\bar{\alpha}} & = - \Gamma_{\bar{\beta}}{}^{\bar{\alpha}} \otimes \ol{\theta}{}^{\bar{\beta}} \, ,
	 	\end{align*}
	 	with $\d {h}_{\alpha \bar{\beta}} - \Gamma_{\alpha \bar{\beta}} - \Gamma_{\bar{\beta} \alpha} = 0$;
	 	\item ${\WbA}_{\alpha \beta}$ is the \emph{Webster torsion tensor} of $\nabla$, which satisfies ${\WbA}_{\alpha \beta} = {\WbA}_{(\alpha \beta)}$;
	 	\item and ${\Nh}_{\alpha \beta \gamma}$ is the \emph{Nijenhuis torsion tensor} of $\nabla$, which satisfies ${\Nh}_{\alpha \beta \gamma} = {\Nh}_{[\alpha \beta] \gamma}$, ${\Nh}_{[\alpha \beta \gamma]} = 0$.
	 \end{itemize}
The Webster connection, also known as \emph{Webster--Tanaka} or \emph{Webster--Stanton} connection, is uniquely determined by its compatibility with the contact form and its Levi form, and the prescription of symmetry on its torsion.

Clearly, the Nijenhuis tensor ${\Nh}_{\alpha \beta \gamma}$ vanishes identically if and only if $H^{(1,0)}$ is involutive. This is a CR invariant condition. For convenience, we set
\begin{align*}
	\| \Nh \|^2 & := \Nh_{\alpha \beta \gamma} \Nh^{\alpha \beta \gamma} \, .
\end{align*}
This is a density of weight $(-1,-1)$. If the Levi form has definite signature, i.e.\ $pq=0$, then its vanishing is equivalent to the vanishing of $\Nh_{\alpha \beta \gamma}$, and thus to the involutivity of $(H,J)$.

On the other hand, the Reeb vector field $\ell$ of $(H,J,\theta)$ is a transverse infinitesimal symmetry of $({H},{J})$, i.e.\ $\mathsterling_\ell v \in \Gamma(H^{(1,0)})$ for all $v \in \Gamma(H^{(1,0)})$, if and only if ${\WbA}_{\alpha \beta}$ vanishes identically.

The curvature tensors of $\nabla$ are given by, for any section $v^\alpha$ of ${H}^{(1,0)}$, 
\begin{align*}
	({\nabla}_{\alpha} {\nabla}_{\bar{\beta}} - {\nabla}_{\bar{\beta}} {\nabla}_{\alpha} ) v^{\gamma} + \i \bm{{h}}_{\alpha \bar{\beta}} v^\gamma & =: {\Riem}_{\alpha \bar{\beta} \delta}{}^{\gamma} v^{\delta}  \, , \\
	({\nabla}_{\alpha} {\nabla}_{0} - {\nabla}_{0} {\nabla}_{\alpha} ) v^{\gamma} - {\WbA}_{\alpha}{}^{\bar{\beta}} \nabla_{\bar{\beta}} v^\gamma & =: {\Riem}_{\alpha 0 \delta}{}^{\gamma} v^{\delta}  \, , \\
	({\nabla}_{\alpha} {\nabla}_{\beta} - {\nabla}_{\beta} {\nabla}_{\alpha} ) v^{\gamma} - {\Nh}_{\alpha \beta}{}^{\bar{\delta}} \nabla_{\bar{\delta}} v^\gamma & =: {\Riem}_{\alpha \beta \delta}{}^{\gamma} v^{\delta}  \, ,
\end{align*}
together with their complex conjugates. The Bianchi identities allows us to express  ${\Riem}_{\alpha 0 \delta}{}^{\gamma}$ and ${\Riem}_{\alpha \beta \delta}{}^{\gamma}$ in terms of the torsion and its covariant derivatives as given in \cite{TaghaviChabert2022}.
The more important piece ${\Riem}_{\alpha \bar{\beta} \gamma}{}^{\delta}$ can be used to define, for $m>1$, the \emph{Chern--Moser tensor}
\begin{align*}
	{\CM}_{\alpha}{}^{\gamma}{}_{\beta}{}^{\delta} & := \left( {\Riem}_{(\alpha}{}^{(\gamma}{}_{\beta)}{}^{\delta)} \right)_\circ \, ,
\end{align*}
and, for $m \geq 1$, the \emph{Webster--Ricci tensor} ${\Ric}{}_{\gamma}{}^{\delta} := {\bm{h}}{}^{\alpha \bar{\beta}} {\Riem}{}_{\alpha \bar{\beta} \gamma}{}^{\delta}$ and  the \emph{Webster--Ricci scalar} ${\Sc} := {\Ric}{}_{\gamma}{}^{\gamma}$. We conveniently introduce the \emph{Webster--Schouten tensor} and the \emph{Webster--Schouten scalar}
\begin{align*}
	{\Rho}_{\alpha \bar{\beta}} & := \tfrac{1}{m+2}\left( {\Ric}_{\alpha \bar{\beta}} - \tfrac{1}{2m+2} {\Sc} \, {\bm{h}}_{\alpha \bar{\beta}} \right) \, , & {\Rho} :=  {\Rho}_{\alpha \bar{\beta}} {\bm{h}}^{\alpha \bar{\beta}} \, ,
\end{align*}
respectively. It then follows that $\Rho = \tfrac{1}{2(m+1)} \Sc$.

The Chern--Moser tensor is a CR invariant. In dimension greater than three, the vanishing of both ${\Nh}_{\alpha \beta \gamma}$ and ${\CM}_{\alpha \bar{\gamma} \beta \bar{\delta}}$ is equivalent to the almost CR structure being locally \emph{CR flat} --- if $pq=0$, this means that $({\mc{M}},H,J)$ is locally CR equivalent to the CR $(2m+1)$-sphere. In dimension three, the corresponding invariant is the fourth-order \emph{Cartan tensor}, but will play no r\^{o}le in this article.

The Webster connection extends to a connection on the density bundles $\mc{E}(w,w')$. If $\sigma \in \Gamma(\mc{E}(w,0))$ is such that $\sigma^{-\frac{m+2}{w}} = \theta \wedge \theta^1 \wedge \ldots \wedge \theta^m$ for some adapted coframe $(\theta,\theta^\alpha, \ol{\theta}{}^{\bar{\alpha}})$. Then
\begin{align}\label{eq:conn1_density}
	\nabla \sigma & = \tfrac{w}{m+2} \Gamma_{\alpha}{}^{\alpha} \sigma \, ,
\end{align}
where $\Gamma_{\alpha}{}^{\beta}$ is the connection one-form of $\nabla$ corresponding to $\theta$. By complex conjugation, one obtains a similar formula for densities of weight $(0,w)$.

Note that $\nabla$ preserves the weighted contact form $\bm{\theta}$, i.e.\ $\nabla \bm{\theta} = 0$. Thus if $\theta = s^{-1} \bm{\theta}$ for some CR scale $s$, $\nabla$ also preserves $s$. This implies that for any $\sigma \in \Gamma(\mc{E}(1,0))$ such that $s = \sigma \ol{\sigma}$, we must have $\sigma^{-1}\nabla \sigma = - \ol{\sigma}^{-1}\nabla \ol{\sigma}$.

For any smooth density $f \in \Gamma(\mc{E}(w,w'))$, the commutation relations are given by
	\begin{subequations}\label{eq:CR_com}
		\begin{align}
			({\nabla}_{\alpha} {\nabla}_{\bar{\beta}} - {\nabla}_{\bar{\beta}} {\nabla}_{\alpha} ) {f} & = \tfrac{w -w'}{m+2} \left( \Ric_{\alpha \bar{\beta}} - \Nh_{\gamma \delta \alpha} \Nh^{ \gamma \delta}{}_{\bar{\beta}} + \Nh_{\alpha \gamma \delta} {\Nh}_{\bar{\beta}}{}^{\gamma \delta} \right) f - \i \bm{{h}}_{\alpha \bar{\beta}} {\nabla}_{0} {f} \, , \label{eq:CR_com1} \\
			({\nabla}_{\alpha} {\nabla}_{0} - {\nabla}_{0} {\nabla}_{\alpha} ) {f} & = \tfrac{w -w'}{m+2} \left( {\nabla}^{\beta}{\WbA}_{\alpha \beta} -  {\WbA}^{\beta \gamma} {\Nh}_{\alpha \beta \gamma} \right) f + {\WbA}_{\alpha}{}^{\bar{\beta}} 
			{\nabla}_{\bar{\beta}} {f} \, , \label{eq:CR_com2} \\
			({\nabla}_{\alpha} {\nabla}_{\beta} - {\nabla}_{\beta} {\nabla}_{\alpha} ) {f} & = \tfrac{w -w'}{m+2} {\nabla}^{\gamma} {\Nh}_{\alpha \beta \gamma} f + {\Nh}_{\alpha \beta}{}^{\bar{\gamma}} {\nabla}_{\bar{\gamma}} {f} \, . \label{eq:CR_com3}
		\end{align}
	\end{subequations}
This follows from the trace of the curvature $2$-form $\Omega_{\gamma}{}^{\gamma} = \d \Gamma_{\gamma}{}^{\gamma}$, which we find to be
\begin{multline}\label{eq:density_curv2form}
\Omega_{\gamma}{}^{\gamma}
= \left( \Ric_{\alpha \bar{\beta}} - \Nh_{\gamma \delta \alpha} \Nh^{ \gamma \delta}{}_{\bar{\beta}} + \Nh_{\alpha \gamma \delta} {\Nh}_{\bar{\beta}}{}^{\gamma \delta} \right) {\theta}^{\alpha} \wedge \overline{{\theta}}{}^{\bar{\beta}} \\
+ \tfrac{1}{2} {\nabla}^{\alpha} {\Nh}_{\gamma \delta \alpha} {\theta}^{\gamma} \wedge {\theta}^{\delta} - \tfrac{1}{2} {\nabla}^{\bar{\alpha}} {\Nh}_{\bar{\gamma} \bar{\delta} \bar{\alpha}} \overline{{\theta}}{}^{\bar{\gamma}} \wedge \overline{{\theta}}{}^{\bar{\delta}} \\
+ \i \left( (m+2) \mathsf{T}_{\gamma} - \nabla_{\gamma} \Rho \right) {\theta}^{\gamma} \wedge \theta - \i \left( (m+2) \mathsf{T}_{\bar{\gamma}} - \nabla_{\bar{\gamma}} \Rho \right) \overline{{\theta}}{}^{\bar{\gamma}} \wedge \theta \, ,
\end{multline}
where
\begin{align}\label{eq:CRT}
	\mathsf{T}_{\alpha} & = \tfrac{1}{m+2} \left( {\nabla}_{\alpha} {\Rho} - \i {\nabla}^{\gamma} {\WbA}_{\gamma \alpha} + \i {\WbA}^{\beta \gamma} {\Nh}_{\alpha \beta \gamma} \right) \, .
\end{align}

\subsection{Transformation rules}
Under a change of contact form, the Webster connection is subject to transformation laws. These can be found in e.g.\ \cite{Matsumoto2022}, and do not differ from the involutive case, see e.g.\ \cite{Gover2005}. It will suffice to state the transformation rules of the Webster torsion, Webster--Schouten tensor and Webster--Schouten scalar as
\begin{align}
		\wh{{\WbA}}_{\alpha \beta} & = {\WbA}_{\alpha \beta} + \i {\nabla}_{(\alpha} {\Upsilon}_{\beta)}  - \i {\Upsilon}_{\alpha} {\Upsilon}_{\beta} +  \i \Upsilon^{\gamma} \Nh_{\gamma (\alpha \beta)} \, , \label{eq:trsfA} \\
\wh{\Rho}_{\alpha \bar{\beta}} & = \Rho_{\alpha \bar{\beta}} - \tfrac{1}{2} \left( \nabla_{\alpha} \Upsilon_{\bar{\beta}} + \nabla_{\bar{\beta}} \Upsilon_{\alpha} \right) - \tfrac{1}{2} \Upsilon^{\gamma} \Upsilon_{\gamma} \bm{h}_{\alpha \bar{\beta}} \, , \label{eq:trsfPt} \\
\wh{{\Rho}} & = {\Rho} - \tfrac{1}{2} \left( {\nabla}^{\alpha} {\Upsilon}_{\alpha} + {\nabla}_{\alpha} {\Upsilon}^{\alpha} \right) - \tfrac{m}{2} {\Upsilon}^{\gamma} {\Upsilon}_{\gamma} \, , \label{eq:trsfP}
\end{align}
respectively. For future use, we also compute
\begin{align*}
	\wh{\nabla}_{\bar{\delta}} \Nh_{\gamma \alpha \beta} & = \nabla_{\bar{\delta}} \Nh_{\gamma \alpha \beta} + \Upsilon_{\bar{\delta}} \Nh_{\gamma \alpha \beta} + \bm{h}_{\gamma \bar{\delta}} \Nh_{\varepsilon \alpha \beta} \Upsilon^{\varepsilon} + \bm{h}_{\alpha \bar{\delta}} \Nh_{\gamma \varepsilon \beta} \Upsilon^{\varepsilon} + \bm{h}_{\beta \bar{\delta}} \Nh_{\gamma \alpha \varepsilon} \Upsilon^{\varepsilon} \, ,
\end{align*}
and hence
\begin{align}
	\wh{\nabla}^{\gamma} \Nh_{\gamma [\alpha \beta]} 
	& = \nabla^{\gamma} \Nh_{\gamma [\alpha \beta]} + (m + 2 )\Upsilon^{\gamma} \Nh_{\gamma [\alpha \beta]}   \, , &
	\wh{\nabla}^{\gamma} \Nh_{\gamma (\alpha \beta)} 
	& = \nabla^{\gamma} \Nh_{\gamma (\alpha \beta)} + m \Upsilon^{\gamma} \Nh_{\gamma (\alpha \beta)}  \, . \label{eq:trsfDNh}
\end{align}

	\subsection{Gauged Webster connections}\label{sec:gauged_Webster}
		In what follows, $\mc{E}_{\bullet}$ will denote any multiple tensor products of $\mc{E}_{\alpha}$ and $\mc{E}_{\bar{\alpha}}$. Choose a Webster connection $\nabla$. Then, for any choice of one-form $\xi$ on $\mc{M}$, we define the connection
	\begin{align}\label{eq:gconn}
		\accentset{\xi}{\nabla} & : \Gamma(\mc{E}_{\bullet}(w,w')) \longrightarrow \Gamma( T^* \mc{M} \otimes \mc{E}_{\bullet}(w,w')) \quad : \bm{f} \mapsto  \accentset{\xi}{\nabla} \bm{f} := \nabla \bm{f} - \i (w - w') \xi \otimes \bm{f} \, .
	\end{align}
	Clearly, $\accentset{\xi}{\nabla}$ coincides with $\nabla$ on any section of $\mc{E}_{\bullet}(w,w)$. We shall often use the short-hand $\accentset{\xi}{\nabla} = \nabla + \xi$ to mean \eqref{eq:gconn}.
	
	\begin{defn}\label{defn:GPconn}
		We shall call the connection defined by \eqref{eq:gconn} the \emph{Webster connection gauged by $\xi$}. If $\xi = \i \nabla \log z$ for some complex-valued function $z$, we refer to the gauge $\xi$ as being \emph{exact}.
	\end{defn}
	
	If $\xi = \xi_{\alpha} \theta^{\alpha} + \xi_{\bar{\alpha}} \ol{\theta}{}^{\bar{\alpha}} + \xi_{0} \theta$
	for some adapted coframe $(\theta, \theta^{\alpha}, \ol{\theta}{}^{\bar{\alpha}})$, then
		\begin{align*}
		F & := \d \xi = F_{\alpha \beta}  \theta^{\alpha} \wedge \theta^{\beta} + F_{\bar{\alpha} \bar{\beta}} \ol{\theta}{}^{\bar{\alpha}} \wedge \ol{\theta}{}^{\bar{\beta}} + 2 F_{\alpha \bar{\beta}} \theta^{\alpha} \wedge \ol{\theta}{}^{\bar{\beta}} 
		+ 2 F_{0 \beta} \theta \wedge \theta^{\beta} 
		+ 2 F_{0 \bar{\beta}} \theta \wedge \ol{\theta}{}^{\bar{\beta}} \, ,
	\end{align*}
	where, using the structure equations \eqref{eq:structure_CR},
	\begin{align*}
		F_{\alpha \beta} & = \nabla_{[\alpha} \xi_{\beta]} - \tfrac{1}{2} \Nh_{\alpha \beta}{}^{\bar{\gamma}} \xi_{\bar{\gamma}}\, , & F_{\alpha \bar{\beta}} & = \tfrac{1}{2} \left( \nabla_{\alpha} \xi_{\bar{\beta}} - \nabla_{\bar{\beta}} \xi_{\alpha} + \i \xi_{0} h_{\alpha \bar{\beta}} \right) \, , \\
		F_{0 \beta} & = \tfrac{1}{2} \left( \nabla_{0} \xi_{\beta} - \nabla_{\beta} \xi_{0} + \xi_{\bar{\alpha}} {\WbA}^{\bar{\alpha}}{}_{\beta} \right) \, .
	\end{align*}
	
	It immediately follows that:
	\begin{lem}\label{lem:closedg}
		Let $\xi$ be a one-form on $(\mc{M},H,J)$ so that $\xi = \xi_{\alpha} \theta^{\alpha} + \xi_{\bar{\alpha}} \ol{\theta}{}^{\bar{\alpha}} + \xi_{0} \theta$
		for some adapted coframe $(\theta, \theta^{\alpha}, \ol{\theta}{}^{\bar{\alpha}})$. A necessary and sufficient condition for $\xi$ to be closed is that
\begin{align*}
	\nabla_{[\alpha} \xi_{\beta]} - \tfrac{1}{2} \Nh_{\alpha \beta}{}^{\bar{\gamma}} \xi_{\bar{\gamma}} & = 0 \, , &  \nabla_{\alpha} \xi_{\bar{\beta}} - \nabla_{\bar{\beta}} \xi_{\alpha} + \i \xi_{0} h_{\alpha \bar{\beta}} & = 0 \, , 
	& \nabla_{0} \xi_{\beta} - \nabla_{\beta} \xi_{0} + \xi_{\bar{\alpha}} {\WbA}^{\bar{\alpha}}{}_{\beta} & = 0 \, .
\end{align*}		
	\end{lem}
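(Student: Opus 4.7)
My plan is very short, because the lemma is essentially an immediate corollary of the component decomposition of $F = \d\xi$ displayed just above its statement. First I would observe that the set of $2$-forms $\{\theta^{\alpha}\wedge\theta^{\beta},\ \ol{\theta}{}^{\bar{\alpha}}\wedge\ol{\theta}{}^{\bar{\beta}},\ \theta^{\alpha}\wedge\ol{\theta}{}^{\bar{\beta}},\ \theta\wedge\theta^{\alpha},\ \theta\wedge\ol{\theta}{}^{\bar{\alpha}}\}$ is a pointwise basis of $\bigwedge^{2}{}^{\C}T^{*}\mc{M}$. Therefore $\d\xi=0$ is equivalent to the simultaneous vanishing of all coefficients $F_{\alpha\beta},F_{\bar{\alpha}\bar{\beta}},F_{\alpha\bar{\beta}},F_{0\beta},F_{0\bar{\beta}}$. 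The barred coefficients are complex conjugates of the unbarred ones, so it suffices to impose $F_{\alpha\beta}=0$, $F_{\alpha\bar{\beta}}=0$ and $F_{0\beta}=0$, which are precisely the three displayed equations.

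To justify the formulas for $F_{\alpha\beta},F_{\alpha\bar{\beta}},F_{0\beta}$ themselves, I would split $\d\xi=\d(\xi_{\alpha}\theta^{\alpha})+\d(\xi_{\bar{\alpha}}\ol{\theta}{}^{\bar{\alpha}})+\d(\xi_{0}\theta)$ and expand each summand by Leibniz. Writing $\d\xi_{I}=\nabla\xi_{I}+\Gamma_{I}{}^{J}\xi_{J}$ with $\nabla\xi_{I}=\nabla_{\alpha}\xi_{I}\,\theta^{\alpha}+\nabla_{\bar{\alpha}}\xi_{I}\,\ol{\theta}{}^{\bar{\alpha}}+\nabla_{0}\xi_{I}\,\theta$, and substituting the structure equations \eqref{eq:structure_CR}, the Webster connection one-forms cancel in pairs between the $\d\xi_{I}\wedge(\cdot)$ and $\xi_{J}\,\d(\cdot)$ contributions, leaving only the intrinsic torsion and Levi-form terms. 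Specifically: the Nijenhuis torsion appearing in $\d\ol{\theta}{}^{\bar{\gamma}}$ contributes the $-\tfrac{1}{2}\Nh_{\alpha\beta}{}^{\bar{\gamma}}\xi_{\bar{\gamma}}$ term to $F_{\alpha\beta}$; the Levi form in $\d\theta=\i h_{\alpha\bar{\beta}}\theta^{\alpha}\wedge\ol{\theta}{}^{\bar{\beta}}$ contributes the $\i\xi_{0}h_{\alpha\bar{\beta}}$ term to $F_{\alpha\bar{\beta}}$; and the Webster torsion $\WbA^{\bar{\alpha}}{}_{\beta}$ in $\d\ol{\theta}{}^{\bar{\alpha}}$ contributes the $\xi_{\bar{\alpha}}\WbA^{\bar{\alpha}}{}_{\beta}$ term to $F_{0\beta}$.

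The only ``obstacle'' is pure index bookkeeping — ensuring the Nijenhuis contribution lands in the correct $\theta^{\alpha}\wedge\theta^{\beta}$ component rather than being absorbed elsewhere, and tracking signs from the antisymmetrisation. Since the computation is explicit and already carried out in the displayed formulas just before the lemma, the proof itself reduces to the one-line observation recorded above.
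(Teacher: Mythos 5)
Your proposal is correct and follows exactly the paper's route: the paper computes the components $F_{\alpha\beta}$, $F_{\alpha\bar{\beta}}$, $F_{0\beta}$ of $\d\xi$ from the structure equations \eqref{eq:structure_CR} in the display immediately preceding the lemma, and then states that the lemma "immediately follows" from the vanishing of those components, which is precisely your one-line observation together with the conjugation remark for the barred components.
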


	\subsection{CR--Einstein structures}\label{sec:CRE}
A \emph{CR--Einstein structure}\footnote{Such a structure was introduced by the author in \cite{TaghaviChabert2022} under the name \emph{almost CR--Einstein structure}. However, the use of the word `almost' clashes with that in the notion of \emph{almost CR--Einstein structure} given in \cite{Cap2008}. For this reason, it was deemed appropriate to drop the adverb `almost' to avoid confusion. That $(H,J)$ is not necessarily involutive should be clear from the context.} on an almost CR manifold $({\mc{M}},{H},{J})$ is a pseudo-Hermitian structure, whose Webster torsion tensor ${\WbA}_{\alpha \beta}$, Webster--Schouten tensor ${\Rho}_{\alpha \bar{\beta}}$ and Nijenhuis tensor ${\Nh}_{\alpha \beta \gamma}$ satisfy \cite{TaghaviChabert2022}
\begin{subequations}\label{eq:CR-E}
\begin{align}
		& \WbA_{\alpha \beta} = 0 \, , \label{eq:CR-E:A} \\
		& \nabla^{\gamma} \Nh_{\gamma (\alpha \beta)} = 0 \, , \label{eq:CR-E:DN} \\
		& \left( \Rho_{\alpha \bar{\beta}} - \tfrac{1}{m+2} \Nh_{\alpha \gamma \delta} \Nh_{\bar{\beta}}{}^{\gamma \delta} \right)_\circ = 0 \, . \label{eq:CR-E:Rho}
	\end{align}
\end{subequations}
In the same reference, it is proved  that condition \eqref{eq:CR-E:Rho} can be replaced by 
\begin{align*}
	\Ric_{\alpha \bar{\beta}} - \Nh_{\alpha \gamma \delta} \Nh_{\bar{\beta}}{}^{\gamma \delta} & = \Lambda h_{\alpha \bar{\beta}} \, , & \mbox{for some constant $\Lambda$.}
\end{align*}
In particular,
\begin{align}\label{eq:LScN2}
	\Lambda & = \tfrac{1}{m} \left( \Sc - \| \Nh \|^2 \right) \, .
\end{align}
Here, we view, with a slight abuse of notation, $\Sc$ and $\| \Nh \|^2$ as unweighted functions.

When $(H,J)$ is involutive, equations \eqref{eq:CR-E} reduce to
	\begin{align}\label{eq:CR-Einstein}
		& \WbA_{\alpha \beta} = 0 \, ,
		& \left( \Rho_{\alpha \bar{\beta}} \right)_\circ = 0 \, ,
	\end{align}
which were first investigated in \cite{Leitner2007}, where the corresponding pseudo-Hermitian structure is then referred to as a \emph{transversally symmetric pseudo-Einstein structure}. These were later revisited in tractor language in \cite{Cap2008} under the name CR--Einstein structure. There, building on a result by \cite{Lee1988}, the authors showed that the CR--Einstein equations \eqref{eq:CR-Einstein} are equivalent to the existence of a nowhere vanishing density $\sigma$ of weight $(1,0)$ satisfying the system of CR-invariant differential equations
\begin{subequations}\label{eq:A-CR-EI}
	\begin{align}
		\nabla_{\alpha} \nabla_{\beta} \sigma + \i \WbA_{\alpha \beta} & = 0 \, , \label{eq:soliton} \\
				\nabla_{\bar{\alpha}} \sigma & = 0 \, . \label{eq:sgm-hol} 
	\end{align}
\end{subequations}
If $\sigma$ is allowed to have a non-empty zero set, the resulting structure is called \emph{almost CR--Einstein}. The prolongation of the system \eqref{eq:A-CR-EI} leads to the construction of the so-called \emph{CR tractor bundle}, that is, a complex rank-$(m+2)$ vector bundle, equipped with the so-called \emph{normal tractor connection}. In fact, any solution to \eqref{eq:A-CR-EI} can be equivalently encoded as a parallel section of the CR tractor bundle --- see \cite{Cap2008} for details.
	
\begin{rem}
In \cite{TaghaviChabert2023}, where $(\mc{M},H,J)$ is assumed to be involutive, densities satisfying condition \eqref{eq:sgm-hol} are referred to \emph{CR densities} in analogy to CR functions. These are shown to be equivalent to the existence of (nowhere vanishing) closed sections of the canonical bundle, which is always guaranteed for realisable CR manifolds.

Such an interpretation clearly does not hold in the non-involutive case however. Certainly, strictly almost CR manifolds cannot admit closed sections of the canonical bundle as the structure equations will immediately reveal. This being said, CR densities in the sense of \eqref{eq:sgm-hol} may still exist on strictly almost CR manifolds as the example in Appendix \ref{app:NP} shows.
\end{rem}

We may nevertheless re-express the CR--Einstein conditions \eqref{eq:CR-E} in terms of a CR scale:
\begin{thm}\label{thm:alCRE}
	A CR--Einstein structure on an almost CR manifold $(\mc{M},H,J)$ determines and is determined by a unique (up to constant rescalings) CR scale $s$ that satisfies the system of CR invariant differential equations:
\begin{subequations}\label{eq:CR-E-inv}
	\begin{align}
		& \nabla_{(\alpha} \nabla_{\beta)} s + \i \WbA_{\alpha \beta} s + \Nh_{\gamma (\alpha \beta)} \nabla^{\gamma} s = 0 \, , \label{eq:AalCRE} \\
		& \nabla^{\gamma} s \Nh_{\gamma (\alpha \beta)} - \tfrac{1}{m} \nabla^{\gamma} \Nh_{\gamma (\alpha \beta)} s = 0 \, , \label{eq:NalCRE} \\
		& \left( s \nabla_{\bar{\beta}} \nabla_{\alpha} s - \nabla_{\alpha} s  \nabla_{\bar{\beta}} s + \Rho_{\alpha \bar{\beta}} s^2 - \tfrac{1}{m+2} \Nh_{\alpha \gamma \delta} \Nh_{\bar{\beta}}{}^{\gamma \delta} s^2 \right)_\circ = 0 \, . \label{eq:RalCRE}
	\end{align}
\end{subequations}
\end{thm}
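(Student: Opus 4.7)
The plan is to prove the equivalence by combining a direct reduction in a distinguished Webster gauge with an invariance argument: (i) in the gauge where $\nabla s=0$, the system \eqref{eq:CR-E-inv} reduces exactly to the CR--Einstein conditions \eqref{eq:CR-E}, and (ii) the system \eqref{eq:CR-E-inv} is CR invariant, so it is insensitive to the choice of pseudo-Hermitian structure used to compute $\nabla$.

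For the reduction (i), I would use the fact recorded after \eqref{eq:conn1_density} that the Webster connection of $\theta$ preserves both the weighted contact form $\bm\theta$ and $\theta$ itself, and hence annihilates the CR scale $s$ determined by $\theta=s^{-1}\bm\theta$. Substituting $\nabla_{\alpha}s=0=\nabla_{\bar{\alpha}}s$ into \eqref{eq:AalCRE}, \eqref{eq:NalCRE} and \eqref{eq:RalCRE} and dividing through by the positive density $s$ or $s^{2}$, these collapse, respectively, to $\WbA_{\alpha\beta}=0$, $\nabla^{\gamma}\Nh_{\gamma(\alpha\beta)}=0$, and $(\Rho_{\alpha\bar{\beta}}-\tfrac{1}{m+2}\Nh_{\alpha\gamma\delta}\Nh_{\bar{\beta}}{}^{\gamma\delta})_{\circ}=0$, i.e.\ precisely \eqref{eq:CR-E:A}, \eqref{eq:CR-E:DN} and \eqref{eq:CR-E:Rho}. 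This handles the forward implication, and, combined with (ii), also the converse, by applying the equations in the gauge $\theta':=s^{-1}\bm\theta$ for which $\nabla's=0$.

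For the invariance (ii), I would verify each of the three equations directly under a conformal rescaling $\hat{\theta}=\e^{2\Upsilon}\theta$, for which $\hat{s}=\e^{-2\Upsilon}s$. The necessary transformation laws for $\WbA_{\alpha\beta}$, $\Rho_{\alpha\bar{\beta}}$ and the symmetric divergence of the Nijenhuis tensor are \eqref{eq:trsfA}, \eqref{eq:trsfPt} and \eqref{eq:trsfDNh}, to be combined with the standard rules for $\nabla_{\alpha}$ acting on weighted one-forms and on densities of weight $(1,1)$, which introduce $\Upsilon_{\alpha}=\nabla_{\alpha}\Upsilon$ corrections. The algebraic shape of the three equations is dictated precisely by the required cancellation: the term $\Nh_{\gamma(\alpha\beta)}\nabla^{\gamma}s$ in \eqref{eq:AalCRE} absorbs the Nijenhuis correction $\i\Upsilon^{\gamma}\Nh_{\gamma(\alpha\beta)}$ in \eqref{eq:trsfA}, the coefficient $\tfrac{1}{m}$ in \eqref{eq:NalCRE} matches the factor $m$ in the symmetric part of \eqref{eq:trsfDNh}, and the nonlinear term $\nabla_{\alpha}s\,\nabla_{\bar{\beta}}s$ in \eqref{eq:RalCRE} soaks up the quadratic $\Upsilon$-terms in \eqref{eq:trsfPt}. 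Uniqueness of $s$ up to a positive real constant then follows from the observation that constant rescalings of $\theta$ preserve \eqref{eq:CR-E}, while non-constant rescalings produce non-trivial $\Upsilon$-dependent contributions in \eqref{eq:trsfA}--\eqref{eq:trsfDNh} that, generically, cannot simultaneously vanish with the Einstein equations.

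The principal computational obstacle I anticipate is the invariance of \eqref{eq:RalCRE}: being second-order in $s$, the check requires reordering the covariant derivatives $\nabla_{\alpha}\nabla_{\bar{\beta}}s$ via the commutator \eqref{eq:CR_com1}, whose curvature terms involving $\Ric_{\alpha\bar{\beta}}-\Nh_{\gamma\delta\alpha}\Nh^{\gamma\delta}{}_{\bar{\beta}}+\Nh_{\alpha\gamma\delta}\Nh_{\bar{\beta}}{}^{\gamma\delta}$ must match precisely against the Schouten transformation and the Nijenhuis contractions before passing to the trace-free part. The anticipated cancellation reflects the fact that \eqref{eq:RalCRE} is essentially the trace-free part of a conformally invariant second-order operator on $\mc{E}(1,1)$, closely analogous to the defining equation of an almost Einstein scale in Gover's tractor framework.
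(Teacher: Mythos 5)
Your proposal is correct and takes essentially the same route as the paper: the forward implication by reducing \eqref{eq:CR-E-inv} to \eqref{eq:CR-E} in the distinguished gauge $\nabla s=0$, and the converse via the transformation laws \eqref{eq:trsfA}, \eqref{eq:trsfDNh} and \eqref{eq:trsfPt} applied with $\Upsilon=-s^{-1}\nabla s$, together with the commutator \eqref{eq:CR_com1} to handle the second-order equation \eqref{eq:RalCRE}. What you phrase as ``CR invariance of the system'' is precisely the paper's use of these transformation laws, so the two arguments coincide.
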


\begin{proof}
	Choosing $\nabla$ such that $\nabla s = 0$ immediately gives us the implication \eqref{eq:CR-E-inv} $\Rightarrow$ \eqref{eq:CR-E}. For the converse, it is enough to use the transformation laws for $\WbA_{\alpha \beta}$, $\nabla^{\gamma}\Nh_{\gamma \alpha \beta}$ and $\Rho_{\alpha \bar{\beta}}$ given by \eqref{eq:trsfA}, \eqref{eq:trsfDNh} and \eqref{eq:trsfPt} with $\Upsilon = - s^{-1} \nabla s$. The form of \eqref{eq:RalCRE} can then be obtained by applying the commutation relation \eqref{eq:CR_com1}.
\end{proof}

\begin{defn}
	Let $(\mc{M},H,J)$ be an almost CR manifold. We shall call any real-valued density $s$ of weight $(1,1)$ that is a solution of the system \eqref{eq:CR-E-inv} an \emph{almost CR--Einstein scale}. If $s$ is a CR scale (and so, nowhere vanishing), it will be referred to simply as a \emph{CR--Einstein scale}.
\end{defn}

In this article, we shall however only be concerned with solutions that are CR scales.

\begin{rem}
	Equation \eqref{eq:AalCRE} already featured in \cite{Matsumoto2022} in the context of deformation of almost CR structures. Using \eqref{eq:CR_com3}, it can be recast as
	\begin{align*}
		\nabla_{\alpha} \nabla_{\beta} s + \i \WbA_{\alpha \beta} s + \Nh_{\gamma \alpha \beta} \nabla^{\gamma} s & = 0 \, .
	\end{align*}
\end{rem}

We now turn to the geometric interpretation of each of the conditions given in the previous theorem. That \eqref{eq:AalCRE} characterises the existence of a CR symmetry is clear, since it defines a pseudo-Hermitian structure for which \eqref{eq:CR-E:A} holds, i.e.\ the Reeb vector field is an infinitesimal CR symmetry. In addition, applying \cite[Corollary~4.1]{TaghaviChabert2022} yields the following:
\begin{prop}\label{prop:CRE-KE}
	Let $(\mc{M},H,J)$ be an almost CR manifold, and let $s \in \Gamma(\mc{E}(1,1))$ be a CR scale. Then $s$ satisfies \eqref{eq:AalCRE} if and only if the Reeb vector field of the contact form $\theta = s^{-1} \bm{\theta}$ is an infinitesimal CR symmetry. This being the case, the leaf space $\ul{\mc{M}}$ of the Reeb foliation locally inherits an almost K\"{a}hler structure $(\ul{h},\ul{J})$. In addition,
	\begin{itemize}
		\item $s$ satisfies \eqref{eq:NalCRE} if and only if the Ricci tensor of $\ul{h}$ is of type $(1,1)$ with respect to $\ul{J}$;
		\item $s$ satisfies \eqref{eq:NalCRE} and \eqref{eq:RalCRE} if and only if  $\ul{h}$ is Einstein.
	\end{itemize} 
\end{prop}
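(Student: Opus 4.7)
The plan is to reduce the three invariant conditions \eqref{eq:AalCRE}, \eqref{eq:NalCRE}, \eqref{eq:RalCRE} to the defining CR--Einstein equations \eqref{eq:CR-E} in the pseudo-Hermitian gauge $\theta = s^{-1}\bm{\theta}$, and then to invoke \cite[Corollary~4.1]{TaghaviChabert2022} in order to identify their geometric content on the leaf space. Concretely, I would choose the Webster connection so that $\nabla s = 0$, as in the proof of Theorem \ref{thm:alCRE}; under this choice \eqref{eq:AalCRE}, \eqref{eq:NalCRE}, \eqref{eq:RalCRE} become respectively \eqref{eq:CR-E:A}, \eqref{eq:CR-E:DN}, \eqref{eq:CR-E:Rho}, so all subsequent reasoning takes place in this gauge with respect to the distinguished pseudo-Hermitian structure $\theta$.

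First, I would settle the preliminary equivalence. In the gauge $\nabla s = 0$, \eqref{eq:AalCRE} reduces to $\WbA_{\alpha\beta} = 0$, and as recorded in the preliminaries this is precisely the condition that the Reeb vector field $\ell$ of $\theta$ be a transverse infinitesimal symmetry of $(H,J)$. Once this holds, $\ell$ preserves $H$ and $J$, and since $\mathsterling_\ell \theta = 0$ together with $\WbA_{\alpha\beta} = 0$ forces $\mathsterling_\ell h_{\alpha\bar{\beta}} = 0$, the Levi form also descends. The quotient $\ul{\mc{M}}$ by the local flow of $\ell$ therefore inherits an almost Hermitian structure $(\ul{h},\ul{J})$, whose fundamental two-form $\ul{\omega}$ pulls back to $\d\theta|_H$; since $\d(\d\theta) = 0$, the form $\ul{\omega}$ is closed on $\ul{\mc{M}}$, so $(\ul{h},\ul{J})$ is almost K\"ahler.

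For the two bullet points, the key input is \cite[Corollary~4.1]{TaghaviChabert2022}, which, under the symmetry assumption $\WbA_{\alpha\beta} = 0$ and in the gauge $\nabla s = 0$, expresses the Ricci tensor $\ul{\Ric}$ of $(\ul{h},\ul{J})$ in terms of $\Rho_{\alpha\bar{\beta}}$, the Nijenhuis contraction $\Nh_{\alpha\gamma\delta}\Nh_{\bar{\beta}}{}^{\gamma\delta}$ and the divergence $\nabla^{\gamma}\Nh_{\gamma(\alpha\beta)}$. More precisely, the $J$-anti-invariant part of $\ul{\Ric}$ is proportional to $\nabla^{\gamma}\Nh_{\gamma(\alpha\beta)}$ (and its conjugate), while its primitive $J$-invariant part is proportional to $\bigl(\Rho_{\alpha\bar{\beta}} - \tfrac{1}{m+2}\Nh_{\alpha\gamma\delta}\Nh_{\bar{\beta}}{}^{\gamma\delta}\bigr)_\circ$. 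Thus \eqref{eq:CR-E:DN} is equivalent to $\ul{\Ric}$ being of type $(1,1)$, which is the first bullet, and adjoining \eqref{eq:CR-E:Rho} additionally kills the trace-free primitive part, leaving $\ul{\Ric}$ a multiple of $\ul{h}$; the multiplier is the $\Lambda$ of \eqref{eq:LScN2}, whose constancy follows from the contracted Bianchi identity for $\ul{h}$ in the relevant dimensions.

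The main obstacle is the two correspondences listed above, which rest on comparing the Webster connection restricted to $H$ with the pullback of the Levi-Civita connection of $(\ul{h},\ul{J})$ along the Riemannian submersion $\mc{M} \to \ul{\mc{M}}$ and on carefully tracking the contribution of $\Nh_{\alpha\beta\gamma}$ --- in particular identifying it (up to a normalisation) with the Nijenhuis tensor of $(\ul{h},\ul{J})$ on the leaf space, so that the $J$-type decomposition of $\ul{\Ric}$ matches the trace decomposition on the CR side. Since exactly this comparison is packaged in \cite[Corollary~4.1]{TaghaviChabert2022}, the proof reduces to invoking that corollary and reading off the equivalences bullet by bullet.
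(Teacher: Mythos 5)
Your proposal is correct and follows essentially the same route as the paper: the paper likewise notes that in the gauge $\nabla s = 0$ equation \eqref{eq:AalCRE} reduces to $\WbA_{\alpha\beta}=0$, i.e.\ the Reeb field is a transverse infinitesimal CR symmetry, and then delegates the two bullet points to \cite[Corollary~4.1]{TaghaviChabert2022} exactly as you do. Your added details (closedness of the fundamental form on the leaf space, the Bianchi argument for constancy of the Einstein factor) are correct elaborations of steps the paper leaves implicit.
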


In general, a CR--Einstein structure does not arise from a density $\sigma$ of weight $(1,0)$ that satisfies \eqref{eq:sgm-hol} as illustrated by the example in Appendix \ref{app:NP}. It is nonetheless instructive to write down a system of equations describing such a structure.
\begin{thm}\label{thm:spalCRE}
	Suppose that an almost CR manifold $(\mc{M},H,J)$ admits a nowhere vanishing density $\sigma$ of weight $(1,0)$ that satisfies the system of CR invariant differential equations
	\begin{subequations}\label{eq:spalCREall}
	\begin{align}
		& \nabla_{\bar{\alpha}} \sigma = 0 \, , \label{eq:spalCRE} \\
		& \nabla_{(\alpha} \nabla_{\beta)} \sigma + \i \WbA_{\alpha \beta} \sigma + \tfrac{1}{m} \nabla^{\gamma} \Nh_{\gamma (\alpha \beta)} \sigma = 0 \, , \label{eq:AspalCRE} \\
		& \nabla_{\gamma} \sigma \Nh^{\gamma}{}_{(\bar{\alpha} \bar{\beta})} - \tfrac{1}{m} \nabla_{\gamma} \Nh^{\gamma}{}_{(\bar{\alpha} \bar{\beta})} \sigma = 0 \, , \label{eq:NspalCRE1} \\
		& \left( 2 \Nh_{\alpha \gamma \delta} \Nh_{\bar{\beta}}{}^{\gamma \delta} -  \Nh_{\gamma \delta \alpha} \Nh^{ \gamma \delta}{}_{\bar{\beta}}  \right)_\circ = 0 \, . \label{eq:NspalCRE}
	\end{align}
	\end{subequations}
	Then $s := \sigma \ol{\sigma}$ is a CR--Einstein scale, i.e.\ $s$ satisfies \eqref{eq:CR-E-inv}.
	
	Conversely, suppose $s$ is a CR--Einstein scale, and $s = \sigma \ol{\sigma}$ for some $\sigma \in \Gamma(\mc{E}(1,0))$ that satisfies \eqref{eq:spalCRE}. Then $\sigma$ also satisfies \eqref{eq:AspalCRE}, \eqref{eq:NspalCRE1} and \eqref{eq:NspalCRE}.
\end{thm}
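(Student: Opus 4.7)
The plan relies on two elementary observations which reduce the theorem to nearly mechanical bookkeeping. First, the hypothesis \eqref{eq:spalCRE} and its complex conjugate $\nabla_{\alpha} \ol{\sigma} = 0$, together with $s = \sigma \ol{\sigma}$, yield the product-rule identities
\begin{align*}
 \nabla_{(\alpha}\nabla_{\beta)} s &= \ol{\sigma} \, \nabla_{(\alpha}\nabla_{\beta)} \sigma \, , \\
 \nabla^\gamma s &= \sigma \, \nabla^\gamma \ol{\sigma} \, , \\
 s \, \nabla_{\bar{\beta}}\nabla_{\alpha} s - \nabla_{\alpha} s \, \nabla_{\bar{\beta}} s &= \sigma \, \ol{\sigma}^{2} \, \nabla_{\bar{\beta}}\nabla_{\alpha} \sigma \, .
\end{align*}
Second, the commutation relation \eqref{eq:CR_com1} applied to $\sigma$, which has weight $(1,0)$, together with $\nabla_{\bar{\beta}} \sigma = 0$, yields
\begin{align*}
 \nabla_{\bar{\beta}} \nabla_{\alpha} \sigma = -\tfrac{1}{m+2}\left( \Ric_{\alpha \bar{\beta}} - \Nh_{\gamma \delta \alpha} \Nh^{\gamma \delta}{}_{\bar{\beta}} + \Nh_{\alpha \gamma \delta} \Nh_{\bar{\beta}}{}^{\gamma \delta} \right) \sigma + \i \bm{h}_{\alpha \bar{\beta}} \nabla_{0} \sigma \, .
\end{align*}

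For the forward direction, I would first derive \eqref{eq:NalCRE}: take the complex conjugate of \eqref{eq:NspalCRE1}, rewrite the raised barred index on $\Nh^{\bar{\gamma}}{}_{(\alpha\beta)}$ by means of $\bm{h}$ so that the identity reads $\nabla^{\gamma} \ol{\sigma} \, \Nh_{\gamma(\alpha\beta)} = \tfrac{1}{m} \nabla^{\gamma} \Nh_{\gamma(\alpha\beta)} \, \ol{\sigma}$, and multiply through by $\sigma$. Next, for \eqref{eq:AalCRE}, multiply \eqref{eq:AspalCRE} by $\ol{\sigma}$; the result differs from \eqref{eq:AalCRE} only by the substitution of $\tfrac{1}{m} \nabla^\gamma \Nh_{\gamma(\alpha\beta)} s$ for $\Nh_{\gamma(\alpha\beta)} \nabla^\gamma s$, and this discrepancy is precisely \eqref{eq:NalCRE}, already established. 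Finally, for \eqref{eq:RalCRE}, apply the third product-rule identity above, substitute $\nabla_{\bar{\beta}}\nabla_{\alpha} \sigma$ from the commutation relation, and take trace-free parts: the $\nabla_{0} \sigma$ and $\Sc$-contributions drop since $(\bm{h}_{\alpha \bar{\beta}})_\circ = 0$, while $(\Rho_{\alpha\bar{\beta}})_\circ = \tfrac{1}{m+2} (\Ric_{\alpha\bar{\beta}})_\circ$ cancels the explicit $\Rho_{\alpha\bar{\beta}}$-term appearing in \eqref{eq:RalCRE}. The residual combination is
\begin{align*}
 \tfrac{1}{m+2}\left( \Nh_{\gamma \delta \alpha} \Nh^{\gamma \delta}{}_{\bar{\beta}} - 2 \, \Nh_{\alpha \gamma \delta} \Nh_{\bar{\beta}}{}^{\gamma \delta} \right)_\circ \sigma \, ,
\end{align*}
which vanishes exactly by \eqref{eq:NspalCRE}.

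The converse reverses each of these steps, invoking the assumption that $\sigma$ is nowhere vanishing to license the necessary divisions: \eqref{eq:NspalCRE1} follows from \eqref{eq:NalCRE} by dividing by $\sigma$ and conjugating; \eqref{eq:AspalCRE} by rewriting \eqref{eq:AalCRE} via the product rule, substituting the freshly obtained \eqref{eq:NspalCRE1}, and dividing by $\ol{\sigma}$; and \eqref{eq:NspalCRE} by running the trace-free Nijenhuis computation above backwards from \eqref{eq:RalCRE}, after dividing by $\sigma \ol{\sigma}^{2}$. The main obstacle is purely notational rather than conceptual: one must carefully distinguish the two inequivalent quadratic Nijenhuis contractions $\Nh_{\gamma \delta \alpha} \Nh^{\gamma \delta}{}_{\bar{\beta}}$ and $\Nh_{\alpha \gamma \delta} \Nh_{\bar{\beta}}{}^{\gamma \delta}$ produced by \eqref{eq:CR_com1}, and keep track of how complex conjugation interacts with raising barred indices via $\bm{h}$ when translating between the $(\sigma,\ol{\sigma})$-system and the real CR-scale system.
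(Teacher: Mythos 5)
Your proposal is correct and follows essentially the same route as the paper's proof: both arguments exploit $\nabla_{\bar{\alpha}}\sigma=0$ via the product rule to pass between the $\sigma$-system and the $s$-system, use (the conjugate of) \eqref{eq:NspalCRE1} to reconcile the terms $\Nh_{\gamma(\alpha\beta)}\nabla^{\gamma}s$ and $\tfrac{1}{m}\nabla^{\gamma}\Nh_{\gamma(\alpha\beta)}s$, and invoke the commutation relation \eqref{eq:CR_com1} to reduce the trace-free equation \eqref{eq:RalCRE} to the quadratic Nijenhuis condition \eqref{eq:NspalCRE}, with the nowhere-vanishing of $\sigma$ licensing the converse. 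The computations you sketch, including the residual $\tfrac{1}{m+2}\bigl(\Nh_{\gamma\delta\alpha}\Nh^{\gamma\delta}{}_{\bar{\beta}}-2\Nh_{\alpha\gamma\delta}\Nh_{\bar{\beta}}{}^{\gamma\delta}\bigr)_\circ\sigma$, match the paper's.
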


\begin{proof}
	We note that using \eqref{eq:NspalCRE1}, we can re-express \eqref{eq:AspalCRE} equivalently as
	\begin{align}
			& \nabla_{(\alpha} \nabla_{\beta)} \sigma + \i \WbA_{\alpha \beta} \sigma + \tfrac{1}{m} \nabla^{\gamma} \Nh_{\gamma (\alpha \beta)} \sigma = 0 \, . \label{eq:AspalCRE2} 
	\end{align}
	Now, set $s=\sigma \ol{\sigma}$. Assuming \eqref{eq:spalCRE}, we have $\ol{\sigma} \nabla_{\alpha} \sigma = \nabla_{\alpha} s$. So, multiplying \eqref{eq:AspalCRE2} and \eqref{eq:NspalCRE1} through by $\ol{\sigma}$ yields
	\begin{align*}
\nabla_{(\alpha} \nabla_{\beta)} s + \i \WbA_{\alpha \beta} s + \Nh_{\gamma (\alpha \beta)} \nabla^{\gamma} s = \ol{\sigma} \left( \nabla_{(\alpha} \nabla_{\beta)} \sigma + \i \WbA_{\alpha \beta} \sigma + \Nh_{\gamma (\alpha \beta)} \nabla^{\gamma} \sigma \right) \, , \\
\nabla^{\gamma} s \Nh_{\gamma (\alpha \beta)} - \tfrac{1}{m} \nabla^{\gamma} \Nh_{\gamma (\alpha \beta)} s = \ol{\sigma} \left( \nabla^{\gamma} \sigma \Nh_{\gamma (\alpha \beta)} - \tfrac{1}{m} \nabla^{\gamma} \Nh_{\gamma (\alpha \beta)} \sigma \right) \, .
\end{align*}
Similarly, we find 
		\begin{multline*}
			\left( s \nabla_{\bar{\beta}} \nabla_{\alpha} s - \nabla_{\alpha} s  \nabla_{\bar{\beta}} s + \Rho_{\alpha \bar{\beta}} s^2 - \tfrac{1}{m+2} \Nh_{\alpha \gamma \delta} \Nh_{\bar{\beta}}{}^{\gamma \delta} s^2 \right)_\circ \\
			= s \ol{\sigma} \left( \nabla_{\bar{\beta}} \nabla_{\alpha} \sigma  + \Rho_{\alpha \bar{\beta}} \sigma - \tfrac{1}{m+2} \Nh_{\alpha \gamma \delta} \Nh_{\bar{\beta}}{}^{\gamma \delta} \sigma \right)_\circ \\
			= s \ol{\sigma} \tfrac{1}{m+2} \left( \Nh_{\gamma \delta \alpha} \Nh^{ \gamma \delta}{}_{\bar{\beta}} - 2 \Nh_{\alpha \gamma \delta} \Nh_{\bar{\beta}}{}^{\gamma \delta}  \right)_\circ \, ,
		\end{multline*}
where we have used \eqref{eq:CR_com1} for the second equality. Hence, since $\sigma$ is nowhere vanishing, we obtain an equivalence between \eqref{eq:AspalCRE} (or \eqref{eq:AspalCRE2}), \eqref{eq:NspalCRE1}, \eqref{eq:NspalCRE} and \eqref{eq:AalCRE}, \eqref{eq:NalCRE}, \eqref{eq:RalCRE}. This establishes both claims of the theorem.
\end{proof}

\begin{rem}
	As pointed out in \cite{Matsumoto2022}, the system of equations \eqref{eq:spalCRE} and \eqref{eq:AspalCRE} arises from the \emph{first BGG operator} on densities of weight $(1,0)$ defined by
	\begin{align*}
		\sigma \mapsto \left( \nabla_{\bar{\alpha}} \sigma , \nabla_{(\alpha} \nabla_{\beta)} \sigma + \i \WbA_{\alpha \beta} \sigma + \tfrac{1}{m} \nabla^{\gamma} \Nh_{\gamma (\alpha \beta)} \sigma \right) \, .
	\end{align*}
For densities $\sigma \in \Gamma( \mc{E}(1,w))$, for any $w \in \Z$, a related CR invariant equation of interest is given by
	\begin{align*}
		\nabla_{(\alpha} \nabla_{\beta)} \sigma + \i \WbA_{\alpha \beta} \sigma + \tfrac{1}{m} \nabla^{\gamma} \Nh_{\gamma (\alpha \beta)}  \sigma & = 0 \, ,
	\end{align*}
or equivalently,
	\begin{align*}
		\nabla_{\alpha} \nabla_{\beta} \sigma + \i \WbA_{\alpha \beta} \sigma + \tfrac{1}{m} \nabla^{\gamma} \Nh_{\gamma (\alpha \beta)} \sigma + \Nh_{\gamma [\alpha \beta]} \nabla^{\gamma} \sigma & = 0 \, .
	\end{align*}
If $\sigma$ is a real-valued density of weight $(1,1)$, then the CR scale determines a contact form, whose pseudo-Hermitian invariants satisfy
	\begin{align*}
		\WbA_{\alpha \beta} & = \tfrac{\i}{m}  \nabla^{\gamma} \Nh_{\gamma (\alpha \beta)} \, .
	\end{align*}
	
\end{rem}

Interestingly, solutions to \eqref{eq:spalCREall} provide a nice interpretation of $\|\Nh \|^{-2}$ in the following terms:
\begin{prop}\label{prop:spalCRE-N2}
	Let $(\mc{M},H,J)$ be an almost CR manifold. Assume that $\| \Nh \|^2$ is nowhere vanishing. Suppose there is a nowhere vanishing solution $\sigma \in \Gamma(\mc{E}(1,0))$ to \eqref{eq:spalCREall}. Then the CR--Einstein scale $s := \sigma \ol{\sigma}$ is proportional to $\| \Nh \|^{-2}$ by a constant factor.
\end{prop}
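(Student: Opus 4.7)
The plan is to show that $s \| \Nh \|^2$, which is a weight-$(0,0)$ quantity, hence a bona fide function, is locally constant; since $\|\Nh\|^2$ is nowhere vanishing by assumption, this immediately yields $s = C \|\Nh\|^{-2}$ for some nonzero constant $C$.

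I would work throughout in the pseudo-Hermitian gauge $\theta = s^{-1}\bm{\theta}$ determined by the CR--Einstein scale $s = \sigma \ol{\sigma}$, so that the Webster connection $\nabla$ satisfies $\nabla s = 0$. In this gauge, Theorem \ref{thm:alCRE} together with equation \eqref{eq:LScN2} give $\WbA_{\alpha\beta} = 0$, $\nabla^\gamma \Nh_{\gamma (\alpha\beta)} = 0$, and $\Ric_{\alpha \bar{\beta}} - \Nh_{\alpha \gamma\delta}\Nh_{\bar{\beta}}{}^{\gamma\delta} = \Lambda \, h_{\alpha\bar{\beta}}$ for a constant $\Lambda$. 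Since $\nabla s = 0$ and $\nabla_\alpha \ol{\sigma} = \ol{\nabla_{\bar{\alpha}} \sigma} = 0$ by \eqref{eq:spalCRE}, the Leibniz rule applied to $s = \sigma \ol{\sigma}$ yields $\nabla_\alpha \sigma = 0$, and thus also $\nabla_{\bar{\alpha}}\sigma = 0$ by \eqref{eq:spalCRE} itself.

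Next, I would apply the commutation relation \eqref{eq:CR_com1} to the density $\sigma$, which has $w-w' = 1$. The left-hand side vanishes since $\sigma$ is annihilated by all horizontal derivatives, so the right-hand side yields an expression for $\nabla_0 \sigma$ in terms of the Nijenhuis-squared and Ricci tensors. Taking the trace of \eqref{eq:NspalCRE} gives $2\Nh_{\alpha\gamma\delta}\Nh_{\bar{\beta}}{}^{\gamma\delta} - \Nh_{\gamma\delta\alpha}\Nh^{\gamma\delta}{}_{\bar{\beta}} = \tfrac{1}{m}\|\Nh\|^2 \bm{h}_{\alpha\bar{\beta}}$; combining this with the Einstein-type identity from the previous step simplifies the relevant curvature expression to $(\Lambda + \tfrac{1}{m}\|\Nh\|^2) \bm{h}_{\alpha\bar{\beta}}$. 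Dividing through by $\bm{h}_{\alpha\bar{\beta}}$, I obtain
\begin{align*}
\nabla_0 \sigma & = -\tfrac{\i}{m+2}\bigl(\Lambda + \tfrac{1}{m}\|\Nh\|^2\bigr)\sigma.
\end{align*}

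I then apply the commutation relation \eqref{eq:CR_com2} to $\sigma$: with $\WbA_{\alpha\beta} = 0$ and $\nabla_{\bar{\beta}}\sigma = 0$, the right-hand side vanishes, so $\nabla_\alpha \nabla_0 \sigma = 0$. Substituting the above expression for $\nabla_0 \sigma$ and using $\nabla_\alpha \sigma = 0$ and the constancy of $\Lambda$ forces $\nabla_\alpha \|\Nh\|^2 = 0$; complex conjugation gives $\nabla_{\bar{\alpha}}\|\Nh\|^2 = 0$. The density $\|\Nh\|^2$ has weight $(-1,-1)$, so the $\tfrac{w-w'}{m+2}$ curvature factor in \eqref{eq:CR_com1} drops; applying the relation to $\|\Nh\|^2$ with the left-hand side automatically vanishing yields $\bm{h}_{\alpha\bar{\beta}}\nabla_0 \|\Nh\|^2 = 0$ and hence $\nabla_0 \|\Nh\|^2 = 0$. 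Combined with $\nabla s = 0$, this shows the weight-$(0,0)$ function $s\|\Nh\|^2$ has vanishing differential, so is locally constant.

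The main technical obstacle will be the bookkeeping in the third paragraph: combining the two algebraic identities (the trace of \eqref{eq:NspalCRE} and the Einstein relation) to reduce the curvature expression appearing in \eqref{eq:CR_com1} to a clean scalar multiple of $\bm{h}_{\alpha\bar{\beta}}$, so that the Reeb derivative $\nabla_0 \sigma$ can be read off unambiguously.
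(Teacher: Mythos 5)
Your proposal is correct and follows essentially the same route as the paper's proof: impose the gauge $\nabla s = 0$, deduce $\nabla_\alpha\sigma = \nabla_{\bar\alpha}\sigma = 0$, extract $\nabla_0\sigma$ from the commutator \eqref{eq:CR_com1} (you isolate it via the trace-free identity \eqref{eq:NspalCRE} where the paper simply contracts, which is equivalent), then use \eqref{eq:CR_com2} to get $\nabla_\alpha\nabla_0\sigma = 0$ and conclude that $\|\Nh\|^2$ is covariantly constant. The only cosmetic discrepancy is a dropped factor of $s^{-1}$ on $\Lambda$ in your expression for $\nabla_0\sigma$, which is harmless since $s$ is parallel in the chosen scale.
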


\begin{proof}
	Suppose that equations \eqref{eq:spalCREall} hold. Then the commutation relations \eqref{eq:CR_com} on $\sigma$, where $\nabla$ is taken to be the Webster connection preserving $s$, reduce to
	\begin{subequations}
		\begin{align}
			0 & = \tfrac{1}{m+2} \left( - \Nh_{\gamma \delta \alpha} \Nh^{ \gamma \delta}{}_{\bar{\beta}} + 2 \Nh_{\alpha \gamma \delta} {\Nh}_{\bar{\beta}}{}^{\gamma \delta} \right) \sigma + \bm{h}_{\alpha \bar{\beta}} \left( \tfrac{1}{m+2} \Lambda \sigma s^{-1} - \i {\nabla}_{0} \sigma \right)\, , \label{eq:N2CRE1} \\
			{\nabla}_{\alpha} {\nabla}_{0} \sigma & = 0\, , \label{eq:N2CRE2} \\
			0 & = \tfrac{1}{m+2} {\nabla}^{\gamma} {\Nh}_{\alpha \beta \gamma} \sigma  \, . \label{eq:N2CRE3}
		\end{align}
	\end{subequations}
Contracting equation \eqref{eq:N2CRE1} gives $\| \Nh \|^2 = - m \Lambda s^{-1}  + \i m (m+2) \sigma^{-1} {\nabla}_{0} \sigma$. Hence, using equation \eqref{eq:N2CRE2}, we find that $\nabla_{\alpha} \left( \| \Nh \|^2 \right) = 0$, and using the commutation \eqref{eq:CR_com1}, we find that $\| \Nh \|^2 \in \Gamma(\mc{E}(-1,-1))$ is covariantly constant, and so its inverse must be proportional to $s$ by a constant factor.
\end{proof}

\begin{rem}
	With reference to the proof of Proposition \ref{prop:spalCRE-N2}, since $\Lambda$ as defined by \eqref{eq:LScN2} is constant, $\Sc$ must also be constant with respect to the Webster connection $\nabla$ compatible with $\|\Nh \|^2$. Note also that equation \eqref{eq:N2CRE3} tells us that ${\nabla}^{\gamma} {\Nh}_{\gamma \alpha \beta} = 0$ since $\sigma \neq 0$ everywhere.
\end{rem}

\begin{rem}
	We may relax our assumption on $\sigma$ by allowing it to have a non-empty zero set. In this case, analogous results will hold only off its zero set.
\end{rem}

	\section{Conformal geometry}\label{sec:conf_geom}
	This section contains a review of conformal geometry with a focus on the properties of congruences of null geodesics, which are at the heart of this article. It is in this context that we introduce generalisations of the notion of almost Einstein scales in Section \ref{sec:confsc}.
	\subsection{Preliminaries}
	Throughout $(\wt{\mc{M}}, \wt{\mbf{c}})$ will denote an oriented and time-oriented conformal manifold of dimension $n+2$ and of Lorentzian signature $(+,\ldots,+,-)$. For consistency with the rest of the article, we shall adorn tensors on $\wt{\mc{M}}$ with a tilde. Two metrics $\wt{g}$ and $\wh{\wt{g}}$ belong to the conformal class $\wt{\mbf{c}}$ if and only if
	\begin{align}\label{eq-conf-res}
		\widehat{\wt{g}} & = \e^{2 \, \wt{\varphi}} \wt{g} \, , & \mbox{for some smooth function $\wt{\varphi}$ on $\wt{\mc{M}}$.}
	\end{align}
	Following \cite{Bailey1994}, for each $w \in \R$, there are naturally associated density bundles denoted $\wt{\mc{E}}[w]$, and the Levi-Civita connection of any metric $\wt{g}$ in $\wt{\mbf{c}}$ extends to a linear connection on $\wt{\mc{E}}[w]$. In particular, metrics in $\wt{\mbf{c}}$ are in one-to-one correspondence with sections of the bundle of \emph{conformal scales}, denoted $\wt{\mc{E}}_+[1]$, which is a choice of positive square root of $\wt{\mc{E}}[2]$. The correspondence is achieved by means of the \emph{conformal metric} $\wt{\bm{g}}$, which is a distinguished non-degenerate section of $\bigodot^2 T \wt{\mc{M}} \otimes \wt{\mc{E}} [2]$ with the property that if $\wt{\sigma}$ is a conformal scale, then the corresponding metric in $\wt{\mbf{c}}$ is given by $\wt{g} = \wt{\sigma}^{-2} \wt{\bm{g}}$.  We thus have a canonical identification of $T \wt{\mc{M}}$ with $T^* \wt{\mc{M}} \otimes \wt{\mc{E}} [2]$ via $\wt{\bm{g}}$. The Levi-Civita connection $\wt{\nabla}$ of $\wt{g}$ also preserves $\wt{\sigma}$, and it follows that $\wt{\bm{g}}$ is preserved by the Levi-Civita connection of any metric $\wt{g}$ in $\wt{\mbf{c}}$.

	We shall often use the abstract index notation, whereby sections of $T \wt{\mc{M}}$, respectively, $T^* \wt{\mc{M}}$ will be adorned with upper, respectively, lower minuscule Roman indices starting from the beginning of the alphabet, and we will write $\wt{\mc{E}}^{a} := T \wt{\mc{M}}$, and $\wt{\mc{E}}_{a} :=T^* \wt{\mc{M}}$. Symmetrisation will be denoted by round brackets, and skew-symmetrisation by square brackets as before. Indices will be lowered and raised with $\wt{\bm{g}}_{a b}$ and its inverse $\wt{\bm{g}}^{a b}$ respectively. The trace-free part of symmetric tensors will be denoted by a ring.
	
	For any two metrics $\wt{g}$ and $\wh{\wt{g}}$ in $\wt{\mbf{c}}$ related by \eqref{eq-conf-res}, their respective Levi-Civita connections $\wt{\nabla}$ and $\wh{\wt{\nabla}}$ are related by
	\begin{align*}
		\widehat{\wt{\nabla}}_a \wt{\alpha}_b & = \wt{\nabla}_a \wt{\alpha}_b + (w - 1) \wt{\Upsilon}_a \wt{\alpha}_b - \wt{\Upsilon}_b \wt{\alpha}_a + \wt{\Upsilon}_c \wt{\alpha}^c \wt{\bm{g}}_{a b} \, , & \mbox{for any $\wt{\alpha}_a \in \Gamma(\wt{\mc{E}}_a[w])$,}
	\end{align*}
	where $\wt{\Upsilon}_a := \wt{\nabla}_a \wt{\varphi}$.

	By convention, we take the Riemann tensor of a given metric $\wt{g}_{a b}$ in $\wt{\mbf{c}}$ to be defined by
	\begin{align*}
		2 \wt{\nabla}_{[a} \wt{\nabla}_{b]} \wt{\alpha}_c & = - \wt{\Riem}_{a b}{}^{d}{}_{c} \wt{\alpha}_d \, , &  \mbox{for any $\wt{\alpha}_a \in \Gamma(\wt{\mc{E}}_a[w])$,} \, .
	\end{align*}
	It decomposes as
	\begin{align*}
		\wt{\Riem}_{a b c d} & = \wt{\Weyl}_{a b c d} + 4 \wt{\bm{g}}_{[a|[c} \wt{\Rho}_{d]|b]} \, ,
	\end{align*}
	where $\wt{\Weyl}_{a b c d}$ is the \emph{Weyl tensor} and $\wt{\Rho}_{a b}$ the \emph{Schouten tensor}, which is related to the \emph{Ricci tensor} $\wt{\Ric}_{a b} = \wt{\Riem}_{c a}{}^{c}{}_{b}$ and the \emph{Ricci scalar} $\wt{\Sc} = \wt{\Ric}_{a}{}^{a}$ by
	\begin{align*}
		\wt{\Rho}_{a b} & = \tfrac{1}{n} \left( \wt{\Ric}_{a b} - \tfrac{1}{2(n+1)} \wt{\Sc} \wt{\bm{g}}_{a b} \right) \, .
	\end{align*}
	The \emph{Schouten scalar} is defined to be $\wt{\Rho} := \wt{\Rho}_{a b} \wt{\bm{g}}^{a b} = \tfrac{1}{2(n+1)} \wt{\Sc} $. The \emph{Cotton tensor} is given by
	\begin{align*}
		\wt{\Cot}_{c a b} & = 2 \wt{\nabla}_{[a} \wt{\Rho}_{b] c} \, ,
	\end{align*}
	and, by the Bianchi identities, satisfies $(n-1)  \wt{\Cot}_{c a b} = \wt{\nabla}^{d} \wt{\Weyl}_{d c a b}$. 
	
	While the Weyl tensor is conformally invariant, the Schouten tensor, Schouten scalar and Cotton tensor transform as
	\begin{align}
		\widehat{\wt{\Rho}}_{a b} &  = \wt{\Rho}_{a b} - \wt{\nabla}_{a} \wt{\Upsilon}_{b} + \wt{\Upsilon}_{a} \wt{\Upsilon}_{b} - \tfrac{1}{2} \wt{\Upsilon}^{c} \wt{\Upsilon}_{c} \wt{\bm{g}}_{a b} \, , &
		\widehat{\wt{\Rho}} &  = \wt{\Rho} - \wt{\nabla}^{a} \wt{\Upsilon}_{a} - \tfrac{n}{2} \wt{\Upsilon}^{c} \wt{\Upsilon}_{c} \, , \label{eq:Rho_transf} \\
				\widehat{\wt{\Cot}}_{c a b} & = \wt{\Cot}_{c a b} + \wt{\Upsilon}^d \wt{\Weyl}_{d c a b} \, , \label{eq:Cot_transf}
	\end{align}
	respectively.

	\subsection{Optical geometry}\label{sec:optical}
	We recall a number of notions introduced in \cite{Trautman1984,Trautman1985,Robinson1985,Penrose1986,Robinson1986,Robinson1989,Trautman1999,Fino2023a}. An \emph{optical geometry} consists of a triple $(\wt{\mc{M}}, \wt{\mbf{c}}, \wt{K})$, where $(\wt{\mc{M}}, \wt{\mbf{c}})$ is an oriented and time-oriented Lorentzian conformal manifold of dimension $n+2$ and $\wt{K}$ is an oriented null line distribution, which we shall also refer to as \emph{an optical structure}. The rank-$n$ \emph{screen bundle} $\wt{H} := \wt{K}^\perp /\wt{K}$ inherits a conformal structure $\wt{\mbf{c}}_{\wt{H}}$ of Riemannian signature.

Of importance in the present article is when $\wt{K}$ is tangent to a \emph{non-shearing congruence of null geodesics} $\wt{\mc{K}}$, i.e.\ for any nowhere vanishing section $\wt{k}$ of $\wt{K}$, i.e.\
		\begin{align}
		\mathsterling_{\wt{k}} \wt{g} (\wt{v} , \wt{w} ) & = \wt{\epsilon} \wt{g} (\wt{v} , \wt{w} )  \, , & \wt{v}, \wt{w} \in \Gamma(\wt{K}^\perp) \, , \label{eq-nonshear} 
	\end{align}
	for some smooth function $\wt{\epsilon}$. This means that the integral curves of $\wt{k}$ are null geodesics and the conformal structure on $\wt{H}$ is preserved along these. By convention, we will always assume that $\wt{K}$, or equivalently $\wt{k}$, is positively oriented. The local leaf space $\mc{M}$ of $\wt{\mc{K}}$ thus inherits a rank-$n$ distribution $H$ from $\wt{H}$, while $\wt{\mbf{c}}_{\wt{H}}$ descends to a bundle conformal structure of Riemannian signature on $H$.
	
	In addition, we shall assume that $\wt{\mc{K}}$ is \emph{twisting}, i.e.\
for any nowhere vanishing one-form $\wt{\kappa} \in \Gamma( \Ann(\wt{K}))$,
\begin{align}
	\d \wt{\kappa} ( \wt{v} , \wt{w}) & \neq 0 \, , & \wt{v}, \wt{w} & \in \Gamma(\wt{K}^\perp) \, ,
\end{align}
which means that $\wt{K}^\perp$, and thus $H$, are not integrable.

\subsection{Almost Robinson geometry}\label{sec:Robinson}
When $(\wt{\mc{M}}, \wt{\mbf{c}})$ is of dimension $2m+2$, a particular case of an optical structure is provided by the notion of \emph{almost Robinson structure} \cite{Nurowski2002,Fino2023}, that is, a pair $(\wt{N},\wt{K})$ where $\wt{N}$ is a totally null complex $(m+1)$-plane distribution, i.e.\
\begin{align*}
	 \wt{\bm{g}} (\wt{v},\wt{w}) & = 0 \, , & \mbox{for all $\wt{v}, \wt{w} \in \Gamma(\wt{N})$,}
\end{align*}
and $\wt{K}$ a real null line distribution such that ${}^\C \wt{K} = \wt{N} \cap \ol{\wt{N}}$. One can show that $(\wt{N},\wt{K})$ is equivalent to an optical structure $\wt{K}$ whose screen bundle is equipped with a bundle complex structure $\wt{J}$ compatible with the induced conformal structure $\wt{\mbf{c}}_{\wt{H}}$. Again, let us consider the leaf space $\mc{M}$ of the congruence $\wt{\mc{K}}$ of null curves tangent to $\wt{K}$. When $\wt{N}$ is preserved along the flow of any generator of $\wt{\mc{K}}$, i.e.\ $[\wt{K},\wt{N}] \subset \wt{N}$, we refer to $(\wt{N},\wt{K})$ as a \emph{nearly Robinson structure}. In this case, the curves of $\wt{K}$ are geodesics, and the leaf space $\mc{M}$ inherits an almost CR structure $(H,J)$. More precisely, local sections of $\Ann(H^{(0,1)})$ pull back to local sections of $\Ann(\ol{\wt{N}})$. If in addition $\wt{N}$ is involutive, i.e.\ $[\wt{N},\wt{N}] \subset \wt{N}$, we refer to $(\wt{N},\wt{K})$ simply as a \emph{Robinson structure}, which also implies the involutivity of the almost CR structure $(H,J)$ on $\mc{M}$.

In dimension four, an optical structure is equivalent to an almost Robinson structure. The former is tangent to a non-shearing congruence of null geodesics if and only if the latter is involutive. This being case, the Weyl tensor satisfies the integrability condition
	\begin{align}\label{eq:int-non-sh}
		\wt{\Weyl}( \wt{k}, \wt{v}, \wt{k}, \wt{v}) & = 0 \, , & \mbox{for any $\wt{k} \in \Gamma(\wt{K})$, $\wt{v} \in \Gamma(\wt{K}^\perp)$.}
	\end{align}
In the jargon of mathematical relativity, $\wt{K}$ is said to be a \emph{principal null direction} of the Weyl tensor.

In contrast, in even dimensions greater than four, an optical geometry does not in general admit a distinguished almost Robinson structure. We note however the following special case: we say that an almost Robinson structure $(\wt{N},\wt{K})$ is \emph{twist-induced} if the congruence $\wt{\mc{K}}$ tangent to $\wt{K}$ is geodesic and twisting, and the associated bundle complex structure $\wt{J}$ on the screen bundle $\wt{H}$ is compatible with the twist in the sense that if $\wt{\kappa}$ is any nowhere vanishing section of $\Ann(\wt{K}^\perp)$, then
\begin{align*}
	\d \wt{\kappa} (\wt{v},\wt{w}) & \propto \wt{h} \left( \wt{J} (\wt{v} + \wt{K}) , \wt{w} + \wt{K} \right) \, , & \mbox{for all $\wt{v}, \wt{w} \in \Gamma(\wt{K}^\perp)$,}
\end{align*}
for some bundle metric $\wt{h}$ in $\wt{\mbf{c}}_{\wt{H}}$. If in addition $\wt{\mc{K}}$ is non-shearing, then $(\wt{N},\wt{K})$ descends to a partially integrable contact almost CR structure of positive definite signature on the leaf space $\mc{M}$ of $\wt{\mc{K}}$.

Crucially, we have the next result:
\begin{thm}[\cite{TaghaviChabert2022}]\label{thm:non-shearing-hidim}
	Let $(\wt{\mc{M}},\wt{\mbf{c}},\wt{K})$ be an optical geometry of dimension $2m+2$ equipped with a twisting non-shearing congruence of null geodesics $\wt{\mc{K}}$. Then the twist of $\wt{\mc{K}}$ induces a nearly Robinson structure $(\wt{N},\wt{K})$ if and only if the Weyl tensor satisfies \eqref{eq:int-non-sh}. This being the case, the local leaf space $\mc{M}$ of the congruence inherits a partially integrable contact almost CR structure $(H,J)$ from $(\wt{N},\wt{K})$.
\end{thm}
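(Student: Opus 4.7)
The plan is to encode the biconditional as algebraic constraints on the twist two-form on the screen bundle and translate them via a Ricci-identity computation along the flow of the congruence.

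Fix a representative $\wt{g} \in \wt{\mbf{c}}$ and a null generator $\wt{k} \in \Gamma(\wt{K})$, with dual one-form $\wt{\kappa}_a = \wt{g}_{ab}\wt{k}^b$. The non-shearing hypothesis forces $[\wt{k},\Gamma(\wt{K}^\perp)] \subset \Gamma(\wt{K}^\perp)$, so $\d\wt{\kappa}$ restricted to $\wt{K}^\perp \wedge \wt{K}^\perp$ descends to a two-form $\wt{\omega}$ on the screen bundle $\wt{H}$, defined up to a conformal factor under rescalings of $\wt{k}$. A twist-induced almost Hermitian structure then consists of an almost complex structure $\wt{J}$ on $\wt{H}$ and a compatible screen metric $\wt{h} \in \wt{\mbf{c}}_{\wt{H}}$ with $\wt{\omega}_{ab} = \lambda\,\wt{h}_{ac}\wt{J}^c{}_b$, equivalently $\wt{A}^2 = -\lambda^2\,\Id$ for the skew endomorphism $\wt{A} := \wt{h}^{-1}\wt{\omega}$. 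The distribution $\wt{N} := {}^{\C}\wt{K} \oplus \wt{H}^{(0,1)}$ is then totally null of complex rank $m+1$, and the nearly Robinson condition $[\wt{K},\wt{N}] \subset \wt{N}$ is equivalent to $\wt{J}$ being preserved up to scale along the flow of $\wt{k}$.

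For the forward direction, assume such a $\wt{J}$ exists. Apply the Ricci identity $2\wt{\nabla}_{[a}\wt{\nabla}_{b]}\wt{\kappa}_c = -\wt{\Riem}_{abc}{}^d\wt{\kappa}_d$ and trace with $\wt{k}^a$ to express $(\mathsterling_{\wt{k}}\wt{\omega})_{bc}$ for $b,c \in \wt{K}^\perp$ in terms of the null components of $\wt{\Riem}$. In the decomposition $\wt{\Riem} = \wt{\Weyl} + 4\wt{\bm{g}}_{[a|[c}\wt{\Rho}_{d]|b]}$, the Schouten part is trace-type in the free indices and is absorbed by the trace of $\wt{\nabla}_a\wt{k}^b$ supplied by non-shearing, leaving only the null Weyl components $\wt{\Weyl}(\wt{k},\wt{v},\wt{k},\wt{w})$. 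The $\wt{J}$-invariance of $\wt{\omega}$ along $\wt{k}$ then forces these components to vanish, which is precisely \eqref{eq:int-non-sh}.

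Conversely, assume \eqref{eq:int-non-sh}. The same computation now shows that $\wt{A}$ satisfies an ODE along each integral curve of $\wt{\mc{K}}$ whose right-hand side preserves the spectral type of $-\wt{A}^2$. Absorbing the non-shearing conformal factor into a rescaling of $\wt{h}$, one propagates a seed-point algebraic decomposition of $\wt{\omega}$ into the K\"{a}hler form of an almost Hermitian $(\wt{h},\wt{J})$ to obtain $\wt{A}^2 = -\Id$ globally, defining a $\wt{\mc{K}}$-invariant almost complex structure $\wt{J}$ on $\wt{H}$. It descends to a bundle complex structure $J$ on $H := \wt{H}/\wt{\mc{K}}$ over the leaf space $\mc{M}$; twisting of $\wt{\mc{K}}$ yields non-degeneracy of the Levi form of any pullback contact form, and the $(1,1)$-type of $\wt{\omega}$ with respect to $\wt{J}$ descends to partial integrability $[H^{(1,0)},H^{(1,0)}] \subset {}^{\C}H$. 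The principal technical obstacle is this reverse direction: upgrading the pointwise eigenvalue coincidence of $-\wt{A}^2$ forced by the Weyl condition to a smooth globally defined almost complex structure on $\wt{H}$ requires careful bookkeeping of conformal weights in both $\wt{k}$ and the screen metric $\wt{h}$, and a propagation argument along $\wt{\mc{K}}$ that remains regular across the entire leaf.
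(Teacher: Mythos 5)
First, note that the paper does not prove Theorem \ref{thm:non-shearing-hidim} itself; it is imported verbatim from \cite{TaghaviChabert2022}, so the comparison is against the argument there. Your overall strategy --- feed the non-shearing hypothesis into the Ricci identity traced with $\wt{k}^a$ (i.e.\ the Sachs equation for the optical matrix) and read off the null Weyl components --- is indeed the right one and is essentially what the reference does. The point of that computation is that, writing the screen-projected $\wt{\nabla}\wt{k}$ as $\tfrac{\wt{\epsilon}}{2}\,\Id + \wt{A}$ with $\wt{A}$ the skew twist endomorphism (no shear term by hypothesis), the symmetric trace-free part of the Sachs equation collapses to the \emph{pointwise algebraic identity} $(\wt{A}^2)_\circ = -\,\wt{\Weyl}(\wt{k},\cdot,\wt{k},\cdot)|_{\wt{H}}$ (the right-hand side being automatically trace-free on the screen), while the skew part gives $\dot{\wt{A}} = -\wt{\epsilon}\,\wt{A}$, so that the normalised $\wt{J} = \wt{A}/\lambda$ is automatically preserved along $\wt{\mc{K}}$. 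The biconditional is then immediate: $\wt{A}^2$ is a negative multiple of the identity if and only if \eqref{eq:int-non-sh} holds.

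This is where your write-up has a genuine gap. In the converse direction you never extract that algebraic identity: you assert only that the ODE for $\wt{A}$ ``preserves the spectral type of $-\wt{A}^2$'' and then try to ``propagate a seed-point algebraic decomposition'', which does not close --- preservation of spectral type along the flow tells you nothing unless you already know $\wt{A}^2 \propto -\Id$ at some point, and nothing in your argument produces that seed. The correct mechanism is that the vanishing of the shear (identically, hence also of its derivative along $\wt{\mc{K}}$) turns the trace-free symmetric part of the Sachs equation into the pointwise equation $(\wt{A}^2)_\circ = -\wt{\Weyl}(\wt{k},\cdot,\wt{k},\cdot)$, so the Weyl condition gives $\wt{A}^2 = -\lambda^2\Id$ at every point with $\lambda^2 = \tfrac{1}{2m}\|\wt{A}\|^2 > 0$ by the twisting hypothesis; the smoothness and regularity issues you flag as ``the principal technical obstacle'' do not arise. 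Relatedly, in the forward direction you attribute the vanishing of $\wt{\Weyl}(\wt{k},\wt{v},\wt{k},\wt{v})$ to the $\wt{J}$-invariance of $\wt{\omega}$ along $\wt{k}$; that invariance lives in the skew part of the Sachs equation and is automatic, whereas what actually kills the Weyl components is the algebraic condition $\wt{A} = \lambda\wt{J}$ fed into the symmetric trace-free part. Your concluding remarks on the descent to the leaf space and on partial integrability (the $(1,1)$-type of the Levi form) are correct as sketched.
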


\begin{rem}
	For $m=1$, the above theorem is perfectly consistent with the extent literature since the integrability condition then becomes vacuous. On the other hand, we may rightly see condition \eqref{eq:int-non-sh} for $m>1$ as the generalisation of the defining equation for $\mc{K}$ being a principal null direction of the Weyl tensor.
\end{rem}

\subsection{Twist-induced nearly Robinson geometry with non-shearing congruence}\label{sec:prefered}
Let us focus on a twist-induced nearly Robinson geometry $(\wt{\mc{M}},\wt{\mbf{c}},\wt{N},\wt{K}) \accentset{\varpi}{\longrightarrow} (\mc{M},H,J)$ with non-shearing congruence $\wt{\mc{K}}$. For a congruence of null curves, the geodesic property of $\wt{K}$, its shear and twist are all conformally invariant and do not depend on the choice of generator of $\wt{\mc{K}}$. The question that we need to address is how metrics in $\wt{\mbf{c}}$ are related to contact forms for $(\mc{M},H,J)$. To this end, we shall use the following canonical structures arising from our geometric setup \cite{TaghaviChabert2022,Fino2023a,Fino2023}:
\begin{itemize}
	\item 		There is a conformal subclass $\accentset{n.e.}{\wt{\mbf{c}}}$ of metrics in $\wt{\mbf{c}}$ with the property that whenever $\wt{g}$ is in $\accentset{n.e.}{\wt{\mbf{c}}}$, the congruence $\wt{\mc{K}}$ is \emph{non-expanding}, i.e.\
	\begin{align}
		\mathsterling_{\wt{k}} \wt{g} (\wt{v} , \wt{w} ) & = 0  \, , & \wt{v}, \wt{w} & \in \Gamma(\wt{K}^\perp) \, . \label{eq-nonshear_nonexp} 
	\end{align}
	Any two metrics in $\accentset{n.e.}{\wt{\mbf{c}}}$ differ by a factor constant along $\wt{\mc{K}}$.
	\item 	From the twisting property of $\wt{\mc{K}}$, there is a \emph{distinguished} generator $\wt{k}$ of $\wt{\mc{K}}$ with the property that for any metric $\wt{g}$ in $\accentset{n.e.}{\wt{\mbf{c}}}$, we have that
	\begin{align*}
		\wt{\kappa} & := \wt{g}(\wt{k},\cdot) = 2 \varpi^* \theta \, ,
	\end{align*}
	where $\theta$ is a contact form for $(H,J)$. In addition, for each such choice of metric, there is a unique associated null vector field $\wt{\ell}$ with the property that $\wt{\kappa}(\wt{\ell})=1$ and $\d \wt{\kappa}(\wt{\ell},\cdot)=0$.
\end{itemize}
Thanks to $\wt{k}$ and $\accentset{n.e.}{\wt{\mbf{c}}}$, we obtain a tight relation between $(\wt{\mc{M}},\wt{\mbf{c}},\wt{K})$ and the leaf space $(\mc{M},H,J)$ of $\wt{\mc{K}}$. To be precise, there is a one-to-one correspondence between metrics in $\accentset{n.e.}{\wt{\mbf{c}}}$ and contact forms for $(H,J)$. Suppressing the pullback symbol $\varpi^*$, each metric $\wt{g}_{\theta}$ in $\accentset{n.e.}{\wt{\mbf{c}}}$ associated to some contact form $\theta$ can be written as 
\begin{subequations}\label{eq:can_met}
	\begin{align}\label{eq:can_met1}
		\wt{g}_{\theta} & = 4 \theta \odot \wt{\lambda} + h \, ,
	\end{align}
	where $h$ is (the metric induced from) the Levi form of $\theta$, and $\wt{\lambda} = \wt{g}_{\theta}(\wt{\ell},\cdot)$ with $\wt{\ell}$ as defined above. Choosing an admissible coframe $(\theta^{\alpha})$ for $\theta$, and an affine parameter $\phi$ along the geodesics of $\wt{k}$ so that $\wt{k}=\parderv{}{\phi}$, we can write
	\begin{align}\label{eq:can_met2}
		h & = 2 h_{\alpha \bar{\beta}} \theta^{\alpha} \odot \ol{\theta}{}^{\bar{\beta}} \, , & 
		\wt{\lambda} & = \d \phi + \wt{\lambda}_{\alpha} \theta^{\alpha} + \wt{\lambda}_{\bar{\alpha}} \ol{\theta}{}^{\bar{\alpha}} + \wt{\lambda}_0 \theta \, ,
	\end{align}
\end{subequations}
for some complex-valued functions $\wt{\lambda}_{\alpha}$ and $\wt{\lambda}_{\bar{\alpha}}=\ol{\wt{\lambda}_{\alpha}}$, and real-valued function $\wt{\lambda}_0$ on $\wt{\mc{M}}$. Differentiation with respect to $\phi$ will be denoted by a dot, i.e.\ $\dot{\wt{f}} := \mathsterling_{\wt{k}} \wt{f}$ for any smooth tensor-valued function $\wt{f}$ on $\wt{\mc{M}}$, and this notation will be extended to tensor components.

Any other choice of metric $\wt{g}{}_{\wh{\theta}} = \e^\varphi \wt{g}{}_{\theta}$ in $\accentset{n.e.}{\wt{\mbf{c}}}$ for some smooth function $\varphi$ on $\mc{M}$ is given by
\begin{align*}
	\wt{g}_{\wh{\theta}} & = 4 \wh{\theta} \odot \wh{\wt{\lambda}} + \wh{h} \, ,
\end{align*}
where
\begin{align*}
	\wh{\theta} & = \tfrac{1}{2} \wt{g}_{\wh{\theta}}(\wt{k}, \cdot) = \e^{\varphi} \theta \, , \\
	\wh{h} & = \e^{\varphi} \left( h - 2 \i \Upsilon_{\alpha} \theta^{\alpha} \odot \theta + 2 \i \Upsilon_{\bar{\alpha}} \ol{\theta}{}^{\bar{\alpha}} \odot \theta + 2 \Upsilon_{\alpha} \Upsilon^{\alpha} \theta \odot \theta \right) \, , \\
	\wh{\wt{\lambda}} & = \wt{\lambda} + \tfrac{1}{2} \i \Upsilon_{\alpha} \theta^{\alpha} - \tfrac{1}{2} \i \Upsilon_{\bar{\alpha}} \ol{\theta}{}^{\bar{\alpha}} - \tfrac{1}{2} \Upsilon_{\alpha} \Upsilon^{\alpha} \theta \, ,
\end{align*}
with $(\theta^{\alpha})$ being admissible for $(H,J,\theta)$, and $\Upsilon_{\alpha} = \nabla_{\alpha} \varphi$. On the other hand, a change of affine parametrisation
\begin{align}\label{eq:reparam}
	\phi \mapsto \phi'= \phi - \mr{\phi} \, ,
\end{align}
for some smooth function $\mr{\phi}$ on $\mc{M}$, accompanied by the redefinitions
\begin{align*}
	\wt{\lambda}'_{\alpha} & = \wt{\lambda}_{\alpha} + \nabla_{\alpha} \mr{\phi} \, , & \wt{\lambda}'_{0} & = \wt{\lambda}_{0} + \nabla_{0} \mr{\phi} \, ,
\end{align*}
preserves the form of the metric \eqref{eq:can_met} and clearly the vector field $\wt{k}=\tfrac{\partial}{\partial \phi'}$ too.

In the light of Theorem \ref{thm:non-shearing-hidim}, we shall henceforth denote a twist-induced nearly Robinson geometry with non-shearing congruence by the quadruple $(\wt{\mc{M}},\wt{\mbf{c}},\wt{N},\wt{k}) \longrightarrow (\mc{M},H,J)$, where $\wt{k}$ is the distinguished section of $\wt{K}$ as described above. The perturbed Fefferman spaces to be introduced in Section \ref{sec:Fefferman} are special cases of these.

\subsection{Almost Einstein scales and generalisations}\label{sec:confsc}
A metric $\wt{g}$ in $\wt{\mbf{c}}$ is said to be \emph{Einstein} if its Ricci tensor satisfies
\begin{align*}
	\wt{\Ric} & = \wt{\Lambda} \wt{g} \, , & \mbox{for some constant $\wt{\Lambda}$.}
\end{align*}
 If $(\wt{\mc{M}},\wt{\mbf{c}})$ is equipped with an almost Robinson structure, we may consider subsystems of the Einstein equations:
\begin{defn}\label{defn:redEins}
	Let $(\wt{\mc{M}},\wt{\mbf{c}},\wt{N},\wt{K})$ be an almost Robinson geometry. We say that a metric $\wt{g}$ in $\wt{\mbf{c}}$ is:
	\begin{itemize}
		\item a \emph{weakly half-Einstein} metric if its Ricci tensor satisfies
		\begin{align}\label{eq:wk_hlf_Einstein}
			\wt{\Ric}_{a b} \wt{v}^a \wt{v}^b & = 0 \, , & \mbox{for any $\wt{v} \in \Gamma(\wt{N})$;}
		\end{align}
		\item a \emph{half-Einstein} metric if it is weakly half-Einstein and has constant Ricci scalar curvature.
		\end{itemize}
\end{defn}
For optical geometries, we have:
	\begin{defn}\label{defn:purad}
		Let $(\wt{\mc{M}},\wt{\mbf{c}},\wt{K})$ be an optical geometry. A metric $\wt{g}$ in $\wt{\mbf{c}}$ is said to be a \emph{pure radiation} metric if its Ricci scalar is constant and the trace-free part of the Ricci tensor satisfies
		\begin{align}\label{eq:puradvac}
			\wt{\kappa}_{[a} \left( \wt{\Ric}_{b] c} \right)_\circ & = 0 \, ,  & \mbox{where $\wt{\kappa}_{a} = \wt{g}_{a b} \wt{k}^b$ for some $\wt{k} \in \Gamma(\wt{K})$.}
		\end{align}
\end{defn}

\begin{rem}
Clearly, if $(\wt{\mc{M}},\wt{\mbf{c}},\wt{N},\wt{K})$ is an almost Robinson geometry that admits a pure radiation metric $\wt{g}$ in $\wt{\mbf{c}}$, then $\wt{g}$ is in particular (weakly) half-Einstein.
\end{rem}

We may, however, be interested in metrics that are defined only off certain hypersurfaces. Thus, motivated by the notion of \emph{almost pseudo-Riemannian structure} \cite{Curry2018}, we make the following definition:
\begin{defn}
	Let $\wt{\sigma} \in \Gamma(\wt{\mc{E}}[1])$  with zero set $\wt{\mc{Z}}=\{ \wt{p} \in \wt{\mc{M}} | \wt{\sigma}(\wt{p}) = 0\}$. We say that $\wt{\sigma}$ is an \emph{almost Lorentzian scale} if $\wt{\nabla} \wt{\sigma} \neq 0$ on $\wt{\mc{Z}}$ where $\wt{\nabla}$ is the Levi-Civita connection of some (and thus any) metric in $\wt{\mbf{c}}$. 
\end{defn}
It is easy to check that this definition does not depend on the choice of metric. The density $\wt{\sigma}$ defines a metric $\wt{g} = \wt{\sigma}^{-2} \wt{\bm{g}}$ in $\wt{\mbf{c}}$, but regular only off $\wt{\mc{Z}}$ --- there is clearly a sign ambiguity since $-\wt{\sigma}$ is also an almost Lorentzian scale that defines the same metric, but this will have no bearing on the subsequent discussion.

An important class of almost Lorentzian scales was introduced in \cite{Gover2005a} --- see also \cite{LeBrun1985}: an \emph{almost Einstein scale} is an almost Lorentzian scale $\wt{\sigma}$ that satisfies the conformally invariant equation
\begin{align}
	\left( \wt{\nabla}_{a} \wt{\nabla}_{b} \wt{\sigma} + \wt{\Rho}_{a b} \wt{\sigma} \right)_\circ = 0 \, . \label{eq:alEinstein}		
\end{align}
If $\wt{\sigma}$ has empty zero set $\wt{\mc{Z}}$, it is then referred to as an \emph{Einstein scale}, and it defines a (global) Einstein metric. Otherwise, if $\wt{\mc{Z}}$ is non-empty, then off $\wt{\mc{Z}}$, the metric $\wt{g} = \wt{\sigma}^{-2} \wt{\bm{g}}$ is Einstein. This is convenient since we may then regard $\wt{\mc{Z}}$ as the \emph{conformal infinity} of some Lorentzian Einstein manifold --- see \cite{Curry2018}.

This can also be applied to the metrics introduced in Definition \ref{defn:redEins}:
\begin{defn}\label{defn:aESc}
	Let $\wt{\sigma}$ be an almost Lorentzian scale on an almost Robinson geometry $(\wt{\mc{M}},\wt{\mbf{c}},\wt{N},\wt{K})$. We say that $\wt{\sigma}$ is 
	\begin{itemize}
		\item an \emph{almost weakly half-Einstein scale} if it satisfies
		\begin{subequations}\label{eq:alwkEins}
			\begin{align}\label{eq:alwkEins1}
			&	\left( \wt{\nabla}_{a} \wt{\nabla}_{b} \wt{\sigma} + \wt{\Rho}_{a b} \wt{\sigma} \right)_\circ = \tfrac{1}{n} \wt{\Phi}_{a b} \wt{\sigma} \, , 
		\end{align}
		for some trace-free symmetric tensor $\wt{\Phi}_{a b}$ satisfying
		\begin{align}\label{eq:RiccwkEins}
			\wt{\Phi}_{a b} \wt{v}^a \wt{v}^b & = 0 \, , & \mbox{for any $\wt{v}^a \in \Gamma (\wt{N})$;}
		\end{align}
		\end{subequations}
		\item an \emph{almost half-Einstein scale} if it is an almost weakly half-Einstein scale, and
		\begin{align}\label{eq:alcstRicSc}
			& \wt{\Phi}_{a}{}^{b} \wt{\nabla}_{b} \wt{\sigma} - \tfrac{1}{n} \wt{\sigma} \wt{\nabla}_{b} \wt{\Phi}_{a}{}^{b} = 0 \, .
		\end{align}
	\end{itemize} 
\end{defn}

Similarly, we introduce the following:
\begin{defn}\label{defn:puradSc}
	Let $\wt{\sigma}$ be an almost Lorentzian scale on an optical geometry $(\wt{\mc{M}},\wt{\mbf{c}},\wt{K})$. We say that $\wt{\sigma}$ is an \emph{almost pure radiation scale} if it satisfies
	\begin{align}
		&	\left( \wt{\nabla}_{a} \wt{\nabla}_{b} \wt{\sigma} + \wt{\Rho}_{a b} \wt{\sigma} \right)_\circ = \tfrac{1}{n} \wt{\Phi}_{a b} \wt{\sigma} \, , &
		& \wt{\Phi}_{a}{}^{b} \wt{\nabla}_{b} \wt{\sigma} - \tfrac{1}{n} \wt{\sigma} \wt{\nabla}_{b} \wt{\Phi}_{a}{}^{b} = 0 \, ,
	\end{align}
where
\begin{align}\label{eq:purad}
	& \wt{\Phi}_{a b} \wt{v}^{b}  = 0 \, , & \mbox{for any $\wt{v} \in \Gamma(\wt{K}^\perp)$.}
\end{align}
\end{defn}

All the conditions given in the above definition are conformally invariant.
\begin{rem}
	Note that condition \eqref{eq:purad} is equivalent to
	\begin{align}\label{eq:purad_alt}
		\wt{\Phi}_{a b} & = \wt{\bm{\Phi}} \wt{\bm{\kappa}}_{a} \wt{\bm{\kappa}}_{b} \, , & \mbox{for some $\wt{\bm{\Phi}} \in \Gamma(\wt{\mc{E}}[-4])$,}
	\end{align}
	where $\wt{\bm{\kappa}}_{a} \in \Gamma(\Ann(\wt{K}^\perp \otimes \wt{\mc{E}}[2]))$.
\end{rem}
The relation between Definition \ref{defn:redEins} and Definition \ref{defn:aESc} is made precise below:
\begin{prop}\label{prop:scale2metric}
	Let $(\wt{\mc{M}},\wt{\mbf{c}},\wt{N},\wt{K})$ be an almost Robinson geometry, and let $\wt{\sigma} \in \Gamma(\mc{E}[1])$ so that $\wt{g} = \wt{\sigma}^{-2} \wt{\bm{g}} \in \wt{\mbf{c}}$ is a smooth metric off the zero set of $\wt{\sigma}$. Then
	\begin{enumerate}
		\item $\wt{\sigma}$ is an almost weakly half-Einstein scale if and only if $\wt{g}$ is a weakly half-Einstein metric;
		\item $\wt{\sigma}$ is an almost half-Einstein scale if and only if $\wt{g}$ is a half-Einstein metric.
	\end{enumerate}
\end{prop}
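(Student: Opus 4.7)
The plan is to exploit the scale freedom in the conformal class: working in the scale determined by the almost Lorentzian density $\wt{\sigma}$ itself, the left-hand side of the conformally invariant equation \eqref{eq:alwkEins1} simplifies drastically, and $\wt{\Phi}_{ab}$ is seen to be (a constant multiple of) the trace-free Ricci tensor of $\wt{g}=\wt{\sigma}^{-2}\wt{\bm{g}}$ off the zero set $\wt{\mc{Z}}$.

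First, I would observe that in the Levi-Civita connection $\wt{\nabla}$ of $\wt{g}=\wt{\sigma}^{-2}\wt{\bm{g}}$, the density $\wt{\sigma}$ is parallel on $\wt{\mc{M}} \setminus \wt{\mc{Z}}$. Substituting $\wt{\nabla}_a \wt{\sigma}=0$ into \eqref{eq:alwkEins1} and dividing by $\wt{\sigma}$ yields $(\wt{\Rho}_{ab})_\circ = \tfrac{1}{n}\wt{\Phi}_{ab}$ off $\wt{\mc{Z}}$. Using the definition $\wt{\Rho}_{ab}=\tfrac{1}{n}\bigl(\wt{\Ric}_{ab}-\tfrac{1}{2(n+1)}\wt{\Sc}\,\wt{\bm{g}}_{ab}\bigr)$ and computing its trace-free part (with respect to $\wt{\bm{g}}_{ab}$), a short calculation gives
\begin{align*}
\wt{\Phi}_{ab} \;=\; n\,(\wt{\Rho}_{ab})_\circ \;=\; \wt{\Ric}_{ab}-\tfrac{1}{n+2}\wt{\Sc}\,\wt{g}_{ab} \;=\; (\wt{\Ric}_{ab})_\circ \qquad \text{on } \wt{\mc{M}}\setminus\wt{\mc{Z}}.
\end{align*}

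For part (1), I would then note that for any $\wt{v}\in\Gamma(\wt{N})$ one has $\wt{\bm{g}}_{ab}\wt{v}^a\wt{v}^b=0$ because $\wt{N}$ is totally null, so the trace term drops out and $\wt{\Phi}_{ab}\wt{v}^a\wt{v}^b = \wt{\Ric}_{ab}\wt{v}^a\wt{v}^b$. Hence \eqref{eq:RiccwkEins} is equivalent to the weakly half-Einstein condition \eqref{eq:wk_hlf_Einstein} for $\wt{g}$ off $\wt{\mc{Z}}$; conversely, if $\wt{g}$ is weakly half-Einstein off $\wt{\mc{Z}}$, then defining $\wt{\Phi}_{ab}:=n(\wt{\Rho}_{ab})_\circ$ produces a trace-free symmetric tensor satisfying \eqref{eq:alwkEins1} (trivially in the scale of $\wt{\sigma}$, and hence in every scale by conformal invariance of the equation) and \eqref{eq:RiccwkEins}.

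For part (2), assuming \eqref{eq:alwkEins}, I would plug $\wt{\nabla}_b\wt{\sigma}=0$ into \eqref{eq:alcstRicSc}, which reduces in the scale of $\wt{\sigma}$ to $\wt{\nabla}^b\wt{\Phi}_{ab}=0$. Using $\wt{\Phi}_{ab}=(\wt{\Ric}_{ab})_\circ$ together with the contracted second Bianchi identity $\wt{\nabla}^b\wt{\Ric}_{ab}=\tfrac{1}{2}\wt{\nabla}_a\wt{\Sc}$, I get
\begin{align*}
\wt{\nabla}^b\wt{\Phi}_{ab} \;=\; \Bigl(\tfrac{1}{2}-\tfrac{1}{n+2}\Bigr)\wt{\nabla}_a\wt{\Sc} \;=\; \tfrac{n}{2(n+2)}\,\wt{\nabla}_a\wt{\Sc},
\end{align*}
so \eqref{eq:alcstRicSc} is equivalent to $\wt{\nabla}_a\wt{\Sc}=0$ off $\wt{\mc{Z}}$, i.e.\ to constancy of the Ricci scalar of $\wt{g}$. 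Combining with part (1) yields the claimed equivalence. The calculation is essentially bookkeeping; the only point requiring care is the conformal weight of $\wt{\Phi}_{ab}$ (which is $0$) and checking that the trace-free projection of the Schouten tensor really produces the trace-free Ricci tensor, but once the scale of $\wt{\sigma}$ is chosen everything reduces to elementary identities.
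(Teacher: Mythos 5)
Your proposal is correct and follows essentially the same route as the paper: both arguments identify $\wt{\Phi}_{ab}$ with the trace-free Ricci tensor of $\wt{g}=\wt{\sigma}^{-2}\wt{\bm{g}}$ by exploiting conformal invariance (you work directly in the scale where $\wt{\nabla}\wt{\sigma}=0$; the paper equivalently multiplies by $\wt{\sigma}^{-1}$ and invokes the transformation law \eqref{eq:Rho_transf}), and both reduce \eqref{eq:alcstRicSc} to constancy of $\wt{\Sc}^{(\wt{g})}$ via the contracted Bianchi identity, with matching constant $\tfrac{n}{2(n+2)}$. The only point where you are no more careful than the paper is the extension of the equations across the zero set $\wt{\mc{Z}}$ in the converse direction, which both treatments dispatch by continuity.
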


\begin{proof}
	Throughout $\wt{\nabla}{}^{(\wt{g})}$ will denote the Levi-Civita connection preserving $\wt{g}$, so that for any other connection $\wt{\nabla}$, $\wt{\nabla} = \wt{\nabla}{}^{(\wt{g})} + \Upsilon$ where $\wt{\Upsilon} = \wt{\sigma}^{-1} \wt{\nabla} \wt{\sigma}$. We first prove the implication $\Rightarrow$ for each of the cases, working off the zero set of $\wt{\sigma}$:
	\begin{enumerate}
		\item Suppose $\wt{\sigma}$ satisfies \eqref{eq:alwkEins}. Multiplying \eqref{eq:alwkEins} through by $\wt{\sigma}^{-1}$ and using the transformation rule for the Schouten tensor \eqref{eq:Rho_transf}, we find $\left( \wt{\Ric}{}^{(\wt{g})}_{a b} \right)_\circ = \wt{\Phi}_{a b}$, and comparing \eqref{eq:wk_hlf_Einstein} with \eqref{eq:RiccwkEins} tells us that $\wt{g}$ is a weakly half-Einstein metric.
		\item Suppose that $\wt{\sigma}$ satisfies \eqref{eq:alcstRicSc} in addition to \eqref{eq:alwkEins}. By case (1), we already know $\wt{g}$ is weakly half-Einstein metric. We proceed to check that its scalar curvature is constant. Multiplying the left-hand side of \eqref{eq:alcstRicSc} through by $\wt{\sigma}^{-1}$ gives the transformation rule
		\begin{align*}
			\wt{\nabla}{}^{(\wt{g})}_{b} \wt{\Phi}_{a}{}^{b} & = \wt{\nabla}_{b} \wt{\Phi}_{a}{}^{b} + n \wt{\Upsilon}{}_{b} \wt{\Phi}_{a}{}^{b} \, .
		\end{align*}
		The right-hand side is zero by virtue of \eqref{eq:alcstRicSc}. On the other hand, we have $(\wt{\Ric}{}^{(\wt{g})}_{a b})_\circ = \wt{\Phi}_{a b}$, and the contracted Bianchi identity tells us that
		\begin{align*}
			\wt{\nabla}_{a}\wt{\Sc}{}^{(\wt{g})} & = \tfrac{2(n+2)}{n} \wt{\nabla}{}^{(\wt{g})}_{b} \wt{\Phi}_{a}{}^{b} = 0 \, ,
		\end{align*}
		i.e.\ $\wt{\Sc}{}^{(\wt{g})}$ is constant, which makes $\wt{g}$ a half-Einstein metric.
	\end{enumerate}
	
	For the converse direction, if we assume that the metric $\wt{g}$ defined by $\wt{\sigma}$ is weakly half-Einstein off the zero set $\wt{\mc{Z}}$ of $\wt{\sigma}$, then choosing $\wt{\nabla}$ to preserve $\wt{\sigma}$ tells us that \eqref{eq:alwkEins} is satisfied off $\wt{\mc{Z}}$. But \eqref{eq:alwkEins} clearly holds on $\wt{\mc{Z}}$ too. Thus $\wt{\sigma}$ is an almost weakly half-Einstein scale. The almost half-Einstein case is similar.
\end{proof}

In a completely analogous way, we prove:
\begin{prop}\label{prop:scale2metricB}
	Let $(\wt{\mc{M}},\wt{\mbf{c}},\wt{K})$ be an optical geometry, and let $\wt{\sigma} \in \Gamma(\mc{E}[1])$ so that $\wt{g} = \wt{\sigma}^{-2} \wt{\bm{g}} \in \wt{\mbf{c}}$ is a smooth metric off the zero set of $\wt{\sigma}$. Then $\wt{\sigma}$ is an almost pure radiation scale if and only if $\wt{g}$ is a pure radiation metric.
\end{prop}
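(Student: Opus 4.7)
The plan is to mirror the structure of the proof of Proposition \ref{prop:scale2metric}, exploiting the algebraic reformulation \eqref{eq:purad_alt} of the defining condition on $\wt{\Phi}_{a b}$. Throughout, I work off the zero set $\wt{\mc{Z}}$ of $\wt{\sigma}$, where $\wt{g} = \wt{\sigma}^{-2} \wt{\bm{g}}$ is a genuine metric, and use $\wt{\nabla}{}^{(\wt{g})}$ for its Levi-Civita connection, which satisfies $\wt{\nabla}{}^{(\wt{g})} \wt{\sigma} = 0$, so that $\wt{\Upsilon}_a = \wt{\sigma}^{-1}\wt{\nabla}_a \wt{\sigma}$ measures the deviation of any other connection from it.

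For the direction $\Rightarrow$, I first multiply the almost-Einstein-type equation in Definition \ref{defn:puradSc} through by $\wt{\sigma}^{-1}$ and apply the transformation rule \eqref{eq:Rho_transf} for the Schouten tensor to obtain $\bigl(\wt{\Ric}{}^{(\wt{g})}_{a b}\bigr)_\circ = \wt{\Phi}_{a b}$. The remark accompanying Definition \ref{defn:puradSc} identifies this with $\wt{\bm{\Phi}} \, \wt{\bm{\kappa}}_a \wt{\bm{\kappa}}_b$ for some $\wt{\bm{\Phi}} \in \Gamma(\wt{\mc{E}}[-4])$, which is exactly the pure radiation form \eqref{eq:puradvac} once the weighted annihilator $\wt{\bm{\kappa}}_a$ is trivialised against $\wt{g}$ to the genuine one-form $\wt{\kappa}_a = \wt{g}_{a b} \wt{k}^b$. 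It remains to show constancy of the scalar curvature: multiplying the second equation in Definition \ref{defn:puradSc} by $\wt{\sigma}^{-1}$ and comparing with the transformation rule for the divergence of a symmetric trace-free tensor yields $\wt{\nabla}{}^{(\wt{g})}_b \wt{\Phi}_a{}^b = 0$, and the contracted second Bianchi identity then gives $\wt{\nabla}_a \wt{\Sc}{}^{(\wt{g})} = \tfrac{2(n+2)}{n} \wt{\nabla}{}^{(\wt{g})}_b \wt{\Phi}_a{}^b = 0$, exactly as in the proof of Proposition \ref{prop:scale2metric}.

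For the converse $\Leftarrow$, I assume $\wt{g}$ is a pure radiation metric off $\wt{\mc{Z}}$ and choose the gauge $\wt{\nabla} = \wt{\nabla}{}^{(\wt{g})}$, which annihilates $\wt{\sigma}$. Then the almost pure radiation equations read $\wt{\Rho}_{a b} \wt{\sigma} = \tfrac{1}{n}\wt{\Phi}_{a b} \wt{\sigma}$ off $\wt{\mc{Z}}$, and taking $\wt{\Phi}_{a b} = \bigl(\wt{\Ric}{}^{(\wt{g})}_{a b}\bigr)_\circ$ immediately makes the first equation hold, while the constancy of $\wt{\Sc}{}^{(\wt{g})}$ together with the contracted Bianchi identity produces $\wt{\nabla}{}^{(\wt{g})}_b \wt{\Phi}_a{}^b = 0$, which is the required second equation. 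On $\wt{\mc{Z}}$, both equations hold trivially since every term carries an explicit factor of $\wt{\sigma}$, and $\wt{\Phi}_{a b}$, being a smooth weighted tensor defined globally by the difference $\bigl(\wt{\nabla}_a \wt{\nabla}_b \wt{\sigma} + \wt{\Rho}_{a b} \wt{\sigma}\bigr)_\circ$, extends smoothly across $\wt{\mc{Z}}$. The automatic kernel condition \eqref{eq:purad} on $\wt{\Phi}_{a b}$ off $\wt{\mc{Z}}$ then propagates to $\wt{\mc{Z}}$ by continuity.

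The only real subtlety, and the place I expect to spend the most care, is the bookkeeping between the weighted annihilator $\wt{\bm{\kappa}}_a \in \Gamma(\Ann(\wt{K}^\perp) \otimes \wt{\mc{E}}[2])$ used in \eqref{eq:purad_alt} and the unweighted one-form $\wt{\kappa}_a$ of Definition \ref{defn:purad}; since both the pure radiation condition \eqref{eq:puradvac} and the algebraic form \eqref{eq:purad} only constrain $\wt{\Phi}_{a b}$ up to an overall scalar factor along $\wt{K}$, the equivalence is formal once the weights are tracked, and no new analytic input beyond that of Proposition \ref{prop:scale2metric} is needed.
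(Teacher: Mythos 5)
Your proof is correct and follows essentially the same route as the paper, which simply defers to the argument of Proposition \ref{prop:scale2metric} and notes that the algebraic condition \eqref{eq:purad} on $\wt{\Phi}_{ab}$ translates into the pure radiation condition \eqref{eq:puradvac} once $\left(\wt{\Ric}{}^{(\wt{g})}_{ab}\right)_\circ=\wt{\Phi}_{ab}$ is established. You merely spell out the Bianchi-identity and weight-bookkeeping details that the paper leaves implicit.
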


\begin{proof}
	Following from the proof of Proposition \ref{prop:scale2metric}, suppose that $\wt{\Phi}_{a b}$ satisfies \eqref{eq:purad}, in addition to $\wt{\sigma}$ being a solution to \eqref{eq:alcstRicSc}. Then clearly \eqref{eq:puradvac} follows from \eqref{eq:purad}. The converse works in the same way as in the proof of Proposition \ref{prop:scale2metric}.
\end{proof}

\section{The Fefferman construction}\label{sec:Fefferman}
Among the twist-induced nearly Robinson geometries with non-shearing congruences introduced in Section \ref{sec:prefered} can be found, as a special case, Fefferman's conformal structure, constructed in a \emph{canonical} manner from a given almost CR structure. In the present section we show how the non-involutivity of the CR structure in fact gives rise to a one-parameter family of conformal structures. We also highlight the existence of distinguished spinor fields on these geometries, before providing a characterisation of our generalisations. 
\subsection{Fefferman spaces}
Let $(\mc{M},H,J)$ be almost CR manifold of dimension $2m+1$. Henceforth, we assume that the Levi form of $(H,J)$ is positive definite, although this requirement may be relaxed with caution. We adapt the approach of \cite{Cap2008} and \cite{Leitner2007}. The bundle $\mc{E}(-1,0)$ with its zero section removed is a principal bundle with structure group $\C^*$, and taking its quotient by the natural $\R_{>0}$-action yields a circle bundle $\wt{\mc{M}}$ over $\mc{M}$. The projection from $\mc{E}(-1,0)$ with its zero section removed to $\wt{\mc{M}}$ sends a nowhere vanishing density $\tau$ of weight $(-1,0)$ to an equivalence class $[\tau]$, where $\tau, \tau' \in [\tau]$ if and only if $\tau' = \varrho \tau$ for some positive real-valued function $\varrho$ on $\mc{M}$. Let us fix a pseudo-Hermitian structure $\theta$. We then have a natural identification of sections of $\wt{\mc{M}}$ with nowhere vanishing densities $\tau$ of weight $(-1,0)$ satisfying $\theta = \tau \ol{\tau} \bm{\theta}$, and we may define a fibre coordinate $\phi \in [ - \pi, \pi)$ such that $\e^{\i \phi} \tau$ is a nowhere vanishing section of $\wt{\mc{M}} \rightarrow \mc{M}$. For each $\alpha \in \R$, we can then introduce the Lorentzian metric on $\wt{\mc{M}}$
	\begin{align}\label{eq:Feff_metric}
		\wt{g}_{\theta}^{(\alpha)} & = 4 \theta \odot \left( \d \phi + \tfrac{\i}{2} \left( \sigma^{-1} \nabla \sigma - \ol{\sigma}^{-1} \nabla \ol{\sigma} \right) - \left( \tfrac{1}{m+2} \Rho + \tfrac{\alpha}{2m(m+1)}  \| \Nh \|^2 \right) \theta \right) + h \, ,
	\end{align}
where $\sigma := \tau^{-1}$, $h$ is (the degenerate metric induced by) the Levi form of $\theta$, $\Rho$ is the Webster--Schouten scalar of $\theta$ and $\| \Nh \|^2$ is the squared norm of the Nijenhuis tensor of $\theta$ with respect to $h$. Strictly, the tensors involved in the definition of $\wt{g}_{\theta}^{(\alpha)}$ should be viewed as pullbacks from $\mc{M}$ to $\wt{\mc{M}}$, but for readability, we have omitted the pullback symbols.

\begin{defn}\label{defn-alpha-Fefferman}
We shall refer to the metric defined in \eqref{eq:Feff_metric} as the \emph{$\alpha$-Fefferman metric} associated to $\theta$.  A $1$-Fefferman metric shall be referred to simply as a \emph{Fefferman metric}.
\end{defn}
One can conveniently eliminate $\ol{\sigma}$ by noting that since $\sigma$ determines $\nabla$, we have that $\nabla(\sigma \ol{\sigma}) =0$, i.e.\ $\sigma^{-1} \nabla \sigma = - \ol{\sigma}^{-1} \nabla \ol{\sigma}$. Using \eqref{eq:conn1_density}, we can also easily recover the more familiar form of the Fefferman metric in terms of the Webster connection one-form:
\begin{align*}
	\wt{g}_{\theta}^{(\alpha)} & = 4 \theta \odot \left( \d \phi + \tfrac{1}{m+2} \left( \i \Gamma_{\alpha}{}^{\alpha} - \tfrac{\i}{2} h^{\alpha \bar{\beta}} \d h_{\alpha \bar{\beta}} - \left( \Rho + \tfrac{\alpha(m+2)}{2m(m+1)} \| \Nh \|^2 \right) \theta \right) \right) + h \, .
\end{align*}
Note the following:
\begin{itemize}
	\item Since, for any other section $\tau' = \e^{\i \mr{\phi}} \tau$, the corresponding change of coordinate is $\phi' = \phi - \mr{\phi}$, the expression \eqref{eq:Feff_metric} does not depend on the choice of trivialisation $\sigma$.
	\item Under a change of contact form $\wh{\theta} = \e^{\varphi} \theta$ for smooth function $\varphi$ on $\mc{M}$, the metric transforms conformally as $\wt{g}_{\wh{\theta}}^{(\alpha)} = \e^{\varphi} \wt{g}_{\theta}^{(\alpha)}$.
\end{itemize}
We have therefore constructed a conformal class $\wt{\mbf{c}}^{(\alpha)}$ of Lorentzian metrics in $\wt{\mc{M}}$ that contains $\alpha$-Fefferman metrics. In addition, $\wt{\mbf{c}}^{(\alpha)}$ admits a canonical \emph{conformal Killing field} $\wt{k}$, i.e.\ $\mathsterling_{\wt{k}} \wt{\bm{g}}^{(\alpha)} = 0$, namely, the generator $\wt{k} = \tfrac{\partial}{\partial \phi}$ of the fibres of $\wt{\mc{M}} \rightarrow \mc{M}$. Here, $\wt{\bm{g}}^{(\alpha)}$ is the conformal metric of $\wt{\mbf{c}}^{(\alpha)}$. In fact, $\wt{k}$ is Killing for each of the Fefferman metrics in $\wt{\mbf{c}}^{(\alpha)}$, i.e.\ $\mathsterling_{\wt{k}} \wt{g}_{\theta}^{(\alpha)} = 0$ for any contact form $\theta$ for $(H,J)$.

\begin{defn}\label{defn-alpha-Fefferman-space}
	We shall refer to $(\wt{\mc{M}},\wt{\mbf{c}}^{(\alpha)},\wt{k}) \longrightarrow (\mc{M},H,J)$ as the \emph{$\alpha$-Fefferman space} of $(\mc{M},H,J)$. A $1$-Fefferman space shall be referred to simply as a \emph{Fefferman space}.
\end{defn}

\begin{rem}
	When $\alpha = 0$, the conformal structure $\wt{\mbf{c}}^{(0)}$ is identical to the one introduced in \cite{Leitner2007}. However, as will become apparent in Section \ref{sec:al_Lor_sc}, the parameter $\alpha=1$ is more useful --- see Corollary \ref{cor:Einstein_Fefferman}.
	
	When $\| \Nh \|^2 = 0$, in which case $(H,J)$ is involutive, the parameter $\alpha$ becomes irrelevant, and we will simply talk of Fefferman metrics or conformal structure.
\end{rem}

\begin{rem}
	These definitions can also be generalised to conformal structure of  signature $(p+1,q+1)$ with $p,q$ even, but if $pq \neq 0$, $\| \Nh \|^2 = 0$ no longer implies the involutivity of $(H,J)$.
\end{rem}

\subsection{Distinguished forms and spinor fields}
	Recall that the Fefferman space admits a \emph{conformal spin structure} regardless of the integrability of the underlying almost CR manifold --- see e.g.\ \cite[Theorem 2.4]{Cap2008} --- and clearly, this result still applies to $\alpha$-Fefferman spaces as defined above.
	
	It was already demonstrated in the involutive case that on a Fefferman space there exists a spinor field $\wt{\eta}$, defined up to a constant factor, that satisfies the so-called \emph{twistor equation} \cite{Lewandowski1991,Baum1999,Cap2008}. This spinor field is \emph{pure}, i.e.\ it annihilates a totally null complex $(m+1)$-plane distribution, which, in the present context, is none other than the Robinson structure arising from the CR structure. These features subsist in the more general case, except that for the spinor field $\wt{\eta}$ to be a solution to the twistor equation, the almost CR structure must be involutive as we shall soon see.
	
	Before dealing with spinors, we must highlight the existence of a tautological weighted $(m+1)$-form associated to an $\alpha$-Fefferman space:
\begin{lem}\label{lem:pure_sp}
	Let $(\wt{\mc{M}},\wt{\mbf{c}}^{(\alpha)},\wt{k}) \accentset{\varpi}{\longrightarrow} (\mc{M},H,J)$ be an $\alpha$-Fefferman space over an almost CR manifold. Then there exists a tautological complex-valued (self-dual) $(m+1)$-form $\wt{\bm{\eta}}$ of weight $m+2$ on $\wt{\mc{M}}$ with the following properties: for any nowhere vanishing density $\sigma \in \Gamma(\mc{E}(1,0))$ with associated contact form $\theta$, $\alpha$-Fefferman conformal scale $\wt{\sigma}_\theta$ and fibre coordinate $\phi$, the value of $\wt{\bm{\eta}}$ at any point $\wt{p} = (\varpi(\wt{p}),\e^{\i \phi(\wt{p})} \sigma^{-1} (\varpi(\wt{p})))$ of $\wt{\mc{M}}$ is given by
	\begin{align}\label{eq:tautoform}
		\wt{\bm{\eta}}_{\wt{p}} = \wt{\sigma}_{\theta}^{m+2} (\wt{p}) \e^{(m+2)\i \phi(\wt{p})} \varpi^*(\zeta_{\varpi(\wt{p})}) \, ,
	\end{align}
	where $\zeta$ is the volume-normalised section of $\bigwedge^{m+1} \Ann(H^{0,1})$ defined by $\sigma^{-m-2}$.
\end{lem}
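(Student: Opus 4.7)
The plan is to show that the right-hand side of \eqref{eq:tautoform} defines, at each point $\wt{p}\in\wt{\mc{M}}$, an element of $\wt{\mc{E}}[m+2]_{\wt{p}} \otimes \bigwedge^{m+1} {}^\C T^*_{\wt{p}}\wt{\mc{M}}$ that is independent of the choice of local trivialisation $\sigma$ of $\mc{E}(1,0)$. Once this is established, a smooth global section $\wt{\bm{\eta}}$ is automatic, since each ingredient---the $\alpha$-Fefferman conformal scale $\wt{\sigma}_\theta$, the phase factor $\e^{(m+2)\i\phi}$, and the canonical section $\zeta = \sigma^{-(m+2)}$---depends smoothly on $\sigma$.

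To carry out the invariance check, I would compare any two nowhere-vanishing sections $\sigma,\sigma'\in\Gamma(\mc{E}(1,0))$ on a common open subset, writing $\sigma' = \e^{\psi}\sigma$ for some smooth $\C$-valued function $\psi$, and track four transformations in parallel: (i) the associated contact forms satisfy $\theta' = \e^{-2\Re\psi}\theta$, so we are in the conformal rescaling regime with $\varphi = -2\Re\psi$; (ii) from $\wt{g}_{\theta'}^{(\alpha)} = \e^{\varphi}\wt{g}_\theta^{(\alpha)}$ it follows that $\wt{\sigma}_{\theta'} = \e^{-\varphi/2}\wt{\sigma}_\theta = \e^{\Re\psi}\wt{\sigma}_\theta$, hence $\wt{\sigma}_{\theta'}^{m+2} = \e^{(m+2)\Re\psi}\wt{\sigma}_\theta^{m+2}$; (iii) the equivalence classes $[\e^{\i\phi'}(\sigma')^{-1}] = [\e^{\i\phi}\sigma^{-1}]$ in $(\mc{E}(-1,0)\setminus\{0\})/\R_{>0}$ force the relation $\e^{\i\phi'-\psi-\i\phi}\in\R_{>0}$, whose real and imaginary parts give $\phi' = \phi + \Im\psi$ (the real part being absorbed into the $\R_{>0}$-action); (iv) the canonical section transforms as $(\sigma')^{-(m+2)} = \e^{-(m+2)\psi}\zeta$. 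The product of the four scalar factors has total exponent $(m+2)(\Re\psi + \i\Im\psi - \psi) = 0$, which establishes the claimed independence.

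Self-duality will then follow from the observation recalled in Section~\ref{sec:Robinson} that local sections of $\Ann(H^{(0,1)})$ pull back to local sections of $\Ann(\ol{\wt{N}})$, so $\wt{\bm{\eta}}$ takes values in $\bigwedge^{m+1}\Ann(\ol{\wt{N}})$, the top exterior power of a totally null complex $(m+1)$-plane distribution in ${}^\C T^*\wt{\mc{M}}$; such forms are preserved by the (conformally invariant) Hodge star on middle-dimensional forms in Lorentzian signature $(2m+1,1)$, up to a sign fixed by the orientation convention. The main obstacle in the above is the bookkeeping of step (iii): one must carefully unpack the definition of $\wt{\mc{M}}$ as a quotient by $\R_{>0}$ (rather than $\C^*$) so that the imaginary part of $\psi$ is correctly absorbed into the fibre coordinate while the real part is absorbed through the $\R_{>0}$-action. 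Once this is set up correctly, the remaining cancellation is a purely algebraic consequence of the specific power $m+2$ appearing in every factor, which is precisely what the weight of $\mc{E}(1,0)$ relative to $\mc{C}$ dictates.
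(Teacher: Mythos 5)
Your proposal is correct and follows exactly the route the paper takes: the paper's proof consists of the single remark that one must check well-definedness of \eqref{eq:tautoform} under changes of trivialisation and contact form (left to the reader), plus the observation that self-duality follows from the form being totally null of top degree in $\ol{\wt{N}}{}^\perp$ together with the orientation convention. Your computation --- tracking $\theta' = \e^{-2\Re\psi}\theta$, $\wt{\sigma}_{\theta'} = \e^{\Re\psi}\wt{\sigma}_{\theta}$, $\phi' = \phi + \Im\psi$ and $(\sigma')^{-(m+2)} = \e^{-(m+2)\psi}\zeta$, and observing the exact cancellation $(m+2)(\Re\psi + \i\Im\psi - \psi)=0$ --- is precisely the verification the paper omits, and it is carried out correctly.
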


\begin{proof}
	It suffices to check that the form \eqref{eq:tautoform} is well-defined under changes of trivialisations and contact forms, which we leave to the reader. That $\wt{\bm{\eta}}$ is self-dual follows from the fact that it is a totally null indecomposable $(m+1)$-form and our choice of orientation.
\end{proof}

In the following argument, $\wt{\mc{S}}^+$ will denote the bundle of positive spinors and $\bigwedge^{m+1}_+ {}^\C T^* \wt{\mc{M}}$ the bundle of (complex-valued) self-dual $(m+1)$-forms.  In addition, $\wt{\gamma}_{a}$ will denote the weighted generators of the Clifford algebra of an $\alpha$-Fefferman space $(\wt{\mc{M}},\wt{\mbf{c}}^{(\alpha)},\wt{k})$ with the convention that $\wt{\gamma}_{(a} \wt{\gamma}_{b)} = - \wt{\bm{g}}_{a b}$. We shall not make any notational difference between the Levi-Civita connection and the induced connection on the spinor bundles.
\begin{prop}\label{prop:non_tw-sp}
	Let $(\wt{\mc{M}},\wt{\mbf{c}}^{(\alpha)},\wt{k}) \longrightarrow (\mc{M},H,J)$ be an $\alpha$-Fefferman space over an almost CR manifold. Then there exists a pure spinor field $\wt{\eta}$ that satisfies the conformally invariant differential equation
	\begin{align}\label{eq:non_tw-sp}
		\wt{\nabla}_{a} \wt{\eta} + \tfrac{1}{2(m+1)} \wt{\gamma}_{a} \wt{\slashed{\nabla}} \wt{\eta} & =  \tfrac{\alpha \i}{32m} \| \wt{\Weyl}(\wt{k}) \|^2 \wt{\bm{\kappa}}_{a}  \wt{\eta} + \tfrac{\i}{8} \wt{k}^{d} \wt{\Weyl}_{d a b c} \wt{\gamma}^{b} \wt{\gamma}^{c} \wt{\eta} \, ,
	\end{align}
	where $\wt{\Weyl}_{a b c d}$ is the Weyl tensor of $\wt{\mbf{c}}^{(\alpha)}$, $\| \wt{\Weyl}(\wt{k}) \|^2 := \wt{k}^{a} \wt{\Weyl}_{a b c d} \wt{k}^e \wt{\Weyl}_{e}{}^{b c d}$, $\wt{\bm{\kappa}}_a := \wt{\bm{g}}(\wt{k},\cdot)$ and $\wt{\slashed{\nabla}} \wt{\eta} := \wt{\gamma}^{a} \wt{\nabla}_{a} \wt{\eta}$.
	
	In particular, $(\mc{M},H,J)$ is involutive if and only if $\wt{\eta}$ is a twistor spinor, i.e.\ $\wt{\eta}$ satisfies
	\begin{align*}
		\wt{\nabla}_{a} \wt{\eta} + \tfrac{1}{2(m+1)} \wt{\gamma}_{a} \wt{\slashed{\nabla}} \wt{\eta} & =  0 .
	\end{align*}
\end{prop}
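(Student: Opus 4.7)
The plan is to construct $\wt{\eta}$ from the tautological $(m+1)$-form of Lemma~\ref{lem:pure_sp} and then verify equation~\eqref{eq:non_tw-sp} by direct computation in a distinguished gauge, exploiting conformal invariance.

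\textbf{Construction of $\wt{\eta}$.} In Lorentzian signature $(2m+1,1)$ with our orientation, a decomposable self-dual complex $(m+1)$-form is, up to sign, the spinor square of a positive pure spinor. Under this correspondence a weight $(m+2)$ self-dual form determines a positive pure spinor field of a definite conformal weight. Applying this to the tautological form $\wt{\bm{\eta}}$ of Lemma~\ref{lem:pure_sp} gives a well-defined (up to sign) pure spinor field $\wt{\eta}$ on $\wt{\mc{M}}$, whose Clifford annihilator is precisely the nearly Robinson distribution $\wt{N}$ induced by the Fefferman construction. This is the spinor field for which \eqref{eq:non_tw-sp} will be established.

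\textbf{Reduction via conformal invariance.} The left-hand side of \eqref{eq:non_tw-sp} is the conformally invariant twistor operator acting on weighted spinors, and the right-hand side is likewise a conformally invariant expression (as can be checked from the transformation rules \eqref{eq:Rho_transf}, \eqref{eq:Cot_transf} and the conformal invariance of $\wt{k}^d \wt{\Weyl}_{dabc}$). Hence it suffices to check \eqref{eq:non_tw-sp} in one gauge. I would fix a pseudo-Hermitian form $\theta$ on $(\mc{M},H,J)$ and work in the corresponding $\alpha$-Fefferman metric $\wt{g}_\theta^{(\alpha)}$ given by \eqref{eq:Feff_metric}.

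\textbf{Explicit verification.} In the adapted null coframe associated with $\wt{g}_\theta^{(\alpha)}$, built from an admissible coframe $(\theta, \theta^\alpha, \ol{\theta}{}^{\bar{\alpha}})$ on $\mc{M}$ and the coordinate $\phi$, the pure spinor $\wt{\eta}$ has a simple expression: essentially a constant pure spinor annihilating $\wt{N}$, multiplied by a scale factor controlled by $\sigma, \ol{\sigma}, \e^{\i\phi}$ and the chosen Fefferman conformal scale. Using the CR structure equations \eqref{eq:structure_CR} together with the density curvature formula \eqref{eq:density_curv2form}, I would compute the Levi-Civita connection one-forms of $\wt{g}_\theta^{(\alpha)}$ and apply them to $\wt{\eta}$. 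The terms in $\wt{\nabla}_a \wt{\eta} + \tfrac{1}{2(m+1)} \wt{\gamma}_a \wt{\slashed{\nabla}}\wt{\eta}$ that do not vanish are precisely those coming from the Nijenhuis torsion $\Nh_{\alpha\beta\gamma}$ and from the chosen combination of $\Rho$ and $\| \Nh \|^2$ appearing in \eqref{eq:Feff_metric}. In parallel, I would compute the contractions $\wt{k}^d \wt{\Weyl}_{dabc}$ and $\|\wt{\Weyl}(\wt{k})\|^2$ from the curvature of $\wt{g}_\theta^{(\alpha)}$; these are polynomial in $\Nh_{\alpha\beta\gamma}$ and its covariant derivatives (this is the same type of computation as in Theorem~\ref{thm:Fefferman-CRA}, whose full proof is deferred to Appendix~\ref{app:proofchi}). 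Matching both sides then pins down the coefficient $\tfrac{\alpha \i}{32m}$ in front of $\|\wt{\Weyl}(\wt{k})\|^2 \wt{\bm{\kappa}}_a \wt{\eta}$ and the coefficient $\tfrac{\i}{8}$ in front of $\wt{k}^d \wt{\Weyl}_{dabc} \wt{\gamma}^b \wt{\gamma}^c \wt{\eta}$.

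\textbf{The involutive case and main obstacle.} If $(H,J)$ is involutive, $\Nh_{\alpha\beta\gamma} \equiv 0$, so by the computation above both $\wt{k}^d \wt{\Weyl}_{dabc}$ and $\|\wt{\Weyl}(\wt{k})\|^2$ vanish identically, and \eqref{eq:non_tw-sp} reduces to the twistor equation. Conversely, assume $\wt{\eta}$ is a twistor spinor. Decomposing $\wt{k}^d \wt{\Weyl}_{dabc} \wt{\gamma}^b \wt{\gamma}^c \wt{\eta}$ with respect to the $\wt{N}$-filtration of the spinor module and using purity of $\wt{\eta}$, vanishing of the right-hand side forces algebraic conditions on the Weyl tensor components that, together with the vanishing of the $\|\wt{\Weyl}(\wt{k})\|^2$ term, are equivalent by the explicit formulae to $\Nh_{\alpha\beta\gamma} = 0$. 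The main obstacle throughout is precisely this detailed computation: obtaining the Levi-Civita connection and the relevant Weyl curvature components of $\wt{g}_\theta^{(\alpha)}$ in Webster terms, carefully tracking which tensorial pieces involving $\Nh$ and its contractions survive, and confirming that the somewhat awkward coefficient $\tfrac{\alpha \i}{32m}$ arises correctly; this calculation is intertwined with that in Appendix~\ref{app:proofchi}.
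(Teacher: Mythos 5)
Your proposal is correct and follows essentially the same route as the paper: construct $\wt{\eta}$ as the spinor square root of the tautological weighted $(m+1)$-form of Lemma \ref{lem:pure_sp} via the Cartan correspondence between pure spinors and self-dual totally null forms, then verify \eqref{eq:non_tw-sp} by an explicit (admittedly lengthy) computation in the gauge of a fixed $\alpha$-Fefferman metric, using that $\| \wt{\Weyl}(\wt{k}) \|^2$ and $\wt{k}^{d}\wt{\Weyl}_{dabc}$ are expressible in terms of $\Nh_{\alpha\beta\gamma}$ and its divergence, with conformal invariance of both sides justifying the gauge reduction. The only slight divergence is your converse argument in the involutive case via the $\wt{N}$-filtration of the spinor module; the paper instead notes (in a remark) that a twistor spinor forces involutivity of its associated distribution by an external result, but both arguments work.
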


\begin{proof}
	 We adapt Cartan's classical results \cite{Cartan1967} --- see also \cite[Appendix B]{Penrose1986} and \cite[Section 4.2]{Fino2023} --- to the conformal setting by introducing a bundle map $\bm{\beta} : \wt{\mc{S}}^+ \times \wt{\mc{S}}^+ \rightarrow \bigwedge^{m+1}_+ {}^\C T^* \wt{\mc{M}} \otimes \wt{\mc{E}}[m+2]$. This map is in fact symmetric and provides a bundle isomorphism between the Cartan product of $\wt{\mc{S}}^+$ and $\bigwedge^{m+1}_+ T^* \wt{\mc{M}} \otimes \wt{\mc{E}}[m+2]$. It also restricts to a bundle isomorphism between positive pure spinors up to sign and self-dual totally null $(m+1)$-forms of conformal weight $m+2$ --- see also \cite[Proposition 5.3]{Fino2023}. Applying this to the weighted form $\wt{\bm{\eta}}$ defined in Lemma \ref{lem:pure_sp}, we obtain a pure spinor field $\wt{\zeta}$, unique up to sign, such that $\bm{\beta} (\wt{\eta},\wt{\eta}) = \wt{\bm{\eta}}$ --- we can essentially think of $\wt{\eta}$ as a choice of square root of $\wt{\bm{\eta}}$.
	
	To derive the differential condition satisfied by $\wt{\eta}$, it suffices to work with a choice of trivialisation of $\wt{\mc{M}} \accentset{\varpi}{\longrightarrow} \mc{M}$, which singles out a contact form $\theta$, a fibre coordinate $\phi$ and an $\alpha$-Fefferman metric $\wt{g}^{(\alpha)}_{\theta}$ in $\wt{\mbf{c}}^{(\alpha)}$. Using a unitary adapted coframe $(\theta,\theta^\alpha, \ol{\theta}{}^{\bar{\alpha}})$, we can express the tautological $(m+1)$-form \eqref{eq:tautoform} as
		\begin{align*}
			\wt{\sigma}_{\theta}^{-m-2} \wt{\bm{\eta}} = \e^{(m+2)\i \phi} \theta \wedge \theta^1 \wedge \ldots \wedge \theta^m \, ,
	\end{align*}
	where $\wt{\sigma}_{\theta}$ is the conformal scale associated to $\wt{g}^{(\alpha)}_{\theta}$, and this, in turn, yields an explicit expression for our spinor $\wt{\eta}$. We can then compute \eqref{eq:non_tw-sp} using the Levi-Civita connection preserving $\wt{\sigma}_{\theta}$ and the spinor frame induced by the pullback of $(\theta,\theta^\alpha, \ol{\theta}{}^{\bar{\alpha}})$. Omitting the details of this rather lengthy computation, we note that the curvature terms appearing in that equation encode the Nijenhuis tensor of $(\mc{M},H,J)$: more specifically, we have $\| \wt{\Weyl}(\wt{k}) \|^2 = 8 \| \Nh \|^2$, while $\wt{k}^{d} \wt{\Weyl}_{d a b c}$ depends on both $\Nh_{\alpha \beta \gamma}$ and $\nabla^{\gamma} \Nh_{\alpha \beta \gamma}$ --- see especially \cite[equations (A.4) and (A.6)]{TaghaviChabert2022}. Conformal invariance of \eqref{eq:non_tw-sp} follows from the fact that the left-hand side is simply the conformally invariant twistor operator on $\wt{\eta}$ while the right-hand side consists of tensor products of conformally invariant quantities. This completes the proof.
\end{proof}

\begin{rem}
	Note that $\slashed{\nabla} \wt{\eta}$ depends on the choice of metric in $\wt{\mbf{c}}^{(\alpha)}$. In fact, for each choice of $\alpha$-Fefferman metric, it can be shown that $\slashed{\nabla} \wt{\eta} = (m+1) \i\wt{\ell}^{a} \wt{\gamma}_{a} \wt{\eta}$, where $\wt{\ell}^{a}$ is the vector field dual to $\wt{k}^{a}$ introduced in Section \ref{sec:prefered}. This means that $\slashed{\nabla} \wt{\eta}$ is pure and its associated totally null $(m+1)$-plane distribution intersects that of $\wt{\eta}$ in an $m$-plane distribution. If $\wt{\eta}$ is a twistor-spinor, then, by \cite[Proposition 5.20]{Taghavi-Chabert2016} its associated distribution is involutive, and so is the almost CR structure.
\end{rem}

\begin{rem}
	The spinor field $\wt{\eta}$, which we first introduced as the square root of the tautological weighted $(m+1)$-form, is defined up to sign. But other normalisations are possible: For instance, we can rescale $\wt{\eta}$ by a non-zero constant factor in such a way that its pairing with its \emph{charge conjugate} is precisely $\wt{k}$. In this case, the remaining freedom consists in rescalings by constant phases.
\end{rem}

\subsection{Characterisations}
We first remind the reader of the well-known characterisation of Fefferman spaces for CR manifolds:
\begin{thm}[\cite{Graham1987,Cap2008}]\label{thm:Fefferman-CR}
	Let $(\wt{\mc{M}},\wt{\mbf{c}})$ be an oriented and time-oriented Lorentzian conformal manifold of dimension $n+2=2m+2$. Then $(\wt{\mc{M}},\wt{\mbf{c}})$ is locally conformally isometric to the Fefferman space of a CR manifold if and only if it admits a null conformal Killing field $\wt{k}$ and the following integrability conditions are satisfied:
	\begin{align*}
		& \tfrac{1}{n^2} (\wt{\nabla}_{a} \wt{k}^{a} )^2 - \wt{\Rho}_{a b} \wt{k}^a \wt{k}^b - \tfrac{1}{n} \wt{k}^a \wt{\nabla}_a \wt{\nabla}_b \wt{k}^b <  0\, , 
		&& \wt{k}^a \wt{\Weyl}_{a b c d}  = 0 \, , 
		&& \wt{k}^a \wt{\Cot}_{a b c}  = 0 \, , 
	\end{align*}
	where $\wt{\nabla}$ is the Levi-Civita connection of any metric in $\wt{\mbf{c}}$ with Schouten tensor $\wt{\Rho}_{a b}$, Cotton tensor $\wt{\Cot}_{a b c}$ and Weyl tensor $\wt{\Weyl}_{a b c d}$.
\end{thm}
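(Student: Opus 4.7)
The plan is to prove both directions using the explicit form \eqref{eq:Feff_metric} of the Fefferman metric together with the structural results of Sections \ref{sec:Robinson}--\ref{sec:prefered}. Since the theorem concerns genuine CR manifolds, we have $\Nh_{\alpha\beta\gamma} = 0$ throughout, so the parameter $\alpha$ in \eqref{eq:Feff_metric} plays no role.

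For the ``only if'' direction, I fix a pseudo-Hermitian structure $\theta$ on $(\mc{M},H,J)$ and work with the associated Fefferman metric $\wt{g}_\theta$. The generator $\wt{k} = \partial/\partial \phi$ of the fibres is manifestly Killing and null from \eqref{eq:Feff_metric}. The twist inequality reduces, up to a positive factor, to the conformally invariant assertion that $\d\wt{\kappa}$ restricted to the screen bundle is non-degenerate, which is immediate from the non-degeneracy of $\d\theta$ on $H$. The two curvature conditions $\wt{k}^a \wt{\Weyl}_{abcd}=0$ and $\wt{k}^a \wt{\Cot}_{abc}=0$ are then verified by direct tensor-component computation in the adapted coframe $(\theta,\theta^\alpha,\ol{\theta}{}^{\bar\alpha},\d\phi)$, using the structure equations \eqref{eq:structure_CR} together with the transformation laws \eqref{eq:trsfA}--\eqref{eq:trsfP}: the only potential obstructions are proportional to $\Nh_{\alpha\beta\gamma}$ and $\nabla^\gamma \Nh_{\gamma\alpha\beta}$, all of which vanish in the integrable case.

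For the ``if'' direction, a null conformal Killing field $\wt{k}$ is automatically tangent to a geodesic \emph{non-shearing} congruence $\wt{\mc{K}}$, since $\mathsterling_{\wt{k}}\wt{g} = 2\lambda\, \wt{g}$ forces $\mathsterling_{\wt{k}}\wt{g}(\wt{v},\wt{w}) = 2\lambda\, \wt{g}(\wt{v},\wt{w})$ for $\wt{v},\wt{w}\in\Gamma(\wt{K}^\perp)$, i.e.\ \eqref{eq-nonshear} with $\wt{\epsilon}=2\lambda$. The inequality supplies the twisting property. Since $\wt{k}^a \wt{\Weyl}_{abcd} = 0$ is strictly stronger than the integrability condition \eqref{eq:int-non-sh}, Theorem \ref{thm:non-shearing-hidim} yields a nearly Robinson structure $(\wt{N},\wt{K})$, and the leaf space $\mc{M}$ inherits a partially integrable contact almost CR structure $(H,J)$. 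Decomposing the full contraction $\wt{k}^a \wt{\Weyl}_{abcd}=0$ along the adapted frame then forces the remaining curvature components (those which in general involve $\Nh_{\alpha\beta\gamma}$ and $\nabla^\gamma\Nh_{\gamma\alpha\beta}$, cf.\ the expressions referenced in the proof of Proposition \ref{prop:non_tw-sp}) to vanish, so $(H,J)$ is actually involutive. Choosing any metric $\wt{g}$ in the non-expanding subclass $\accentset{n.e.}{\wt{\mbf{c}}}$ and writing it in canonical form \eqref{eq:can_met}, the Cotton condition, combined with \eqref{eq:Cot_transf}, pins down the horizontal components of $\wt{\lambda}$ to be $\wt{\lambda}_\alpha = \tfrac{\i}{m+2}\Gamma_\alpha{}^\alpha$ modulo an exact shift absorbed by a reparametrization \eqref{eq:reparam}, and the Reeb component to be $\wt{\lambda}_0 = -\tfrac{1}{m+2}\Rho$, so that $\wt{g}$ assumes the Fefferman form \eqref{eq:Feff_metric}.

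The main obstacle is precisely this last step: extracting the Webster connection one-form of $\theta$ from the Cotton condition. Morally, the Cotton tensor is the conformally invariant first-derivative datum of the Schouten tensor, and on a Fefferman space it encodes, via the fibration $\wt{\mc{M}}\to\mc{M}$, the curvature of the Webster connection. Making the identification fully explicit requires careful bookkeeping of how the Levi-Civita connection of $\wt{g}$ relates to the pulled-back Webster connection $\nabla$ along with its one-form transformation under changes of $\theta$; this is the technical core of the proof and is most cleanly handled by the tractor calculus of \cite{Cap2008}, where one shows that the CR tractor bundle of $(\mc{M},H,J)$ corresponds to the restriction of the conformal tractor bundle of $\wt{\mc{M}}$ to a section of $\wt{\mc{M}}\to\mc{M}$, and where the three conditions translate precisely into the existence of a parallel CR tractor of the appropriate type.
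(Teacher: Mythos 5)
Your overall architecture is sound, and it is worth noting that the article does not actually prove Theorem \ref{thm:Fefferman-CR} --- it quotes it from \cite{Graham1987,Cap2008}; the closest in-house argument is the proof of the generalisation, Theorem \ref{thm:Fefferman-CRA}, in Appendix \ref{app:proofchi}, which follows the Sparling--Graham prolongation method. Against that benchmark, several of your moves are legitimate variants: invoking Theorem \ref{thm:non-shearing-hidim} to obtain the nearly Robinson structure is a genuine shortcut (the appendix instead extracts the screen-bundle complex structure directly from the contracted prolongation identity \eqref{eq-prlg_2_k}, normalised using the sign condition \eqref{eq-Rho_sc}), and reading the full contraction $\wt{k}^a\wt{\Weyl}_{abcd}=0$ as killing the Nijenhuis components, hence forcing involutivity, is consistent with the curvature formulas used in the proof of Proposition \ref{prop:non_tw-sp} and in Appendix \ref{app:proofchi}. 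One small correction in the ``only if'' direction: the inequality \eqref{eq-Rho_sc} is, in the Killing gauge, the statement $\wt{\tau}_{ab}\wt{\tau}^{ab}>0$, i.e.\ non-vanishing of the twist on the screen bundle, not its non-degeneracy; non-degeneracy is what the Weyl condition supplies by forcing $\wt{\tau}$ to square to $-\Id$ there.

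The genuine gap is the step you yourself flag as the ``main obstacle'', and the mechanism you point to for it is not the one that works. The conformal transformation law \eqref{eq:Cot_transf} of the Cotton tensor plays no role in pinning down $\wt{\lambda}$; what is needed is the prolonged conformal Killing system \eqref{eq-prlg_1}--\eqref{eq-prlg_4}. Concretely, one writes $\d\wt{\lambda}$ in terms of tensors $\wt{B}_{ab}$ and $\wt{C}_a$; the skew derivative of $\wt{\psi}_a=-\wt{\Rho}_a{}^b\wt{\kappa}_b$ from \eqref{eq-prlg_4} expresses $\wt{\nabla}_{[a}\wt{\psi}_{b]}$ through $\wt{\Rho}_{[a}{}^c\wt{\tau}_{b]c}$ and $\wt{k}^c\wt{\Cot}_{cab}$; substituting $\wt{\Riem}_{ab}{}^{cd}\wt{\tau}_{cd}=\wt{\Weyl}_{ab}{}^{cd}\wt{\tau}_{cd}-4\wt{\Rho}_{[a}{}^{c}\wt{\tau}_{b]c}$ and using the hypotheses to discard the Weyl and Cotton terms leaves $\wt{B}_{ab}$ and $\wt{C}_a$ expressed through $\wt{\Riem}_{ab}{}^{cd}\wt{\tau}_{cd}$ alone, whose components must then be matched, via the explicit curvature of the canonical metric \eqref{eq:can_met}, against the Webster--Ricci tensor, $\nabla^{\gamma}\Nh_{\alpha\beta\gamma}$ and the quantity \eqref{eq:CRT}. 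Only after establishing $\d\wt{\lambda}=\d\wt{\lambda}'$, with $\wt{\lambda}'$ the Webster connection form of the model, can one integrate to an exact difference $\d\phi$ (a function on $\wt{\mc{M}}$, not merely a reparametrisation by a function on $\mc{M}$) and build the local bundle isometry. Without this computation, or the equivalent tractor argument you defer to the cited sources, the identification of $\wt{\lambda}_{\alpha}$ and $\wt{\lambda}_0$ with the Webster data is asserted rather than proved, so the ``if'' direction remains open in your write-up.
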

We next state characterisations of $\alpha$-Fefferman spaces for almost CR manifolds. The proof has been relegated to Appendix \ref{app:proofchi}.
\begin{thm}\label{thm:Fefferman-CRA}
	Let $(\wt{\mc{M}},\wt{\mbf{c}})$ be an oriented and time-oriented Lorentzian conformal manifold of dimension $n+2=2m+2$. Then, for any $\alpha \in \R$, $(\wt{\mc{M}},\wt{\mbf{c}})$ is locally conformally isometric to an $\alpha$-Fefferman space of an almost CR structure if and only if it admits a null conformal Killing field $\wt{k}$ and the following integrability conditions are satisfied:
	\begin{subequations}\label{eq-ICaFeff}
	\begin{align}
		& \tfrac{1}{n^2} (\wt{\nabla}_{a} \wt{k}^{a} )^2 - \wt{\Rho}_{a b} \wt{k}^a \wt{k}^b - \tfrac{1}{n} \wt{k}^a \wt{\nabla}_a \wt{\nabla}_b \wt{k}^b <  0\, , \label{eq-Rho_sc} \\
		& \wt{k}^a \wt{\Weyl}_{a b c d} \wt{k}^d  = \tfrac{\alpha - 1}{8(2m+1)}\wt{\bm{\kappa}}_{b} \wt{\bm{\kappa}}_{c} \| \wt{\Weyl}(\wt{k}) \|^2 \, , \label{eq-Wkk} \\
		& \wt{k}^a \wt{\Cot}_{a b c} \wt{k}^c  = \tfrac{\alpha - 1}{16(2m+1)} \wt{\bm{\kappa}}_{b} \wt{k}^{c} \wt{\nabla}_{c} \| \wt{\Weyl}(\wt{k}) \|^2  \, ,  \label{eq-Ykk} \\
		&
		\wt{\Weyl}_{a b}{}^{c d} \wt{\bm{\tau}}_{c d} - 2 \wt{k}^c \wt{\Cot}_{c a b} - \tfrac{1}{2} \left( \wt{\bm{\tau}}_{c[a} \wt{k}^{d} \wt{\Weyl}_{b] d}{}^{e f} \wt{\Weyl}_{e f g}{}^{c} \wt{k}^g  + \wt{\bm{\kappa}}_{[a} \wt{k}^{c} \wt{\Weyl}_{b] c}{}^{d e} \wt{\Cot}_{f d e} \wt{k}^f \right) \nonumber  \\
		& \qquad \qquad \qquad = \tfrac{1}{4m} \left( 1 + \tfrac{2m(m+1)}{2m+1} (\alpha - 1) \right) \wt{\nabla}_{[a}  \left( \wt{\bm{\kappa}}_{b]} \| \wt{\Weyl}(\wt{k}) \|^2 \right) \, ,  \label{eq-int_cond} 
	\end{align}
	\end{subequations}
	where $\wt{\nabla}$ is the Levi--Civita connection of any choice of metric in $\wt{\mbf{c}}$ with Schouten tensor $\wt{\Rho}_{a b}$, Cotton tensor $\wt{\Cot}_{a b c}$ and Weyl tensor $\wt{\Weyl}_{a b c d}$,
	\begin{align*}
		\| \wt{\Weyl}(\wt{k}) \|^2 & := \wt{k}^{a} \wt{\Weyl}_{a b c d} \wt{k}^e \wt{\Weyl}_{e}{}^{b c d}  \, ,
	\end{align*}
	and $\wt{\bm{\kappa}}_{a} = \wt{\bm{g}}_{a b} \wt{k}^b$, $\wt{\bm{\tau}}_{a b} = \wt{\nabla}_{[a} \wt{\bm{\kappa}}_{b]}$.
\end{thm}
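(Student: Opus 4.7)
\medskip\noindent\textbf{Proof proposal.} The proof proceeds in two directions, with the bulk of the work being explicit curvature computations in an adapted coframe. For the forward implication, fix a contact form $\theta$ and the associated $\alpha$-Fefferman metric \eqref{eq:Feff_metric}. The vector field $\wt{k}=\partial/\partial\phi$ is manifestly a null Killing field, hence a fortiori a null conformal Killing field. The plan is to choose an adapted coframe of the form $(\wt{\kappa},\wt{\lambda},\theta^\alpha,\ol{\theta}{}^{\bar\alpha})$ with $\wt{\kappa}=2\theta$ and $\wt{\lambda}=\d\phi+\tfrac{\i}{2}(\sigma^{-1}\nabla\sigma-\ol{\sigma}^{-1}\nabla\ol{\sigma})-(\tfrac{1}{m+2}\Rho+\tfrac{\alpha}{2m(m+1)}\|\Nh\|^2)\theta$, following \eqref{eq:can_met}, and to compute the Levi-Civita connection, Riemann tensor, Schouten tensor, Weyl tensor and Cotton tensor of $\wt{g}_\theta^{(\alpha)}$ systematically in terms of the Webster connection data $(\WbA_{\alpha\beta},\Nh_{\alpha\beta\gamma},\nabla^\gamma\Nh_{\gamma\alpha\beta},\Ric_{\alpha\bar\beta},\Rho)$ of $(\mc{M},H,J,\theta)$. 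The decisive observation, already used in the proof of Proposition \ref{prop:non_tw-sp}, is that $\|\wt\Weyl(\wt{k})\|^2=8\|\Nh\|^2$; in particular it is purely a base quantity. The $\alpha$-dependent terms on the right-hand sides of \eqref{eq-Wkk}, \eqref{eq-Ykk} and \eqref{eq-int_cond} arise precisely because the $\alpha\|\Nh\|^2/(2m(m+1))$ summand in the Fefferman metric enters the Schouten scalar and hence shifts all $\wt{k}$-contractions of $\wt\Weyl$ and $\wt\Cot$ from their $\alpha=1$ values in a controlled way. Inequality \eqref{eq-Rho_sc} reduces, after this calculation, to positivity of the Levi form pulled back to $\wt{\mc{M}}$.

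For the converse, the first task is to reconstruct the almost CR base. Since $\wt{k}$ is a null conformal Killing field, its integral curves form a congruence of null geodesics $\wt{\mc{K}}$, which is non-shearing, and by \eqref{eq-Rho_sc} also twisting. Condition \eqref{eq-Wkk} says that $\wt{k}^a\wt\Weyl_{abcd}\wt{k}^d$ lies along $\wt{\bm{\kappa}}_b\wt{\bm{\kappa}}_c$, so it vanishes when contracted with any $\wt{v}\in\Gamma(\wt{K}^\perp)$: this is precisely the integrability condition \eqref{eq:int-non-sh}. Theorem \ref{thm:non-shearing-hidim} then supplies a twist-induced nearly Robinson structure $(\wt{N},\wt{K})$ and an associated partially integrable contact almost CR structure $(H,J)$ on the local leaf space $\mc{M}$; equivalently, the construction of Lemma \ref{lem:pure_sp} and Proposition \ref{prop:non_tw-sp} may be run backwards, producing the tautological self-dual $(m+1)$-form from which $(H,J)$ is read off. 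The canonical bundle root $\mc{E}(-1,0)$ and the identification of $\wt{\mc{M}}$ with a circle bundle in it then come from the normalization of $\wt{k}$ provided by \eqref{eq-Rho_sc}, which fixes a conformal scale on $\wt{H}$ and hence a Levi form in the distinguished class.

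It remains to identify the given conformal class with the $\alpha$-Fefferman class and to pin down $\alpha$. For each contact form $\theta$ on $\mc{M}$ one has the canonical form \eqref{eq:can_met} of a representative in $\accentset{n.e.}{\wt{\mbf{c}}}$, namely $\wt{g}_\theta=4\theta\odot\wt{\lambda}+h$ with $\wt{\lambda}=\d\phi+\wt{\lambda}_\alpha\theta^\alpha+\wt{\lambda}_{\bar\alpha}\ol{\theta}{}^{\bar\alpha}+\wt{\lambda}_0\theta$. Condition \eqref{eq-Ykk} determines the horizontal components $\wt{\lambda}_\alpha$ to equal the Weyl-connection one-form $\tfrac{\i}{m+2}\Gamma_\alpha{}^\beta|_\beta$ built from the CR scale induced by $\theta$, matching the middle summand of \eqref{eq:Feff_metric}. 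Finally, \eqref{eq-int_cond} fixes $\wt{\lambda}_0$ as the expected linear combination of $\Rho$ and $\|\Nh\|^2$, with the coefficient $1+\tfrac{2m(m+1)}{2m+1}(\alpha-1)$ forcing the prefactor of $\|\Nh\|^2$ to be exactly $\alpha/(2m(m+1))$; this is where $\alpha$ is read off the conformal geometry. The main obstacle is the sheer length of the curvature calculation underlying the forward direction, and, in the converse, carefully tracking how the contributions of $\Nh_{\alpha\beta\gamma}$ and $\nabla^\gamma\Nh_{\gamma\alpha\beta}$ to $\wt\Weyl$ and $\wt\Cot$ assemble into the specific $\alpha$-dependent coefficients of \eqref{eq-Wkk}–\eqref{eq-int_cond}. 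This is why the detailed verification is deferred to Appendix \ref{app:proofchi}.
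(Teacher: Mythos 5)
Your overall strategy — forward direction by brute-force curvature computation of the $\alpha$-Fefferman metric, converse by reconstructing the almost CR base from the twist and then matching the connection one-form — is the same route the paper takes (following Graham's characterisation argument), and the key observation $\|\wt{\Weyl}(\wt{k})\|^2 = 8\|\Nh\|^2$ is indeed the hinge on which the $\alpha$-dependence turns. But your sketch of the converse misassigns the roles of the integrability conditions in a way that would not survive execution. You claim that \eqref{eq-Ykk} determines the horizontal components $\wt{\lambda}_{\alpha}$ and that \eqref{eq-int_cond} then fixes $\wt{\lambda}_{0}$. This cannot work by counting alone: \eqref{eq-Ykk} constrains only $\wt{k}^a\wt{\Cot}_{abc}\wt{k}^c$, a single semi-basic one-form's worth of data, and has no chance of pinning down the $2m$ complex components $\wt{\lambda}_{\alpha}$. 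In the paper's proof, \eqref{eq-Wkk} and \eqref{eq-Ykk} serve a preliminary purpose: after prolonging the conformal Killing equation into the system for $(\wt{\kappa}_a,\wt{\tau}_{ab},\wt{\psi}_a)$, they are what force $\wt{c}=\wt{\psi}_a\wt{k}^a$ to be a (by \eqref{eq-Rho_sc}, negative) constant, give $\wt{\tau}_{ab}\wt{\psi}^b=0$, and yield the algebraic identity $\wt{\tau}_a{}^c\wt{\tau}_{cb}=-\wt{g}_{ab}+2\wt{\kappa}_{(a}\wt{\lambda}_{b)}$ that makes $\wt{\tau}$ a compatible Hermitian structure on the screen bundle. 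It is condition \eqref{eq-int_cond} alone, fed through the skew part of the prolonged system, that determines \emph{all} of $\d\wt{\lambda}$ — the components $\wt{B}_{\alpha\beta}$, $\wt{B}_{\alpha\bar{\beta}}$ and $\wt{C}_{\alpha}$ at once — and hence both $\wt{\lambda}_{\alpha}$ and $\wt{\lambda}_{0}$.

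Two further points you gloss over are essential rather than cosmetic. First, you never mention the prolongation of the conformal Killing equation, yet it is the engine of the whole converse: without the auxiliary variable $\wt{\psi}_a$ and the identities it satisfies, there is no way to convert the curvature hypotheses into statements about $\d\wt{\lambda}$. Your alternative of invoking Theorem \ref{thm:non-shearing-hidim} does legitimately produce the nearly Robinson structure from \eqref{eq-Wkk}, but it does not replace the prolongation for the subsequent identification step, and ``running Lemma \ref{lem:pure_sp} backwards'' is not an argument — those results presuppose the Fefferman structure you are trying to build. Second, the conditions determine $\wt{\lambda}$ only up to an exact form: the paper concludes $\d\wt{\lambda}=\d\mr{\lambda}$ and hence $\wt{\lambda}-\mr{\lambda}=\d\phi$ locally, and it is precisely this residual function $\phi$ that supplies the fibre coordinate and the bundle map $\iota(\wt{p})=(\varpi(\wt{p}),\e^{\i\phi(\wt{p})}\sigma^{-1}(\varpi(\wt{p})))$ realising the local conformal isometry. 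Asserting that $\wt{\lambda}_{\alpha}$ ``equals'' the Webster connection one-form pointwise both overstates what is true and throws away the very freedom needed to finish the proof.
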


\begin{rem}
	Conditions \eqref{eq-Rho_sc}, \eqref{eq-Wkk}, \eqref{eq-Ykk} and \eqref{eq-int_cond} are all conformally invariant, as can be seen from the transformations \eqref{eq:Cot_transf} and $\widehat{\wt{\bm{\tau}}}_{a b} = \wt{\bm{\tau}}_{a b} + 2 \, \wt{\Upsilon}_{[a} \wt{\bm{\kappa}}_{b]}$.
\end{rem}

\section{Perturbed Fefferman spaces}\label{sec:perturbed_Fefferman}
Henceforth, and in line with the definitions introduced earlier, a Fefferman space will refer to a $1$-Fefferman space, so that a Fefferman metric $\wt{g}_{\theta}^{(1)}$ will be denoted by $\wt{g}_{\theta}$, and a Fefferman conformal structure $\wt{\mbf{c}}^{(1)}$ by $\wt{\mbf{c}}$.

We introduce the formal definition of a perturbation of a Fefferman conformal structure, before providing characterisations in Sections \ref{sec:algsp} and \ref{sec:char_pert}. 

\subsection{General definitions and properties}
\begin{defn}\label{defn:perturbed_Fefferman}
	Let $(\wt{\mc{M}},\wt{\mbf{c}},\wt{k}) \longrightarrow (\mc{M},H,J)$ be a Fefferman space. Let $\wt{\xi}$ be a semi-basic one-form on $\wt{\mc{M}}$, that is, $\wt{k} \hook \wt{\xi} = 0$. Given a Fefferman metric $\wt{g}_{\theta}$ in $\wt{\mbf{c}}$, we define the \emph{Fefferman metric perturbed by $\wt{\xi}$} as the metric
	\begin{align*}
		\wt{g}_{\theta,\wt{\xi}} & = \wt{g}_{\theta} + 4 \theta \odot \wt{\xi} \, .
	\end{align*}
	This naturally extends to a conformal structure $\wt{\mbf{c}}_{\wt{\xi}}$, which we refer to as the \emph{Fefferman conformal structure perturbed by $\wt{\xi}$}, and we call $\wt{\xi}$ the \emph{perturbation one-form} of $(\wt{\mc{M}},\wt{\mbf{c}},\wt{k})$, and the triple $(\wt{\mc{M}},\wt{\mbf{c}}_{\wt{\xi}},\wt{k})$ as a \emph{perturbed Fefferman space}.
\end{defn}

Since $\wt{\xi}$ lives on a circle bundle, its components can be Fourier expanded, and in fact, the Fourier coefficients should be understood as trivialised densities on $\mc{M}$ as the lemma below makes clear. Example \ref{exa:CR_data} should clarify any ambiguity in the notation adopted.
\begin{lem}\label{lem:CR_data}
	Let $(\wt{\mc{M}},\wt{\mbf{c}}_{\wt{\xi}},\wt{k}) \longrightarrow (\mc{M},H,J)$ be a perturbed Fefferman space. For any subsets $\mc{I} \subset \Z$, $\mc{J} \subset \Z_{\geq 0}$, consider the tuple $\left( \bm{\xi}^{(i)}_{\alpha} , [\nabla, \bm{\xi}^{(j)}_{0}] \right)_{i \in \mc{I}, j \in \mc{J}}$ where:
	\begin{enumerate}
		\item  for $k \in \mc{I}$, $\bm{\xi}_{\alpha}^{(k)} \in \Gamma(\mc{E}_{\alpha}\left(\tfrac{k}{2},-\tfrac{k}{2}\right))$,
		\item  for $k \in \mc{J}$, $[\nabla, \bm{\xi}^{(k)}_0]$ denotes the equivalence class $(\nabla, \bm{\xi}_{0}^{(k)}) \sim (\wh{\nabla}, \wh{\bm{\xi}}_{0}^{(k)})$ where $\nabla, \wh{\nabla}$ are Webster connections related by $\wh{\nabla} = \nabla + \Upsilon$ for some exact one-form $\Upsilon$, and for  $k \in \mc{I}$, $\bm{\xi}_{0}^{(k)}$ and $\wh{\bm{\xi}}_{0}^{(k)}$ are sections of $\mc{E} \left(\tfrac{k}{2}-1,-\tfrac{k}{2}-1\right)$ related by
		\begin{align}\label{eq:wtd_Fourier}
			\wh{\bm{\xi}}{}^{(k)}_0 & = \bm{\xi}^{(k)}_0 - \i \bm{\xi}^{(k)}_{\alpha} \Upsilon^\alpha + \i \bm{\xi}^{(k)}_{\bar{\alpha}} \Upsilon^{\bar{\alpha}} \, ,
		\end{align}
		with the understanding that, for $i \in \mc{I}, j \in \mc{J}$, $\bm{\xi}_{\bar{\alpha}}^{(i)} = \ol{\bm{\xi}_{\alpha}^{(-i)}}$, $\bm{\xi}_{0}^{(j)} = \ol{\bm{\xi}_{0}^{(-j)}}$. In particular $\bm{\xi}_{0}^{(0)}$ is real-valued.
	\end{enumerate}
	Choose a nowhere vanishing density $\sigma \in \Gamma(\mc{E}(1,0))$ to trivialise $\wt{\mc{M}} \rightarrow \mc{M}$ with fibre coordinate $\phi$, and set
	\begin{align}\label{eq:Fourier_coef}
		\xi_{\alpha}^{(k)} & = \bm{\xi}_{\alpha}^{(k)} \sigma^{-\tfrac{k}{2}} \ol{\sigma}^{\tfrac{k}{2}} \, , & \xi_{\bar{\alpha}}^{(k)} & = \bm{\xi}_{\bar{\alpha}}^{(k)} \ol{\sigma}^{-\tfrac{k}{2}} \sigma^{\tfrac{k}{2}} \, , & \xi_{0}^{(k)} & =  \bm{\xi}_{0}^{(k)}  \sigma^{1-\tfrac{k}{2}} \ol{\sigma}^{1+\tfrac{k}{2}} \, .
	\end{align}
	Then, viewing
	\begin{align}\label{eq:xi_Fourier}
		\wt{\xi}_{\alpha} & = \sum_{k \in \mc{I}} \xi_{\alpha}^{(k)} \e^{k \i \phi} \, , &
		\wt{\xi}_{\bar{\alpha}} & = \sum_{k \in \mc{I}} \xi_{\bar{\alpha}}^{(-k)} \e^{-k \i \phi} \, , &
		\wt{\xi}_{0} & = \sum_{k \in - \mc{J} \cup \mc{J}} \xi_{0}^{(k)} \e^{k \i \phi} \, ,
	\end{align}
	as components with respect to some adapted coframe $(\theta, \theta^{\alpha}, \ol{\theta}{}^{\bar{\alpha}})$ with $\theta = (\sigma \ol{\sigma})^{-1} \bm{\theta}$, the semi-basic one-form
	\begin{align}\label{eq:xi_cmpnt}
		\wt{\xi} & = \wt{\xi}_{\alpha} \theta^{\alpha} + \wt{\xi}_{\bar{\alpha}} \ol{\theta}{}^{\bar{\alpha}} + \wt{\xi}_{0} \theta \, ,
	\end{align}
	is well-defined on $\wt{\mc{M}}$.
	
	Conversely, any semi-basic one-form on $\wt{\mc{M}}$ arises in this way.
\end{lem}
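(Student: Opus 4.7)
The plan is to verify that the formula \eqref{eq:xi_cmpnt} produces a globally well-defined semi-basic one-form on $\wt{\mc{M}}$, independent of the two arbitrary choices entering its construction --- the trivialisation $\sigma$ of the circle bundle and the pseudo-Hermitian structure $\theta$ furnishing the adapted coframe. The converse statement will then follow by expanding any semi-basic one-form in an adapted coframe pulled back from $\mc{M}$ and Fourier-decomposing the component functions along the $S^1$-fibres.

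First I would address the change of trivialisation $\sigma \mapsto \sigma' = \e^{\i \mr{\phi}} \sigma$, with $\mr{\phi}$ real-valued on $\mc{M}$. The identification $\e^{\i\phi}\sigma^{-1}$ of points of $\wt{\mc{M}}$ forces the fibre coordinate to shift as $\phi' = \phi + \mr{\phi}$. A short computation shows that the normalisation factor $\sigma^{-k/2}\ol{\sigma}^{k/2}$ in \eqref{eq:Fourier_coef} picks up the compensating factor $\e^{-k\i\mr{\phi}}$, and similarly for the other two types of coefficients. These factors exactly cancel the $\e^{k\i\phi}$-contributions in \eqref{eq:xi_Fourier}, so that each product $\xi_\alpha^{(k)}\e^{k\i\phi}$, $\xi_{\bar{\alpha}}^{(k)}\e^{-k\i\phi}$, and $\xi_0^{(k)}\e^{k\i\phi}$ is invariant; since $(\theta, \theta^\alpha, \ol{\theta}^{\bar{\alpha}})$ is unchanged, \eqref{eq:xi_cmpnt} is manifestly trivialisation-independent.

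Next I would treat the change of pseudo-Hermitian structure $\hat{\theta} = \e^{\varphi}\theta$ with $\Upsilon_{\alpha} = \nabla_\alpha \varphi$. Choosing $\hat{\sigma} = \e^{-\varphi/2}\sigma$ (so that $\hat{\sigma}\ol{\hat{\sigma}} = \hat{s} = \e^{-\varphi} s$) leaves the fibre coordinate unchanged, $\hat{\phi} = \phi$. The transformation of admissible coframes --- uniquely determined by the requirement that $\hat{\theta}^\alpha$ annihilate the new Reeb field and by the structure equation $\d\hat{\theta} = \i \hat{h}_{\alpha\bar{\beta}}\hat{\theta}^\alpha\wedge\ol{\hat{\theta}}{}^{\bar{\beta}}$ --- reads $\hat{\theta}^\alpha = \theta^\alpha + \i \Upsilon^\alpha \theta$. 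Substituting this into \eqref{eq:xi_cmpnt} and matching components gives
\begin{align*}
	\wh{\wt{\xi}}_{\alpha} & = \wt{\xi}_{\alpha} \, , & \wh{\wt{\xi}}_{\bar{\alpha}} & = \wt{\xi}_{\bar{\alpha}} \, , & \wh{\wt{\xi}}_{0} & = \e^{-\varphi}\left( \wt{\xi}_{0} - \i\,\wt{\xi}_{\alpha}\Upsilon^\alpha + \i\,\wt{\xi}_{\bar{\alpha}}\Upsilon^{\bar{\alpha}} \right) \, .
\end{align*}
Passing to Fourier modes (noting that $\Upsilon^\alpha$ is of mode zero in $\phi$) and converting to the weighted densities via \eqref{eq:Fourier_coef}, the rescaling $\hat{\sigma}^{1-k/2}\ol{\hat{\sigma}}{}^{1+k/2} = \e^{-\varphi}\sigma^{1-k/2}\ol{\sigma}{}^{1+k/2}$ precisely absorbs the overall factor of $\e^{-\varphi}$ and what remains is exactly \eqref{eq:wtd_Fourier}; an analogous computation, using $\hat{\sigma}^{-k/2}\ol{\hat{\sigma}}{}^{k/2} = \sigma^{-k/2}\ol{\sigma}{}^{k/2}$, gives $\wh{\bm{\xi}}{}_\alpha^{(k)} = \bm{\xi}_\alpha^{(k)}$, confirming that $\bm{\xi}_\alpha^{(k)}$ is intrinsic as a section of $\mc{E}_{\alpha}(k/2,-k/2)$ whereas only the equivalence class $[\nabla, \bm{\xi}_0^{(k)}]$ is intrinsic. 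The reality constraints $\bm{\xi}_{\bar{\alpha}}^{(k)} = \ol{\bm{\xi}_{\alpha}^{(-k)}}$ and $\bm{\xi}_{0}^{(k)} = \ol{\bm{\xi}_{0}^{(-k)}}$ (including the reality of $\bm{\xi}_0^{(0)}$) follow directly from $\wt{\xi}$ being real-valued.

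For the converse, any semi-basic one-form $\wt{\xi}$ on $\wt{\mc{M}}$ decomposes in the pulled-back adapted coframe as $\wt{\xi} = \wt{\xi}_\alpha \theta^\alpha + \wt{\xi}_{\bar{\alpha}}\ol{\theta}{}^{\bar{\alpha}} + \wt{\xi}_0 \theta$ with smooth component functions on $\wt{\mc{M}}$. Since $\wt{\mc{M}}\to \mc{M}$ is a principal $S^1$-bundle, each component admits a Fourier expansion as in \eqref{eq:xi_Fourier} with coefficients descending to smooth functions on $\mc{M}$; inverting \eqref{eq:Fourier_coef} then produces the tuple of weighted densities, and the forward invariance analysis certifies that these have precisely the weights and transformation law claimed in the statement. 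The main bookkeeping obstacle is tracking the interplay of the phase factors $\e^{k\i\phi}$, the density factors $\sigma^{-k/2}\ol{\sigma}{}^{k/2}$, and the conformal rescaling $\e^{\pm\varphi/2}$; once the normalisations in \eqref{eq:Fourier_coef} are set up correctly, each verification reduces to a direct, if careful, computation.
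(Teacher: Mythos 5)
Your proposal is correct and follows essentially the same route as the paper's own (much terser) proof: a direct verification that the trivialised Fourier coefficients transform so as to leave $\wt{\xi}$ invariant under a change of trivialisation and under a change of contact form consistent with \eqref{eq:wtd_Fourier}, together with the reality conditions, and the converse by fibrewise Fourier expansion. Your write-up simply supplies the explicit coframe transformation $\wh{\theta}{}^{\alpha} = \theta^{\alpha} + \i \Upsilon^{\alpha}\theta$ and the weight bookkeeping that the paper leaves to the reader.
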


\begin{proof}
	Starting from the tuple $\left( \bm{\xi}^{(i)}_{\alpha} , [\nabla, \bm{\xi}^{(j)}_{0}] \right)_{i \in \mc{I}, j \in \mc{J}}$, we only need to check that \eqref{eq:xi_cmpnt} and \eqref{eq:xi_Fourier} are well-defined. First, $\wt{\xi}$ is real-valued by virtue of the reality conditions on $\left( \bm{\xi}^{(i)}_{\alpha} , [\nabla, \bm{\xi}^{(j)}_{0}] \right)_{i \in \mc{I}, j \in \mc{J}}$. Next, $\wt{\xi}$ does not depend on the choice of trivialisation, since under the transformation $\sigma' = \e^{-\i \mr{\phi}} \sigma$, $\phi'= \phi - \mr{\phi}$, the relations \eqref{eq:wtd_Fourier} tell us that the Fourier coefficients \eqref{eq:Fourier_coef} transform as
	\begin{align*}
		{\xi'}_{\alpha}^{(k)} & = \e^{k \i \mr{\phi}} \xi_{\alpha}^{(k)} \, , &
		{\xi'}_{0}^{(k)} & = \e^{k \i \mr{\phi}} \xi_{0}^{(k)} \, .
	\end{align*}
	Finally, $\wt{\xi}$ does not depend on the choice of adapted coframe as follows from \eqref{eq:wtd_Fourier}. The converse works analogously.
\end{proof}

\begin{defn}\label{defn:CR_data}
	We shall refer to the tuple $\left( \bm{\xi}^{(i)}_{\alpha} , [\nabla, \bm{\xi}^{(j)}_{0}] \right)_{i \in \mc{I}, j \in \mc{J}}$ given in Lemma \ref{lem:CR_data}, where $\mc{I} \subset \Z$, $\mc{J} \subset \Z_{\geq 0}$, as the \emph{CR data associated to the perturbation one-form $\wt{\xi}$}.
\end{defn}

\begin{exa}\label{exa:CR_data}
	A perturbation one-form $\wt{\xi}$ with CR data $\left( \bm{\xi}_{\alpha}^{(0)}, [\nabla, \bm{\xi}_{0}^{(0)}], \bm{\xi}_{0}^{(2k)}\right)_{k=1,\ldots,m+1}$ means that with a choice of trivialisation $\sigma$ of $\wt{\mc{M}}$ with fibre coordinate $\phi$, and of adapted coframe $(\theta,\theta^{\alpha}, \ol{\theta}{}^{\bar{\alpha}})$ with $\theta = (\sigma \ol{\sigma})^{-1} \bm{\theta}$, the one-form $\wt{\xi}$ is given by
	\begin{align*}
		\wt{\xi} = \xi_{\alpha}^{(0)}  \theta^{\alpha} + \xi_{\bar{\alpha}}^{(0)} \ol{\theta}{}^{\bar{\alpha}} + \sum_{k=-m-1}^{m+1} \xi_{0}^{(2k)} \e^{2k\i \phi}  \theta \, , 
	\end{align*}
	where the coefficients are related to the CR data via \eqref{eq:Fourier_coef}. Note that, for $k \neq 0$, $\wh{\bm{\xi}}_{0}^{(2k)} = {\bm{\xi}}_{0}^{(2k)}$ under a change of contact form, which justifies our notation for the tuple.
\end{exa}

\begin{rem}
	Let $(\wt{\mc{M}},\wt{\mbf{c}},\wt{k}) \rightarrow (\mc{M},H,J)$ be a Fefferman space. It is easy to check that two perturbations $\wt{\mbf{c}}_{\wt{\xi}}$ and $\wt{\mbf{c}}_{\wt{\xi}'}$ are locally conformally isometric if and only if their corresponding CR data $\left( \bm{\xi}^{(i)}_{\alpha} , [\nabla, \bm{\xi}^{(j)}_{0}] \right)_{i \in \mc{I}, j \in \mc{J}}$ and $\left( \bm{\xi}'{}^{(i)}_{\alpha} , [\nabla, \bm{\xi}'{}^{(j)}_{0}] \right)_{i \in \mc{I}', j \in \mc{J}'}$ are such that $\mc{I}' = \mc{I}$ and $\mc{J}' = \mc{J}$, and
	\begin{align*}
		\bm{\xi}'{}^{(0)}_{\alpha} & = 	\bm{\xi}^{(0)}_{\alpha} + \nabla_{\alpha} \mr{\phi} \, , & [\nabla, \bm{\xi}'{}^{(0)}_{0} ]& = 	[\nabla, \bm{\xi}^{(0)}_{0} + \nabla_{0} \mr{\phi} ] \, , \\
		\bm{\xi}'{}^{(k)}_{\alpha}  & = 	\e^{k \i \mr{\phi}} \bm{\xi}^{(k)}_{\alpha}  \, , & [\nabla, \bm{\xi}'{}^{(k)}_{0}] & = [\nabla, \e^{k \i \mr{\phi}} \bm{\xi}^{(k)}_{0} ] \, , & k \neq 0 \, ,
	\end{align*}
for some smooth function $\mr{\phi}$ on $(\mc{M},H,J)$. In the special case where $\wt{\xi}'=0$, we conclude that $\wt{\mbf{c}}_{\wt{\xi}}$ and $\wt{\mbf{c}}$ are locally conformally isometric if and only if $\wt{\xi}$ is closed (i.e. locally exact), and one can use the criterion of Lemma \ref{lem:closedg} on $\xi_{\alpha} = \xi_{\alpha}^{(0)}$ and $\xi_{0} = \xi_{0}^{(0)}$ to verify this property. See also \cite{Leitner2010}. One can extend these arguments as done in \cite{Graham1987} to verify that an obstruction to a global conformal isometry between two perturbed Fefferman spaces lies in the cohomology group $H^1(\mc{M},S^1)$.
\end{rem}

Finally, we state the next self-evident result without proof.
\begin{lem}
	Any perturbed Fefferman space is a twist-induced nearly Robinson geometry with non-shearing congruence.
\end{lem}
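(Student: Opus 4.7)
The plan is simply to unwind the definitions. Fix a perturbed Fefferman space $(\wt{\mc{M}}, \wt{\mbf{c}}_{\wt{\xi}}, \wt{k})$ built from $(\wt{\mc{M}}, \wt{\mbf{c}}, \wt{k}) \longrightarrow (\mc{M}, H, J)$ with perturbation semi-basic one-form $\wt{\xi}$, pick a contact form $\theta$ for $(H, J)$ with associated Fefferman metric $\wt{g}_{\theta}$ and perturbed metric $\wt{g}_{\theta, \wt{\xi}} = \wt{g}_{\theta} + 4\, \theta \odot \wt{\xi}$. The first step is to check that $\wt{k}$ remains null and generates the same line distribution $\wt{K}$: using $\wt{k} \hook \wt{\xi} = 0$ together with $\wt{g}_{\theta}(\wt{k}, \cdot) = 2\theta$ (recalled in Section \ref{sec:prefered}), one computes $\wt{g}_{\theta, \wt{\xi}}(\wt{k}, \cdot) = 2\theta$, so $\wt{K}$ and $\wt{K}^\perp = \ker \theta$ are unchanged by the perturbation.

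The second step is to verify the non-shearing (in fact non-expanding) and geodesic properties. Since $\theta$ is pulled back from $\mc{M}$ one has $\mathsterling_{\wt{k}} \theta = 0$, and since $\wt{k}$ is Killing for $\wt{g}_{\theta}$ one has $\mathsterling_{\wt{k}} \wt{g}_{\theta} = 0$; combining these yields
\[
\mathsterling_{\wt{k}}\, \wt{g}_{\theta, \wt{\xi}} \; = \; 4\, \theta \odot \mathsterling_{\wt{k}} \wt{\xi},
\]
which vanishes after contraction with any two vectors in $\wt{K}^\perp = \ker \theta$. This is the non-expanding condition \eqref{eq-nonshear_nonexp}, and in particular it implies the non-shearing and geodesic properties of the congruence $\wt{\mc{K}}$ tangent to $\wt{K}$.

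The third step is to produce the twist-induced nearly Robinson structure. The twist of $\wt{\mc{K}}$ is controlled by $\d \wt{\kappa}_{\wt{\xi}} = 2\, \d \theta$, whose pullback expression $2\i\, h_{\alpha \bar{\beta}}\, \theta^{\alpha} \wedge \ol{\theta}^{\bar{\beta}}$ is non-degenerate on $H$, so $\wt{\mc{K}}$ is twisting. Because a semi-basic perturbation contributes nothing to $\wt{g}_{\theta, \wt{\xi}}(\wt{v}, \wt{w})$ for $\wt{v}, \wt{w} \in \wt{K}^\perp$, the screen bundle $\wt{H} = \wt{K}^\perp / \wt{K}$ and its induced conformal structure $\wt{\mbf{c}}_{\wt{H}}$ coincide with those of the unperturbed Fefferman space. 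Hence the bundle complex structure $\wt{J}$ on $\wt{H}$ descending from $J$ on $H$ equips $(\wt{\mc{M}}, \wt{\mbf{c}}_{\wt{\xi}}, \wt{K})$ with an almost Robinson structure $(\wt{N}, \wt{K})$, and its compatibility with $\d \wt{\kappa}_{\wt{\xi}}$ via the Levi form is precisely the twist-induced condition of Section \ref{sec:Robinson}.

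Finally, the nearly Robinson condition $[\wt{K}, \wt{N}] \subset \wt{N}$ holds because both $\wt{K}$ and $\wt{N}$ are the same distributions as in the underlying Fefferman space, where this property is already in place (and is consistent with Theorem \ref{thm:non-shearing-hidim}). No serious obstacle arises; the only thing worth tracking carefully is which quantities are preserved under the modification $4\, \theta \odot \wt{\xi}$ and which are not --- for instance $\wt{\ell}$ is not preserved, but it plays no role in the definition of twist-induced nearly Robinson geometry with non-shearing congruence.
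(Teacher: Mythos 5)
Your argument is correct, and it matches the intended (but omitted) justification: the paper explicitly declares this lemma self-evident and gives no proof, so the right thing to do is exactly what you did, namely unwind the definitions. The key observations --- that a semi-basic perturbation $4\,\theta\odot\wt{\xi}$ leaves $\wt{g}(\wt{k},\cdot)=2\theta$, the screen bundle data, $\wt{N}$, and $\wt{K}$ unchanged, and that $\mathsterling_{\wt{k}}\wt{g}_{\theta,\wt{\xi}}=4\,\theta\odot\mathsterling_{\wt{k}}\wt{\xi}$ vanishes on $\wt{K}^\perp\times\wt{K}^\perp$ --- are all accurate and suffice.
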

In the next section, we investigate under which conditions the converse is true.

\subsection{Relation to twist-induced nearly Robinson geometries}\label{sec:algsp}
Consider a $(2m+2)$-dimen-sional twist-induced nearly Robinson geometry $(\wt{\mc{M}},\wt{\mbf{c}},\wt{N},\wt{k}) \longrightarrow (\mc{M},H,J)$ with non-shearing congruence $\wt{\mc{K}}$. We refer the reader to Sections \ref{sec:optical} and \ref{sec:Robinson} for the general setup and notation concerning these geometries, and more particularly Section \ref{sec:prefered}. Since a perturbed Fefferman metric is a particular case of such an optical geometry, it will be convenient to write
\begin{align}\label{eq:metNSFeff}
	\wt{g}_{\theta,\wt{\xi}} & = 4 \theta \odot \wt{\lambda} + h \, , &
	\wt{\lambda} & := \wt{\omega}_{\theta} - \left( \tfrac{1}{m+2}\Rho + \tfrac{1}{2m(m+1)} \| \Nh \|^2 \right) \theta + \wt{\xi} \, ,
\end{align}
where $\wt{\omega}_{\theta}$ is the induced Webster connection one-form on $\wt{\mc{M}}$ with Webster--Schouten tensor $\Rho$ and Nijenhuis tensor $\Nh_{\alpha \beta \gamma}$. Just as we did in Lemma \ref{lem:CR_data}, we can Fourier expand the components of $\wt{\lambda}$ with respect to an adapted coframe.

We start with a technical lemma.
\begin{lem}\label{lem:NSh2Feff}
	Let $(\wt{\mc{M}},\wt{\mbf{c}},\wt{N},\wt{k}) \accentset{\varpi}{\longrightarrow} (\mc{M},H,J)$ be a twist-induced nearly Robinson geometry with non-shearing congruence of dimension $2m+2 \geq 4$. Let $\wt{g}_{\theta} \in \accentset{n.e.}{\wt{\mbf{c}}}$ for some pseudo-Hermitian structure $\theta$ so that $\wt{g}_{\theta}$ is given by \eqref{eq:can_met}. Suppose that
	\begin{align}\label{eq:lambda_Fourier}
		\wt{\lambda}_{\alpha} & = \sum_{k \in \mc{I}} \lambda_{\alpha}^{(k)} \e^{k \i \phi} \, , &
		\wt{\lambda}_{\bar{\alpha}} & = \sum_{k \in \mc{I}} \lambda_{\bar{\alpha}}^{(-k)} \e^{-k \i \phi} \, , &
		\wt{\lambda}_{0} & = \sum_{k \in - \mc{J} \cup \mc{J}} \lambda_{0}^{(k)} \e^{k \i \phi} \, ,
	\end{align}
	for some $\mc{I} \subset \Z$, $\mc{J} \subset \Z_{\geq0}$. Then $(\wt{\mc{M}},\wt{\mbf{c}},\wt{N},\wt{k})$ is locally conformally isometric to a perturbed Fefferman space $(\wt{\mc{M}}',\wt{\mbf{c}}'_{\wt{\xi}},\wt{k}') \longrightarrow (\mc{M},H,J)$ with perturbation one-form $\wt{\xi}$ determined by the CR data $\left(\bm{\xi}_{\alpha}^{(i)}, [\nabla,\bm{\xi}_{0}^{(j)}] \right)_{i \in \mc{I}, j \in \mc{J}}$, where for each choice $\sigma \in \Gamma(\mc{E}(1,0))$ trivialising $\wt{\mc{M}}$, we have
	\begin{subequations}\label{eq:lmb2xi}
		\begin{align}
			\lambda_{\alpha}^{(0)} & = \i \sigma^{-1} \nabla_{\alpha} \sigma + \xi_{\alpha}^{(0)}   \, , &
			\lambda_{0}^{(0)} & = \i \sigma^{-1} \nabla_{0} \sigma + \xi_{0}^{(0)} - \left( \tfrac{1}{m+2} \Rho + \tfrac{1}{2m(m+1)} \| \Nh \|^2 \right)  \, , &
			\\
			\lambda_{\alpha}^{(k)} & =  \xi_{\alpha}^{(k)} \, , &
			\lambda_{0}^{(k)} & =  \xi_{0}^{(k)} \, , & k \neq 0 \, ,
		\end{align}
	\end{subequations}
	where $\xi_{\alpha}^{(i)}$ and $\xi_{0}^{(j)}$ are given by \eqref{eq:Fourier_coef}.
\end{lem}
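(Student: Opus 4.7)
The plan is to construct the perturbation one-form $\wt{\xi}$ directly as the discrepancy between $\wt{\lambda}$ and the canonical Fefferman one-form, then translate the Fourier modes of the resulting semi-basic form into the CR data prescribed by Lemma~\ref{lem:CR_data}.

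First I would identify $\wt{\mc{M}}$ locally with the Fefferman circle bundle $\wt{\mc{M}}'\rightarrow\mc{M}$ built from $(\mc{M},H,J)$. Indeed, by Theorem~\ref{thm:non-shearing-hidim} the leaf space $(\mc{M},H,J)$ is a partially integrable contact almost CR manifold, and by Section~\ref{sec:prefered} the non-expanding metric $\wt{g}_{\theta}$ singles out a pseudo-Hermitian structure $\theta$ together with an affine parameter $\phi$ along $\wt{k}$, unique up to an additive function on $\mc{M}$. A choice of trivialising density $\sigma\in\Gamma(\mc{E}(1,0))$ with $\sigma\ol{\sigma}\bm{\theta}=\theta$ then identifies $\phi$ with the Fefferman fibre coordinate and $\wt{k}$ with the fibre generator. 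Motivated by comparing \eqref{eq:metNSFeff} with \eqref{eq:Feff_metric} at $\alpha=1$, the natural choice is to set
$$\wt{\xi} := \wt{\lambda} - \wt{\omega}_{\theta} + \left(\tfrac{1}{m+2}\Rho + \tfrac{1}{2m(m+1)}\|\Nh\|^2\right)\theta,$$
which tautologically gives $\wt{g}_{\theta}=\wt{g}_{\theta,\wt{\xi}}$. Both $\wt{\omega}_{\theta}$ and $\wt{\lambda}$ begin with $\d\phi$, so $\wt{k}\hook\wt{\omega}_{\theta}=\wt{k}\hook\wt{\lambda}=1$ and $\wt{k}\hook\theta=0$; consequently $\wt{\xi}$ is semi-basic, as required by Definition~\ref{defn:perturbed_Fefferman}.

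Using the explicit form $\wt{\omega}_{\theta}=\d\phi+\i\sigma^{-1}\nabla\sigma$ and expanding in the adapted coframe, the pointwise components of $\wt{\xi}$ read $\wt{\xi}_{\alpha}=\wt{\lambda}_{\alpha}-\i\sigma^{-1}\nabla_{\alpha}\sigma$ and $\wt{\xi}_{0}=\wt{\lambda}_{0}-\i\sigma^{-1}\nabla_{0}\sigma+\tfrac{1}{m+2}\Rho+\tfrac{1}{2m(m+1)}\|\Nh\|^2$. Since the canonical correction is $\phi$-independent, it feeds only the zero-modes of the Fourier expansion, and matching against \eqref{eq:lambda_Fourier} yields exactly \eqref{eq:lmb2xi}. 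Promoting the resulting Fourier coefficients via \eqref{eq:Fourier_coef} produces candidate sections $\bm{\xi}^{(k)}_{\alpha}$ and $\bm{\xi}^{(k)}_{0}$.

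The remaining, most technical step is to verify that these candidates meet the invariance conditions of Lemma~\ref{lem:CR_data}. Under a change of trivialisation $\sigma\mapsto\e^{-\i\mr{\phi}}\sigma$ with $\phi\mapsto\phi-\mr{\phi}$, the shift $\i\sigma^{-1}\nabla\sigma\mapsto\i\sigma^{-1}\nabla\sigma+\d\mr{\phi}$ is exactly cancelled by the induced phase rescalings in \eqref{eq:Fourier_coef}, so the $\bm{\xi}^{(k)}_{\alpha}$ are intrinsic sections of $\mc{E}_{\alpha}(k/2,-k/2)$. Under a change of contact form $\wh{\theta}=\e^{\varphi}\theta$, the reparametrisation rules for $\wt{\lambda}_{\alpha},\wt{\lambda}_{0}$ recorded in Section~\ref{sec:prefered}, the transformation \eqref{eq:trsfP} for $\Rho$, and the density weight $(-1,-1)$ of $\|\Nh\|^2$ combine to produce precisely \eqref{eq:wtd_Fourier} for the zero-mode $[\nabla,\bm{\xi}^{(0)}_{0}]$, while the $k\neq 0$ Fourier modes of $\wt{\xi}_{0}$ transform as bona fide densities. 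This final bookkeeping is the main obstacle, but it is essentially mechanical.
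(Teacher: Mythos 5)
Your proposal is correct and follows essentially the same route as the paper: one identifies $\wt{\mc{M}}$ locally with the Fefferman circle bundle of the leaf space (the Fourier hypothesis \eqref{eq:lambda_Fourier} supplying the $2\pi$-periodicity needed for this), defines the candidate coefficients $\xi_{\alpha}^{(k)},\xi_{0}^{(k)}$ as the discrepancy between $\wt{\lambda}$ and the canonical Fefferman one-form, and then checks that they transform under changes of trivialisation and of contact form as the CR data of Lemma~\ref{lem:CR_data}. The paper merely makes the identifying bundle map $\iota(\wt{p})=(\varpi(\wt{p}),\e^{\i\phi(\wt{p})}\sigma^{-1}(\varpi(\wt{p})))$ explicit and verifies it is a conformal isometry, which your construction accomplishes implicitly.
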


\begin{proof}
	Let us first remark that \eqref{eq:can_met} with coefficients given in \eqref{eq:lmb2xi} and $\wt{k} = \parderv{}{\phi}$ are invariant under translations $\phi \mapsto \phi + 2 k \pi$ for any $k \in \Z$. So, locally, we may consider the quotient $\wt{\mc{M}}$ by these, and restrict $\phi$ to $[-\pi,\pi)$.
	
	Since $(\mc{M},H,J)$ is an almost CR manifold, we associate to it its conformal Fefferman space $(\wt{\mc{M}}',\wt{\mbf{c}}',\wt{k}') \accentset{\varpi'}{\longrightarrow} (\mc{M},H,J)$. To the pseudo-Hermitian structure $\theta$, we have an associated Webster connection $\nabla$ with induced connection one-form $\wt{\omega}_{\theta}$ on $\wt{\mc{M}}'$, and a corresponding Fefferman metric $\wt{g}'_{\theta}$ in $\wt{\mbf{c}}'$.
	
	Next, choose a nowhere vanishing density $\sigma$ of weight $(1,0)$ such that $\theta = (\sigma \ol{\sigma})^{-1} \bm{\theta}$ and denote by $\phi'$ the corresponding fibre coordinate on $\wt{\mc{M}}' \rightarrow \mc{M}$ so that $\wt{k}' = \parderv{}{\phi'}$ and a point $\wt{p}'$ of $\wt{\mc{M}}'$ can be expressed locally as $(\varpi'(\wt{p}'),\e^{\i \phi'(\wt{p}')} \sigma^{-1}(\varpi'(\wt{p}')))$. The map that sends a point $\wt{p}$ in $\wt{\mc{M}}$ to $(\varpi(\wt{p}) , \e^{\i \phi (\wt{p}) } \sigma^{-1} (\varpi(\wt{p})))$ in $\wt{\mc{M}}'$ is a bundle map $\iota$ from $(\wt{\mc{M}},\wt{\mbf{c}},\wt{k})$ to $(\wt{\mc{M}}',\wt{\mbf{c}}',\wt{k}')$: clearly $\varpi' \circ \iota = \varpi$, and any change of affine parametrisation of $\wt{k}$ of the form $\phi' = \phi - \mr{\phi}$ induces a change of trivialisation as $\sigma' = \e^{-\i \mr{\phi}}\sigma$, so $\iota$ does not depend on the choice of trivialisation.
	
Now, since all the coefficients $\lambda_{\alpha}^{(k)}$, $\lambda_{0}^{(k)}$ are independent of the coordinate $\phi$, we view them as tensors on $(\mc{M},H,J)$, and define
	\begin{align*}
		\xi_{\alpha}^{(0)} & := \lambda_{\alpha}^{(0)} - \i \sigma^{-1} \nabla_{\alpha} \sigma  \, , &
		\xi_{0}^{(0)} & := \lambda_{0}^{(0)} - \i \sigma^{-1} \nabla_{0} \sigma + \left( \tfrac{1}{m+2} \Rho + \tfrac{1}{2m(m+1)} \| \Nh \|^2 \right)  \, , &
		\\
		\xi_{\alpha}^{(k)} & := \lambda_{\alpha}^{(k)}  \, , &
		\xi_{0}^{(k)} & := \lambda_{0}^{(k)} \, , & k \neq 0 \, .
	\end{align*}
	It is then straightforward to check that under an affine reparametrisation of the geodesics of $\wt{k}$ and a change of contact form, the induced transformations of $\xi_{0}^{(k)}$ and $\xi_{0}^{(k)}$ allow us to view them as the trivialisations of some CR data $\left(\bm{\xi}_{\alpha}^{(i)}, [\nabla,\bm{\xi}_{0}^{(j)}] \right)_{i \in \mc{I}, j \in \mc{J}}$. We can therefore define a perturbation one-form $\wt{\xi}$ from this CR data, and an associated perturbed Fefferman metric
	\begin{align*}
		\wt{g}'_{\theta,\wt{\xi}} & := \wt{g}'_{\theta} + 4 \theta \odot \wt{\xi} \, .
	\end{align*}
	One can also verify that under a change of contact form, the metrics thus constructed rescale by the same conformal factor. Checking that $\iota$ is a conformal isometry that sends each metric $\wt{g}_{\theta}$ in $\accentset{n.e.}{\wt{\mbf{c}}}$ to a perturbed Fefferman metric $\wt{g}'_{\theta,\wt{\xi}}$, is routine. Finally, $\iota_* \wt{k}|_{\wt{p}} = \wt{k}'|_{\iota(\wt{p})}$ at every point $\wt{p}$ of $\wt{\mc{M}}$.
\end{proof}

\subsection{Curvature conditions}\label{sec:char_pert}
	Let $(\wt{\mc{M}},\wt{\mbf{c}},\wt{K})$ be an optical geometry of dimension $2m+2\geq4$ with twisting non-shearing congruence of null geodesics. Then, we already know, by Theorem \ref{thm:non-shearing-hidim}, that the twist induces a nearly Robinson structure $(\wt{N},\wt{k})$, where $\wt{k}$ is the distinguished section of $\wt{K}$ of Section \ref{sec:prefered}, if and only if the Weyl tensor satisfies
	\begin{align}\label{eq:Wkvkv}
		\wt{\Weyl}_{a b c d} \wt{k}^{a} \wt{v}^{b} \wt{k}^{c} \wt{v}^{d} & = 0 \, ,  & \mbox{for any $\wt{k} \in \Gamma(\wt{K}), \wt{v} \in \Gamma (\wt{K}^\perp)$.}
	\end{align}
	This condition is vacuous in dimension four, and we will henceforth focus on dimensions greater than four.

We presently examine further conformally invariant degeneracy curvature conditions.

\begin{prop}\label{prop:PetrovII}
	Let $(\wt{\mc{M}},\wt{\mbf{c}},\wt{K})$ be an optical geometry of dimension $2m+2>4$ with twisting non-shearing congruence of null geodesics $\wt{\mc{K}}$. Then the Weyl tensor satisfies
	\begin{align}
		\wt{\Weyl}_{a b c d} \wt{k}^{a} \wt{v}^{b} \wt{k}^{c} & = 0 \, , & \mbox{for any $\wt{k} \in \Gamma(\wt{K}), \wt{v} \in \Gamma (\wt{K}^\perp)$,} \label{eq:PetrovIIa} \\
		\wt{\Weyl}_{a b c d} \wt{k}^{a} \wt{v}^{b} \wt{\nabla}^{c} \wt{k}^{d} & = 0 \, , & \mbox{for any $\wt{k} \in \Gamma(\wt{K}), \wt{v} \in \Gamma (\wt{K}^\perp)$,} \label{eq:PetrovIIb}
	\end{align}
if and only if the twist of $\wt{\mc{K}}$ induces a nearly Robinson structure, and, for any $\wt{g} \in \wt{\mbf{c}}$,
	\begin{align}\label{eq:strongNS}
		\mathsterling_{\wt{k}} \wt{g} ( \wt{v},\cdot ) & \propto \wt{g} ( \wt{v},\cdot ) \, , & \mbox{for any $\wt{k} \in \Gamma(\wt{K}), \wt{v} \in \Gamma (\wt{K}^\perp)$.}
	\end{align}
\end{prop}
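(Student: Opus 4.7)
The plan is to work in the preferred frame of Section \ref{sec:prefered}: fix a metric $\wt{g}_{\theta}$ in the non-expanding subclass $\accentset{n.e.}{\wt{\mbf{c}}}$ so that it takes the form \eqref{eq:can_met} with $\wt{\kappa} = 2\,\varpi^{*}\theta$ and $\wt{k} = \partial/\partial\phi$. The first observation is that condition \eqref{eq:PetrovIIa} subsumes the integrability condition \eqref{eq:int-non-sh}: because of the pair-interchange and skew symmetries of $\wt{\Weyl}_{abcd}$, the quantity $\wt{\Weyl}_{abcd}\wt{k}^{a}\wt{v}^{b}\wt{k}^{c}\wt{w}^{d}$ is symmetric under $\wt{v} \leftrightarrow \wt{w}$, so contracting the free index $d$ of \eqref{eq:PetrovIIa} with any $\wt{w} \in \Gamma(\wt{K}^\perp)$ and polarising recovers \eqref{eq:int-non-sh}. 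Theorem \ref{thm:non-shearing-hidim} then delivers the nearly Robinson structure $(\wt{N},\wt{K})$ in one stroke. Similarly, since $\wt{\Weyl}_{abcd}$ is skew in $c,d$, the symmetric part of $\wt{\nabla}^{c}\wt{k}^{d}$ drops out of \eqref{eq:PetrovIIb}, which reduces to $\wt{\Weyl}_{abcd}\,\wt{k}^{a}\,\wt{v}^{b}\,\wt{\tau}^{cd} = 0$ for all $\wt{v} \in \Gamma(\wt{K}^\perp)$, where $\wt{\tau}_{ab} = \wt{\nabla}_{[a}\wt{k}_{b]}$ is the twist $2$-form.

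The second step is to translate the strong non-shearing property \eqref{eq:strongNS} into a Fourier-mode statement about the coefficients of $\wt{\lambda}$. Using $\mathsterling_{\wt{k}}\wt{g}_{\theta} = 4\,\theta \odot \dot{\wt{\lambda}}$, for $\wt{v} \in \Gamma(\wt{K}^\perp)$ one finds
\begin{align*}
\mathsterling_{\wt{k}}\wt{g}_{\theta}(\wt{v},\cdot) & = 2\,\dot{\wt{\lambda}}(\wt{v})\,\theta\, , & \wt{g}_{\theta}(\wt{v},\cdot) & = 2\,\wt{\lambda}(\wt{v})\,\theta + h(\wt{v},\cdot) \, .
\end{align*}
Since the horizontal summand $h(\wt{v},\cdot)$ on the right-hand side has no counterpart on the left, proportionality forces the proportionality factor to vanish, and \eqref{eq:strongNS} amounts exactly to $\dot{\wt{\lambda}}_{\alpha} = \dot{\wt{\lambda}}_{\bar{\alpha}} = 0$; equivalently, in the expansion \eqref{eq:lambda_Fourier}, to the vanishing of $\lambda^{(k)}_{\alpha}$ for every $k \neq 0$.

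The last step is to match the residual content of \eqref{eq:PetrovIIa}--\eqref{eq:PetrovIIb} to this mode condition. Beyond \eqref{eq:int-non-sh}, the new information in \eqref{eq:PetrovIIa} is the vanishing of $\wt{\Weyl}(\wt{k},\wt{v},\wt{k},\wt{\ell})$ for $\wt{v} \in \Gamma(\wt{K}^\perp)$, where $\wt{\ell}$ is any vector field transverse to $\wt{K}^\perp$ (for instance, the dual vector field $\wt{\ell}$ of Section \ref{sec:prefered}); the reduced form of \eqref{eq:PetrovIIb} above is a screen-trace condition along $\wt{\tau}$. Both of these Weyl components can be expressed in the adapted coframe using the structure equations \eqref{eq:structure_CR} and the Koszul formula for the Levi-Civita connection of \eqref{eq:can_met}, in the same spirit as \cite[Appendix A]{TaghaviChabert2022}; the result is in each case a linear combination of $\dot{\wt{\lambda}}_{\alpha}$, $\dot{\wt{\lambda}}_{\bar{\alpha}}$ and their horizontal derivatives, which vanishes precisely when the mode condition holds. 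The main obstacle is the book-keeping of this explicit curvature computation: it is lengthy but mostly routine, the only genuine subtlety being the additional Nijenhuis-torsion contributions arising from the non-involutivity of $(H,J)$, which do not appear in the analogous involutive Fefferman-type calculations.
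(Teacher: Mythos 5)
Your first two steps are sound and coincide with the paper's: condition \eqref{eq:PetrovIIa} implies \eqref{eq:int-non-sh}, so Theorem \ref{thm:non-shearing-hidim} yields the nearly Robinson structure, and in the non-expanding gauge condition \eqref{eq:strongNS} is indeed equivalent to $\dot{\wt{\lambda}}_{\alpha}=0$. The gap is in your final step, where you assert that \emph{each} of \eqref{eq:PetrovIIa} and \eqref{eq:PetrovIIb} separately reduces to ``a linear combination of $\dot{\wt{\lambda}}_{\alpha}$, $\dot{\wt{\lambda}}_{\bar{\alpha}}$ and their horizontal derivatives, which vanishes precisely when the mode condition holds.'' That is not what the computation gives, and the claimed equivalence fails condition by condition. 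In the paper, \eqref{eq:PetrovIIa} is equivalent to the second-order ODE in the fibre variable whose general solution is
\begin{align*}
	\wt{\lambda}_{\alpha} & =
	\begin{cases}
		\lambda_{\alpha} + \mu_{\alpha} \phi \, , & m = 2 \, , \\
		\lambda_{\alpha} + \mu_{\alpha} \e^{\tfrac{2m-4}{2m-1} \i \phi} \, , & m > 2 \, ,
	\end{cases}
\end{align*}
while \eqref{eq:PetrovIIb} amounts to $\ddot{\wt{\lambda}}_{\alpha} + 2(2m^2+m-2)\i\,\dot{\wt{\lambda}}_{\alpha} = 0$, i.e.\ $\wt{\lambda}_{\alpha} = \lambda_{\alpha} + \nu_{\alpha}\e^{-2(2m^2+m-2)\i\phi}$. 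Each condition on its own admits non-constant solutions, so neither is equivalent to $\dot{\wt{\lambda}}_{\alpha}=0$; note also that the second involves $\ddot{\wt{\lambda}}_{\alpha}$, a second $\phi$-derivative, which your description omits.

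The actual content of the proposition — and the reason both hypotheses are needed — is that the two solution families are incompatible except for constants: the frequency $\tfrac{2m-4}{2m-1}$ (or the linear-in-$\phi$ mode when $m=2$) never coincides with $-2(2m^2+m-2)$ when $m>1$, so imposing both forces $\mu_{\alpha}=\nu_{\alpha}=0$ and hence $\dot{\wt{\lambda}}_{\alpha}=0$. This comparison of Fourier modes is precisely where the dimensional restriction $2m+2>4$ enters, and it is also what makes the converse direction work: assuming $\dot{\wt{\lambda}}_{\alpha}=0$ one must check that both curvature expressions vanish, which is immediate from the two ODE forms but would not follow from a vague ``linear combination of $\dot{\wt{\lambda}}_{\alpha}$'' claim. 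To repair your argument you need to carry out (or cite, as the paper does from \cite[Appendix~A]{TaghaviChabert2022}) the explicit integration of each condition in $\phi$ and then run the mode-matching argument; the step you dismissed as book-keeping is the crux of the proof.
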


\begin{proof}
Condition \eqref{eq:PetrovIIa} implies \eqref{eq:Wkvkv}, which tells us, by Theorem \ref{thm:non-shearing-hidim}, that we can view our conformal manifold as a twist-induced nearly Robinson geometry $(\wt{\mc{M}},\wt{\mbf{c}},\wt{N},\wt{k})$ as in Section \ref{sec:prefered}. We work with a metric $\wt{g}_{\theta}$ in $\accentset{n.e.}{\wt{\mbf{c}}}$ for some pseudo-Hermitian structure $\theta$ as given by \eqref{eq:can_met}. It is shown in \cite{TaghaviChabert2022} that the Weyl tensor satisfies \eqref{eq:PetrovIIa} if and only if the component $\wt{\lambda}_{\alpha}$ satisfies
\begin{align}\label{eq:condlambda1}
	\wt{\lambda}_{\alpha} & =
	\begin{cases}
		\lambda_{\alpha} + \mu_{\alpha} \phi \, , & m = 2 \, , \\
		\lambda_{\alpha} + \mu_{\alpha} \e^{\tfrac{2m-4}{2m-1} \i \phi} \, , & m > 2 \, ,
	\end{cases}
\end{align}
for some $\lambda_{\alpha}, \mu_{\alpha} \in \Gamma(\mc{E}_{\alpha})$. On the other hand, using \cite[Appendix~A]{TaghaviChabert2022}, we can compute, in the obvious notation,
\begin{align*}
	\wt{\Weyl}_{\beta}{}^{\beta 0}{}_{\alpha}
	& = - \tfrac{1}{4m} \left( \ddot{\wt{\lambda}}_{\alpha} +  2(2m^2 + m - 2)\i \dot{\wt{\lambda}}_{\alpha} \right) \, ,
\end{align*}
and so, condition \eqref{eq:PetrovIIb} is equivalent to
\begin{align}\label{eq:condlambda2}
	\wt{\lambda}_{\alpha} & = \lambda_{\alpha} + \nu_{\alpha} \e^{- 2(2m^2 + m - 2) \i \phi} \, , 
\end{align}
for some $\lambda_{\alpha}, \nu_{\alpha} \in \Gamma (\mc{E}_{\alpha})$. By \eqref{eq:condlambda1} and \eqref{eq:condlambda2}, and since $m>1$, it immediately follows that $\wt{\lambda}_{\alpha} = \lambda_{\alpha}$. But, for any $\wt{v} \in \Gamma( \langle \wt{k} \rangle^\perp)$, we have that
\begin{align*}
	\mathsterling_{\wt{k}} \wt{g}_{\theta}  (\wt{v},\cdot) & = 4 \theta \odot \left( \wt{k} \hook \d \wt{\lambda} + \d (\wt{k} \hook \wt{\lambda} ) \right) (\wt{v},\cdot) = 2 \dot{\wt{\lambda}}_{\alpha} \theta^{\alpha}(\wt{v}) \theta = 0 \, ,
\end{align*}
which establishes condition \eqref{eq:strongNS}. The converse works in the same way.
\end{proof}

\begin{rem}\label{rem:Cotton-Weyl}
	Note that, for any $\wt{k} \in \Gamma(\wt{K}), \wt{v} \in \Gamma (\wt{K}^\perp)$, we have
	\begin{align*}
		(n-1)  \wt{k}^a \wt{v}^{b} \wt{k}^{c} \wt{\Cot}_{abc} 
		& = - \wt{\nabla}^c (\wt{k}^d \wt{k}^a \wt{v}^{b} \wt{\Weyl}_{c d a b} ) - ( \wt{\nabla}^d \wt{v}^{b} ) \wt{k}^c \wt{\Weyl}_{c d a b} \wt{k}^a + \tfrac{3}{2}  \wt{k}^a \wt{v}^{b} \wt{\Weyl}_{a b c d} \wt{\nabla}^{c} \wt{k}^{d} \, .
	\end{align*}
Assuming \eqref{eq:PetrovIIa}, which can be re-expressed as $\wt{k}^c \wt{\Weyl}_{c d a b} \wt{k}^a = \wt{f} \wt{\kappa}_{d} \wt{\kappa}_{b}$ for some function $\wt{f}$ and where $\wt{\kappa} = \wt{g}(\wt{k},\cdot)$, the above equation reduces to $(n-1)  \wt{k}^a \wt{v}^{b} \wt{k}^{c} \wt{\Cot}_{abc} = \tfrac{3}{2}  \wt{k}^a \wt{v}^{b} \wt{\Weyl}_{a b c d} \wt{\nabla}^{c} \wt{k}^{d}$. Hence, \eqref{eq:PetrovIIb} is equivalent to
	\begin{align*}
	\wt{\Cot}_{a b c} \wt{k}^{a} \wt{v}^{b} \wt{k}^{c} & = 0 \, , & \mbox{for any $\wt{k} \in \Gamma(\wt{K}), \wt{v} \in \Gamma (\wt{K}^\perp)$,}
\end{align*}	
which is conformally invariant provided that \eqref{eq:PetrovIIa} holds.
\end{rem}

\begin{rem}\label{rem:repPND}
	In dimension $2m+2 = 4$, condition \eqref{eq:PetrovIIa} is the defining equation for $\wt{K}$ being a \emph{repeated principal null direction} of the Weyl tensor --- see \cite[Section~3.4]{TaghaviChabert2023}. In the context of twisting non-shearing congruences of null geodesics, it immediately implies that $\wt{\lambda}_{\alpha} = \lambda_{\alpha} + \mu_{\alpha} \e^{-2 \i \phi}$ for some $\lambda_{\alpha}, \mu_{\alpha} \in \Gamma(\mc{E}_{\alpha})$, which can readily be interpreted as a Fourier expansion of $\wt{\lambda}_{\alpha}$ viewed as a periodic function with period $\pi$ --- this is treated in \cite{TaghaviChabert2023}. In addition, conditions \eqref{eq:PetrovIIa} and \eqref{eq:PetrovIIb} turn out to be equivalent.
	
	This should be contrasted with the even higher-dimensional case: merely assuming \eqref{eq:PetrovIIa}, the integration \eqref{eq:condlambda1} tells us that $\wt{\lambda}_{\alpha}$ is clearly not periodic in $\phi$ in the case $m=2$, while in the case $m>2$, the period $\tfrac{2m-4}{2m-1} \pi$ is not an integer multiple of $\pi$, which is problematic if one ultimately desires to interpret $\wt{\mc{M}}$ as Fefferman's bundle over $\mc{M}$. It is then necessary to impose the additional requirement \eqref{eq:PetrovIIb} to force $\wt{\lambda}$ to be interpretable as an appropriate Fourier expansion in $\phi$.
\end{rem}

To fully integrate out the fibre dependence of $\wt{\lambda}$, we need the following stronger condition.
\begin{prop}\label{prop:PetrovIII}
	Let $(\wt{\mc{M}},\wt{\mbf{c}},\wt{K})$ be an optical geometry of dimension $2m+2>4$ with twisting non-shearing congruence of null geodesics $\wt{\mc{K}}$. Suppose the Weyl tensor satisfies
	\begin{align}
	\wt{\Weyl}_{a b c d} \wt{k}^{a} \wt{k}^{c} & = 0 \, , & \mbox{for any $\wt{k} \in \Gamma(\wt{K}), \wt{v} \in \Gamma (\wt{K}^\perp)$,} \label{eq:PetrovIIIa} \\
		\wt{\Weyl}_{a b c d} \wt{k}^{a} \wt{v}^{b} \wt{\nabla}^{c} \wt{k}^{d} & = 0 \, , & \mbox{for any $\wt{k} \in \Gamma(\wt{K}), \wt{v} \in \Gamma (\wt{K}^\perp)$.} \label{eq:PetrovIIIb}
\end{align}
Then $(\wt{\mc{M}},\wt{\mbf{c}},\wt{K})$ is locally conformally isometric to a perturbed Fefferman space\linebreak $(\wt{\mc{M}}',\wt{\mbf{c}}'_{\wt{\xi}},\wt{k}') \longrightarrow (\mc{M},H,J)$, and the perturbation one-form $\wt{\xi}$ is determined by the CR data $\left(\bm{\xi}_{\alpha}^{(0)}, [\nabla, \bm{\xi}_{0}^{(0)}] \right)$ where
	\begin{align}
		\bm{\xi}_{0}^{(0)} & = \tfrac{\i}{m} \left( \nabla_{\alpha} \bm{\xi}^{\alpha}_{(0)} -   \nabla^{\alpha} \bm{\xi}_{\alpha}^{(0)} \right) \, , \label{eq:coeff00} 
	\end{align}
In particular, $\wt{k}$ is conformal Killing.
\end{prop}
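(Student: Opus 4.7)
The first observation is that \eqref{eq:PetrovIIIa} specialises to \eqref{eq:PetrovIIa} upon restricting the free indices $b$ and $d$ to $\wt{K}^\perp$, so the hypotheses of Proposition \ref{prop:PetrovII} are already subsumed. We conclude that the twist induces a nearly Robinson structure $(\wt{N},\wt{k})$, and I work in the framework of Section \ref{sec:prefered}: fix a pseudo-Hermitian structure $\theta$ on the leaf space $(\mc{M},H,J)$ and the associated metric $\wt{g}_{\theta} \in \accentset{n.e.}{\wt{\mbf{c}}}$ of the form \eqref{eq:can_met}, for which we already know that $\wt{\lambda}_{\alpha} = \lambda_{\alpha}$ is independent of $\phi$.

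The content of \eqref{eq:PetrovIIIa} beyond \eqref{eq:PetrovIIa} is the vanishing of $\wt{\Weyl}(\wt{k},\wt{\ell},\wt{k},\cdot)$, where $\wt{\ell}$ is the distinguished null vector dual to $\wt{k}$ associated to $\wt{g}_{\theta}$. These components split into the boost-weight-one pieces $\wt{\Weyl}(\wt{k},\wt{\ell},\wt{k},\wt{e}_{\beta})$ and the boost-weight-zero scalar $\wt{\Weyl}(\wt{k},\wt{\ell},\wt{k},\wt{\ell})$. The plan is to compute both using the explicit curvature formulas of \cite[Appendix~A]{TaghaviChabert2022} for the metric \eqref{eq:can_met} with $\wt{\lambda}_{\alpha}$ already purged of its fibre dependence. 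The vectorial condition is expected to yield a second-order ODE in $\phi$ for $\wt{\lambda}_0$ of the shape $\ddot{\wt{\lambda}}_0 + c\,\dot{\wt{\lambda}}_0 = 0$ with a characteristic exponent that, since $m>1$, is not a non-trivial integer multiple of $\i$ (cf.\ the integrations \eqref{eq:condlambda1}--\eqref{eq:condlambda2} in the proof of Proposition \ref{prop:PetrovII}), and well-definedness on the circle bundle forces every non-constant Fourier mode of $\wt{\lambda}_0$ to vanish. Hence $\wt{\lambda}_0$ is also independent of $\phi$.

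Once both $\wt{\lambda}_\alpha$ and $\wt{\lambda}_0$ are shown to be $\phi$-independent, Lemma \ref{lem:NSh2Feff} applied with $\mc{I} = \mc{J} = \{0\}$ identifies $(\wt{\mc{M}},\wt{\mbf{c}},\wt{k})$ locally with a perturbed Fefferman space over $(\mc{M},H,J)$ whose CR data is $(\bm{\xi}_\alpha^{(0)}, [\nabla, \bm{\xi}_0^{(0)}])$. The residual algebraic consequence of \eqref{eq:PetrovIIIa}, after the Fourier integration, is the vanishing of the scalar component $\wt{\Weyl}(\wt{k},\wt{\ell},\wt{k},\wt{\ell})$, which collapses to a single linear relation between $\wt{\lambda}_0$ and the skew combination $\nabla^{\alpha}\lambda_{\alpha} - \nabla_{\alpha}\lambda^{\alpha}$. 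Translating through the dictionary \eqref{eq:lmb2xi} and noting that the pure-gauge contribution $\i \sigma^{-1} \nabla \sigma$ cancels in this antisymmetric combination, together with the corresponding cancellation of the Schouten and Nijenhuis terms between $\lambda_0$ and $\xi_0^{(0)}$, delivers the explicit formula \eqref{eq:coeff00}. Finally, $\wt{k} = \partial/\partial\phi$ is manifestly Killing for the $\phi$-independent perturbed Fefferman metric $\wt{g}_{\theta,\wt{\xi}}$, hence conformal Killing for the conformal class $\wt{\mbf{c}}_{\wt{\xi}}$.

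The main technical burden lies in the second paragraph: performing the curvature computation carefully enough to extract the exact linear operator in $\phi$ acting on $\wt{\lambda}_0$, and verifying that the resulting ODEs admit only the constant solution. This is the scalar analogue of the vectorial Fourier analysis performed for $\wt{\lambda}_\alpha$ in Proposition \ref{prop:PetrovII}, but carried out in the presence of the additional algebraic constraint that must ultimately be matched against \eqref{eq:coeff00}.
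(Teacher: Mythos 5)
Your overall strategy coincides with the paper's: reduce to Proposition \ref{prop:PetrovII} to kill the $\phi$-dependence of $\wt{\lambda}_{\alpha}$, then exploit the remaining content of \eqref{eq:PetrovIIIa} to pin down $\wt{\lambda}_{0}$, and finish with the dictionary \eqref{eq:lmb2xi} and Lemma \ref{lem:NSh2Feff}. However, the division of labour you assign to the two families of components is wrong, and as written the middle step does not close. Once \eqref{eq:PetrovIIa} holds, the pair symmetry $\wt{\Weyl}_{abcd}=\wt{\Weyl}_{cdab}$ gives $\wt{\Weyl}(\wt{k},\wt{\ell},\wt{k},\wt{e}_{\beta}) = \wt{\Weyl}(\wt{k},\wt{e}_{\beta},\wt{k},\wt{\ell}) = 0$ automatically, since $\wt{e}_{\beta}\in\Gamma(\wt{K}^{\perp})$. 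So the ``vectorial'' components carry no information beyond Proposition \ref{prop:PetrovII}, and in particular no ODE for $\wt{\lambda}_{0}$: executed literally, your second paragraph produces $0=0$ and never establishes the $\phi$-independence of $\wt{\lambda}_{0}$. The only genuinely new scalar in \eqref{eq:PetrovIIIa} is $\wt{\Weyl}(\wt{k},\wt{\ell},\wt{k},\wt{\ell})$, and, contrary to your description of it as a ``residual algebraic consequence,'' it is a differential relation in $\phi$; this single component is what the paper computes, and it yields both $\dot{\wt{\lambda}}_{0}=0$ and the explicit value $\wt{\lambda}_{0} = \tfrac{\i}{m}\left(\nabla_{\alpha}\lambda^{\alpha}_{(0)} - \nabla^{\alpha}\lambda_{\alpha}^{(0)}\right) + \tfrac{1}{2m(m+1)}\left(\Sc - \|\Nh\|^{2}\right)$.

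A second, related point: you invoke ``well-definedness on the circle bundle'' to discard non-constant Fourier modes, but at this stage $(\wt{\mc{M}},\wt{\mbf{c}},\wt{K})$ is only an optical geometry; the identification with the Fefferman circle bundle is the \emph{conclusion} delivered by Lemma \ref{lem:NSh2Feff}, not an available hypothesis. What actually kills the $\phi$-dependence is the reality of $\wt{\lambda}_{0}$ applied to the complex-coefficient ODE coming from $\wt{\Weyl}(\wt{k},\wt{\ell},\wt{k},\wt{\ell})=0$, in the same spirit in which Proposition \ref{prop:PetrovII} compares the two explicit integrations \eqref{eq:condlambda1} and \eqref{eq:condlambda2} rather than appealing to periodicity. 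Your final step is essentially right, though the ``cancellation'' of the pure-gauge terms is not automatic: one must apply the commutation relation \eqref{eq:CR_com1} to convert $\nabla_{\alpha}(\sigma^{-1}\nabla^{\alpha}\sigma)-\nabla^{\alpha}(\sigma^{-1}\nabla_{\alpha}\sigma)$ into Webster--Ricci, Nijenhuis and $\nabla_{0}\sigma$ terms, which then cancel against the Schouten and $\|\Nh\|^{2}$ contributions in \eqref{eq:lmb2xi} to produce \eqref{eq:coeff00}. The conformal Killing statement is immediate once all coefficients are $\phi$-independent, as you say.
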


\begin{proof}
	Again, condition \eqref{eq:PetrovIIa} implies \eqref{eq:Wkvkv}, so that, by Theorem \ref{thm:non-shearing-hidim}, we view our conformal manifold as a twist-induced nearly Robinson geometry $(\wt{\mc{M}},\wt{\mbf{c}},\wt{N},\wt{k})$ as in Section \ref{sec:prefered}. Let $\wt{g}_{\theta}$ be any metric in $\accentset{n.e.}{\wt{\mbf{c}}}$ for some pseudo-Hermitian structure $\theta$ as given by \eqref{eq:can_met}. Clearly, the pair of conditions \eqref{eq:PetrovIIIa} and \eqref{eq:PetrovIIIb} implies the hypotheses of Proposition \ref{prop:PetrovII}, which allows us to write $\wt{\lambda}_{\alpha} = \lambda_{\alpha}^{(0)}$ for some $\lambda_{\alpha}^{(0)} \in \Gamma(\mc{E}_{\alpha})$. To integrate out the $\phi$-dependence of $\wt{\lambda}_{0}$, we must make full use of \eqref{eq:PetrovIIIa} --- incidentally, this constraint was already used in Theorem \ref{thm:Fefferman-CRA} --- and compute $\wt{W} (\wt{k}, \wt{\ell}, \wt{k}, \wt{\ell})$, where $\wt{\ell}$ is such that $\wt{g}_{\theta}(\wt{k},\wt{\ell}) =1$. We find that the component $\wt{\lambda}_0$ is constant along $\wt{k}$, and
\begin{align*}
\wt{\lambda}_0  & = \lambda_0^{(0)} :=  
	 \tfrac{\i}{m}  \left( \nabla_{\alpha} \lambda^{\alpha}_{(0)} - \nabla^{\alpha} \lambda_{\alpha}^{(0)}  \right) 	+ \tfrac{1}{2m(m+1)} \left( \Sc - \| \Nh \|^2 \right) \, .
\end{align*}
We can now use \eqref{eq:lmb2xi} and apply the commutation relation \eqref{eq:CR_com1} to conclude
	\begin{align*}
		\xi_{\alpha}^{(0)}  & = \lambda_{\alpha}^{(0)} - \i \sigma^{-1} \nabla_{\alpha} \sigma    \, , &
		\xi_{0}^{(0)} & = \tfrac{\i}{m}  \left( \nabla_{\alpha} \xi^{\alpha}_{(0)} - \nabla^{\alpha} \xi_{\alpha}^{(0)}  \right) \, .
	\end{align*}
Invoking Lemma \ref{lem:NSh2Feff} completes the proof.
\end{proof}

\begin{rem}
	We note that if the perturbation one-form is (locally) exact then \eqref{eq:coeff00} holds by Lemma \ref{lem:closedg}. It also means that  $\wt{\mbf{c}}$ is locally conformally isometric to a Fefferman space. In particular, the additional curvature condition \eqref{eq-int_cond} with $\alpha=1$ holds.
\end{rem}

\begin{rem}
In dimension $2m+2=4$, condition \eqref{eq:PetrovIIIa} (which implies \eqref{eq:PetrovIIIb}) is equivalent to $\wt{K}$ being a \emph{triple principal null direction} of the Weyl tensor --- see \cite[Section~3.4]{TaghaviChabert2023}.
\end{rem}
	
	It is proved in \cite{TaghaviChabert2023} that in dimension four, full integration of the components of $\wt{\lambda}_{\alpha}$ and $\wt{\lambda}_{0}$ is possible under weaker curvature conditions, namely that the Weyl tensor and the Bach tensor satisfy
	\begin{align*}
		\wt{\Weyl}_{a b c d} \wt{k}^{a} \wt{v}^{b} \wt{k}^{c} & = 0 \, , & \mbox{for any $\wt{k} \in \Gamma(\wt{K}), \wt{v} \in \Gamma (\wt{K}^\perp)$,} \\
		\wt{\Bach}_{a b} \wt{k}^{a} \wt{k}^{b} & = 0 \, , & \mbox{for any $\wt{k} \in \Gamma(\wt{K})$,}
	\end{align*}
	respectively. In higher dimensions, the Bach tensor is not conformally invariant any more, but there is a conformally invariant analogue, known as the \emph{Fefferman--Graham obstruction tensor}, introduced in \cite{Fefferman1985}, see also \cite{Gover2006}, which we shall denote by $\wt{\FG}$. This tensor is an obstruction to the existence of an Einstein metric in the conformal class. Its explicit form is dimension-dependent and rather complicated. We shall content ourselves to conjecture the following generalisation of \cite[Theorem~5.6]{TaghaviChabert2023} to dimensions $2m+2>4$. The form of the perturbation one-form given below is motivated by Theorems \ref{thm:hlfEinsc} and \ref{thm:pertFeffpurad} of the next section.

\begin{conj}\label{conj}
	Let $(\wt{\mc{M}},\wt{\mbf{c}},\wt{K})$ be an optical geometry of dimension $2m+2>4$ with twisting non-shearing congruence of null geodesics $\wt{\mc{K}}$. Suppose the Weyl tensor satisfies
	\begin{align*}
		\wt{\Weyl}_{a b c d} \wt{k}^{a} \wt{v}^{b} \wt{k}^{c} & = 0 \, , &
		\wt{\Weyl}_{a b c d} \wt{k}^{a} \wt{v}^{b} \wt{\nabla}^{c} \wt{k}^{d} & = 0 \, , & \mbox{for any $\wt{k} \in \Gamma(\wt{K}), \wt{v} \in \Gamma (\wt{K}^\perp)$,}
	\end{align*}
	and the Fefferman--Graham obstruction tensor
	\begin{align*}
		\wt{\FG}_{a b} \wt{k}^a \wt{k}^b & = 0 \, , & \mbox{for any $\wt{k} \in \Gamma(\wt{K})$.}
	\end{align*}
	Then $(\wt{\mc{M}},\wt{\mbf{c}},\wt{K})$ is locally conformally isometric to a perturbed Fefferman space\linebreak $(\wt{\mc{M}}',\wt{\mbf{c}}'_{\wt{\xi}},\wt{k}') \longrightarrow (\mc{M},H,J)$, and the perturbation one-form $\wt{\xi}$ is determined by the CR data $\left(\bm{\xi}_{\alpha}^{(0)}, [\nabla, \bm{\xi}_{0}^{(0)}], \bm{\xi}_{0}^{(2k)} \right)_{k=1,\ldots,m+1}$ where $\bm{\xi}_{0}^{(0)} = \tfrac{\i}{m} \left( \nabla_{\alpha} \bm{\xi}^{\alpha}_{(0)} -   \nabla^{\alpha} \bm{\xi}_{\alpha}^{(0)} \right)$.
\end{conj}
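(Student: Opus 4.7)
The plan is to parallel the argument of Proposition \ref{prop:PetrovIII}, but to replace the strong Weyl condition \eqref{eq:PetrovIIIa} by the obstruction-tensor vanishing, which should still be strong enough to pin down the $\phi$-dependence of the remaining metric component $\wt{\lambda}_0$. Since the hypothesis $\wt{\Weyl}_{abcd}\wt{k}^a\wt{v}^b\wt{k}^c = 0$ specialises to \eqref{eq:Wkvkv}, Theorem \ref{thm:non-shearing-hidim} guarantees a nearly Robinson structure $(\wt{N},\wt{k})$ induced by the twist, so I may fix a metric $\wt{g}_\theta \in \accentset{n.e.}{\wt{\mbf{c}}}$ in the normal form \eqref{eq:can_met}. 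The two Weyl hypotheses are now exactly those of Proposition \ref{prop:PetrovII}, whose proof already forces $\wt{\lambda}_\alpha = \lambda_\alpha^{(0)}$ to be independent of the fibre coordinate $\phi$; via \eqref{eq:lmb2xi} this produces the CR datum $\bm{\xi}_\alpha^{(0)}$.

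Next I would compute the contraction $\wt{\FG}_{ab}\wt{k}^a\wt{k}^b$ in the adapted frame associated to $\wt{g}_\theta$. Because the Fefferman--Graham obstruction tensor in dimension $2m+2$ is of order $2m+2$ in the metric and $\wt{k} = \partial/\partial\phi$, this contraction becomes a linear differential operator of order at most $2m+2$ in $\phi$ acting on the only remaining unknown function $\wt{\lambda}_0$, all other quantities having been fixed by the previous step and by the intrinsic structure equations on $(\mc{M},H,J)$. The vanishing condition $\wt{\FG}_{ab}\wt{k}^a\wt{k}^b = 0$ then translates into a linear ODE with $\phi$-independent coefficients for $\wt{\lambda}_0(\phi)$.

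The third step is to read off the characteristic exponents of this ODE. Guided by the four-dimensional Bach computation of \cite{TaghaviChabert2023} and by the range of Fourier modes that appear in Theorems \ref{thm:hlfEinsc} and \ref{thm:pertFeffpurad}, the expected exponents are precisely $\{0,\pm 2\i,\pm 4\i,\ldots,\pm 2(m+1)\i\}$. This would give the Fourier expansion $\wt{\lambda}_0 = \lambda_0^{(0)} + \sum_{k=1}^{m+1}\bigl(\lambda_0^{(2k)}\e^{2k\i\phi} + \lambda_0^{(-2k)}\e^{-2k\i\phi}\bigr)$, with $\lambda_0^{(-k)} = \overline{\lambda_0^{(k)}}$ by reality. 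The zero-mode part of the obstruction equation should then reproduce, after applying the commutation relation \eqref{eq:CR_com1}, the same closure relation $\bm{\xi}_0^{(0)} = \tfrac{\i}{m}\bigl(\nabla_\alpha \bm{\xi}^\alpha_{(0)} - \nabla^\alpha \bm{\xi}_\alpha^{(0)}\bigr)$ obtained in Proposition \ref{prop:PetrovIII}, the Webster--scalar and Nijenhuis-norm contributions being absorbed into the conversion \eqref{eq:lmb2xi} between $\lambda$- and $\xi$-coefficients. The remaining modes $\lambda_0^{(\pm 2k)}$ for $k=1,\ldots,m+1$ translate via \eqref{eq:Fourier_coef} into the CR data $\bm{\xi}_0^{(2k)}$, and Lemma \ref{lem:NSh2Feff} then realises $(\wt{\mc{M}},\wt{\mbf{c}},\wt{K})$ locally as the perturbed Fefferman space with the stated CR data.

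The main obstacle is unambiguously the explicit computation of $\wt{\FG}_{ab}\wt{k}^a\wt{k}^b$: no closed-form expression for the Fefferman--Graham obstruction tensor is available in arbitrary even dimension, so extracting the principal symbol of the ODE and determining its characteristic polynomial is far from routine. I envisage two viable routes: performing the calculation recursively through the ambient-metric construction of \cite{Fefferman1985}, exploiting the fact that the distinguished vector $\wt{k}$ should lift to an ambient symmetry so as to simplify the expansion; or using tractor-calculus techniques along the lines of \cite{Gover2006}. The secondary technical point is to prove that the characteristic roots are precisely the even integer multiples of $\i$ in the range $[-2(m+1)\i,2(m+1)\i]$, with no extraneous roots; this should reduce to verifying a polynomial identity of degree $2m+3$, matching exactly the number of real Fourier coefficients in the predicted CR data.
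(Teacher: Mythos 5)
The statement you are addressing is stated in the paper as Conjecture \ref{conj}: the author explicitly declines to prove it, remarking that the explicit form of the Fefferman--Graham obstruction tensor is dimension-dependent and rather complicated, and that the proposed form of the perturbation one-form is merely \emph{motivated} by Theorems \ref{thm:hlfEinsc} and \ref{thm:pertFeffpurad}. Your proposal does not close this gap either. Its first step is sound and matches the paper's framework: the two Weyl hypotheses are exactly those of Proposition \ref{prop:PetrovII}, so $\wt{\lambda}_{\alpha}$ is $\phi$-independent and yields the datum $\bm{\xi}_{\alpha}^{(0)}$ via \eqref{eq:lmb2xi}. Everything after that rests on a computation you do not carry out and whose outcome you only guess at.

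Concretely, three things are missing. First, the contraction $\wt{\FG}_{ab}\wt{k}^a\wt{k}^b$ is never computed; without it there is no equation for $\wt{\lambda}_0$ at all. Second, you assert that this contraction is a \emph{linear} ODE in $\wt{\lambda}_0$ with $\phi$-independent coefficients of order at most $2m+2$. The obstruction tensor is polynomial in the curvature and its covariant derivatives, so a priori the contraction contains products of $\wt{\lambda}_0$ and its $\phi$-derivatives; linearity and constancy of coefficients are exactly the kind of structural facts that must be verified, not assumed (in dimension four the analogous Bach computation of \cite{TaghaviChabert2023} does work out, but that is a genuinely separate calculation and does not propagate to the higher-order obstruction tensor). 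Third, even granting a constant-coefficient linear ODE, the claim that its characteristic roots are precisely $0,\pm2\i,\dots,\pm2(m+1)\i$, each occurring with the right multiplicity and with no extraneous roots, is pure extrapolation: Theorems \ref{thm:hlfEinsc} and \ref{thm:pertFeffpurad} obtain that Fourier range from the Ricci (half-Einstein and Einstein) equations, not from the obstruction tensor, so they provide motivation but no logical support here. What you have written is a reasonable research plan for attacking the conjecture, with the hardest step --- the ambient-metric or tractor computation of $\wt{\FG}_{ab}\wt{k}^a\wt{k}^b$ --- explicitly deferred; it is not a proof, and the paper itself offers none.
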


\section{Distinguished almost Lorentzian scales}\label{sec:al_Lor_sc}
We now investigate the consequences of the existence of the distinguished almost Lorentzian scales that we introduced in Section \ref{sec:confsc}. We remind the reader that a Fefferman space will here refer to a $1$-Fefferman space except in Corollary \ref{cor:Einstein_Fefferman}. This comes as no cost since, for any $1 \neq \alpha \in \R$,  any $\alpha$-Fefferman space can then be viewed as a perturbed Fefferman space.

\subsection{Preliminary results}
For the time being, the dimension is assumed to be $2m+2 \geq4$. Let us first describe the zero set of almost Lorentzian scales.
\begin{prop}\label{prop:density_Ric_NS}
	Let $(\wt{\mc{M}},\wt{\mbf{c}},\wt{N},\wt{k}) \accentset{\varpi}{\longrightarrow} (\mc{M},H,J)$ be a twist-induced nearly Robinson geometry with non-shearing congruence. Then in the neighbourhood $\wt{\mc{U}}$ of each point, we can always find an almost Lorentzian scale $\wt{\sigma} \in \Gamma(\wt{\mc{E}}[1])$ that satisfies 
	\begin{align}\label{eq:Rickksc_a}
		\wt{k}{}^{a} \wt{k}{}^{b} \left( \wt{\nabla}_{a} \wt{\nabla}_{b} \wt{\sigma} + \wt{\Rho}_{a b} \wt{\sigma} \right) & = 0 \, ,
	\end{align}
	and whose zero set $\wt{\mc{Z}}$ consists of the union of sections of $\varpi: \wt{\mc{U}} \rightarrow \mc{U} := \varpi (\wt{\mc{U}}) \subset \mc{M}$ parametrised by the integers $\Z$.
	
	More specifically, there exists an affine parameter $\phi$ along the geodesics of $\wt{k}$, unique up to transformations $\phi \mapsto \phi + 2 k \pi$ for any $k \in \Z$, such that
	\begin{align}\label{eq:scRickk_a}
		\wt{\sigma} & = \cos \phi \cdot \wt{\sigma}_{\theta} \, ,
	\end{align}
	where $\wt{\sigma}_{\theta}$ is the conformal scale of the metric $\wt{g}_{\theta} \in \accentset{n.e.}{\wt{\mbf{c}}}$ associated to a unique contact form $\theta$ for $(H,J)$. In particular, the zero set of $\wt{\sigma}$ is
	\begin{align}\label{eq:zeroset}
		\wt{\mc{Z}} = \left\{ \phi = \tfrac{2 k + 1}{2}\pi \, | \, k \in \Z \right\} \, ,
	\end{align}
	and off $\wt{\mc{Z}}$, $\wt{\sigma}$ determines a smooth metric $\wt{g} = \wt{\sigma}{}^{-2} \wt{\bm{g}}$ of the form
	\begin{align}\label{eq:metRickk_a}
		\wt{g} & = \sec^2 \phi \cdot \wt{g}_{\theta} \, , 
	\end{align}
 	whose Ricci tensor satisfies
	\begin{align}\label{eq:lowRicond_a}
		\wt{\Ric}(\wt{k},\wt{k}) & = 0 \, .
	\end{align}
\end{prop}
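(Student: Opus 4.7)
The plan is to construct $\wt{\sigma}$ as a conformal rescaling of the canonical scale $\wt{\sigma}_{\theta}$ of a suitably normalised $\wt{g}_{\theta} \in \accentset{n.e.}{\wt{\mbf{c}}}$, writing $\wt{\sigma} = f\,\wt{\sigma}_{\theta}$ for a function $f$ on $\wt{\mc{U}}$ to be determined. Using that the Levi-Civita connection of $\wt{g}_{\theta}$ preserves $\wt{\sigma}_{\theta}$, and that $\wt{k} = \partial/\partial\phi$ is an affinely parametrised null geodesic generator (so $\wt{k}^{a}\wt{\nabla}_{a}^{(\wt{g}_{\theta})}\wt{k}^{b} = 0$), the conformally invariant equation \eqref{eq:Rickksc_a} reduces along each fibre of $\varpi$ to the second-order linear ODE
\begin{equation*}
\ddot{f} + \wt{\Rho}^{(\wt{g}_{\theta})}(\wt{k},\wt{k})\, f = 0 \, ,
\end{equation*}
where $\ddot{f} := \mathsterling_{\wt{k}}^{2} f$.

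The central step is to evaluate the coefficient $\wt{\Rho}^{(\wt{g}_{\theta})}(\wt{k},\wt{k}) = \tfrac{1}{n}\wt{\Ric}^{(\wt{g}_{\theta})}(\wt{k},\wt{k})$. I expect it to equal $1$ identically for every $\wt{g}_{\theta} \in \accentset{n.e.}{\wt{\mbf{c}}}$. The cleanest way to verify this is via Raychaudhuri's equation for the non-expanding, non-shearing, twisting congruence $\wt{\mc{K}}$, which gives $\wt{\Ric}^{(\wt{g}_{\theta})}(\wt{k},\wt{k}) = 2\|\omega\|^{2}$ where $\omega = \tfrac{1}{2}\d\wt{\kappa}$; a direct evaluation from $\d\wt{\kappa} = 2\i h_{\alpha\bar{\beta}}\,\theta^{\alpha}\wedge\ol{\theta}{}^{\bar{\beta}}$ then yields the constant value $n = 2m$. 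Alternatively, one may read the same value off directly from the explicit curvature formulas for metrics of the form \eqref{eq:can_met} derived in \cite{TaghaviChabert2022}. Independence of the value on the choice of $\theta \in \accentset{n.e.}{\wt{\mbf{c}}}$ follows from the conformal transformation rule \eqref{eq:Rho_transf}, since under $\theta \mapsto \e^{\varphi}\theta$ with $\varphi$ pulled back from $\mc{M}$, the associated $\wt{\Upsilon}$ is horizontal and both correction terms vanish on $(\wt{k},\wt{k})$.

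With $\wt{\Rho}^{(\wt{g}_{\theta})}(\wt{k},\wt{k}) = 1$, the general solution of the ODE along each geodesic is $f = A\cos(\phi - \phi_{0})$ with $A, \phi_{0}$ functions on $\mc{M}$. These two parameters are absorbed using the freedoms of Section \ref{sec:prefered}: the phase $\phi_{0}$ is removed by the affine reparametrisation \eqref{eq:reparam}, while the amplitude $A$ is absorbed by the rescaling $\theta \mapsto A^{-1}\theta$, which rescales $\wt{g}_{\theta}$ and $\wt{\sigma}_{\theta}$ consistently. After these adjustments one obtains $f = \cos\phi$ and a unique $\theta$, modulo the translations $\phi \mapsto \phi + 2k\pi$, $k \in \Z$, which preserve $\cos\phi$. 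Formulae \eqref{eq:scRickk_a}, \eqref{eq:zeroset} and \eqref{eq:metRickk_a} follow immediately, and \eqref{eq:lowRicond_a} off $\wt{\mc{Z}}$ is automatic since \eqref{eq:Rickksc_a} is the conformally invariant formulation of $\wt{\Rho}(\wt{k},\wt{k}) = 0$ (equivalently $\wt{\Ric}(\wt{k},\wt{k}) = 0$, as $\wt{k}$ is null) for the rescaled metric, in the spirit of Propositions \ref{prop:scale2metric} and \ref{prop:scale2metricB}. The one step requiring real care is the determination of the precise value $\wt{\Rho}^{(\wt{g}_{\theta})}(\wt{k},\wt{k}) = 1$; everything else is a routine integration of an ODE together with normalisation of the residual gauge freedoms.
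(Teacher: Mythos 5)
Your argument is correct and follows essentially the same route as the paper: write $\wt{\sigma}=f\,\wt{\sigma}_{\theta}$, reduce \eqref{eq:Rickksc_a} to $\ddot{f}+f=0$ using $\wt{\Rho}_{ab}\wt{k}^{a}\wt{k}^{b}=1$, and absorb the amplitude and phase of the general solution $\varrho\cos(\phi-\mr{\phi})$ by rescaling $\theta$ and reparametrising the geodesics. The only difference is that you verify $\wt{\Rho}(\wt{k},\wt{k})=1$ in-line via Raychaudhuri (correctly, since the twist equals the Levi form and so has squared norm $n$), whereas the paper simply cites this fact from \cite{TaghaviChabert2022}.
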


\begin{proof}
	Equation \eqref{eq:Rickksc_a} represents a second-order ordinary differential equation whose local solutions are guaranteed by the Picard--Lindel\"{o}f theorem. To be precise, we work in the neighbourhood $\wt{\mc{U}}$ of a point and we choose a conformal scale $\wt{\sigma}_{\theta}$ associated to a metric $\wt{g}_{\theta} \in \accentset{n.e.}{\wt{\mbf{c}}}$ for some contact form $\theta$ for $(H,J)$. Let $\phi$ be an affine parameter along the null geodesics of the congruence generated by $\wt{k}$, so that $\wt{k} = \parderv{}{\phi}$. Locally, any almost Lorentzian scale can then be written as $\wt{\sigma} = \wt{f} \wt{\sigma}_{\theta}$ for some smooth function $\wt{f}$ on $\wt{\mc{U}}$. We shall naturally exclude the trivial case where $\wt{\sigma}$, and thus $\wt{f}$, are identically zero. Then, choosing $\wt{\nabla}$ to be the Levi-Civita associated to $\wt{\sigma}_{\theta}$, i.e.\ $\wt{\nabla} \wt{\sigma}_{\theta} = 0$ and the fact \cite{TaghaviChabert2022} that $\wt{k}^a \wt{k}^b \wt{\Rho}_{a b} = 1$, we find, using the Leibniz rule, that $\wt{\sigma}$ satisfies \eqref{eq:Rickksc_a} if and only if $\wt{f}$ satisfies
	\begin{align}\label{eq:ODE}
		\ddot{\wt{f}} + \wt{f} = 0 \, .
	\end{align}
	This has general solution
	\begin{align}\label{eq:confact}
		\wt{f} = \mr{\varrho} \cos(\phi - \mr{\phi}) \, ,
	\end{align}
	for some smooth functions $\mr{\varrho}$ and $\mr{\phi}$ on $\mc{U} := \varpi(\wt{\mc{U}}) \subset \mc{M}$, and each choice of a pair $(\mr{\varrho},\mr{\phi})$ singles out a unique solution to \eqref{eq:ODE}. We may choose $\mr{\varrho}$ to be nowhere vanishing on $\mc{U}$.  We can then rescale $\wt{\sigma}_\theta$ by $\mr{\varrho}$, effectively rescaling $\theta$ by $\mr{\varrho}^{-2}$, and reparametrise the geodesics of $\wt{k}$ as in \eqref{eq:reparam} so that $\wt{\sigma}$ now takes the form \eqref{eq:scRickk_a}. It immediately follows that the zero set of $\wt{\sigma}$ is given by \eqref{eq:zeroset}.
	
	Finally, following from the same line of arguments used in the proof of Proposition \ref{prop:scale2metric}, the existence of a density $\wt{\sigma}$ that satisfies \eqref{eq:Rickksc_a} off its zero set $\wt{\mc{Z}}$ is equivalent to the existence of a metric $\wt{g} = \wt{\sigma}^{-2} \wt{\bm{g}}$ that takes the form \eqref{eq:metRickk_a} and satisfies \eqref{eq:lowRicond_a} off $\wt{\mc{Z}}$.
\end{proof}

\begin{rem}
	The form of the metric \eqref{eq:metRickk_a} was also obtained in \cite{Lewandowski1990,TaghaviChabert2022} without the use of an almost Lorentzian scale. Also, condition \eqref{eq:lowRicond_a} is in fact conformally invariant if we restrict ourselves to conformal changes of metrics that are induced from changes of contact forms on $(\mc{M},H,J)$ --- this can be seen from the transformation law \eqref{eq:Rho_transf} for the Schouten tensor. Therefore, it does not single out any particular almost Lorentzian scale.
\end{rem}

Since the zero set of the density $\wt{\sigma}$ in Proposition \ref{prop:density_Ric_NS} is never empty, we obtain as a corollary:
\begin{cor}
	Let $(\wt{\mc{M}},\wt{\mbf{c}},\wt{N},\wt{k}) \longrightarrow (\mc{M},H,J)$ be a twist-induced nearly Robinson geometry with non-shearing congruence. Then there is no global metric in $\wt{\mbf{c}}$ whose Ricci tensor satisfies $\wt{\Ric}(\wt{k},\wt{k}) = 0$. 
\end{cor}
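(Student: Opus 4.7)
The plan is to derive the corollary as a direct consequence of Proposition \ref{prop:density_Ric_NS}, arguing by contradiction. Suppose there exists a global metric $\wt{g} \in \wt{\mbf{c}}$ with $\wt{\Ric}(\wt{k},\wt{k})=0$. I would first translate this into a statement about a global conformal scale: let $\wt{\sigma} \in \Gamma(\wt{\mc{E}}_+[1])$ be the (global, nowhere vanishing) conformal scale defined by $\wt{g} = \wt{\sigma}^{-2} \wt{\bm{g}}$, and let $\wt{\nabla}$ be the Levi--Civita connection of $\wt{g}$, which preserves $\wt{\sigma}$. Then $\wt{\nabla}_a \wt{\nabla}_b \wt{\sigma} = 0$, and since $\wt{k}$ is null, $\wt{k}^a \wt{k}^b \wt{\Rho}_{ab} = \tfrac{1}{n} \wt{\Ric}(\wt{k},\wt{k}) = 0$. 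Hence $\wt{\sigma}$ satisfies \eqref{eq:Rickksc_a} identically.

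Next, I would appeal to the local classification obtained in the proof of Proposition \ref{prop:density_Ric_NS}. Restrict $\wt{\sigma}$ to a neighbourhood $\wt{\mc{U}}$ of any chosen point $\wt{p} \in \wt{\mc{M}}$, fix a non-expanding metric $\wt{g}_{\theta} \in \accentset{n.e.}{\wt{\mbf{c}}}$ with its associated conformal scale $\wt{\sigma}_\theta$, and write $\wt{\sigma} = \wt{f} \cdot \wt{\sigma}_\theta$ for a smooth function $\wt{f}$ on $\wt{\mc{U}}$. The proof of Proposition \ref{prop:density_Ric_NS} shows that \eqref{eq:Rickksc_a} is, for the Levi--Civita connection preserving $\wt{\sigma}_{\theta}$, equivalent to the ODE $\ddot{\wt{f}} + \wt{f} = 0$ along each null geodesic of the congruence, with affine parameter $\phi$ satisfying $\wt{k} = \partial/\partial \phi$. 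The general solution along any such geodesic takes the form $\wt{f} = \mr{\varrho} \cos(\phi - \mr{\phi})$ for constants $\mr{\varrho}, \mr{\phi}$ (which are smooth functions on the base), and hence vanishes at the discrete set $\phi = \mr{\phi} + \tfrac{2k+1}{2}\pi$, $k \in \Z$.

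Finally, I would observe that along any leaf of the congruence $\wt{\mc{K}}$ on which $\phi$ extends over an interval of length at least $\pi$, the restriction of $\wt{f}$ to that leaf must vanish somewhere, contradicting the nowhere-vanishing character of $\wt{\sigma}$. In particular, since the zero set $\wt{\mc{Z}}$ of the local model scale built in Proposition \ref{prop:density_Ric_NS} is nonempty in every neighbourhood, so is the zero set of $\wt{\sigma}|_{\wt{\mc{U}}}$, and we reach the required contradiction.

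The only mild obstacle is ensuring that the affine parameter does extend far enough along some geodesic to enclose a zero of $\cos(\phi - \mr{\phi})$; this is implicit in the geometric setup of Proposition \ref{prop:density_Ric_NS}, where the local leaf-space picture and the fibre coordinate $\phi$ sweep out an interval of length at least $\pi$ inside any standard trivialising neighbourhood of a twisting congruence. Modulo this standard point, the result follows at once from the classification of solutions to \eqref{eq:Rickksc_a} established in the preceding proposition.
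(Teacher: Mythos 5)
Your argument is correct and is essentially the paper's own: the paper derives the corollary in one line from Proposition \ref{prop:density_Ric_NS}, observing that every solution of \eqref{eq:Rickksc_a} has the form $\mr{\varrho}\cos(\phi-\mr{\phi})\cdot\wt{\sigma}_{\theta}$ and hence a non-empty zero set, which is exactly your contradiction. The only difference is that you make explicit the caveat about the affine parameter sweeping an interval of length at least $\pi$ along the fibres, a point the paper leaves implicit in its identification of $\wt{\mc{U}}$ with a union of whole fibres over the local leaf space.
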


We can refine our findings by assuming that our optical geometry is in fact a perturbed Fefferman conformal structure $(\wt{\mc{M}},\wt{\mbf{c}}_{\wt{\xi}},\wt{k}) \longrightarrow (\mc{M},H,J)$, which means that we work on a circle, rather than line, bundle. In this case, a nowhere vanishing density $\sigma \in \Gamma(\mc{E}(1,0))$ trivialises $\wt{\mc{M}} \rightarrow \mc{M}$ with a local fibre coordinate $\phi$, and determines a contact form $\theta = (\sigma \ol{\sigma})^{-1} \bm{\theta}$ with associated perturbed Fefferman metric $\wt{g}_{\theta,\wt{\xi}}$ corresponding to a conformal scale $\wt{\sigma}_{\theta,\wt{\xi}}$. We can then define an almost Lorentzian scale $\wt{\sigma}= \cos \phi \cdot \wt{\sigma}_{\theta,\wt{\xi}}$, and a direct computation will show that it satisfies equation \eqref{eq:Rickksc_a}. Finally, since the fibre coordinate $\phi$ takes value in $[-\pi, \pi)$, the zero set of $\wt{\sigma}$ is now simply the hypersurfaces $\phi = \pm \tfrac{\pi}{2}$, which, by definition of the Fefferman bundle, can be identified as the cross-sections $[\pm \i \sigma^{-1}] : \mc{M} \rightarrow \wt{\mc{M}}$.

Conversely, we can reinterpret the proof of Proposition \ref{prop:density_Ric_NS} in the special case of a perturbed Fefferman space. We work with an arbitrary choice of trivialising density $\sigma \in \Gamma(\mc{E}(1,0))$ with contact form $\theta$ and perturbed Fefferman metric $\wt{g}_{\theta,\wt{\xi}}$ and seek a solution $\wt{\sigma}$ to \eqref{eq:Rickksc_a}. We are then led to conclude that it must take the form $\wt{\sigma} = \mr{\varrho} \cos(\phi - \mr{\phi}) \wt{\sigma}_{\theta,\wt{\xi}}$ for some functions of integrations $\mr{\varrho}$ and $\mr{\phi}$ of the ODE \eqref{eq:ODE}. This singles out a unique nowhere vanishing density $\wh{\sigma} := z \sigma$ where $z = \mr{\varrho} \e^{-\i \mr{\phi}}$. Note that there is a corresponding change of contact form $\wh{\theta} = \mr{\varrho}^{-2} \theta$, and thus a change of perturbed Fefferman metric $\wt{g}_{\wh{\theta},\wt{\xi}} = \mr{\varrho}^{-2} \wt{g}_{\theta,\wt{\xi}}$ and a change of fibre coordinate $\wh{\phi} = \phi - \mr{\phi}$. For notational convenience, we drop the hats, i.e.\ $\sigma$ is our new density $\wh{\sigma}$, and $\wt{\sigma} = \cos \phi \cdot \wt{\sigma}_{\theta}$ is an almost Lorentzian scale satisfying equation \eqref{eq:Rickksc_a}. We can now reformulate Proposition \ref{prop:density_Ric_NS} as follows:
\begin{prop}\label{prop:density_Ric}
	Let $(\wt{\mc{M}},\wt{\mbf{c}}_{\wt{\xi}},\wt{k}) \longrightarrow (\mc{M},H,J)$ be a  perturbed Fefferman space. Then any choice of nowhere vanishing density $\sigma$ of weight $(1,0)$ on $\mc{M}$ determines an almost Lorentzian scale $\wt{\sigma} \in \Gamma(\wt{\mc{E}}[1])$ on $\wt{\mc{M}}$ that satisfies 
	\begin{align}\label{eq:RickkscFef}
		\wt{k}{}^{a} \wt{k}{}^{b} \left( \wt{\nabla}_{a} \wt{\nabla}_{b} \wt{\sigma} + \wt{\Rho}_{a b} \wt{\sigma} \right) & = 0 \, ,
	\end{align}
	and whose zero set $\wt{\mc{Z}}$ of $\wt{\sigma}$ consists of the union $\mc{Z}_+ \cup \mc{Z}_-$ of the sections $\mc{Z}_{\pm} := [\pm \i \sigma^{-1}] : \mc{M} \rightarrow \wt{\mc{M}}$. More particularly,
	\begin{align*}
		\wt{\sigma} = \cos \phi \cdot \wt{\sigma}_{\theta,\wt{\xi}} \, ,
	\end{align*}
	where $\wt{\sigma}_{\theta,\wt{\xi}}$ is the conformal scale of the perturbed Fefferman metric $\wt{g}_{\theta,\wt{\xi}}$ associated to the contact form $\theta = (\sigma \ol{\sigma})^{-1} \bm{\theta}$, and $\phi$ is the fibre coordinate determined by $\sigma$.
	
	Off $\wt{\mc{Z}}$, the metric $\wt{g} = \wt{\sigma}{}^{-2} \wt{\bm{g}}$ takes the form
	\begin{align}\label{eq:metRickkFef} 
		\wt{g} & = \sec^2 \phi \cdot \wt{g}_{\theta,\wt{\xi}} \, ,
	\end{align}
	and its Ricci tensor satisfies
	\begin{align}\label{eq:lowRicondFef}
		\wt{\Ric}(\wt{k},\wt{k}) & = 0 \, .
	\end{align}
	
	Conversely, any almost Lorentzian scale $\wt{\sigma}$ on $\wt{\mc{M}}$ satisfying \eqref{eq:RickkscFef} arises from a unique nowhere vanishing density $\sigma \in \Gamma(\mc{E}(1,0))$.
\end{prop}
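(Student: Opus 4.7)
The plan is to mirror the argument of Proposition~\ref{prop:density_Ric_NS}, exploiting the fact that on the circle bundle $\wt{\mc{M}} \to \mc{M}$ a section is precisely a nowhere vanishing density $\sigma \in \Gamma(\mc{E}(1,0))$, so the freedom in the pair $(\mr{\varrho}, \mr{\phi})$ that parametrises solutions of the fibre ODE can be absorbed into a single re-choice of~$\sigma$.

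For the forward direction, I fix a nowhere vanishing density $\sigma \in \Gamma(\mc{E}(1,0))$, with induced contact form $\theta = (\sigma\ol{\sigma})^{-1}\bm{\theta}$, perturbed Fefferman metric $\wt{g}_{\theta,\wt{\xi}}$, conformal scale $\wt{\sigma}_{\theta,\wt{\xi}}$ and fibre coordinate $\phi$ characterised by $\wt{k} = \partial/\partial\phi$. Set $\wt{\sigma} := \cos\phi \cdot \wt{\sigma}_{\theta,\wt{\xi}}$. Choosing $\wt{\nabla}$ as the Levi-Civita connection preserving $\wt{\sigma}_{\theta,\wt{\xi}}$, and using $\wt{k}^a\wt{k}^b\wt{\Rho}_{ab} = 1$ (from \cite{TaghaviChabert2022}, already invoked in the proof of Proposition~\ref{prop:density_Ric_NS}) together with $\wt{k}\hook\d\wt{\sigma}_{\theta,\wt{\xi}}=0$, equation \eqref{eq:RickkscFef} reduces by the Leibniz rule to the ODE $\ddot{\wt{f}} + \wt{f} = 0$ for the coefficient $\wt{f} = \cos\phi$, which is satisfied. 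That $\wt{\nabla}\wt{\sigma}$ does not vanish on $\{\cos\phi = 0\}$ is clear since $\wt{k}\cdot\wt{\sigma} = -\sin\phi\cdot\wt{\sigma}_{\theta,\wt{\xi}}$ equals $\mp\wt{\sigma}_{\theta,\wt{\xi}} \neq 0$ there, so $\wt{\sigma}$ is an almost Lorentzian scale.

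For the zero set, recall that by construction of the Fefferman bundle, a point of $\wt{\mc{M}}$ over $p \in \mc{M}$ with fibre coordinate $\phi$ is represented by the density $\e^{\i\phi}\sigma^{-1}(p)$. The locus $\cos\phi = 0$ is therefore $\phi = \pm\tfrac{\pi}{2}$, which is exactly the image of the two cross-sections $[\pm\i\sigma^{-1}] : \mc{M} \to \wt{\mc{M}}$, giving $\wt{\mc{Z}} = \mc{Z}_+ \cup \mc{Z}_-$. Off $\wt{\mc{Z}}$, the formula $\wt{g} = \wt{\sigma}^{-2}\wt{\bm{g}} = \sec^2\phi \cdot \wt{g}_{\theta,\wt{\xi}}$ is immediate, and the Ricci condition \eqref{eq:lowRicondFef} is obtained by the same computation as in Proposition~\ref{prop:density_Ric_NS}: rewriting \eqref{eq:RickkscFef} in the scale $\wt{\sigma}$ using the transformation \eqref{eq:Rho_transf} of the Schouten tensor yields $\wt{\Rho}^{(\wt{g})}(\wt{k},\wt{k}) = 0$, which by $\wt{k}^a\wt{k}^b\wt{\bm{g}}_{ab} = 0$ is equivalent to \eqref{eq:lowRicondFef}.

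For the converse, let $\wt{\sigma}$ be any almost Lorentzian scale satisfying \eqref{eq:RickkscFef}, pick any trivialising $\sigma \in \Gamma(\mc{E}(1,0))$ and write $\wt{\sigma} = \wt{f}\cdot\wt{\sigma}_{\theta,\wt{\xi}}$. Exactly as above, \eqref{eq:RickkscFef} is equivalent to $\ddot{\wt{f}} + \wt{f} = 0$, whose general solution is $\wt{f} = \mr{\varrho}\cos(\phi - \mr{\phi})$ for smooth $\mr{\varrho}, \mr{\phi}$ on $\mc{M}$, with $\mr{\varrho}$ nowhere zero since $\wt{\sigma}$ is an almost Lorentzian scale (and hence not identically zero on any fibre). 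Now set $\wh{\sigma} := \mr{\varrho}\,\e^{-\i\mr{\phi}}\sigma$; then $\wh{\theta} = (\wh{\sigma}\ol{\wh{\sigma}})^{-1}\bm{\theta} = \mr{\varrho}^{-2}\theta$ and the fibre coordinate shifts to $\wh{\phi} = \phi - \mr{\phi}$, while the conformal scales transform as $\wt{\sigma}_{\wh{\theta},\wt{\xi}} = \mr{\varrho}\cdot\wt{\sigma}_{\theta,\wt{\xi}}$. Hence $\wt{\sigma} = \cos(\wh{\phi})\cdot\wt{\sigma}_{\wh{\theta},\wt{\xi}}$, which is the canonical form associated with $\wh{\sigma}$. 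Uniqueness of $\wh{\sigma}$ (up to the sign ambiguity inherent to almost Lorentzian scales) follows because any further such rewriting would alter the pair $(\mr{\varrho}, \mr{\phi})$ and hence change the solution of the ODE, contradicting uniqueness in the Picard--Lindel\"of theorem. The only step requiring mild care is tracking the various rescalings --- of $\sigma$, $\theta$, $\wt{g}_{\theta,\wt{\xi}}$, $\phi$ and $\wt{\sigma}_{\theta,\wt{\xi}}$ --- simultaneously; this is the main bookkeeping obstacle, but it is essentially the same juggling performed in the discussion preceding the statement of the proposition.
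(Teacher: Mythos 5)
Your proof is correct and follows essentially the same route as the paper's own treatment, which is carried out in the discussion immediately preceding the statement: the forward direction by the direct computation reducing \eqref{eq:RickkscFef} to $\ddot{\wt{f}} + \wt{f} = 0$ along the fibres, and the converse by absorbing the integration functions $(\mr{\varrho},\mr{\phi})$ of that ODE into the redefined density $\wh{\sigma} := \mr{\varrho}\,\e^{-\i \mr{\phi}} \sigma$. The only difference is presentational: you spell out the simultaneous bookkeeping of the rescalings of $\sigma$, $\theta$, $\wt{g}_{\theta,\wt{\xi}}$, $\phi$ and $\wt{\sigma}_{\theta,\wt{\xi}}$, which the paper leaves implicit.
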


\begin{rem}
	We note that the transformation $\sigma \mapsto -\sigma$ induces the transformation $\wt{\sigma} \mapsto - \wt{\sigma}$ while interchanging $\wt{\mc{Z}}_+$ and $\wt{\mc{Z}}_-$, but clearly leaves the contact form $\theta$, and thus the metrics $\wt{g}_{\theta,\wt{\xi}}$ and $\wt{g}$, unchanged.
\end{rem}

While $\wt{g}_{\theta}$ in Proposition \ref{prop:density_Ric_NS} is regular on $\wt{\mc{U}}$, there is, in general, no guarantee that it is periodic, and this prevents us to conclude that $(\wt{\mc{M}},\wt{\mbf{c}},\wt{N},\wt{k})$ is a circle bundle as for perturbed Fefferman spaces.

We shall soon consider more restrictive conditions on almost Lorentzian scales on the  geometry $(\wt{\mc{M}},\wt{\mbf{c}},\wt{N},\wt{k})$ of Proposition \ref{prop:density_Ric_NS}. We shall also suppose that the  Weyl tensor of $\wt{\mbf{c}}$ satisfies the stronger condition
\begin{align}
	\wt{\Weyl}_{a b c d} \wt{k}^{a} \wt{v}^{b} \wt{k}^{c} & = 0 \, , & \mbox{for any $\wt{v} \in \Gamma (\wt{K}^\perp)$.} \label{eq:PetrovIIa2} 
\end{align}
This is a reasonable assumption since in dimension four, this would tell us that $\langle \wt{k} \rangle$ is a repeated principal null direction of $\wt{\Weyl}$. Since the four-dimensional case is treated separately in \cite{TaghaviChabert2023}, we shall henceforth assume that the dimension of $\wt{\mc{M}}$ is greater than four, i.e.\ $2m+2>4$. 

Now, let $\wt{\sigma}$ be an almost Lorentzian scale, and suppose that $\wt{\sigma}$ satisfies
		\begin{align}\label{eq:Rickvsc_a}
	\wt{k}{}^{a} \wt{v}{}^{b} \left( \wt{\nabla}_{a} \wt{\nabla}_{b} \wt{\sigma} + \wt{\Rho}_{a b} \wt{\sigma} \right) & = 0 \, , & \mbox{for all $\wt{v}^{a} \in \Gamma (\langle \wt{k} \rangle^\perp)$.}
\end{align}
Then, by Proposition \ref{prop:density_Ric_NS}, $\wt{\sigma}$ has non-empty zero set $\wt{\mc{Z}}$, and off $\wt{\mc{Z}}$, the metric $\wt{g} = \wt{\sigma}^{-2} \wt{\bm{g}}$ takes the form \eqref{eq:metRickkFef} for some metric $\wt{g}_{\theta} \in \accentset{n.e.}{\wt{\mbf{c}}}$, which can be cast as \eqref{eq:can_met}. By virtue of \eqref{eq:Rickvsc_a}, the Ricci tensor of $\wt{g}$ satisfies
\begin{align}\label{eq:Rickv}
\wt{\Ric}(\wt{k},\wt{v}) & = 0 \, , & \mbox{for all $\wt{v} \in \Gamma (\langle \wt{k} \rangle^\perp)$.}
\end{align}
This follows from a variation of the proof of Proposition \ref{prop:scale2metric}. Further, it is shown in \cite{TaghaviChabert2022} that this Ricci curvature condition together with \eqref{eq:PetrovIIa2} is equivalent to the component $\wt{\lambda}_{\alpha}$ being identically zero. In conclusion:
\begin{lem}\label{lem:al_half-Einstein}
	Let $(\wt{\mc{M}},\wt{\mbf{c}},\wt{N},\wt{k}) \longrightarrow (\mc{M},H,J)$ be a twist-induced nearly Robinson geometry of dimension $2m+2>4$ with non-shearing congruence. Suppose that the Weyl tensor satisfies \eqref{eq:PetrovIIa2}. Let $\wt{\sigma}$ be an almost Lorentzian scale that satisfies \eqref{eq:Rickksc_a}, so that off the zero set $\wt{\mc{Z}}$ of $\wt{\sigma}$, $\wt{g} = \wt{\sigma}^{-2} \wt{\bm{g}}$ is a smooth metric. Then $\wt{\sigma}$ satisfies \eqref{eq:Rickvsc_a} if and only if off $\wt{\mc{Z}}$, the metric $\wt{g}$ takes the form
		\begin{align}\label{eq:strongNS_2}
			\wt{g} & = \sec^2 \phi \cdot \left( 4 \theta \odot \left( \d \phi + \wt{\lambda}_{0} \theta \right) + h \right) \, ,
		\end{align}
		for some smooth function $\wt{\lambda}_{0}$ and where $\theta$ is a pseudo-Hermitian structure with Levi form $h$, and $\phi$ an affine parameter along $\wt{\mc{K}}$.
\end{lem}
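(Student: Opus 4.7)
The plan is to build on the preparatory analysis already laid out just before the statement. Since $\wt{\sigma}$ is assumed to satisfy \eqref{eq:Rickksc_a}, Proposition \ref{prop:density_Ric_NS} supplies a pseudo-Hermitian structure $\theta$, an affine parameter $\phi$ along the geodesics of $\wt{k}$, and a metric $\wt{g}_{\theta} \in \accentset{n.e.}{\wt{\mbf{c}}}$ of the canonical form \eqref{eq:can_met} such that off $\wt{\mc{Z}}$ one has $\wt{g} = \sec^2\phi \cdot \wt{g}_{\theta}$. In this presentation $\wt{g}_{\theta}$ is determined by the components $(\wt{\lambda}_\alpha, \wt{\lambda}_{\bar\alpha}, \wt{\lambda}_0)$ in \eqref{eq:can_met2}, and the task reduces to showing that condition \eqref{eq:Rickvsc_a} is equivalent to the vanishing of the horizontal piece $\wt{\lambda}_\alpha$, for then \eqref{eq:can_met} collapses exactly to \eqref{eq:strongNS_2}.

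For the forward implication, I would first convert \eqref{eq:Rickvsc_a} into a Ricci condition on $\wt{g}$ itself. Mimicking the scaling argument of Proposition \ref{prop:scale2metric} --- multiplying through by $\wt{\sigma}^{-1}$ and invoking the Schouten transformation rule \eqref{eq:Rho_transf} with $\wt{\Upsilon} = \wt{\sigma}^{-1} \wt{\nabla}\wt{\sigma}$ --- yields, off $\wt{\mc{Z}}$, the identity $\wt{\Ric}(\wt{k},\wt{v}) = 0$ for every $\wt{v} \in \Gamma(\langle \wt{k}\rangle^\perp)$ with respect to the Levi-Civita connection of $\wt{g}$. Combining this Ricci condition with the standing Weyl degeneracy \eqref{eq:PetrovIIa2}, and using the explicit expansions of the Weyl and Ricci tensors of a metric of the form \eqref{eq:can_met} obtained in \cite{TaghaviChabert2022} (the same computation responsible for \eqref{eq:condlambda1}), one concludes that $\wt{\lambda}_\alpha \equiv 0$ in dimension $2m+2>4$, and substitution into \eqref{eq:can_met} produces precisely \eqref{eq:strongNS_2}.

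For the converse, if $\wt{g}$ has the shape \eqref{eq:strongNS_2} off $\wt{\mc{Z}}$, then by construction $\wt{\lambda}_\alpha \equiv 0$. A direct computation of the Ricci tensor using the structure equations \eqref{eq:structure_CR} for the adapted coframe then shows $\wt{\Ric}(\wt{k},\wt{v}) = 0$ for every $\wt{v} \in \Gamma(\langle\wt{k}\rangle^\perp)$; reversing the scaling identity returns \eqref{eq:Rickvsc_a} off $\wt{\mc{Z}}$, and since $\wt{\sigma}$ itself is smooth on $\wt{\mc{M}}$, both sides of \eqref{eq:Rickvsc_a} extend by continuity across $\wt{\mc{Z}}$, giving the equation everywhere.

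The main obstacle lies in the forward implication's curvature step: one has to track how the coefficients $(\wt{\lambda}_\alpha, \wt{\lambda}_0)$ enter the Ricci and Weyl tensors via the structure equations, and verify that, in dimension strictly greater than four, the combined constraints \eqref{eq:PetrovIIa2} and $\wt{\Ric}(\wt{k},\wt{v})=0$ leave no room for a non-trivial $\wt{\lambda}_\alpha$. This is exactly the algebraic content already extracted in \cite{TaghaviChabert2022}, so in practice that step amounts to quoting the result; the remainder of the argument is a routine translation between scale-level and metric-level equations.
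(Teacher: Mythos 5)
Your proposal is correct and follows essentially the same route as the paper: apply Proposition \ref{prop:density_Ric_NS} to put $\wt{g}$ in the form $\sec^2\phi\cdot\wt{g}_{\theta}$ with $\wt{g}_{\theta}$ as in \eqref{eq:can_met}, translate \eqref{eq:Rickvsc_a} into the metric-level condition $\wt{\Ric}(\wt{k},\wt{v})=0$ by the scaling argument of Proposition \ref{prop:scale2metric}, and then invoke the result of \cite{TaghaviChabert2022} that this Ricci condition together with \eqref{eq:PetrovIIa2} is equivalent to $\wt{\lambda}_{\alpha}\equiv 0$. The paper presents exactly this chain (stated as an equivalence rather than split into two implications), so your slightly more explicit treatment of the converse and of continuity across $\wt{\mc{Z}}$ adds nothing essentially new.
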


\begin{rem}
Note that condition \eqref{eq:Rickv} is equivalent to $\wt{k}^a \wt{\Rho}_{a b} = \wt{f} \wt{g}_{a b} \wt{k}^{b}$ for some function $\wt{f}$. It then follows that  for any $\wt{v}^{a} \in \Gamma (\langle \wt{k} \rangle^\perp)$,
	\begin{align*}
		\wt{k}^a \wt{v}^b \wt{k}^c \wt{\Cot}_{a b c} & = \wt{k}^a \wt{v}^b \wt{k}^c \left( \wt{\nabla}_{b} \wt{\Rho}_{c a} - \wt{\nabla}_{c} \wt{\Rho}_{b a} \right)  \\
		& = - 2 ( \wt{v}^b \wt{\nabla}_{b} \wt{k}^a ) \wt{k}^c \wt{\Rho}_{c a} + (\wt{k}^c \wt{\nabla}_{c} \wt{k}^a )\wt{v}^b \wt{\Rho}_{b a} + (\wt{k}^c \wt{\nabla}_{c} \wt{v}^b ) \wt{k}^a \wt{\Rho}_{b a} \\
		& = - 2 ( \wt{v}^b \wt{\nabla}_{b} \wt{k}^a ) \wt{f} \wt{g}_{a c} \wt{k}^{c} + (\wt{k}^c \wt{\nabla}_{c} \wt{k}^a )\wt{v}^b \wt{\Rho}_{b a} + (\wt{k}^c \wt{\nabla}_{c} \wt{v}^b ) \wt{f} \wt{g}_{a b} \wt{k}^{b} \, .
	\end{align*}
	The first term is clearly zero, while the last two terms must vanish since $\wt{k}^a$ is tangent to geodesic curves. So the Cotton tensor satisfies $\wt{\Cot}_{a b c} \wt{k}^{a} \wt{v}^b \wt{k}^{c} = 0$ for any $\wt{v}^{a} \in \Gamma (\langle \wt{k} \rangle^\perp)$. We thus see that Lemma \ref{lem:al_half-Einstein} is consistent with Proposition \ref{prop:PetrovII} in view of Remark \ref{rem:Cotton-Weyl}.
\end{rem}

\subsection{Almost weakly half-Einstein scales}
Continuing on from Lemma \ref{lem:al_half-Einstein}, we now suppose that $\wt{\sigma}$ is an almost weakly half-Einstein scale, i.e.\ it satisfies \eqref{eq:alwkEins}. By Proposition \ref{prop:scale2metric}, off $\wt{\mc{Z}}$, the Ricci tensor satisfies $\wt{\Ric}(\wt{v},\wt{v}) = 0$ for all $\wt{v} \in \Gamma (\wt{N})$, which can be shown \cite{TaghaviChabert2022} to be equivalent to the pseudo-Hermitian tensors of $\theta$ satisfying $\WbA_{\alpha \beta} = \nabla^{\gamma} \Nh_{\gamma (\alpha \beta)} = 0$. Analogously to Theorem \ref{thm:alCRE}, we can show that this is equivalent to the existence of a CR scale $s$ satisfying \eqref{eq:AalCRE} and \eqref{eq:NalCRE}. We can thus conclude:

\begin{prop}\label{prop:alwkhalfE}
	Let $(\wt{\mc{M}},\wt{\mbf{c}},\wt{N},\wt{k}) \longrightarrow (\mc{M},H,J)$ be a twist-induced nearly Robinson geometry of dimension $2m+2>4$ with non-shearing congruence. Suppose that the Weyl tensor satisfies \eqref{eq:PetrovIIa2}. Let $\wt{\sigma}$ be an almost Lorentzian scale that satisfies \eqref{eq:Rickksc_a}, so that off the zero set $\wt{\mc{Z}}$ of $\wt{\sigma}$, $\wt{g} = \wt{\sigma}^{-2} \wt{\bm{g}}$ is a smooth metric. Then $\wt{\sigma}$ is an almost weakly half-Einstein scale if and only if off $\wt{\mc{Z}}$, the metric $\wt{g}$ takes the form \eqref{eq:strongNS_2}, and the CR scale $s$ corresponding to $\theta$ satisfies
		\begin{subequations}\label{eq:CRscalealK}
			\begin{align}
				& \nabla_{(\alpha} \nabla_{\beta)} s + \i \WbA_{\alpha \beta} s + \Nh_{\gamma (\alpha \beta)} \nabla^{\gamma} s = 0 \, , \\
				& \nabla^{\gamma} s \Nh_{\gamma (\alpha \beta)} - \tfrac{1}{m} \nabla^{\gamma} \Nh_{\gamma (\alpha \beta)} s = 0 \, .
			\end{align}
		\end{subequations}
\end{prop}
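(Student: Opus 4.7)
The argument proceeds by chaining three equivalences, each supplied by earlier results in the paper. First, by Proposition \ref{prop:scale2metric}, $\wt{\sigma}$ is an almost weakly half-Einstein scale if and only if the metric $\wt{g} = \wt{\sigma}^{-2}\wt{\bm{g}}$, smooth off $\wt{\mc{Z}}$, satisfies $\wt{\Ric}_{ab}\wt{v}^a\wt{v}^b = 0$ for every $\wt{v} \in \Gamma(\wt{N})$. Working in a local adapted complex frame, I would decompose this condition into a \emph{mixed} part, namely $\wt{\Ric}(\wt{k},\wt{v}) = 0$ for every $\wt{v} \in \Gamma(\wt{K}^\perp)$ (which follows from real-linear combinations of $\wt{v} \in \Gamma(\wt{N})$ with its conjugates, using ${}^{\C}\wt{K}^\perp = \wt{N} + \ol{\wt{N}}$), and a \emph{holomorphic} part consisting of those components of $\wt{\Ric}$ lying along the complex directions transverse to ${}^{\C}\wt{K}$ inside $\wt{N}$.

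The mixed part is exactly the content of \eqref{eq:Rickvsc_a}. Combined with the assumption \eqref{eq:Rickksc_a} and the Weyl hypothesis \eqref{eq:PetrovIIa2}, Lemma \ref{lem:al_half-Einstein} then applies and places $\wt{g}$ in the normal form \eqref{eq:strongNS_2}; this in turn singles out the pseudo-Hermitian structure $\theta$ and hence the CR scale $s$ with $\theta = s^{-1}\bm{\theta}$. For the holomorphic part, I would substitute the form \eqref{eq:can_met} into the explicit formulas for the Ricci tensor in terms of Webster invariants derived in \cite{TaghaviChabert2022}: these show that the vanishing of the holomorphic Ricci components along $\wt{N}/{}^{\C}\wt{K}$ is equivalent to the pseudo-Hermitian identities
\begin{align*}
\WbA_{\alpha\beta} & = 0 \, , & \nabla^{\gamma} \Nh_{\gamma(\alpha\beta)} & = 0 \, .
\end{align*}

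The last equivalence---between these two pseudo-Hermitian identities and the CR invariant system \eqref{eq:CRscalealK} on $s$---is entirely analogous to the argument used in the proof of Theorem \ref{thm:alCRE}: picking $\nabla$ to be the Webster connection adapted to $s$ (i.e.\ $\nabla s = 0$) gives the direction \eqref{eq:CRscalealK} $\Rightarrow$ (pseudo-Hermitian identities) immediately, and the converse is obtained from the transformation laws \eqref{eq:trsfA} and \eqref{eq:trsfDNh} under $\Upsilon = -s^{-1}\nabla s$. Note that no analogue of the third CR invariant equation \eqref{eq:RalCRE} appears here, precisely because we are imposing only the weakly half-Einstein (as opposed to the full Einstein) condition on $\wt{g}$, so the trace-free Webster--Schouten piece is not constrained. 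The converse direction of the proposition is obtained by running each of these equivalences backwards.

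The only non-routine step is the explicit translation of the \emph{holomorphic} Ricci components into Webster torsion and Nijenhuis data; this, however, is exactly the computation carried out in \cite{TaghaviChabert2022}, so no new calculation is required beyond quoting those formulas.
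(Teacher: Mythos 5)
Your proof is correct and follows essentially the same route as the paper: Proposition \ref{prop:scale2metric} to pass to the Ricci condition $\wt{\Ric}(\wt{v},\wt{v})=0$ on $\wt{N}$, Lemma \ref{lem:al_half-Einstein} for the normal form \eqref{eq:strongNS_2}, the curvature formulas of \cite{TaghaviChabert2022} to translate the remaining components into $\WbA_{\alpha\beta}=\nabla^{\gamma}\Nh_{\gamma(\alpha\beta)}=0$, and the argument of Theorem \ref{thm:alCRE} for the equivalence with \eqref{eq:CRscalealK}. Your explicit polarization step extracting $\wt{\Ric}(\wt{k},\wt{v})=0$ for $\wt{v}\in\Gamma(\wt{K}^\perp)$ from the weakly half-Einstein condition is a detail the paper leaves implicit, but the argument is otherwise the same.
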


Specialising the optical geometry to a Fefferman space leads to:
\begin{thm}\label{thm:wkhlfEinsc}
	Let $(\wt{\mc{M}},\wt{\mbf{c}}_{\wt{\xi}},\wt{k}) \longrightarrow (\mc{M},H,J)$ be a perturbed Fefferman space of dimension $2m+2>4$. Suppose that the Weyl tensor satisfies \eqref{eq:PetrovIIa2}. Then $\wt{\mbf{c}}_{\wt{\xi}}$ admits an almost weakly half-Einstein scale $\wt{\sigma}$ if and only if there exists a nowhere vanishing density $\sigma \in \Gamma(\mc{E}(1,0))$ such that the CR scale $s := \sigma \ol{\sigma}$ satisfies
	\begin{subequations}\label{eq:CRscalealK_B0}
		\begin{align}
			& \nabla_{(\alpha} \nabla_{\beta)} s + \i \WbA_{\alpha \beta} s + \Nh_{\gamma (\alpha \beta)} \nabla^{\gamma} s = 0 \, , \\
			& \nabla^{\gamma} s \Nh_{\gamma (\alpha \beta)} - \tfrac{1}{m} \nabla^{\gamma} \Nh_{\gamma (\alpha \beta)} s = 0 \, ,
		\end{align}
	\end{subequations}
	and $\wt{\xi}$ is determined by the CR data $\left(\bm{\xi}_{\alpha}^{(0)}, [\nabla , \bm{\xi}_{0}^{(0)}], \bm{\xi}_{0}^{(k)} \right)_{k\in \mc{I}}$ for some subset $\mc{I} \subset \Z_{\geq0}$.
	
	Moreover, the zero set $\wt{\mc{Z}}$ of $\wt{\sigma}$ consists of the union $\wt{\mc{Z}}_+ \cup \wt{\mc{Z}}_-$ of the sections $\wt{\mc{Z}}_{\pm} = [\pm \i \sigma^{-1}] : \mc{M} \rightarrow \wt{\mc{M}}$. Off $\wt{\mc{Z}}$, $\wt{\sigma}$ determines a weakly half-Einstein metric $\wt{g} = \sec^2 \phi \cdot \wt{g}_{\theta, \wt{\xi}}$, where $\phi$ is the local fibre coordinate determined by $\sigma$ and $\wt{g}_{\theta, \wt{\xi}}$ is the perturbed Fefferman metric associated to the contact form $\theta = (\sigma \ol{\sigma})^{-1} \bm{\theta}$.
\end{thm}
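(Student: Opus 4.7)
The plan is to combine Proposition \ref{prop:density_Ric} with Proposition \ref{prop:alwkhalfE} and the formulas of Lemma \ref{lem:NSh2Feff}, which translate the Fourier modes of the one-form $\wt{\lambda}$ into the CR data of $\wt{\xi}$.

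For the forward direction, start from an almost weakly half-Einstein scale $\wt{\sigma}$. Contracting \eqref{eq:alwkEins1} with $\wt{k}^a \wt{k}^b$ gives \eqref{eq:RickkscFef}, since $\wt{k}$ is null and the trace-free condition \eqref{eq:RiccwkEins} annihilates $\wt{\Phi}_{a b} \wt{k}^a \wt{k}^b$ because $\wt{k} \in \Gamma(\wt{N})$. Proposition \ref{prop:density_Ric} then yields a nowhere vanishing density $\sigma \in \Gamma(\mc{E}(1,0))$, unique up to sign, such that $\wt{\sigma} = \cos \phi \cdot \wt{\sigma}_{\theta, \wt{\xi}}$ with $\theta = (\sigma \ol{\sigma})^{-1} \bm{\theta}$, and $\wt{g} = \wt{\sigma}^{-2} \wt{\bm{g}} = \sec^2 \phi \cdot \wt{g}_{\theta, \wt{\xi}}$ off the zero set. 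By Proposition \ref{prop:scale2metric}, $\wt{g}$ is then weakly half-Einstein on this open set, so Proposition \ref{prop:alwkhalfE} applies (its Weyl hypothesis is our assumption \eqref{eq:PetrovIIa2}) and delivers both the CR-scale equations \eqref{eq:CRscalealK_B0} for $s = \sigma \ol{\sigma}$ and the reduced form \eqref{eq:strongNS_2} of the metric, that is to say $\wt{\lambda}_\alpha \equiv 0$.

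Now compare \eqref{eq:strongNS_2} with the perturbed Fefferman expression \eqref{eq:metNSFeff} through the identification \eqref{eq:lmb2xi} of Lemma \ref{lem:NSh2Feff}. Using the Fourier decomposition \eqref{eq:xi_Fourier}, and since $\i \sigma^{-1} \nabla_\alpha \sigma$ is independent of $\phi$, the identity $\wt{\lambda}_\alpha \equiv 0$ forces $\xi_\alpha^{(k)} = 0$ for every $k \neq 0$, equivalently $\bm{\xi}_\alpha^{(k)} = 0$ for $k \neq 0$; the zero-mode $\xi_\alpha^{(0)} = -\i \sigma^{-1} \nabla_\alpha \sigma$ is absorbed into the phase of $\sigma$ and places no restriction on $\bm{\xi}_\alpha^{(0)}$. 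Since no constraint at all falls on $\wt{\lambda}_0$, the $\bm{\xi}_0^{(k)}$ components are free, and the remaining CR data of $\wt{\xi}$ takes exactly the asserted shape $\left( \bm{\xi}_\alpha^{(0)}, [\nabla, \bm{\xi}_0^{(0)}], \bm{\xi}_0^{(k)} \right)_{k \in \mc{I}}$ with $\mc{I} \subset \Z_{\geq 0}$ collecting the indices of the surviving modes.

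For the converse, given $\sigma$ satisfying \eqref{eq:CRscalealK_B0} and $\wt{\xi}$ with CR data of the prescribed form, set $\theta = (\sigma \ol{\sigma})^{-1} \bm{\theta}$ and $\wt{\sigma} = \cos \phi \cdot \wt{\sigma}_{\theta, \wt{\xi}}$, after adjusting the phase of $\sigma$ so that $\xi_\alpha^{(0)} = - \i \sigma^{-1} \nabla_\alpha \sigma$ (this phase change leaves $s = \sigma \ol{\sigma}$ untouched, hence does not spoil the CR-scale equations). Proposition \ref{prop:density_Ric} delivers \eqref{eq:RickkscFef}, the structural constraint on the CR data forces $\wt{\lambda}_\alpha \equiv 0$, so $\wt{g}$ takes the form \eqref{eq:strongNS_2} off the zero set, and Propositions \ref{prop:alwkhalfE} and \ref{prop:scale2metric} combine to conclude that $\wt{\sigma}$ is almost weakly half-Einstein. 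The description of $\wt{\mc{Z}} = \wt{\mc{Z}}_+ \cup \wt{\mc{Z}}_-$ and the formula $\wt{g} = \sec^2 \phi \cdot \wt{g}_{\theta, \wt{\xi}}$ are then just restatements of Proposition \ref{prop:density_Ric}. The main technical subtlety lies in reconciling the phase freedom in $\sigma$ with the zero-mode $\bm{\xi}_\alpha^{(0)}$: this is where one uses essentially that the Fefferman bundle is a circle bundle, so that the cosine-type solution $\wt{\sigma} = \cos \phi \cdot \wt{\sigma}_{\theta, \wt{\xi}}$ of the ODE \eqref{eq:ODE} is globally consistent on the fibres.
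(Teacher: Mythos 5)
Your overall architecture is exactly the paper's: reduce to Proposition \ref{prop:density_Ric} via the $\wt{k}^a\wt{k}^b$-contraction of \eqref{eq:alwkEins1}, invoke Proposition \ref{prop:alwkhalfE} to get \eqref{eq:CRscalealK_B0} and the vanishing of $\wt{\lambda}_{\alpha}$, and then translate Fourier modes into CR data through Lemma \ref{lem:NSh2Feff} and Lemma \ref{lem:CR_data}. The forward direction is essentially sound. However, your treatment of the zero mode $\bm{\xi}_{\alpha}^{(0)}$ contains a genuine error. The identity $\wt{\lambda}_{\alpha}\equiv 0$ together with \eqref{eq:lmb2xi} does not merely kill the nonzero modes: it also forces $\xi_{\alpha}^{(0)}=-\i\sigma^{-1}\nabla_{\alpha}\sigma$, which is a nontrivial constraint tying $\bm{\xi}_{\alpha}^{(0)}$ to the density $\sigma$ produced by Proposition \ref{prop:density_Ric} (this is precisely the formula recorded in Theorem \ref{thm:hlfEinsc}). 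Your claim that this "places no restriction on $\bm{\xi}_{\alpha}^{(0)}$" because it can be "absorbed into the phase of $\sigma$" is incompatible with the transformation law established in the proof of Lemma \ref{lem:CR_data}: under $\sigma\mapsto \e^{\i\mr{\phi}}\sigma$ the coefficient $\xi_{\alpha}^{(k)}$ rescales by $\e^{-k\i\mr{\phi}}$, so the zero mode $\xi_{\alpha}^{(0)}$ is \emph{invariant}, whereas $-\i\sigma^{-1}\nabla_{\alpha}\sigma$ shifts by $\nabla_{\alpha}\mr{\phi}$. Matching the two by a phase adjustment therefore requires $\xi_{\alpha}^{(0)}+\i\sigma^{-1}\nabla_{\alpha}\sigma$ to be of the form $\nabla_{\alpha}\mr{\phi}$ for a real function $\mr{\phi}$, which fails for a generic section of $\mc{E}_{\alpha}$.

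This matters because the converse direction hinges on it: if $\bm{\xi}_{\alpha}^{(0)}$ is genuinely arbitrary, then $\wt{\lambda}_{\alpha}=\i\sigma^{-1}\nabla_{\alpha}\sigma+\xi_{\alpha}^{(0)}$ need not vanish, the metric does not take the form \eqref{eq:strongNS_2}, and the "only if" half of Proposition \ref{prop:alwkhalfE} then shows that $\cos\phi\cdot\wt{\sigma}_{\theta,\wt{\xi}}$ is \emph{not} an almost weakly half-Einstein scale; since every almost weakly half-Einstein scale satisfies \eqref{eq:RickkscFef} and hence arises via Proposition \ref{prop:density_Ric}, no other candidate rescues the argument. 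The fix is simply to retain the relation $\bm{\xi}_{\alpha}^{(0)}=-\i\sigma^{-1}\nabla_{\alpha}\sigma$ (suitably weighted) as part of the CR data, as the paper does explicitly in \eqref{eq:xi0} for the half-Einstein case; with that constraint in place the converse follows directly, with no phase adjustment needed, by reversing your forward argument.
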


\begin{proof}
	We omit the proof here as it is subsumed in that of Theorem \ref{thm:hlfEinsc} below.
\end{proof}

\subsection{Almost half-Einstein scales}
Next, suppose that $\wt{\sigma}$ is an almost half-Einstein scale, i.e.\ it satisfies \eqref{eq:alwkEins} and \eqref{eq:alcstRicSc}. So, in particular, Proposition \ref{prop:alwkhalfE} holds. By Proposition \ref{prop:scale2metric}, condition \eqref{eq:alcstRicSc} tells us that, off $\wt{\mc{Z}}$, the Ricci scalar is constant, which, by \cite[equation~(8.14)]{TaghaviChabert2022}, is shown to be equivalent to the second-order differential equation on the component $\wt{\lambda}_{0}$ in \eqref{eq:strongNS_2}, for $\phi \neq \tfrac{2k+1}{2} \pi$, $k \in \Z$,
\begin{multline}\label{eq:ode1}
\ddot{\wt{\lambda}}_{0} + 2 (2m+1) \tan \phi \dot{\wt{\lambda}}_{0} + \left( -4m (m+1) + 2(m+1)(2m+1) \sec^2 \phi \right) \wt{\lambda}_{0} \\
- 2 (m+1) \wt{\Lambda} \sec^2 \phi + 2m \Lambda = 0 \, ,
\end{multline}
where $\wt{\Lambda} = \tfrac{1}{2m+2} \wt{\Sc}$ is constant, $\wt{\Sc}$ being the Ricci scalar of $\wt{g}$, and   $\Lambda := \tfrac{1}{m} \left( \Sc - \| \Nh \|^2 \right)$, with $\Sc$ being the Webster--Ricci scalar of $\theta$.
Applying Proposition \ref{prop:alwkhalfE} with the solution $\wt{\lambda}_{0}$ of \eqref{eq:ode1} proves:
\begin{prop}\label{prop:al_half-Einstein} 
	Let $(\wt{\mc{M}},\wt{\mbf{c}},\wt{N},\wt{k}) \longrightarrow (\mc{M},H,J)$ be a twist-induced nearly Robinson geometry of dimension $2m+2>4$ with non-shearing congruence. Suppose that the Weyl tensor satisfies \eqref{eq:PetrovIIa2}. Let $\wt{\sigma}$ be an almost Lorentzian scale that satisfies \eqref{eq:Rickksc_a}, so that off the zero set $\wt{\mc{Z}}$ of $\wt{\sigma}$, $\wt{g} = \wt{\sigma}^{-2} \wt{\bm{g}}$ is a smooth metric. Then $\wt{\sigma}$ is an almost half-Einstein scale if and only if off $\wt{\mc{Z}}$, the metric $\wt{g}$ takes the form \eqref{eq:strongNS_2}, the CR scale $s$ corresponding to $\theta$ satisfies \eqref{eq:CRscalealK}, and $\wt{\lambda}_{0}$ is given by
		\begin{align}\label{eq:ODE1sol}
			\wt{\lambda}_0 & = \sum_{-m-1}^{m+1} \lambda_0^{(2k)} \e^{2\i k \phi} \, ,
		\end{align}
		where
		\begin{subequations}\label{eq:coeff_step3}
			\begin{align}
				\lambda_0^{(2m+2)} & = \tfrac{m!(m+1)!}{2(2m+2)!} \left( (2m+1) \Lambda - (2m+2) \wt{\Lambda} \right) + \mu \, , \\
				\lambda_0^{(2k)} & = \tfrac{2(2m+1)!}{(m+1-k)!(m+1+k)!} \left( k \lambda_0^{(2m+2)} + (m+1-k)\Re(\mu) \right) \, ,  & 1 \leq |k| \leq m \, , \\
				\lambda_0^{(0)} & =  \tfrac{1}{2m+2} \Lambda + \tfrac{2(2m+1)!}{m!(m+1)!} \Re(\mu) \, ,
			\end{align}
		\end{subequations}
		for some complex-valued function $\mu$ on $\mc{M}$, and real-valued function $\Lambda := \tfrac{1}{m} \left( \Sc - \| \Nh \|^2 \right)$ on $\mc{M}$, with $\Sc$ being the Webster--Ricci scalar of $\theta$, and $\Nh$ the Nijenhuis tensor of $(N,J)$.
\end{prop}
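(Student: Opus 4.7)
My plan is to split the biconditional into the weakly half-Einstein part, handled by invoking the previous proposition, and the extra scalar-curvature condition, which reduces to a single ordinary differential equation along the fibres of $\wt{\mc{K}}$ that I will solve explicitly.

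First, I would apply Proposition~\ref{prop:alwkhalfE}: under the standing hypotheses, $\wt{\sigma}$ satisfies the almost weakly half-Einstein condition \eqref{eq:alwkEins} if and only if $\wt{g} = \wt{\sigma}^{-2} \wt{\bm{g}}$ takes the form \eqref{eq:strongNS_2} off $\wt{\mc{Z}}$ and the CR scale $s$ determined by $\theta$ satisfies \eqref{eq:CRscalealK}. It then remains to show that, granted these, the supplementary condition \eqref{eq:alcstRicSc} in Definition~\ref{defn:aESc} is equivalent to $\wt{\lambda}_0$ having the Fourier expansion \eqref{eq:ODE1sol}--\eqref{eq:coeff_step3}. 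Replaying the end of the proof of Proposition~\ref{prop:scale2metric}(2), condition \eqref{eq:alcstRicSc} is equivalent, off $\wt{\mc{Z}}$, to the Ricci scalar $\wt{\Sc}^{(\wt{g})}$ of $\wt{g}$ being constant, and I would set $\wt{\Lambda} := \tfrac{1}{2(m+1)(2m+3)}\wt{\Sc}^{(\wt{g})}$ (or whichever normalisation matches the conventions of \cite{TaghaviChabert2022}) so that $\wt{\Lambda}$ is a genuine constant.

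Next I would compute $\wt{\Sc}^{(\wt{g})}$ in terms of $\wt{\lambda}_0$, its derivatives along $\wt{k}$, and pseudo-Hermitian invariants of $\theta$. Here the key input is \cite[equation~(8.14)]{TaghaviChabert2022}: once the pseudo-Hermitian conditions $\WbA_{\alpha\beta}=0$ and $\nabla^\gamma\Nh_{\gamma(\alpha\beta)}=0$ (equivalent to \eqref{eq:CRscalealK}) are enforced and the commutation relations \eqref{eq:CR_com} applied to trade Webster--Schouten for Webster--Ricci data, the formula collapses to exactly \eqref{eq:ode1}, with the $\phi$-independent source $-2m\Lambda$ arising from the combination $\Lambda = \tfrac{1}{m}(\Sc - \|\Nh\|^2)$ on $(\mc{M},H,J)$. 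This reduces the problem to solving the linear second-order ODE \eqref{eq:ode1} for $\wt{\lambda}_0$ at each point of $\mc{M}$, with the base point entering only as a parameter through $\Lambda$ and through the two ``constants'' of integration.

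The main technical step, and the place where care is required, is the explicit integration of \eqref{eq:ode1}. I would first multiply by $\cos^{2}\phi$ to remove the apparent singularities at $\phi = \tfrac{2k+1}{2}\pi$, obtaining an equation with trigonometric-polynomial coefficients. Substituting the Fourier ansatz $\wt{\lambda}_0 = \sum_{k \in \Z} \lambda_0^{(2k)} \e^{2\i k\phi}$ with coefficients constant along $\wt{k}$ and expanding $\tan\phi$, $\sec^2\phi$ in $\e^{\pm 2\i\phi}$, the ODE decouples into a three-term recursion on the modes $\lambda_0^{(2k)}$. Reality of $\wt{\lambda}_0$ imposes $\lambda_0^{(-2k)} = \overline{\lambda_0^{(2k)}}$. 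The recursion has the property that for $|k|>m+1$ the coefficient of the leading term vanishes, so regularity of $\wt{\lambda}_0$ forces the series to truncate at $|k|=m+1$. The truncation leaves precisely two free data: the mode $\lambda_0^{(2m+2)}$ (equivalently, a complex function $\mu$ on $\mc{M}$ defined by the affine shift in \eqref{eq:coeff_step3}) and the constant $\wt{\Lambda}$; all remaining coefficients are then fixed by back-substitution, producing the displayed expressions \eqref{eq:coeff_step3}. The converse direction is just the verification that \eqref{eq:ODE1sol}--\eqref{eq:coeff_step3} do solve \eqref{eq:ode1}, which is a direct substitution. I expect the hardest part to be bookkeeping the recursion coefficients cleanly so that the closed-form expressions in \eqref{eq:coeff_step3} emerge without ambiguity in the factors $\tfrac{2(2m+1)!}{(m+1-k)!(m+1+k)!}$.
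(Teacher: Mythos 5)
Your proposal follows the paper's proof essentially verbatim: Proposition \ref{prop:alwkhalfE} handles the weakly half-Einstein part, Proposition \ref{prop:scale2metric} converts \eqref{eq:alcstRicSc} into constancy of the Ricci scalar off $\wt{\mc{Z}}$, and \cite[equation~(8.14)]{TaghaviChabert2022} reduces everything to the ODE \eqref{eq:ode1} (where the paper normalises $\wt{\Lambda} = \tfrac{1}{2m+2}\wt{\Sc}$, not $\tfrac{1}{2(m+1)(2m+3)}\wt{\Sc}$), whose general solution is \eqref{eq:ODE1sol}--\eqref{eq:coeff_step3}. The one step to tighten is your claim that regularity forces the Fourier series to truncate: in this proposition $\wt{\lambda}_0$ lives on a line bundle and is not a priori periodic, so the cleaner justification that \emph{every} solution of \eqref{eq:ode1} has the stated form is that \eqref{eq:coeff_step3} is a two-real-parameter affine family of solutions (parametrised by $\mu \in \C$) of a second-order linear ODE, and hence exhausts the solution space.
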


\begin{rem}
	We can weaken our assumption in Lemma \ref{lem:al_half-Einstein} and Propositions \ref{prop:alwkhalfE} and \ref{prop:al_half-Einstein} by replacing the twist-induced nearly Robinson geometry by an optical geometry $(\wt{\mc{M}},\wt{\mbf{c}},\wt{K})$ with twisting non-shearing congruence of null geodesics $\wt{\mc{K}}$, together with the Weyl curvature condition \eqref{eq:Wkvkv}. This follows from the fact that, by Theorem \ref{thm:non-shearing-hidim}, condition \eqref{eq:Wkvkv} implies that the twist of $\wt{\mc{K}}$ induces a nearly Robinson structure $(\wt{N},\wt{k})$ with distinguished  generator $\wt{k}$ of $\wt{\mc{K}}$, and $(\wt{\mc{M}},\wt{\mbf{c}},\wt{N},\wt{k})$ is locally fibred over an almost CR manifold $(\mc{M},H,J)$.
\end{rem}

By the above remark, and since $\wt{\lambda}_{\alpha} = 0$ and $\wt{\lambda}_{0}$ is of the form \eqref{eq:coeff_step3}, applying Lemma \ref{lem:NSh2Feff} results in the following:

\begin{cor}\label{cor:alt_char}
	Let $(\wt{\mc{M}},\wt{\mbf{c}},\wt{K})$ be an optical geometry of dimension $2m+2>4$ with twisting non-shearing congruence of null geodesics $\wt{\mc{K}}$. Suppose that the  Weyl tensor satisfies
	\begin{align*}
		\wt{\Weyl}_{a b c d} \wt{k}^{a} \wt{v}^{b} \wt{k}^{c} & = 0 \, , & \mbox{for any $\wt{k} \in \Gamma (\wt{K})$, $\wt{v} \in \Gamma (\wt{K}^\perp)$,}
	\end{align*}
	and $\wt{\mbf{c}}$ admits an almost half-Einstein scale. Then $(\wt{\mc{M}},\wt{\mbf{c}},\wt{K})$ is locally conformally isometric to a perturbed Fefferman space.
\end{cor}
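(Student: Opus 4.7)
The plan is to assemble the corollary from the earlier results in the paper as follows.

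First, I would establish that we are in the setting of a twist-induced nearly Robinson geometry. The hypothesis $\wt{\Weyl}_{abcd}\wt{k}^a\wt{v}^b\wt{k}^c = 0$ for all $\wt{k} \in \Gamma(\wt{K})$ and $\wt{v} \in \Gamma(\wt{K}^\perp)$ implies, by contracting a further $\wt{v}^d$, the weaker integrability condition $\wt{\Weyl}_{abcd}\wt{k}^a\wt{v}^b\wt{k}^c\wt{v}^d = 0$ of \eqref{eq:int-non-sh}. Theorem \ref{thm:non-shearing-hidim} then produces a nearly Robinson structure $(\wt{N},\wt{k})$ whose distinguished null vector field generates $\wt{\mc{K}}$, so that we can locally view our manifold as $(\wt{\mc{M}},\wt{\mbf{c}},\wt{N},\wt{k}) \longrightarrow (\mc{M},H,J)$ with the machinery of Section \ref{sec:prefered} available.

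Second, I would show that the given almost half-Einstein scale $\wt{\sigma}$ automatically satisfies the precondition \eqref{eq:Rickksc_a} required to apply Propositions \ref{prop:alwkhalfE} and \ref{prop:al_half-Einstein}. Contracting \eqref{eq:alwkEins1} with $\wt{k}^a\wt{k}^b$, the trace term drops (since $\wt{k}$ is null), and the right-hand side vanishes because $\wt{k} \in \Gamma(\wt{N})$, so condition \eqref{eq:RiccwkEins} forces $\wt{\Phi}_{ab}\wt{k}^a\wt{k}^b = 0$. Hence \eqref{eq:Rickksc_a} holds.

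Third, I would apply Proposition \ref{prop:al_half-Einstein} (whose hypothesis \eqref{eq:PetrovIIa2} coincides with the Weyl condition of the corollary). This yields, off the zero set $\wt{\mc{Z}}$ of $\wt{\sigma}$, a metric of the form \eqref{eq:strongNS_2}, meaning in particular that $\wt{\lambda}_\alpha \equiv 0$, together with the explicit expression \eqref{eq:ODE1sol} for $\wt{\lambda}_0$ as a finite Fourier sum with modes only at even integers $2k$ for $-m-1 \le k \le m+1$, with coefficients given by \eqref{eq:coeff_step3}.

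Finally, with these two pieces of information I can invoke Lemma \ref{lem:NSh2Feff}: the components of $\wt{\lambda}$ expand exactly as in \eqref{eq:lambda_Fourier} with the index sets $\mc{I} = \emptyset$ and $\mc{J} = \{0, 2, \ldots, 2m+2\}$. This produces the required local conformal isometry to a perturbed Fefferman space $(\wt{\mc{M}}',\wt{\mbf{c}}'_{\wt{\xi}},\wt{k}') \longrightarrow (\mc{M},H,J)$, completing the proof. There is no genuinely hard step here, since the heavy technical work has been carried out in Propositions \ref{prop:alwkhalfE} and \ref{prop:al_half-Einstein} and in Lemma \ref{lem:NSh2Feff}; the only thing to watch is the bookkeeping of which index sets $\mc{I}, \mc{J}$ actually arise, so that the conclusion is stated as a genuine perturbed Fefferman space rather than something slightly more general.
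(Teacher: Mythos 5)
Your proof is correct and follows essentially the same route as the paper's, which likewise reduces the corollary to Propositions \ref{prop:alwkhalfE} and \ref{prop:al_half-Einstein} (via the observation that the Weyl condition upgrades the optical geometry to a twist-induced nearly Robinson geometry by Theorem \ref{thm:non-shearing-hidim}) and then feeds $\wt{\lambda}_{\alpha}=0$ together with $\wt{\lambda}_{0}$ of the form \eqref{eq:coeff_step3} into Lemma \ref{lem:NSh2Feff}; your explicit verification that the almost half-Einstein scale satisfies the precondition \eqref{eq:Rickksc_a} is a detail the paper leaves implicit, and it is done correctly. The only quibble is the index set: since $\xi_{\alpha}^{(0)}=-\i\sigma^{-1}\nabla_{\alpha}\sigma$ is generally nonzero even though $\wt{\lambda}_{\alpha}=0$, one should take $\mc{I}=\{0\}$ with $\lambda_{\alpha}^{(0)}=0$ rather than $\mc{I}=\emptyset$, as in Theorem \ref{thm:hlfEinsc}.
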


The above results can easily be adapted to the case where the optical geometry under consideration is a perturbed Fefferman space. We leave the details for Proposition \ref{prop:alwkhalfE} to the reader, while for Proposition \ref{prop:al_half-Einstein}, we derive:
\begin{thm}\label{thm:hlfEinsc}
	Let $(\wt{\mc{M}},\wt{\mbf{c}}_{\wt{\xi}},\wt{k}) \longrightarrow (\mc{M},H,J)$ be a perturbed Fefferman space of dimension $2m+2>4$. Suppose that the Weyl tensor satisfies
	\begin{align}
		\wt{\Weyl}_{a b c d} \wt{k}^{a} \wt{v}^{b} \wt{k}^{c} & = 0 \, , & \mbox{for any $\wt{v} \in \Gamma (\langle\wt{k}\rangle^\perp)$,} \label{eq:PetrovIIa3} 
	\end{align}
	Then $\wt{\mbf{c}}_{\wt{\xi}}$ admits an almost half-Einstein scale $\wt{\sigma}$ if and only if there exists a nowhere vanishing density $\sigma \in \Gamma(\mc{E}(1,0))$ such that the CR scale $s := \sigma \ol{\sigma}$ satisfies
	 \begin{subequations}\label{eq:CRscalealK_B}
	 \begin{align}
	 	& \nabla_{(\alpha} \nabla_{\beta)} s + \i \WbA_{\alpha \beta} s + \Nh_{\gamma (\alpha \beta)} \nabla^{\gamma} s = 0 \, , \\
	 	& \nabla^{\gamma} s \Nh_{\gamma (\alpha \beta)} - \tfrac{1}{m} \nabla^{\gamma} \Nh_{\gamma (\alpha \beta)} s = 0 \, ,
	 \end{align}
	 \end{subequations}
	 and $\wt{\xi}$ is determined\label{item:2A} by the CR data $\left(\bm{\xi}_{\alpha}^{(0)}, [\nabla , \bm{\xi}_{0}^{(0)}], \bm{\xi}_{0}^{(2k)} \right)_{k=1,\ldots, m+1}$ given by
	 \begin{subequations}\label{eq:xi0}
	 	\begin{align*}
	 		\bm{\xi}_{\alpha}^{(0)}  & = - \i \sigma^{-1} \nabla_{\alpha} \sigma  \, , \\
	 		\bm{\xi}_0^{(2m+2)} & = \left( \tfrac{m!(m+1)!}{2(2m+2)!} \left( (2m+1) \Lambda - (2m+2) \wt{\Lambda} \right) + \mu \right) \sigma^{m} \ol{\sigma}^{-m-2} \, , \\
	 		\bm{\xi}_0^{(2k)} & = \tfrac{2(2m+1)!}{(m+1-k)!(m+1+k)!} \left( k \bm{\xi}_0^{(2m+2)} + (m+1-k)\Re(\mu) \right) \sigma^{k-1} \ol{\sigma}^{-k-1} \, ,  & 1 \leq |k| \leq m \, , \\
	 		\bm{\xi}_0^{(0)} & =  \tfrac{\i}{m} \left( \nabla_{\alpha} \bm{\xi}^{\alpha}_{(0)} - \nabla^{\alpha} \bm{\xi}_{\alpha}^{(0)} \right) + \tfrac{2(2m+1)!}{m!(m+1)!} \Re(\mu) \sigma^{-1} \ol{\sigma}^{-1} \, ,
	 	\end{align*}
	 \end{subequations}
	 for some real constant $\wt{\Lambda}$, real-valued function $\Lambda := \tfrac{1}{m} \left( \Sc - \| \Nh \|^2 \right)$, and complex-valued function $\mu$ on $(\mc{M},H,J)$.

Moreover, the zero set $\wt{\mc{Z}}$ of $\wt{\sigma}$ consists of the union $\wt{\mc{Z}}_+ \cup \wt{\mc{Z}}_-$ of the sections $\wt{\mc{Z}}_{\pm} = [\pm \i \sigma^{-1}] : \mc{M} \rightarrow \wt{\mc{M}}$. Off $\wt{\mc{Z}}$, $\wt{\sigma}$ determines a half-Einstein metric $\wt{g} = \sec^2 \phi \cdot \wt{g}_{\theta, \wt{\xi}}$, with Ricci scalar $(2m+2) \wt{\Lambda}$, where $\phi$ is the local fibre coordinate determined by $\sigma$ and $\wt{g}_{\theta, \wt{\xi}}$ is the perturbed Fefferman metric associated to the contact form $\theta = (\sigma \ol{\sigma})^{-1} \bm{\theta}$.
\end{thm}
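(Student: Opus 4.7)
The strategy is to combine three earlier results: Proposition \ref{prop:density_Ric} (which pins down the shape of almost Lorentzian scales satisfying \eqref{eq:RickkscFef} on a perturbed Fefferman space), Proposition \ref{prop:al_half-Einstein} (which characterises almost half-Einstein scales at the level of the underlying optical geometry), and Lemma \ref{lem:NSh2Feff} (which translates the optical data $(\wt{\lambda}_\alpha,\wt{\lambda}_0)$ into CR data for the perturbation one-form $\wt{\xi}$).

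For the direct implication, assume $\wt{\sigma}$ is an almost half-Einstein scale on $\wt{\mbf{c}}_{\wt{\xi}}$. In particular, $\wt{\sigma}$ satisfies \eqref{eq:RickkscFef}, so Proposition \ref{prop:density_Ric} supplies a unique $\sigma\in\Gamma(\mc{E}(1,0))$ with $\wt{\sigma}=\cos\phi\cdot\wt{\sigma}_{\theta,\wt{\xi}}$ for the contact form $\theta=(\sigma\ol{\sigma})^{-1}\bm{\theta}$, and off the zero set the associated metric has the form \eqref{eq:metRickkFef}, which in adapted coordinates is of the type \eqref{eq:strongNS_2} for the perturbed Fefferman metric. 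The Weyl condition \eqref{eq:PetrovIIa3} together with the almost weakly half-Einstein part of \eqref{eq:alwkEins} forces $\wt{\lambda}_\alpha\equiv 0$ by Proposition \ref{prop:alwkhalfE}, which in turn gives the CR-scale system \eqref{eq:CRscalealK_B} for $s=\sigma\ol{\sigma}$. The constant-Ricci-scalar condition \eqref{eq:alcstRicSc} then yields the ODE \eqref{eq:ode1} for $\wt{\lambda}_0$, whose general solution is \eqref{eq:ODE1sol}--\eqref{eq:coeff_step3}, depending on a complex function $\mu$ and on the (necessarily constant) Ricci scalar $(2m+2)\wt{\Lambda}$.

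The main computational step is then to feed these coefficients into the dictionary \eqref{eq:lmb2xi} of Lemma \ref{lem:NSh2Feff}. Because $\wt{\lambda}_\alpha=0$, formula \eqref{eq:lmb2xi} gives $\bm{\xi}_\alpha^{(0)}=-\i\sigma^{-1}\nabla_\alpha\sigma$, while for $k\neq 0$ we simply have $\lambda_0^{(2k)}=\xi_0^{(2k)}$, and the $k=0$ coefficient picks up the shift $\tfrac{1}{m+2}\Rho+\tfrac{1}{2m(m+1)}\|\Nh\|^2$. Using the identity $\Lambda=\tfrac{1}{m}(\Sc-\|\Nh\|^2)$ and $\Rho=\tfrac{1}{2(m+1)}\Sc$, these shifts combine with $\lambda_0^{(0)}$ from \eqref{eq:coeff_step3} to give the expression for $\bm{\xi}_0^{(0)}$ stated in \eqref{eq:xi0}; the commutation relation \eqref{eq:CR_com1} applied to $s$ is used here to convert the scalar curvature term into $\tfrac{\i}{m}(\nabla_\alpha\bm{\xi}^\alpha_{(0)}-\nabla^\alpha\bm{\xi}_\alpha^{(0)})$. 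The weights in \eqref{eq:Fourier_coef} then recast each $\lambda_0^{(2k)}$ as the density $\bm{\xi}_0^{(2k)}$ displayed in \eqref{eq:xi0}.

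For the converse direction, starting from a $\sigma$ solving \eqref{eq:CRscalealK_B} and a perturbation with CR data as in \eqref{eq:xi0}, one builds $\wt{\sigma}=\cos\phi\cdot\wt{\sigma}_{\theta,\wt{\xi}}$ and runs the same computations backwards: the form of the CR data guarantees $\wt{\lambda}_\alpha=0$ and ensures that $\wt{\lambda}_0$ solves \eqref{eq:ode1} with the prescribed constant $\wt{\Lambda}$, so that Proposition \ref{prop:al_half-Einstein} (applied via the remark following it, which extends it to perturbed Fefferman spaces) yields that $\wt{\sigma}$ is an almost half-Einstein scale. The description of the zero set $\wt{\mc{Z}}=\wt{\mc{Z}}_+\cup\wt{\mc{Z}}_-$ follows directly from Proposition \ref{prop:density_Ric}. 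The main obstacle is bookkeeping: tracking the density weights in \eqref{eq:Fourier_coef} through the shifts coming from $\sigma^{-1}\nabla\sigma$ and the scalar curvature terms, so that each identity is both CR-invariant and independent of the choice of Webster connection. Once this is done carefully the theorem follows.
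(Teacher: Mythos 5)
Your proposal is correct and follows essentially the same route as the paper: the paper's proof likewise treats the theorem as a corollary of Proposition \ref{prop:al_half-Einstein}, uses Proposition \ref{prop:density_Ric} to tie the fibre coordinate $\phi$ to a density $\sigma\in\Gamma(\mc{E}(1,0))$, and converts the coefficients $\wt{\lambda}_{\alpha},\wt{\lambda}_{0}$ into the CR data of $\wt{\xi}$ via the dictionary of Lemma \ref{lem:NSh2Feff} and relations \eqref{eq:Fourier_coef}, with the converse obtained by reversing the steps. Your bookkeeping of the $k=0$ shift using \eqref{eq:CR_com1} matches the computation the paper carries out in Proposition \ref{prop:PetrovIII}.
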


\begin{proof}
	This is really a corollary of Proposition \ref{prop:al_half-Einstein}, where we take our optical geometry to be a perturbed Fefferman space in the first place. We begin by interpreting the coordinate $\phi$ as being defined by a distinguished nowhere vanishing density $\sigma$ of weight $(1,0)$ as in Proposition \ref{prop:density_Ric}, which is related to the CR scale as $s = \sigma \ol{\sigma}$. We can determine the components of the perturbation one-form $\wt{\xi}$ by expressing the perturbed Fefferman metric as \eqref{eq:metNSFeff}, and we finally turn these into densities by applying Lemma \ref{lem:CR_data} --- see relations \eqref{eq:Fourier_coef}. The converse works by reversing the steps.
\end{proof}

\subsection{Almost Einstein scales}
Our next step would be to consider almost pure radiation scales, however, the following lemma, which is a straightforward reformulation of a result in \cite{TaghaviChabert2022}, will give us a short-cut to almost Einstein scales.
\begin{lem}\label{lem:pertFeffpurad}
	An almost pure radiation scale on a perturbed Fefferman space of dimension $2m+2>4$ whose Weyl tensor satisfies \eqref{eq:PetrovIIa3} is necessarily an almost Einstein scale.
\end{lem}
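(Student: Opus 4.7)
The plan is to reduce the claim, via the scale--metric correspondence of Proposition \ref{prop:scale2metricB}, to a statement already contained (if perhaps in different dress) in the classification of \cite{TaghaviChabert2022}.

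First I would observe that any almost pure radiation scale $\wt{\sigma}$ is, in particular, an almost half-Einstein scale: the condition $\wt{\Phi}_{ab} \wt{v}^b = 0$ for every $\wt{v} \in \Gamma(\wt{K}^\perp)$ in Definition \ref{defn:puradSc} is strictly stronger than \eqref{eq:RiccwkEins}, since $\wt{N} \subset {}^\C \wt{K}^\perp$, while the divergence identity \eqref{eq:alcstRicSc} is common to both. Theorem \ref{thm:hlfEinsc} therefore applies on our perturbed Fefferman space under the hypothesis \eqref{eq:PetrovIIa3}: there exists a nowhere vanishing $\sigma \in \Gamma(\mc{E}(1,0))$ with $s = \sigma \ol{\sigma}$ solving \eqref{eq:CRscalealK_B}, the perturbation one-form $\wt{\xi}$ is given by the explicit CR data \eqref{eq:xi0} with free parameters $\wt{\Lambda} \in \R$ and a complex-valued function $\mu$, and off $\wt{\mc{Z}} = \wt{\mc{Z}}_+ \cup \wt{\mc{Z}}_-$ the metric $\wt{g} = \sec^2 \phi \cdot \wt{g}_{\theta,\wt{\xi}}$ is half-Einstein with constant Ricci scalar $(2m+2)\wt{\Lambda}$.

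Next, I would use Proposition \ref{prop:scale2metricB} to translate the pure radiation hypothesis into the assertion that $\wt{g}$ is a pure radiation metric off $\wt{\mc{Z}}$, i.e.\ $(\wt{\Ric}_{ab})_\circ = \wt{\bm{\Phi}}\, \wt{\bm{\kappa}}_a \wt{\bm{\kappa}}_b$ for some function $\wt{\bm{\Phi}}$. This is precisely the setup analysed in \cite{TaghaviChabert2022}, where the Einstein equations on a twist-induced nearly Robinson geometry with twisting non-shearing congruence of null geodesics were solved component by component. Re-reading that computation under the weaker pure radiation assumption, one observes that the vanishing of $(\wt{\Ric}_{ab})_\circ$ on every pair $(\wt{v},\wt{w})$ with at least one factor in $\Gamma(\wt{K}^\perp)$ already determines the function $\mu$, the pseudo-Hermitian invariants of $\theta$ and the Fourier coefficients of $\wt{\lambda}_0$ in \eqref{eq:ODE1sol} uniquely, leaving no residual freedom in which a non-trivial $\wt{\bm{\Phi}}\, \wt{\bm{\kappa}}_a \wt{\bm{\kappa}}_b$ contribution could be accommodated. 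Hence $\wt{\bm{\Phi}} \equiv 0$ and $\wt{g}$ is Einstein off $\wt{\mc{Z}}$. Applying Proposition \ref{prop:scale2metricB} (or rather its Einstein analogue, obtained by repeating its proof with $\wt{\Phi}_{ab} = 0$) in the reverse direction, $\bigl( \wt{\nabla}_a \wt{\nabla}_b \wt{\sigma} + \wt{\Rho}_{ab} \wt{\sigma} \bigr)_\circ$ vanishes off $\wt{\mc{Z}}$, and by smoothness of the left-hand side together with regularity of $\wt{\sigma}$ across $\wt{\mc{Z}}$, the equation \eqref{eq:alEinstein} extends to all of $\wt{\mc{M}}$, so $\wt{\sigma}$ is an almost Einstein scale.

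The main obstacle is not conceptual but bibliographic: one has to identify in \cite{TaghaviChabert2022} the precise intermediate equations which, read under the weaker pure radiation rather than Einstein hypothesis, still force the full trace-free Ricci tensor to vanish. Since the off-diagonal calculations carried out there proceed independently of the $(\wt{k},\wt{k})$ component of the Einstein equation, this re-reading is essentially mechanical, and it is in this sense that the lemma is a \emph{straightforward reformulation} of the earlier classification rather than a new computation.
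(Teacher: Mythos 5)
Your proposal is correct and follows essentially the same route as the paper, which itself gives no independent proof of this lemma but simply declares it ``a straightforward reformulation of a result in \cite{TaghaviChabert2022}''; your reduction via Proposition \ref{prop:scale2metricB} and Theorem \ref{thm:hlfEinsc} to the component-by-component analysis of that reference is exactly the intended argument. One small caution: the phrase ``no residual freedom to accommodate $\wt{\bm{\Phi}}\,\wt{\bm{\kappa}}_a\wt{\bm{\kappa}}_b$'' is not quite a proof that $\wt{\bm{\Phi}}\equiv 0$ --- determining the metric only reduces the claim to an explicit computation of the remaining Ricci component, which is the computation actually carried out in \cite{TaghaviChabert2022} --- but since you explicitly defer to that computation, the logic is sound.
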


We can presently finish this section with a description of perturbed Fefferman spaces admitting almost Einstein scales.
\begin{thm}\label{thm:pertFeffpurad}
	Let $(\wt{\mc{M}},\wt{\mbf{c}}_{\wt{\xi}},\wt{k}) \longrightarrow (\mc{M},H,J)$ be a perturbed Fefferman space of dimension $2m+2>4$. Suppose that the Weyl tensor satisfies \eqref{eq:PetrovIIa3}. Then $\wt{\mbf{c}}_{\wt{\xi}}$ admits an almost Einstein scale $\wt{\sigma}$ if and only if there exists a nowhere vanishing density $\sigma \in \Gamma(\mc{E}(1,0))$ such that $s := \sigma \ol{\sigma}$ is an CR--Einstein scale, i.e.\ $s$ satisfies
	\begin{subequations}\label{eq:CR-E-inv_2}
		\begin{align}
			& \nabla_{(\alpha} \nabla_{\beta)} s + \i \WbA_{\alpha \beta} s + \Nh_{\gamma (\alpha \beta)} \nabla^{\gamma} s = 0 \, ,  \\
			& \nabla^{\gamma} s \Nh_{\gamma (\alpha \beta)} - \tfrac{1}{m} \nabla^{\gamma} \Nh_{\gamma (\alpha \beta)} s = 0 \, ,  \\
			& \left( s \nabla_{\bar{\beta}} \nabla_{\alpha} s - \nabla_{\alpha} s  \nabla_{\bar{\beta}} s + \Rho_{\alpha \bar{\beta}} s^2 - \tfrac{1}{m+2} \Nh_{\alpha \gamma \delta} \Nh_{\bar{\beta}}{}^{\gamma \delta} s^2 \right)_\circ = 0 \, ,
		\end{align}
	\end{subequations}
	 and $\wt{\xi}$ is determined by the CR data $\left(\bm{\xi}_{\alpha}^{(0)},  [\nabla,\bm{\xi}_{0}^{(0)}], \bm{\xi}_{0}^{(2k)} \right)_{k = 1, \ldots, m+1}$ given by
	\begin{subequations}\label{eq:xi0b}
	\begin{align}
		\bm{\xi}_{\alpha}^{(0)}  & = - \i \sigma^{-1} \nabla_{\alpha} \sigma  \, , \label{eq:xi0balf} \\
		\bm{\xi}_0^{(2m+2)} & = \left( \tfrac{m!(m+1)!}{2(2m+2)!} \left( (2m+1) \Lambda - (2m+2) \wt{\Lambda} \right) + \mu \right) \sigma^{m} \ol{\sigma}^{-m-2} \, , \\
		\bm{\xi}_0^{(2k)} & = \tfrac{2k(2m+1)!}{(m+1-k)!(m+1+k)!} \bm{\xi}_0^{(2m+2)} \sigma^{k-1} \ol{\sigma}^{-k-1} \, ,  & 1 \leq |k| \leq m \, , \\
		\bm{\xi}_0^{(0)} & =  \tfrac{\i}{m} \left( \nabla_{\alpha} \bm{\xi}^{\alpha}_{(0)} - \nabla^{\alpha} \bm{\xi}_{\alpha}^{(0)} \right) \, , \label{eq:xi0b0}
	\end{align}
\end{subequations}
	for some real constants $\wt{\Lambda}$ and $\Lambda := \tfrac{1}{m} \left( \Sc - \| \Nh \|^2 \right)$, and complex constant $\mu$ on $(\mc{M},H,J)$ satisfying $\ol{\mu}= -\mu$.

	Moreover, the zero set $\wt{\mc{Z}}$ of $\wt{\sigma}$ consists of the union $\wt{\mc{Z}}_+ \cup \wt{\mc{Z}}_-$ of the sections $\wt{\mc{Z}}_{\pm} = [\pm \i \sigma^{-1}] : \mc{M} \rightarrow \wt{\mc{M}}$. Off $\wt{\mc{Z}}$, $\wt{\sigma}$ determines an Einstein metric $\wt{g} = \sec^2 \phi \cdot \wt{g}_{\theta, \wt{\xi}}$, with Ricci scalar $(2m+2) \wt{\Lambda}$, where $\phi$ is the local fibre coordinate determined by $\sigma$ and $\wt{g}_{\theta, \wt{\xi}}$ is the perturbed Fefferman metric associated to the contact form $\theta = (\sigma \ol{\sigma})^{-1} \bm{\theta}$.
\end{thm}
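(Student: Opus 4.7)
The plan is to leverage Theorem \ref{thm:hlfEinsc} and Lemma \ref{lem:pertFeffpurad} to reduce the problem to isolating the additional constraints that an almost Einstein scale must satisfy beyond the almost half-Einstein conditions. Since the defining tensor $\wt{\Phi}_{ab}$ of an almost Einstein scale vanishes identically (rather than merely restricting trivially to $\wt{N}$), an almost Einstein scale is in particular an almost half-Einstein scale, and Theorem \ref{thm:hlfEinsc} already furnishes the bulk of the statement: the existence of a nowhere vanishing $\sigma\in\Gamma(\mc{E}(1,0))$ with $s=\sigma\ol{\sigma}$ satisfying the first two CR--Einstein conditions in \eqref{eq:CR-E-inv_2}, the fact that $\wt{\mc{Z}}=\wt{\mc{Z}}_+\cup\wt{\mc{Z}}_-$ is a union of cross-sections, and the preliminary form of the perturbation one-form involving a complex-valued function $\mu$ on $\mc{M}$ together with constants $\wt{\Lambda}$ and $\Lambda$.

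The remaining task is therefore to impose the full vanishing $(\wt{\Ric}_{ab})_\circ=0$ off $\wt{\mc{Z}}$. I would work in the split \eqref{eq:metNSFeff}--\eqref{eq:metRickkFef}, with $\wt{g}=\sec^2\phi\cdot \wt{g}_{\theta,\wt{\xi}}$, and use the formulas of \cite[Appendix~A]{TaghaviChabert2022} to expand each Ricci component of $\wt{g}_{\theta,\wt{\xi}}$ in Fourier modes in $\phi$. Theorem \ref{thm:hlfEinsc} already handles the $\wt{N}\otimes\wt{N}$-block; what remains are the $\bm{h}^{\alpha\bar{\beta}}$-trace part $\wt{\Ric}_{\alpha\bar{\beta}}-\tfrac{1}{m+1}\wt{\Sc}\,\bm{g}_{\alpha\bar{\beta}}$ (its trace-free-in-$(\alpha,\bar{\beta})$ piece) and the cross components $\wt{\Ric}_{\alpha 0}$, $\wt{\Ric}_{00}$. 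Matching Fourier coefficients mode by mode:
\begin{itemize}
\item The $\phi$-independent piece of the trace-free part of $\wt{\Ric}_{\alpha\bar{\beta}}$ is proportional to
\[
\left( s\nabla_{\bar{\beta}}\nabla_{\alpha} s-\nabla_{\alpha}s\,\nabla_{\bar{\beta}}s+\Rho_{\alpha\bar{\beta}}s^2-\tfrac{1}{m+2}\Nh_{\alpha\gamma\delta}\Nh_{\bar{\beta}}{}^{\gamma\delta}s^2\right)_\circ,
\]
whose vanishing is precisely the third condition \eqref{eq:RalCRE} of the CR--Einstein system \eqref{eq:CR-E-inv_2}.
\item The nontrivial Fourier modes of the cross components $\wt{\Ric}_{\alpha 0}$ enforce $\nabla_\alpha\mu=0$ and $\nabla_0\mu=0$, i.e.\ $\mu$ is constant on $\mc{M}$.
\item Finally, the purely real Fourier modes of $\wt{\Ric}_{00}$, after using that $\Lambda$ and $\Sc$ are constant by Proposition \ref{prop:CRE-KE} and the remarks following Proposition \ref{prop:spalCRE-N2}, force $\Re(\mu)=0$; this precisely converts the coefficients \eqref{eq:xi0}, where the $\Re(\mu)$-contributions appear, into the simpler form \eqref{eq:xi0b}, and in particular drops the $\Re(\mu)$-term in $\bm{\xi}_0^{(0)}$.
\end{itemize}

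For the converse direction, given $\sigma$ with $s=\sigma\ol{\sigma}$ solving \eqref{eq:CR-E-inv_2} and $\wt{\xi}$ of the form \eqref{eq:xi0b}--\eqref{eq:xi0b0}, I would define $\wt{\sigma}:=\cos\phi\cdot \wt{\sigma}_{\theta,\wt{\xi}}$ as in Proposition \ref{prop:density_Ric}. Using Proposition \ref{prop:scale2metric} together with the explicit form of $\wt{g}=\sec^2\phi\cdot \wt{g}_{\theta,\wt{\xi}}$ and a direct substitution of \eqref{eq:xi0b} into the Ricci curvature formulas, one verifies off $\wt{\mc{Z}}$ that $\wt{\Ric}_{ab}=(2m+2)\wt{\Lambda}\,\bm{g}_{ab}/ \wt{\sigma}^2$; equivalently, $(\wt\nabla_a\wt\nabla_b\wt\sigma+\wt\Rho_{ab}\wt\sigma)_\circ=0$ there. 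Since $\wt\nabla\wt\sigma\neq 0$ on $\wt{\mc{Z}}$, this extends smoothly across the zero set, establishing that $\wt{\sigma}$ is an almost Einstein scale. The identification of the zero set and of the conformally rescaled metric then follows verbatim from Proposition \ref{prop:density_Ric}.

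The principal obstacle in executing this plan is the lengthy but mechanical bookkeeping needed to extract, from the mode-by-mode Fourier analysis of the Ricci tensor in the presence of the non-vanishing Nijenhuis terms, precisely the equation \eqref{eq:RalCRE} together with the constancy and purely imaginary nature of $\mu$; the calculations parallel those carried out in \cite{TaghaviChabert2022} for the CR case and should reduce to them upon specialisation.
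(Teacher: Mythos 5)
Your proposal follows essentially the same route as the paper's proof: reduce to the almost half-Einstein setting of Theorem \ref{thm:hlfEinsc}, impose the remaining components of the Einstein condition to extract the third CR--Einstein equation together with $\Re(\mu)=0$ and the constancy of $\mu$, and invoke Lemma \ref{lem:pertFeffpurad} to pass from the pure-radiation condition to the full Einstein condition. The only differences are at the level of bookkeeping (the paper obtains $\Re(\mu)=0$ from the screen components via the first-order ODE \eqref{eq:ode2} on $\wt{\lambda}_0$ and the constancy of $\mu$ from the cross components, whereas you attribute these to $\wt{\Ric}_{00}$ and $\wt{\Ric}_{\alpha 0}$ respectively), which does not affect the substance of the argument.
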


\begin{proof}
	Our assumption that $\wt{\sigma}$ is an almost Einstein scale clearly allows us to re-use the setting of Proposition \ref{prop:al_half-Einstein} and Theorem \ref{thm:hlfEinsc} to which this proof will refer. The only difference is that there are additional constraints on the Ricci tensor of the metric $\wt{g} = \wt{\sigma}^{-2} \wt{\bm{g}}$. First, the condition $\wt{\Ric}(\wt{v},\wt{w}) = \wt{\Lambda} \wt{g}(\wt{v},\wt{w})$ for any $\wt{v}, \wt{w}  \in \Gamma(\langle \wt{k} \rangle^\perp)$ has the following two consequences:
	\begin{itemize}
		\item The pseudo-Hermitian structure $\theta$ is CR--Einstein, and so, by Theorem \ref{thm:alCRE}, the corresponding CR scale $s$ must satisfy \eqref{eq:CR-E-inv_2} in agreement with Theorem \ref{thm:hlfEinsc}. This also means that $\Lambda := \tfrac{1}{m} \left( \Sc - \| \Nh \|^2 \right)$ is now constant.
		\item The complex-valued function $\wt{\lambda}_{0}$ is subject to the ordinary differential equation
		\begin{align}\label{eq:ode2}
			\tan \phi \dot{\wt{\lambda}}_{0} - \left(2 (m+1) - (2m+1)\sec^2 \phi \right) \wt{\lambda}_{0} - \wt{\Lambda} \sec^2 \phi + \Lambda = 0 \, ,
		\end{align}
		in addition to \eqref{eq:ode1} --- see \cite[equation~(8.18)]{TaghaviChabert2022}. Plugging the solution \eqref{eq:ODE1sol} of \eqref{eq:ode1} with \eqref{eq:coeff_step3} into \eqref{eq:ode2} determines $\mu$ as a purely imaginary function, i.e.\ $\Re(\mu) = 0$, which in comparison with the form of the CR data given in Theorem \ref{thm:hlfEinsc}, yields \eqref{eq:xi0b}.
	\end{itemize}
	The next condition, $\wt{\Ric}(\wt{v},\cdot) = \wt{\Lambda} \wt{g}(\wt{v},\cdot)$ for any $\wt{v} \in \Gamma(\langle{\wt{k}}\rangle^\perp)$, then forces $\mu$ to be a constant as shown in \cite{TaghaviChabert2022}. This is the equation characterising $\wt{g}$ as a pure radiation metric, but we know by Lemma \ref{lem:pertFeffpurad} that this is equivalent to $\wt{g}$ being an Einstein metric. At this stage, we have exhausted all the conditions. The proof is finally completed by following the same line of arguments as in Proposition \ref{prop:al_half-Einstein} and Theorem \ref{thm:hlfEinsc}.
\end{proof}

As a corollary, we find:
\begin{cor}\label{cor:Einstein_Fefferman}
	Let $(\wt{\mc{M}},\wt{\mbf{c}}^{(\alpha)},\wt{k}) \longrightarrow (\mc{M},H,J)$ be an $\alpha$-Fefferman space that admits an almost Einstein scale $\wt{\sigma}$. Then $\alpha=1$.
	
	This being the case, the almost CR manifold $(\mc{M},H,J)$ admits a CR--Einstein structure that arises from a nowhere vanishing density $\sigma \in \Gamma(\mc{E}(1,0))$ that satisfies \eqref{eq:spalCREall}.
	
	In addition, the Ricci scalar of the Einstein metric defined by $\wt{\sigma}$ off its zero set is given by $\wt{\Sc} = \tfrac{2m+1}{m} \left( \Sc - \| \Nh \|^2 \right)$, where $\Sc$ is the Webster--Ricci scalar of the pseudo--Hermitian structure defined by the CR scale $\sigma \ol{\sigma}$ and $\Nh$ is the Nijenhuis tensor in that scale.
\end{cor}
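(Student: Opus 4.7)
The strategy is to realise the $\alpha$-Fefferman space as a perturbed Fefferman space and then apply Theorem \ref{thm:pertFeffpurad} directly. Comparing \eqref{eq:Feff_metric} at parameters $\alpha$ and $1$ immediately yields
\begin{align*}
\wt{g}_{\theta}^{(\alpha)} & = \wt{g}_{\theta} + 4 \theta \odot \wt{\xi} \, , & \wt{\xi} & = -\tfrac{\alpha - 1}{2m(m+1)} \| \Nh \|^2 \, \theta \, ,
\end{align*}
so $(\wt{\mc{M}},\wt{\mbf{c}}^{(\alpha)},\wt{k})$ is a perturbed Fefferman space whose semi-basic perturbation points purely along $\theta$ and is independent of the fibre coordinate $\phi$. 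In the language of Lemma \ref{lem:CR_data}, the associated CR data consists of $\bm{\xi}_{\alpha}^{(k)}=0$ for every $k$, $\bm{\xi}_{0}^{(k)}=0$ for $k\neq 0$, and $\bm{\xi}_{0}^{(0)} = -\tfrac{\alpha-1}{2m(m+1)} \| \bm{\Nh} \|^2$.

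The Weyl hypothesis \eqref{eq:PetrovIIa3} required by Theorem \ref{thm:pertFeffpurad} holds on every $\alpha$-Fefferman space: using the pair-exchange symmetry of $\wt{\Weyl}$ and the identity \eqref{eq-Wkk} together with $\wt{v}^{a}\wt{\bm{\kappa}}_{a}=0$ for $\wt{v}\in\Gamma(\langle\wt{k}\rangle^{\perp})$, one checks that $\wt{\Weyl}_{abcd}\wt{k}^{a}\wt{v}^{b}\wt{k}^{c} = 0$. Theorem \ref{thm:pertFeffpurad} then produces a nowhere vanishing $\sigma \in \Gamma(\mc{E}(1,0))$ whose CR data takes the structural form \eqref{eq:xi0b}. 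Matching with the data computed above yields three constraints. First, $\bm{\xi}_{\alpha}^{(0)} = 0 = -\i \sigma^{-1}\nabla_{\alpha}\sigma$, where $\nabla$ is the Webster connection preserving $s := \sigma\ol{\sigma}$; conjugation combined with $\nabla s = 0$ then gives $\nabla_{\bar{\alpha}}\sigma = 0$, that is, $\sigma$ satisfies \eqref{eq:spalCRE}. Second, $\bm{\xi}_{0}^{(2m+2)} = 0$ together with the reality constraint $\ol{\mu}=-\mu$ forces $\mu = 0$ and $\wt{\Lambda} = \tfrac{2m+1}{2m+2}\Lambda$, whereupon all intermediate $\bm{\xi}_{0}^{(2k)}$ vanish automatically. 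Third, $\bm{\xi}_{0}^{(0)}$ computed from \eqref{eq:xi0b0} is zero since $\bm{\xi}_{\alpha}^{(0)}=0$, and equating this with the $\alpha$-Fefferman value gives $(\alpha-1)\|\bm{\Nh}\|^2 = 0$; since the Levi form is positive definite, this forces $\alpha = 1$ whenever $(H,J)$ is strictly almost CR, while in the involutive case the $\|\Nh\|^2$-term in \eqref{eq:Feff_metric} is absent and the statement $\alpha=1$ is vacuous.

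The remaining assertions follow quickly. Theorem \ref{thm:pertFeffpurad} guarantees that $s = \sigma\ol{\sigma}$ is a CR--Einstein scale, and combined with $\nabla_{\bar{\alpha}}\sigma = 0$, Theorem \ref{thm:spalCRE} implies that $\sigma$ satisfies the full system \eqref{eq:spalCREall}. The Ricci scalar formula is simply $\wt{\Sc} = (2m+2)\wt{\Lambda} = (2m+1)\Lambda = \tfrac{2m+1}{m}(\Sc - \|\Nh\|^2)$. The main technical hurdle is the bookkeeping in the second step: one must ensure that the weighted-versus-unweighted conversions of \eqref{eq:Fourier_coef} and the reality constraint $\ol{\mu}=-\mu$ are tracked consistently, so that the single algebraic identification $\bm{\xi}_{0}^{(0)}=0$ legitimately extracts the conclusion $\alpha=1$.
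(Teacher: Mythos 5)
Your proof is correct and follows essentially the same route as the paper's: rewrite $\wt{g}^{(\alpha)}_{\theta}$ as a perturbed $1$-Fefferman metric with $\wt{\xi}\propto(1-\alpha)\|\Nh\|^2\,\theta$, invoke Theorem \ref{thm:pertFeffpurad}, and match the CR data term by term to extract $\nabla_{\bar\alpha}\sigma=0$, $\mu=0$, $(2m+1)\Lambda=(2m+2)\wt\Lambda$ and $\alpha=1$. Your explicit verification of the Weyl hypothesis \eqref{eq:PetrovIIa3} from \eqref{eq-Wkk} and your care with the reality of $\mu$ and the involutive edge case are welcome additions that the paper's proof leaves implicit.
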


\begin{proof}
	We first note that any $\alpha$-Fefferman metric $\wt{g}_{\theta}^{(\alpha)}$ can be expressed as a perturbed $1$-Fefferman metric, i.e.\
	\begin{align*}
		\wt{g}_{\theta}^{(\alpha)} & = \wt{g}_{\theta}^{(1)} + \wt{\xi} =: \wt{g}_{\theta,\xi} \, , & \mbox{where} & & \wt{\xi} & = \tfrac{1-\alpha}{2m(2m+1)} \| \Nh \|^2 \theta \, .
	\end{align*}
	Now, let $\wt{g}$ be the Einstein metric defined off the zero set of the almost Einstein scale $\wt{\sigma}$. Then, by Theorem \ref{thm:pertFeffpurad}, there exists a nowhere vanishing density $\sigma \in \Gamma(\mc{E}(1,0))$ satisfying equations \eqref{eq:CR-E-inv_2} and such that $\wt{g} = \sec^2 \phi \cdot \wt{g}_{\theta,\wt{\xi}}$ where $\wt{g}_{\theta,\wt{\xi}}$ is the perturbed $1$-Fefferman metric associated to it. From the form of $\wt{\xi}$, we already know that $\xi_{\alpha}^{(0)} = 0$, and so, by \eqref{eq:xi0balf}, $\nabla_{\bar{\alpha}} \sigma = 0$. By Theorem \ref{thm:spalCRE}, this already tells us that $\sigma$ in fact satisfies \eqref{eq:spalCREall}.
	
	Next, by \eqref{eq:xi0b0}, we have $\xi_{0}^{(0)} = 0$ too. But this implies $\tfrac{1-\alpha}{2m(2m+1)} \| \Nh \|^2 = 0$, i.e.\ $\alpha = 1$. The vanishing of the other coefficients implies $(2m+1) \Lambda - (2m+2) \wt{\Lambda} = \mu = 0$, which determines the relation between $\wt{\Sc}$, $\Sc$ and $\|\Nh\|^2$.
\end{proof}

\begin{rem}	
	For consistency with Corollary \ref{cor:Einstein_Fefferman}, we can compute the integrability condition \eqref{eq-Wkk} when $\wt{\mbf{c}}^{(\alpha)}$ admits an almost Einstein scale. Using \cite[equation~(8.24f)]{TaghaviChabert2022}, we find
	\begin{align*}
		\tfrac{1}{2m+1} \left( \Lambda - \tfrac{2m+2}{2m+1} \wt{\Lambda} \right) & = \tfrac{\alpha - 1}{2m+1} \| \Nh \|^2 \, ,
	\end{align*}
	which can only be satisfied when $\alpha=1$.
\end{rem}

\section{Properties of the zero set of almost (half-)Einstein scales}\label{sec:asymptotics}
Let $\wt{\sigma}$ be an almost Lorentzian scale on a perturbed Fefferman space $(\wt{\mc{M}},\wt{\mbf{c}}_{\wt{\xi}},\wt{k}) \longrightarrow (\mc{M},H,J)$ of dimension $2m+2\geq4$. Then, for each choice of Levi-Civita connection $\wt{\nabla}$, we have a weighted one-form
\begin{align}\label{eq:nu}
	\wt{\bm{\nu}}_{a} & := \wt{\nabla}_{a} \wt{\sigma} \in \Gamma( \wt{\mc{E}}_{a}[1]) \, .
\end{align}
This depends on the choice of metric in $\wt{\mbf{c}}_{\wt{\xi}}$ since under a conformal change, $\wt{\bm{\nu}}_{a}$ transforms as $\wh{\wt{\bm{\nu}}}_{a} = \wt{\bm{\nu}}_{a} + \wt{\Upsilon}_{a} \wt{\sigma}$. However, on restriction to $\wt{\mc{Z}}$, any of these weighted forms coincide with the weighted normal covector of $\wt{\mc{Z}}$ \cite{Curry2018}.

Let us assume further that $\wt{\sigma}$ is an almost half-Einstein scale. By Theorem \ref{thm:hlfEinsc}, $\wt{\sigma}$ is determined by some nowhere vanishing $\sigma \in \Gamma(\mc{E}(1,0))$ trivialising $\wt{\mc{M}} \rightarrow \mc{M}$ with fibre coordinate $\phi$ and with contact form $\theta = (\sigma \ol{\sigma})^{-1} \bm{\theta}$, associated Fefferman scale $\wt{\sigma}_{\theta,\wt{\xi}}$ so that $\wt{\sigma} = \cos \phi \cdot \wt{\sigma}_{\theta,\wt{\xi}}$. Its zero set is given by $\wt{\mc{Z}} = \wt{\mc{Z}}_+  \cup \wt{\mc{Z}}_-$ where $\wt{\mc{Z}}_\pm := [\pm \i \sigma^{-1}] : \mc{M} \rightarrow \wt{\mc{M}}$. This means two things: First, for any choice of metric $\wt{g}$ in $\wt{\mbf{c}}$ with Levi-Civita connection $\wt{\nabla}$, the weighted one-form \eqref{eq:nu} reads as
\begin{align*}
	\wt{\bm{\nu}} & = - \wt{\sigma}_{\theta,\wt{\xi}} \cdot \sin \phi \cdot \d \phi   + \cos \phi \cdot \wt{\nabla} \wt{\sigma}_{\theta,\wt{\xi}} \, .
\end{align*}
Second, we may choose our metric $\wt{g}$ to be such that $\wt{\nabla}$ preserves the distinguished perturbed Fefferman scale $\wt{\sigma}_{\theta,\wt{\xi}}$. This singles out the one-form of weight $1$ and vector field of weight $-1$
\begin{align*}
	\accentset{(\theta)}{\wt{\bm{\nu}}} & = - \wt{\sigma}_{\theta,\wt{\xi}} \cdot \sin \phi \cdot \d \phi \, , & 	\accentset{(\theta)}{\wt{\bm{n}}} & = \wt{\bm{g}}^{-1} ( \accentset{(\theta)}{\wt{\bm{\nu}}} , \cdot ) \, .
\end{align*}
Computing the squared length of $\accentset{(\theta)}{\wt{\bm{n}}}$ on restriction to $\wt{\mc{Z}}$, we find 
\begin{align*}
	\wt{\bm{g}} (\accentset{(\theta)}{\wt{\bm{n}}}, 	\accentset{(\theta)}{\wt{\bm{n}}}) |_{\wt{\mc{Z}}} & = -\tfrac{1}{2m+1} \wt{\Lambda} \, .
\end{align*}

Now, recall that $\wt{\mc{Z}}$ is said to be \emph{spacelike}, \emph{timelike} or \emph{null} if its normal is timelike, spacelike or null respectively. Hence, the previous display 
immediately yields the following proposition --- see also \cite{Penrose1965,Penrose1986} for the four-dimensional case.
\begin{prop}\label{prop:redE_hypsrf}
	Let $(\wt{\mc{M}},\wt{\mbf{c}}_{\wt{\xi}},\wt{k}) \longrightarrow (\mc{M},H,J)$ be a perturbed Fefferman space of dimension $2m+2 \geq 4$ that admits an almost half-Einstein scale $\wt{\sigma}$, so that off the zero set $\wt{\mc{Z}}$ of $\wt{\sigma}$, the metric $\wt{g} = \wt{\sigma}^{-2} \wt{\bm{g}}$ is half-Einstein with constant Ricci scalar $(2m+2) \wt{\Lambda}$. Then
	\begin{itemize}
		\item $\wt{\mc{Z}}$ is null if and only if $\wt{\Lambda} = 0$;
		\item $\wt{\mc{Z}}$ is spacelike if and only if $\wt{\Lambda} > 0$;
		\item $\wt{\mc{Z}}$ is timelike if and only if $\wt{\Lambda} < 0$.
	\end{itemize}
\end{prop}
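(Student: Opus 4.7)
The plan is simple: the displayed identity
\[
\wt{\bm{g}}(\accentset{(\theta)}{\wt{\bm{n}}}, \accentset{(\theta)}{\wt{\bm{n}}})|_{\wt{\mc{Z}}} \; = \; -\tfrac{1}{2m+1}\wt{\Lambda}
\]
that appears just above the statement contains everything. The proposition is then a case split on the sign of $\wt{\Lambda}$, combined with the convention, recalled in the paragraph preceding the statement, that a hypersurface is spacelike, timelike, or null if its normal is timelike, spacelike, or null, respectively. Since $\wt{\bm{g}}$ has signature $(+,\ldots,+,-)$, a vector is timelike precisely when its squared norm is negative, so $\wt{\Lambda}>0$ forces the normal to be timelike (hence $\wt{\mc{Z}}$ spacelike), $\wt{\Lambda}<0$ forces it to be spacelike (hence $\wt{\mc{Z}}$ timelike), and $\wt{\Lambda}=0$ forces it to be null.

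To make the argument self-contained, I would first spell out the above identity. Choosing the Levi-Civita connection preserving the perturbed Fefferman scale $\wt{\sigma}_{\theta,\wt{\xi}}$, Proposition~\ref{prop:density_Ric} gives $\accentset{(\theta)}{\wt{\bm{\nu}}} = -\wt{\sigma}_{\theta,\wt{\xi}}\sin\phi\,\d\phi$, so that $\wt{\bm{g}}(\accentset{(\theta)}{\wt{\bm{n}}},\accentset{(\theta)}{\wt{\bm{n}}}) = \sin^2\phi\cdot \wt{g}^{-1}_{\theta,\wt{\xi}}(\d\phi,\d\phi)$. By Lemma~\ref{lem:al_half-Einstein} the half-Einstein hypothesis forces $\wt{\lambda}_\alpha=0$, so the relation $\wt{\lambda}=\d\phi+\wt{\lambda}_0\theta$ rearranges to $\d\phi=\wt{\lambda}-\tfrac{1}{2}\wt{\lambda}_0\wt{\kappa}$ with $\wt{\kappa}=2\theta$. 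Since $\wt{\lambda}$ and $\wt{\kappa}$ are both null and paired as $\wt{g}^{-1}_{\theta,\wt{\xi}}(\wt{\lambda},\wt{\kappa})=1$, this gives $\wt{g}^{-1}_{\theta,\wt{\xi}}(\d\phi,\d\phi)=-\wt{\lambda}_0$.

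The only remaining point is to evaluate $\wt{\lambda}_0$ on $\wt{\mc{Z}}=\{\phi=\pm\pi/2\}$. Here I would not use the explicit Fourier expansion \eqref{eq:ODE1sol}--\eqref{eq:coeff_step3} but rather multiply the ODE \eqref{eq:ode1} through by $\cos^2\phi$, producing an equation that is regular at $\phi=\pm\pi/2$ and that at these points collapses to
\[
2(m+1)(2m+1)\,\wt{\lambda}_0 - 2(m+1)\wt{\Lambda} = 0,
\]
whence $\wt{\lambda}_0|_{\wt{\mc{Z}}}=\wt{\Lambda}/(2m+1)$. Combined with $\sin^2(\pm\pi/2)=1$, this yields the displayed formula and the proposition follows.

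The only mildly delicate point is the passage from the singular form \eqref{eq:ode1} to its value on $\wt{\mc{Z}}$: one must confirm that $\wt{\lambda}_0$ is smooth across $\phi=\pm\pi/2$ (which is built into the perturbed Fefferman setup, since $\wt{\xi}$ is a globally defined semi-basic one-form on $\wt{\mc{M}}$) before reading off its boundary value. No further calculation is required.
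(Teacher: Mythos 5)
Your proof is correct and follows essentially the same route as the paper: both reduce the statement to the identity $\wt{\bm{g}}(\accentset{(\theta)}{\wt{\bm{n}}},\accentset{(\theta)}{\wt{\bm{n}}})|_{\wt{\mc{Z}}} = -\tfrac{1}{2m+1}\wt{\Lambda}$ together with the causal classification of hypersurfaces by their normals. Your way of supplying the computation the paper leaves implicit --- obtaining $\wt{g}^{-1}_{\theta,\wt{\xi}}(\d\phi,\d\phi)=-\wt{\lambda}_{0}$ from $\wt{\lambda}_{\alpha}=0$ and then evaluating $\wt{\lambda}_{0}|_{\wt{\mc{Z}}}=\wt{\Lambda}/(2m+1)$ by multiplying \eqref{eq:ode1} through by $\cos^{2}\phi$ rather than summing the explicit Fourier coefficients \eqref{eq:coeff_step3} --- is clean and correct, and is consistent with the paper's remark that one may alternatively compute $\det\left(\wt{g}_{\theta,\wt{\xi}}|_{\wt{\mc{Z}}}\right)=-\tfrac{4}{2m+1}\wt{\Lambda}$.
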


\begin{rem}
	In general, while $\wt{\mc{Z}}$ is expected to have merely a conformal, rather than metric, structure --- see for instance \cite{Curry2018} --- in our case, by virtue of the almost half-Einstein equations, we do in fact have a distinguished regular metric on $\wt{\mc{Z}}$, namely, the perturbed Fefferman metric restricted to $\wt{\mc{Z}}$:
	\begin{align*}
		\wt{g}_{\theta,\wt{\xi}} |_{\wt{\mc{Z}}} & = 4 \theta \odot \left( \tfrac{\i}{2} \left( \sigma^{-1} \nabla \sigma - \ol{\sigma}^{-1} \nabla \ol{\sigma} \right)   - \left( \tfrac{1}{m+2} \Rho + \tfrac{1}{2m(m+1)} \| \Nh \|^2 \right) \theta  + \wt{\xi}|_{\wt{\mc{Z}}}  \right) + h \, .
	\end{align*}
	This clearly depends on the pseudo-Hermitian data.
	Proposition \ref{prop:redE_hypsrf} can then alternatively be derived by computing the determinant $\det \left( \left.\wt{g}_{\theta,\wt{\xi}} \right|_{\wt{\mc{Z}}} \right) = -\tfrac{4}{2m+1} \wt{\Lambda}$. When $\accentset{(\theta)}{\wt{\bm{n}}}$ is null, it is tangent to $\wt{\mc{Z}}$ and the metric $\wt{g}_{\theta,\wt{\xi}}$ is degenerate. 
\end{rem}

We end this section with a result on the curvature properties on $\wt{\mc{Z}}$:
\begin{thm}\label{thm:conformal_flat_asym}
	Let $(\wt{\mc{M}},\wt{\mbf{c}}_{\wt{\xi}},\wt{k}) \rightarrow (\mc{M},H,J)$ be a perturbed Fefferman space of dimension $2m+2>4$ that admits an almost Einstein scale $\wt{\sigma}$ with zero set $\wt{\mc{Z}}$, so that off $\wt{\mc{Z}}$, the metric $\wt{g} = \wt{\sigma}^{-2} \wt{\bm{g}}$ is Einstein with constant Ricci scalar $(2m+2) \wt{\Lambda}$. Then $\wt{\mbf{c}}_{\wt{\xi}}$ is conformally flat on $\wt{\mc{Z}}$ if and only if $(\mc{M},H,J)$ is CR flat and the CR--Einstein structure has Webster--Ricci scalar $\tfrac{m(2m+2)}{2m+1} \wt{\Lambda}$.
\end{thm}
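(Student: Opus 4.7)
The plan is to reduce the statement to a direct computation of the Weyl tensor of the perturbed Fefferman metric and its restriction to the zero set.

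First, I would apply Theorem \ref{thm:pertFeffpurad} to fix the setup: the almost Einstein scale $\wt{\sigma}$ singles out, up to a constant multiple, a nowhere vanishing density $\sigma \in \Gamma(\mc{E}(1,0))$ for which $s = \sigma \ol{\sigma}$ is a CR--Einstein scale (satisfying \eqref{eq:CR-E-inv_2}), while $\wt{\xi}$ has CR data of the form \eqref{eq:xi0b} involving the constants $\Lambda = \tfrac{1}{m}(\Sc - \| \Nh \|^2)$, $\wt{\Lambda}$, and a purely imaginary constant $\mu$. Then $\wt{\sigma} = \cos \phi \cdot \wt{\sigma}_{\theta,\wt{\xi}}$ with $\theta = (\sigma \ol{\sigma})^{-1} \bm{\theta}$, so $\wt{\mc{Z}}$ corresponds to $\phi = \pm \tfrac{\pi}{2}$.

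Since the Weyl tensor is conformally invariant, the question reduces to computing $\wt{\Weyl}$ for the (smooth) perturbed Fefferman metric $\wt{g}_{\theta,\wt{\xi}}$ and evaluating it at $\phi = \pm \tfrac{\pi}{2}$. I would carry this out in the adapted frame dual to $(\theta, \wt{\lambda}, \theta^{\alpha}, \ol{\theta}{}^{\bar{\alpha}})$ pulled back from $(\mc{M},H,J)$, specialising the general Weyl formulas of \cite[Appendix~A]{TaghaviChabert2022} and Appendix \ref{app:proofchi} to the present situation: $\wt{\lambda}_{\alpha} \equiv 0$ and $\wt{\lambda}_{0}$ is the explicit Fourier polynomial \eqref{eq:ODE1sol}--\eqref{eq:coeff_step3} with $\Re(\mu) = 0$. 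Under these specialisations, every Weyl component becomes a trigonometric polynomial in $\phi$ whose coefficients are built from the Chern--Moser tensor $\CM_{\alpha \bar{\beta} \gamma \bar{\delta}}$, the Nijenhuis tensor $\Nh_{\alpha \beta \gamma}$ together with its divergence $\nabla^{\gamma} \Nh_{\gamma \alpha \beta}$, and the scalar combination $\Lambda - \tfrac{2m+2}{2m+1} \wt{\Lambda}$.

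Evaluating at $\phi = \pm \tfrac{\pi}{2}$ (where $\cos \phi = 0$, $\sin \phi = \pm 1$, and $\e^{2 k \i \phi} = (-1)^k$) then extracts the CR-invariant boundary value of each component. The expected outcome is that the horizontal-type components force $\CM_{\alpha \bar{\beta} \gamma \bar{\delta}} = 0$ and all algebraic Nijenhuis contractions to vanish, the mixed components force $\Nh_{\alpha \beta \gamma} = 0$ outright (using positive-definiteness of the Levi form), and the remaining double-trace component gives $(2m+1)\Lambda = (2m+2)\wt{\Lambda}$. Combined with $\| \Nh \|^2 = 0$ and \eqref{eq:LScN2}, this yields $\Sc = \tfrac{m(2m+2)}{2m+1}\wt{\Lambda}$; simultaneous vanishing of $\Nh$ and $\CM$ for $m > 1$ is precisely CR flatness, so both directions of the equivalence fall out from the same set of identities, the converse direction amounting to substituting the flatness conditions back into the Weyl formulas and confirming every component vanishes at $\phi = \pm \tfrac{\pi}{2}$.

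The principal obstacle is bookkeeping rather than concept: the Weyl components are numerous, they carry Fourier oscillations that collapse differently on $\wt{\mc{Z}}$, and they must be sorted into independent CR-invariant pieces. The CR--Einstein equations should cause extensive cancellations at $\phi = \pm \tfrac{\pi}{2}$, isolating exactly the three conditions above, but verifying this cleanly requires a careful calculation analogous to (but more intricate than) that behind Theorem \ref{thm:Fefferman-CRA}.
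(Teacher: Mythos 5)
Your proposal is correct and follows essentially the same route as the paper: the paper's proof likewise sets up the situation via Theorem \ref{thm:pertFeffpurad}, takes the explicit Weyl curvature of the perturbed Fefferman metric from \cite[equations~(8.24)]{TaghaviChabert2022}, evaluates at $\phi = \pm\tfrac{\pi}{2}$, and reads off that vanishing is equivalent to $\Nh_{\alpha\beta\gamma}=0$, $\CM_{\alpha\bar{\beta}\gamma\bar{\delta}}=0$ and $\tfrac{1}{2m+2}\Lambda - \tfrac{1}{2m+1}\wt{\Lambda}=0$. Your translation of the scalar condition into the stated Webster--Ricci scalar via $\Lambda = \tfrac{1}{m}(\Sc - \|\Nh\|^2)$ with $\|\Nh\|^2=0$ is exactly what the paper leaves implicit.
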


\begin{proof}
	This follows from a direct computation.	The Weyl curvature of the perturbed Fefferman metric is given by \cite[equations~(8.24)]{TaghaviChabert2022}. Evaluating these at $\phi = \pm \tfrac{\pi}{2}$ gives us the Weyl curvature on $\wt{\mc{Z}}$, and it is then straightforward, if  somewhat tedious, to check that the Weyl tensor vanishes on $\wt{\mc{Z}}$ if and only if $\Nh_{\alpha \beta \gamma} = 0$, $\mathsf{S}_{\alpha \bar{\beta} \gamma \bar{\delta}} =0$, i.e.\ $(\mc{M},H,J)$ is CR flat, and $\frac{1}{2m+2} \Lambda - \frac{1}{2m+1} \wt{\Lambda}=0$.
\end{proof}

\begin{rem}
	The above result should be contrasted with \cite[Theorem~7.13]{TaghaviChabert2023} in dimension four.
\end{rem}

\section{Comments on conformal symmetries}\label{sec:sym}
Given a perturbed Fefferman space $(\wt{\mc{M}},\wt{\mbf{c}}_{\wt{\xi}},\wt{k}) \longrightarrow (\mc{M},H,J)$, one may naturally ask how conformal symmetries of $(\wt{\mc{M}},\wt{\mbf{c}}_{\wt{\xi}},\wt{k})$ relate to solutions to CR invariant differential equations on its base $(\mc{M},H,J)$. While this question goes beyond the scope of this article, we shall nevertheless outline some ideas towards the answer.

Let us first review the situation in the unperturbed case, i.e.\ $\wt{\xi} = 0$ and when $(\mc{M},H,J)$ is a CR manifold. It is shown in \cite{Cap2008} that the space of all conformal Killing fields on a Fefferman space $(\wt{\mc{M}},\wt{\mbf{c}},\wt{k})$ splits into a direct sum of three spaces
\begin{align}\label{eq:CK_decomp}
	\wt{\mc{A}} \oplus \wt{\mc{B}} \oplus \langle \wt{k} \rangle \, ,
\end{align}
where
\begin{enumerate}
	\item any section of $\wt{\mc{A}}$ is transverse to $\langle \wt{k} \rangle^\perp$ and arises from a real-valued solution $s \in \Gamma(\mc{E}(1,1))$ to
	\begin{align*}
		\nabla_{\alpha} \nabla_{\beta} s + \i \WbA_{\alpha \beta} s & = 0 \, ,
	\end{align*}
	and a nowhere vanishing solution gives rise to transverse infinitesimal CR symmetries as we have already seen in Section \ref{sec:CR_geom};
	\item any section of $\wt{\mc{B}}$ is tangent to $\langle \wt{k} \rangle^\perp$, but not to $\langle \wt{k} \rangle$, and arises from a solution $w^{\alpha} \in \Gamma(\mc{E}^{\alpha}(-1,1))$ to the CR invariant differential equations
	\begin{align*}
		\nabla^{(\alpha} w^{\beta)} & = 0 \, , & \nabla_{\alpha} w^{\beta} & = \tfrac{1}{m}\delta_{\alpha}^{\beta} \nabla_{\gamma} w^{\gamma} \, .
	\end{align*}
	In dimension three, however, the second condition is vacuous, while the first one reduces to $\nabla^{\alpha} w^{\beta} = 0$.
\end{enumerate}
For conformally flat $(\wt{\mc{M}},\wt{\mbf{c}},\wt{k})$, the dimensions of $\wt{\mc{A}}$ and $\wt{\mc{B}}$ are $(m+1)(m+3)$ and $(m+1)(m+2)$ respectively, the former being the maximal dimension of the automorphism group of $(\mc{M},H,J)$ as expected. Together with $\langle \wt{k} \rangle$, these indeed add up to $(m+2)(2m+3)$, the maximal dimension of the automorphism group of $(\wt{\mc{M}},\wt{\mbf{c}},\wt{k})$.

There are two ways in which a perturbation of $(\wt{\mc{M}},\wt{\mbf{c}},\wt{k})$ by a semi-basic one-form $\wt{\xi}$ affects the decomposition \eqref{eq:CK_decomp}: first, $\wt{k}$ itself will cease to be conformal Killing (unless $\wt{k} \hook \d \wt{\xi} = 0$). Second, the perturbation one-form will affect the conformal curvature, and in particular, the dimension of each summand in \eqref{eq:CK_decomp} will not be preserved.

Interestingly, if we start with a conformally flat $(\wt{\mc{M}},\wt{\mbf{c}},\wt{k})$, we then end up with non-conformally flat perturbed Fefferman spaces over a flat CR manifold. To illustrate this phenomenon in dimension four, and following \cite{Trautman2002}, the hyperquadric, locally CR equivalent to the CR three-sphere, gives rise to three distinct perturbed Fefferman spaces, each admitting an almost Einstein scale $\wt{\sigma}$ with zero set $\wt{\mc{Z}}$:
\begin{itemize}
	\item the canonical conformally flat Fefferman conformal structure that contains the Minkowski metric off $\wt{\mc{Z}}$;
	\item a perturbed Fefferman conformal structure that contains the Petrov type D Taub-NUT metric \cite{Taub1951,Newman1963} off $\wt{\mc{Z}}$;
	\item a perturbed Fefferman conformal structure that contains the Petrov type N Hauser metric \cite{Hauser1974} off $\wt{\mc{Z}}$.
\end{itemize}

Incidentally, since in dimension four, the `unperturbed' Fefferman conformal structure admits an almost Einstein scale if and only if it is conformally flat (or equivalently its underlying CR structure is flat), the main point of these perturbations is to enlarge the range of possible almost Einstein scales.

In fact, a four-dimensional conformally flat space $(\wt{\mc{M}},\wt{\mbf{c}})$ can be viewed as a (perturbed) Fefferman space in `infinitely many' ways: The space of all null geodesics in $\wt{\mc{M}}$ is a contact CR five-fold $\mbb{PN}$ of signature $(1,1)$, and any contact CR submanifold of $\mbb{PN}$ of dimension three gives rise to a twisting non-shearing congruences of null geodesics on $\wt{\mc{M}}$. In the analytic category, the so-called \emph{Kerr theorem} tells us that such CR three-manifolds can be constructed as the intersection of $\mbb{PN} \subset \CP^3$ with a complex surface in $\CP^3$, the \emph{twistor space} of $(\mc{M},\wt{\mbf{c}})$ \cite{Penrose1967,Penrose1986}. For instance, the `massless' Kerr metric is flat, but its underlying CR three-manifold admits a two-dimensional group of symmetry.

\begin{rem}
	In  \cite{Lewandowski1990}, the authors consider algebraically special pure radiation metrics, which can therefore be treated as almost pure radiation scales on a perturbed Fefferman space, with underlying contact CR three-manifold admitting an automorphism group of \emph{submaximal} dimension (in this case, three).
\end{rem}

\begin{rem}
When $(\mc{M},H,J)$ is a strictly almost CR manifold, we would expect the decomposition \eqref{eq:CK_decomp} to carry over, but we would certainly have to change the interpretation of the bundle $\wt{\mc{A}}$, and possibly $\wt{\mc{B}}$. There is also the added complication of the parameter $\alpha$ in Definitions \ref{defn-alpha-Fefferman} and \ref{defn-alpha-Fefferman-space}. We shall not attempt to answer these questions at this stage.
\end{rem}

\appendix

\section{A strictly almost CR manifold admitting a CR--Einstein structure}\label{app:NP}
	According to \cite[Corollary~4.1 and Proposition~4.4]{TaghaviChabert2022}, locally, the anti-canonical circle bundle of any almost K\"{a}hler--Einstein manifold admits a CR--Einstein structure, and any almost CR manifold admitting a CR--Einstein structure locally arises in this way --- see also Proposition \ref{prop:CRE-KE}. This allows us to produce examples of such almost CR manifolds. For the integrable case, see \cite{Leitner2007}.  We now construct an example of a CR--Einstein structure on a strictly almost CR manifold, based on the strictly almost K\"{a}hler--Einstein four-manifold explicitly given by Nurowski and Przanowski \cite{Nurowski1999}.
	
	We let $\mc{M}$ be the subset of $\R^5$ with coordinates $(u,z^\alpha, \ol{z}^{\bar{\alpha}})_{\alpha=1,2}$ subject to the condition $z^1 + \ol{z}^{\bar{1}} - 2 z^2 \ol{z}^{\bar{2}}>0$. Define
	\begin{align*}
		\theta & =
			\d u + \i z^1 \d z^2 - \i z^2 \d z^1  - \i \ol{z}^{\bar{1}} \d \ol{z}^{\bar{2}} + \i \ol{z}^{\bar{2}} \d \ol{z}^{\bar{1}} \, , \\
		\theta^1 & = f^{-\tfrac{1}{4}} \cdot \left( \d z^1 - 2 \ol{z}^{\bar{2}} \cdot \d z^2 + f^{\tfrac{1}{2}} \cdot \d \ol{z}^{\bar{2}} \right)  \, , & \ol{\theta}{}^{\bar{1}} & = \ol{\theta^{1}} \, , \\
		\theta^2 & = f^{-\tfrac{1}{4}} \cdot \left( - \d \ol{z}^{\bar{1}} + 2 z^{2} \cdot \d \ol{z}^{\bar{2}} + f^{\tfrac{1}{2}} \cdot \d z^2 \right) \, , & \ol{\theta}{}^{\bar{2}} & = \ol{\theta^{2}} \, , 
	\end{align*}
	where
	\begin{align*}
		f & := 4 \cdot (z^1 + \ol{z}^{\bar{1}} - 2 z^2 \ol{z}^{\bar{2}}) \, .
	\end{align*}
	Note that
	\begin{align*}
		\d f & = 2 f^{\tfrac{1}{4}} \cdot \left( \theta^1 - \theta^2 + \ol{\theta}{}^{\bar{1}} - \ol{\theta}{}^{\bar{2}} \right) \, . 
	\end{align*}
	Then
	\begin{align*}
		\d \theta & = \i \theta^1 \wedge \ol{\theta}{}^{\bar{1}} + \i \theta^2 \wedge \ol{\theta}{}^{\bar{2}} \, ,\\
		\d \theta^1 & = f^{-\tfrac{3}{4}} \left( - \tfrac{1}{2} \theta^1 \wedge \theta^2 - \tfrac{1}{2} \theta^1 \wedge \ol{\theta}{}^{\bar{1}} + \tfrac{1}{2}  \theta^1 \wedge \ol{\theta}{}^{\bar{2}} + \ol{\theta}{}^{\bar{1}} \wedge \ol{\theta}{}^{\bar{2}} \right) \, , \\
		\d \theta^2 & = f^{-\tfrac{3}{4}} \left( - \tfrac{1}{2} \theta^1 \wedge \theta^2 - \tfrac{1}{2} \theta^2 \wedge \ol{\theta}^{\bar{1}} + \tfrac{1}{2}  \theta^2 \wedge \ol{\theta}^{\bar{2}} + \ol{\theta}^{\bar{1}} \wedge \ol{\theta}^{\bar{2}} \right) \, ,
	\end{align*}
	so that $(\theta,\theta^{\alpha},\ol{\theta}{}^{\bar{\alpha}})$ define a strictly almost CR structure $(H, J)$ on $\mc{M}$ with positive definite Levi form. The one-form of the Webster connection $\nabla$ associated to the pseudo-Hermitian structure $\theta$ is given by
	\begin{align*}
		\Gamma_{1}{}^{1} & = \tfrac{1}{2} f^{-\tfrac{3}{4}} \left( \theta^1 - \theta^2 - \ol{\theta}{}^{\bar{1}} + \ol{\theta}{}^{\bar{2}} \right) \, , &
		\Gamma_{2}{}^{2} & = \tfrac{1}{2} f^{-\tfrac{3}{4}} \left( \theta^1 - \theta^2 - \ol{\theta}{}^{\bar{1}} + \ol{\theta}{}^{\bar{2}} \right) \, , &
		\Gamma_{1}{}^{2} & = \Gamma_{2}{}^{1} = 0 \, ,
	\end{align*}
	and the Webster torsion and Nijenhuis tensor by
	\begin{align}
		\WbA_{\alpha \beta} & = 0 \, , \label{eq:A_NPrz} \\
		\Nh_{1 2 1} = \Nh_{1 2 2} & = - f^{-\tfrac{3}{4}} \, , \label{eq:N_NPrz}
	\end{align}
	respectively. Equation \eqref{eq:A_NPrz} tells us that the Reeb vector field $\parderv{}{u}$ generates a transverse symmetry of $(\mc{M},H,J)$, and following \cite[Section~4.3]{TaghaviChabert2022}, the four-dimensional leaf space $\ul{\mc{M}}$ of this Reeb foliation inherits a strictly almost K\"{a}hler structure $(\ul{h},\ul{J})$, which is precisely the one given in \cite{Nurowski1999}. The contact form $\theta$ can then be identified as the induced connection one-form on the anti-canonical circle bundle over $(\ul{\mc{M}},\ul{h},\ul{J})$ with local fibre coordinate $u$.
	
	On the other hand, equation \eqref{eq:N_NPrz} implies
\begin{subequations}\label{eq:N2NP}
	\begin{align}
		\Nh_{\gamma \delta 1} \Nh^{\gamma \delta}{}_{\bar{1}} & = \Nh_{\gamma \delta 1} \Nh^{\gamma \delta}{}_{\bar{2}} = \Nh_{\gamma \delta 2} \Nh^{\gamma \delta}{}_{\bar{1}} = \Nh_{\gamma \delta 2} \Nh^{\gamma \delta}{}_{\bar{2}} = 2 f^{-\tfrac{3}{2}} \, , \\
		\Nh_{\alpha \gamma \delta} \Nh_{\bar{\beta}}{}^{\gamma \delta} & = 2 f^{-\tfrac{3}{2}} h_{\alpha \bar{\beta}} \, , \label{eq:N2NPab} \\
		\Nh_{\alpha \beta \gamma} \Nh^{\alpha \beta \gamma} & = 4 f^{-\tfrac{3}{2}} \, ,
	\end{align}
\end{subequations}
	and a direct computation will show
	\begin{align}\label{eq:DN_NPrz}
		\nabla_{\bar{\delta}} \Nh_{\gamma \alpha \beta} & = 0 \, .
	\end{align}

	To compute the curvature two-form $\Omega_{\alpha}{}^{\beta}$ of $\Gamma_{\alpha}{}^{\beta}$, it is convenient to use the identity
	\begin{align*}
		\d ( \theta^1 - \theta^2) & = -\tfrac{1}{2} f^{-\tfrac{3}{4}} ( \theta^1 - \theta^2) \wedge ( \ol{\theta}{}^{\bar{1}} - \ol{\theta}{}^{\bar{2}} ) \, ,
	\end{align*}
	from which we easily obtain
	\begin{align*}
		\Omega_{2}{}^{2} = \Omega_{1}{}^{1}
		& = f^{-\tfrac{3}{2}} ( \theta^1 - \theta^2) \wedge ( \ol{\theta}{}^{\bar{1}} - \ol{\theta}{}^{\bar{2}} ) \, , 
		& \Omega_{2}{}^{1} = \Omega_{2}{}^{1} & = 0 \, ,
	\end{align*}
	i.e.\
	\begin{align*}
		\Riem_{1 \bar{1} \alpha}{}^{\beta} = \Riem_{2 \bar{2} \alpha}{}^{\beta} & = f^{-\tfrac{3}{2}} \delta_{\alpha}^{\beta} \, , &
		\Riem_{1 \bar{2} \alpha}{}^{\beta} = \Riem_{2 \bar{1} \alpha}{}^{\beta} & = - f^{-\tfrac{3}{2}} \delta_{\alpha}^{\beta} \, ,
	\end{align*}
	and all other components vanish. In particular, the Webster--Ricci tensor is given by
	\begin{align*}
		\Ric_{\alpha \bar{\beta}} & = 2 f^{-\tfrac{3}{2}} h_{\alpha \bar{\beta}} \, , & \Sc & = 4 f^{-\tfrac{3}{2}} \, ,
	\end{align*}
	so that using \eqref{eq:N2NPab}, we find
	\begin{align}\label{eq:CRE-Nprz}
		\Ric_{\alpha \bar{\beta}} - \Nh_{\alpha \gamma \delta} \Nh_{\bar{\beta}}{}^{\gamma \delta} & = 0 \, .
	\end{align}
	By \eqref{eq:A_NPrz}, \eqref{eq:DN_NPrz} and \eqref{eq:CRE-Nprz}, we can now conclude that $(\mc{M},H,J)$ as defined above admits a CR--Einstein structure --- see equations \eqref{eq:CR-E}. Again, with reference to Proposition \ref{prop:CRE-KE} or \cite[Section~4.3]{TaghaviChabert2022}, this means that the strictly almost K\"{a}hler structure on the leaf space of the Reeb foliation generated by $\parderv{}{u}$ is in fact almost K\"{a}hler--Einstein in agreement with the construction of \cite{Nurowski1999}. 
	
	Let us now turn our attention to the existence of distinguished complex-valued densities on $(\mc{M},H,J)$. Since by \eqref{eq:N2NP},
	\begin{align*}
		2 \Nh_{1 \gamma \delta} \Nh_{\bar{2}}{}^{\gamma \delta} -  \Nh_{\gamma \delta 1} \Nh^{ \gamma \delta}{}_{\bar{2}}  = - 2 f^{-\tfrac{3}{2}} \, , 
	\end{align*}
	condition \eqref{eq:NspalCRE} is not satisfied, which means that the CR--Einstein scale $s$ cannot be expressed as $s = \sigma \ol{\sigma}$ where $\sigma$ is a density of weight $(1,0)$ that satisfies $\nabla_{\bar{\alpha}} \sigma = 0$. We shall nevertheless show that there exists a density $\wh{\sigma}$ of weight $(1,0)$ that satisfies $\nabla_{\bar{\alpha}} \wh{\sigma} = 0$ but $s \neq \wh{\sigma} \ol{\wh{\sigma}}$. To this end, we define
	\begin{align*}
		\sigma & := \left( \theta \wedge \theta^1 \wedge \theta^2 \right)^{-\tfrac{1}{4}} \, ,
	\end{align*}
	which clearly satisfies $s = \sigma \ol{\sigma}$, and compute
	\begin{align*}
		\nabla \sigma & = \i \xi \otimes \sigma \, , & \mbox{where} & & 
		\xi & = - \tfrac{\i}{4} f^{-\tfrac{3}{4}} \left( \theta^1 - \theta^2 - \ol{\theta}{}^{\bar{1}} + \ol{\theta}{}^{\bar{2}} \right) \, .
	\end{align*}
	In particular, since
	\begin{align*}
		\d \xi & = - \i f^{-\tfrac{3}{2}} \left( \theta^1 \wedge \ol{\theta}{}^{\bar{1}} - \theta^1 \wedge \ol{\theta}{}^{\bar{2}} + \theta^2 \wedge \ol{\theta}{}^{\bar{1}} + \theta^2 \wedge \ol{\theta}{}^{\bar{2}} \right) \, ,
	\end{align*}
	is nowhere vanishing, the one-form $\xi$ cannot possibly be exact, which means that there is no density $\sigma'$ of weight $(1,0)$ such that $\sigma' \ol{\sigma}'$ is the CR--Einstein scale $s$ and $\nabla_{\bar{\alpha}} \sigma' = 0$ as claimed earlier.
		
	However, for any $k \in \R$, we have
	\begin{align*}
		\nabla (f^k \sigma) & = f^{k-\tfrac{3}{4}} \left( (2k + \tfrac{1}{4}) (\theta^1 - \theta^2) + (2k - \tfrac{1}{4}) (\ol{\theta}^{\bar{1}} - \ol{\theta}^{\bar{2}}) \right) \otimes \sigma \, .
	\end{align*}
	Hence, taking $k=\tfrac{1}{8}$, we find that $\wh{\sigma} := f^{\tfrac{1}{8}} \sigma$ satisfies
	\begin{align*}
		\nabla \wh{\sigma} & = \tfrac{1}{2} f^{-\tfrac{3}{4}} (\theta^1 - \theta^2) \otimes \wh{\sigma} \, ,
	\end{align*}
	i.e.\
	\begin{align*}
		\nabla_{\bar{\alpha}} \wh{\sigma} & = 0 \, , & s & = f^{-\tfrac{1}{4}} \wh{\sigma} \ol{\wh{\sigma}} \, ,
	\end{align*}
	as required. It is also interesting to note that $\wh{\sigma} \ol{\wh{\sigma}}$ is distinct from $\|\Nh\|^{-2}$.
	
	This example thus shows that strictly almost CR manifolds may admit CR densities, i.e.\ densities of weight $(1,0)$ annihilated by the distribution $H^{(0,1)}$.

\section{Proof of Theorem \ref{thm:Fefferman-CRA}}\label{app:proofchi}
	We essentially follow the proof given in \cite{Graham1987} for the involutive case. We first recall that a null conformal Killing field $\wt{k}$ on $(\wt{\mc{M}},\wt{\mbf{c}})$ can be equivalently expressed in terms of the weighted one-form $\wt{\bm{\kappa}}_{a} = \wt{\bm{g}}_{a b} \wt{k}^{b}$ solving the conformally invariant equation
	\begin{align}\label{eq-CKf}
		\nabla_a \wt{\bm{\kappa}}_b - \wt{\bm{\tau}}_{ab} - \wt{\epsilon} \, \wt{\bm{g}}_{ab} & = 0 \, ,
	\end{align}
	where $\wt{\bm{\tau}}_{ab} = \wt{\bm{\tau}}_{[ab]} = \wt{\nabla}_{[a} \wt{\bm{\kappa}} _{b]}$ and $\wt{\epsilon} = \tfrac{1}{n+2} \wt{\nabla}_c \wt{k}^c$.
In the course of the proof, we shall make use of the additional assumptions
\begin{subequations}
	\begin{align}
	& \tfrac{1}{n^2} (\wt{\nabla}_{a} \wt{k}^{a} )^2 - \wt{\Rho}_{a b} \wt{k}^a \wt{k}^b - \tfrac{1}{n} \wt{k}^a \wt{\nabla}_a \wt{\nabla}_b \wt{k}^b <  0\, , \label{eq:Rickk_neg} \\
	& \wt{k}^a \wt{\Weyl}_{a b c d} \wt{k}^d  = 2 s \wt{\bm{\kappa}}_{b} \wt{\bm{\kappa}}_{c} \| \wt{\Weyl}(\wt{k}) \|^2 \, ,  \label{eq:kkW}\\
	& \wt{k}^a \wt{\Cot}_{a b c} \wt{k}^c  = s \wt{\bm{\kappa}}_{b} \wt{k}^{c} \wt{\nabla}_{c} \| \wt{\Weyl}(\wt{k}) \|^2  \, , \label{eq:kkY}
\end{align}
and
\begin{multline}\label{eq:cond_Weyl}
	\wt{\Weyl}_{a b}{}^{c d} \wt{\bm{\tau}}_{c d} - 2 \wt{k}^c \wt{\Cot}_{c a b} - \tfrac{1}{2} \left( \wt{\bm{\tau}}_{c[a} \wt{k}^{d} \wt{\Weyl}_{b] d}{}^{e f} \wt{\Weyl}_{e f g}{}^{c} \wt{k}^g  + \wt{\bm{\kappa}}_{[a} \wt{k}^{c} \wt{\Weyl}_{b] c}{}^{d e} \wt{\Cot}_{f d e} \wt{k}^f \right) \\
	= t \wt{\nabla}_{[a}  \left( \wt{\bm{\kappa}}_{b]} \| \wt{\Weyl}(\wt{k}) \|^2 \right) \, ,
\end{multline}
\end{subequations}
for some real constants $s$ and $t$, and where $\| \wt{\Weyl}(\wt{k}) \|^2 := \wt{k}^{a} \wt{\Weyl}_{a b c d} \wt{k}^e \wt{\Weyl}_{e}{}^{b c d}$. Conditions \eqref{eq:Rickk_neg}, \eqref{eq:kkW}, \eqref{eq:kkY} and \eqref{eq:cond_Weyl} are none other than the respective hypotheses  \eqref{eq-Rho_sc}, \eqref{eq-Wkk}, \eqref{eq-Ykk} and \eqref{eq-int_cond} of Theorem \ref{thm:Fefferman-CRA} except that the former depend on two parameters, $s$ and $t$, while the latter on only one, $\alpha$. This is for convenience, and an algebraic relation between $s$, $t$ and $\alpha$ will emerge towards the end of the proof.

Let us first assume that equation \eqref{eq-CKf} holds. We can always choose a conformal scale $\wt{\sigma}$ such that $\wt{k}$ is Killing with respect to $\wt{g}_{a b} = \sigma^{-2} \wt{\bm{g}}_{a b}$, , i.e.\ $\mathsterling_{\wt{k}} \wt{g}_{a b} = 0$. In this case, the prolongation of equation \eqref{eq-CKf} (see, e.g.\ \cite{Gover2008}) then reduces to:
 \begin{subequations}
	\begin{align}
		\wt{\nabla}_a \wt{\kappa}_b - \wt{\tau}_{ab} & = 0 \, , \label{eq-prlg_1} \\
		\wt{\nabla}_a \wt{\tau}_{b c} - 2 \, \wt{g}_{a[b} \wt{\psi}_{c]} + 2\,  \wt{\Rho} {_{a[b}} \wt{\kappa}_{c]} - \wt{k}^d \wt{\Weyl}_{dabc} & = 0 \, , \label{eq-prlg_2} \\
		\wt{\Rho}_a{}^b \wt{\kappa}_b + \wt{\psi}_a & = 0 \, , \label{eq-prlg_3} \\
		\wt{\nabla}_a \wt{\psi}_b - \wt{\Rho}_a{}^c \wt{\tau}_{cb} - \wt{\Cot}_{abc} \wt{k}^c & = 0 \, , \nonumber
	\end{align}
	where $\wt{\kappa}_{a} = \wt{g}_{a b} \wt{k}^b$, $\wt{\tau}_{a b} = \wt{\nabla}_{[a} \wt{\kappa}_{b]}$ and $\wt{\psi}_{a}$ is simply defined by \eqref{eq-prlg_3}.
	Upon skew-symmetrisation, the last display becomes
	\begin{align}
		\wt{\nabla}_{[a} \wt{\psi}_{b]} - \wt{\Rho}_{[a}{}^{c} \wt{\tau}_{b]c} + \tfrac{1}{2} \wt{k}^c \wt{\Cot}_{c a b}  & = 0 \, . \label{eq-prlg_4} 
	\end{align}
\end{subequations}

	We shall show that $\wt{c} := \wt{\psi}_c \wt{k}^c$ is constant, $\wt{\tau}_{a b}$ is annihilated by both $\wt{k}^a$ and $\wt{\psi}^a$, and $\wt{\psi}^a$ is null. In this way, we will be able to rescale $\wt{k}^a$ by some constant, so that $\wt{\tau}_{a b}$ will play the r\^{o}le of a bundle Hermitian structure on the screen bundle of $\wt{k}$. Contracting \eqref{eq-prlg_1} with $\wt{k}^a$ already gives
	\begin{align}\label{eq-tau_k}
		\wt{\tau}_{a b} \wt{k}^b & = 0 \, ,
	\end{align}
	which also implies that $\wt{k}^b \wt{\nabla}_b \wt{k}^a = 0$. Contracting \eqref{eq-prlg_2} with $\wt{k}^d$, and using \eqref{eq-prlg_1} and \eqref{eq-prlg_3} yields
	\begin{align}
		\wt{k}^a \wt{\Weyl}_{abcd} \wt{k}^d & = \wt{{\tau}}_a{}^c \wt{{\tau}}_{c b} - (\wt{\psi}_c \wt{k}^c ) \, \wt{{g}}_{ab} +  2 \, \wt{{\kappa}}_{(a} \wt{\psi}_{b)} \, . \label{eq-prlg_2_k}
	\end{align}
	Equation \eqref{eq-prlg_3} and the trace of \eqref{eq-prlg_2_k} give, with $\wt{c} = \wt{k}^{a} \wt{\psi}_{a}$,
	\begin{align}
		\wt{c} &  = - \wt{\Rho}_{a b} \wt{k}^a \wt{k}^b \, , \label{eq-prlg_3_k} \\
		\wt{c} & =  - \tfrac{1}{n} \wt{{\tau}}_{a b} \wt{{\tau}}^{a b} \, , \label{eq-prlg_2_k_tr}
	\end{align}
	respectively, which upon differentiation, yield
	\begin{align}
		\wt{\nabla}_a \wt{c}
		& = - 2 \wt{k}^b \wt{\Cot}_{b a c} \wt{k}^c + \wt{{\tau}}_{a c} \wt{\psi}^c \, ,  \label{eq-nab_c1}\\
		\wt{\nabla}_a \wt{c}
		& =  -\tfrac{1}{n} \left( 4 \wt{{\tau}}_{a b} \wt{\psi}^b + 2 \wt{k}^{d} \wt{\Weyl}_{d a b c} \wt{{\tau}}^{b c} \right) \, . \label{eq-nab_c}
	\end{align}
	By the Leibniz rule, equation \eqref{eq-prlg_1} and the property of $\wt{\Cot}_{a b c}$, we have
	\begin{align}
		\wt{\nabla}^b (\wt{k}^a \wt{\Weyl}_{abcd} \wt{k}^d ) 
		& = - (n-1)  \wt{k}^a \wt{\Cot}_{acd} \wt{k}^d + \tfrac{3}{2}  \wt{k}^d \wt{\Weyl}_{d c a b} \wt{\tau}^{a b} \, . \label{eq:DkWk}
	\end{align}

	Henceforth, we impose the assumptions \eqref{eq:kkW} and \eqref{eq:kkY}. Recall that the integrability condition for $\wt{k}$ to be conformal Killing is that $\mathsterling_{\wt{k}} \wt{\Weyl}_{abc}{}^{d} = 0$. This means that with our choice of scale for which $\wt{k}^{a}$ is Killing, we have that $\mathsterling_{\wt{k}} \| \wt{\Weyl} (\wt{k}) \|^2 = \wt{k}^{a} \wt{\nabla}_{a} \| \wt{\Weyl} (\wt{k}) \|^2 = 0$. It immediately follows from \eqref{eq:kkY} that  $\wt{k}^a \wt{k}^c \wt{\Cot}_{a b c} = 0$. In addition, since $\wt{\nabla}^{a} \wt{\kappa}_{a} = 0$ by \eqref{eq-prlg_1}, and substituting \eqref{eq:kkW} into the LHS of \eqref{eq:DkWk}, it follows that the LHS of \eqref{eq:DkWk} is identically zero. Hence, we find that \eqref{eq:DkWk} 
	implies
	\begin{align*}
		\wt{k}^{d} \wt{\Weyl}_{d a b c} \wt{{\tau}}^{b c}
		& =  0 \, .
	\end{align*}	
	Thus, \eqref{eq-nab_c1} and \eqref{eq-nab_c} immediately gives $\wt{\nabla}_a \wt{c} =  \wt{{\tau}}_{a c} \wt{\psi}^c =  -\tfrac{4}{n} \wt{{\tau}}_{a b} \wt{\psi}^b$, i.e.\ $\wt{\nabla}_{a} \wt{c} = 0$ and $\wt{{\tau}}_{a b} \wt{\psi}^b = 0$. In particular, $\wt{c}$ is a constant. Now assuming \eqref{eq:Rickk_neg}, we conclude that $\wt{c}<0$. This allows us to rescale $\wt{k}^a$ by $-\tfrac{1}{\wt{c}}$ so that $\wt{\psi}_a \wt{k}^a = -1$. Then, by assumption \eqref{eq:kkW} and equation \eqref{eq-prlg_2_k}, we get
	\begin{align}\label{eq-prlg_2_k'}
		\wt{{\tau}}_a{}^c \wt{{\tau}}_{c b} & = - \wt{{g}}_{ab} +  2 \, \wt{{\kappa}}_{(a} \wt{\lambda}_{b)} \, ,
	\end{align}	
	where we have set $\wt{\lambda}_a = - \wt{\psi}_{a} + s \| \wt{\Weyl} (\wt{k}) \|^2 \wt{\kappa}_{a}$. We also define $\wt{\ell}^a = \wt{g}^{a b} \wt{\lambda}_b$.

		Since $\wt{\ell}^a \wt{{\tau}}_{a b} = 0$, we immediately conclude that $\wt{\ell}^a$ is null. Hence, \eqref{eq-prlg_2_k'} tells us that $\wt{{\tau}}_{a b}$ defines a bundle Hermitian structure on the screen bundle $\wt{H}=\langle \wt{k} \rangle^\perp / \langle \wt{k} \rangle$ of $\langle \wt{k} \rangle$, which is isomorphic to $\langle \wt{k} \rangle^\perp \cap \langle \wt{\ell} \rangle^\perp$. By virtue of the fact that $\wt{k}$ is null and conformal Killing, $\wt{k}$ generates a non-shearing congruence of null geodesics $\wt{\mc{K}}$. Furthermore, the algebraic condition \eqref{eq-prlg_2_k'} tells us that the twist of $\wt{\mc{K}}$, identified with $\wt{{\tau}}_{a b}$, induces an almost Robinson structure $(\wt{N},\langle \wt{k} \rangle)$ on $\wt{\mc{M}}$. Hence, by \cite{TaghaviChabert2022,Fino2023}, $(\wt{N},\langle \wt{k} \rangle)$ induces a partially integrable almost contact CR structure $(H,J)$ on the local leaf space $\mc{M}$ of $\wt{\mc{K}}$, and we shall denote by $\varpi$ is the projection from $\wt{\mc{M}}$ to $\mc{M}$. With respect to our choice of metric $\wt{g}$ for which $\wt{k}$ is Killing, $\tfrac{1}{2}\wt{\kappa}$ and $\wt{\tau}$ are the respective pullbacks of the pseudo-Hermitian form $\theta$ and its Levi form $h$ from $\mc{M}$, i.e.\ $\wt{\kappa} = 2 \varpi^* \theta$ and $\wt{\tau} = \varpi^* h$.

Let us now impose condition \eqref{eq:cond_Weyl}. Plugging $\wt{\Riem}_{a b}{}^{c d} \wt{\tau}_{b c}
= \wt{\Weyl}_{a b}{}^{c d} \wt{\tau}_{b c} - 4 \wt{\Rho}_{[a}{}^{c} \wt{\tau}_{b] c}$ into \eqref{eq-prlg_4} and using \eqref{eq:cond_Weyl} yields
	\begin{multline}
		\wt{\nabla}_{[a} \wt{\psi}_{b]}
		 = - \tfrac{1}{4} \wt{R}_{a b}{}^{c d} \wt{\tau}_{c d} + \tfrac{1}{4} \left( \tfrac{1}{2} \left( \wt{{\tau}}_{c[a} \wt{k}^{d} \wt{\Weyl}_{b] d}{}^{e f} \wt{\Weyl}_{e f g}{}^{c} \wt{k}^g  + \wt{{\kappa}}_{[a} \wt{k}^{c} \wt{\Weyl}_{b] c}{}^{d e} \wt{\Cot}_{f d e} \wt{k}^f \right) \right) \\+ \tfrac{1}{4} t \wt{\nabla}_{[a}  \left( \wt{{\kappa}}_{b]} \| \wt{\Weyl}(\wt{k}) \|^2 \right) \, . \label{eq-pre-pre-crucial}
	\end{multline}
	In addition, \eqref{eq-prlg_4} gives
	\begin{align}\label{eq-pre-crucial}
		\wt{{\kappa}}_{[a} \wt{k}^{c} \wt{\Weyl}_{b] c}{}^{d e} \wt{\Cot}_{f d e} \wt{k}^f & = 2 \, \wt{{\kappa}}_{[a} \wt{k}^{c} \wt{\Weyl}_{b] c}{}^{d e} \left( - \wt{\nabla}_d \wt{\psi}_e + \wt{\Rho}_d{}^f \wt{{\tau}}_{e f} \right) \, .
	\end{align}
		Since $\wt{k}$ generates a twisting non-shearing congruence of null geodesics, non-expanding with respect to the chosen $\wt{g}$, we may use the computation of the curvature given in \cite[Appendix~A]{TaghaviChabert2022}. It is a tedious computational matter to show that $\wt{{\kappa}}_{[a} \wt{k}^{c} \wt{\Weyl}_{b] c}{}^{d e}  \wt{\Rho}_d{}^f \wt{{\tau}}_{e f} = 0$.
	
	At this stage, let us write
	\begin{align}
		\wt{\nabla}_{[a} \wt{\lambda}_{b]} = -  \wt{B}_{a b} + \wt{{\kappa}}_{[a} \wt{C}_{b]} \,  ,
	\end{align}
	for some tensors $\wt{B}_{ab} = \wt{B}_{[a b]}$ and $\wt{C}_{a}$. Since $\wt{k}$ is conformal Killing, we have $\wt{k}^{a} \wt{B}_{a b} = \wt{k}^{a} \wt{C}_{a} = 0$ and $\wt{\ell}^{a} \wt{B}_{a b} = \wt{\ell}^{a} \wt{C}_{a} = 0$. Using $\wt{\psi}_{a} = - \wt{\lambda}_{a} + s \| \wt{\Weyl}(\wt{k}) \|^2 \wt{\kappa}_{a}$, combining the previous display with \eqref{eq-pre-crucial}, we obtain
	\begin{multline}\label{eq-crucial}
		\wt{B}_{a b} -  \wt{{\kappa}}_{[a} \wt{C}_{b]} 
		=  - \tfrac{1}{4} \wt{\Riem}_{a b}{}^{c d} \wt{\tau}_{c d} + \tfrac{1}{8} \left( \wt{{\tau}}_{c[a} \wt{k}^{d} \wt{\Weyl}_{b] d}{}^{e f} \wt{\Weyl}_{e f g}{}^{c} \wt{k}^g  - 2 \, \wt{{\kappa}}_{[a} \wt{k}^{c} \wt{\Weyl}_{b] c}{}^{d e} \wt{B}_{d e}  \right) \\
		+ \tfrac{1}{4} (t - 4 s) \left( \| \wt{\Weyl}(\wt{k}) \|^2 \wt{\tau}_{a b} - \wt{{\kappa}}_{[a} \wt{\nabla}_{b]} \| \wt{\Weyl}(\wt{k}) \|^2 \right) \, .
	\end{multline}
	We take the components of $\wt{B}_{a b}$ with respect to the splitting of $\langle \wt{k} \rangle^\perp \cap \langle \wt{\ell} \rangle^\perp$ into the eigenbundles of the bundle complex structure defined by $\wt{\tau}_{a b}$. In the obvious index notation, we find
	\begin{align}
		\wt{B}_{\alpha \beta} & =  - \tfrac{1}{4} \wt{\Riem}_{\alpha \beta}{}^{c d} \wt{\tau}_{c d}\, , \label{eq-crucial-Bp}  \\
		\wt{B}_{\alpha \bar{\beta}} & =  - \tfrac{1}{4} \wt{\Riem}_{\alpha \bar{\beta}}{}^{c d} \wt{\tau}_{c d} - \tfrac{\i}{8}  \wt{k}^{d} \wt{\Weyl}_{\alpha d}{}^{e f} \wt{\Weyl}_{e f g \bar{\beta}} \wt{k}^g + \tfrac{1}{4} (t - 4 s) \i \| \wt{\Weyl}(\wt{k}) \|^2 h_{\alpha \bar{\beta}}  \, , \label{eq-crucial-Bm} \\
		\wt{C}_{\alpha} & =  - \tfrac{1}{2} \wt{\Riem}_{\alpha 0 \bar{\beta}}{}^{c d} \wt{\tau}_{c d} + \tfrac{1}{4} \left( - \tfrac{\i}{2} \wt{k}^{d} \wt{\Weyl}_{0 d}{}^{e f} \wt{\Weyl}_{e f g \alpha} \wt{k}^g  + \wt{k}^{c} \wt{\Weyl}_{\alpha c}{}^{d e} \wt{B}_{d e} \right) + \tfrac{1}{4} (t - 4 s) \wt{\nabla}_{\alpha} \| \wt{\Weyl}(\wt{k}) \|^2 \, . \label{eq-crucial-C}
	\end{align}
	Tracing \cite[equations~(A.10), (A.11) and (A.14)]{TaghaviChabert2022} over the first two indices yields
	\begin{align}
		\wt{\Riem}_{\gamma}{}^{\gamma}{}_{\alpha \beta} & =   2 ( m + 1) \, \i \wt{B}_{\alpha \beta}   -  \nabla^{\gamma} \Nh_{\alpha \beta \gamma} \, , \label{eq:R-DbT-Tr}\\
		\wt{\Riem}_{\gamma}{}^{\gamma}{}_{\bar{\beta} \alpha} & =  \Ric_{\alpha \bar{\beta}}  -  2 (m + 1) \, \i \wt{B}_{\alpha \bar{\beta}} 
		- 2 \, \i \wt{B}_{\gamma}{}^{\gamma} h_{\alpha \bar{\beta}}  
		+  \Nh_{\alpha \epsilon \gamma}  \Nh{}_{\bar{\beta}}{}^{\epsilon \gamma} \, , \label{eq:R-ST-Tr} \\
		\wt{\Riem}_{\beta}{}^{\beta}{}_{\alpha 0} & =  - \nabla^{\beta} \wt{B}_{\alpha \beta}  + \nabla_{\beta} \wt{B}_{\alpha}{}^{\beta}   +  2 m \, \i \wt{C}_{\alpha} + \wt{B}^{\beta \delta} \Nh_{\delta \alpha \beta}    - \tfrac{1}{2} \nabla^{\beta} \WbA_{\alpha \beta} - \tfrac{1}{2} \WbA^{\beta \delta} \Nh_{\delta \alpha \beta}  \, . \label{eq:R-D0B-Tr}
	\end{align}
	Here, $\nabla$ is the Webster connection corresponding to $\theta$, $\Ric_{\alpha \bar{\beta}}$ its Webster--Ricci tensor and $\Nh_{\alpha \beta \gamma}$ the Nijenhuis tensor. Similarly, using \cite[equations~(A.3) and (A.6)]{TaghaviChabert2022}, we compute
	\begin{align}
		\wt{k}^{d} \wt{\Weyl}_{\beta d}{}^{e f} \wt{\Weyl}_{e f g}{}^{\alpha} \wt{k}^g 
		& =  - 4 \Nh_{\gamma \delta \beta} \Nh^{\gamma \delta \alpha} \, , \label{eq:WWm} \\
		\wt{k}^{d} \wt{\Weyl}_{0 d}{}^{e f} \wt{\Weyl}_{e f g \alpha} \wt{k}^g 
		& =  -  4 \Nh_{\gamma \delta \alpha}  \wt{B}^{\gamma \delta} \, ,  \label{eq:WW0} \\
		\wt{k}^{c} \wt{\Weyl}_{\beta c}{}^{d e} \wt{B}_{d e} & =  2 \i \Nh_{\gamma \delta \beta} \wt{B}^{\gamma \delta} \, .  \label{eq:WB} 
	\end{align}
	This means that
	\begin{align}
	\| \wt{\Weyl}(\wt{k}) \|^2 & :=	\wt{k}^{a} \wt{\Weyl}_{a b c d} \wt{k}^e \wt{\Weyl}_{e}{}^{b c d} =  8 \| \Nh \|^2  \, . \label{eq:WWmC}
	\end{align}
	We now note $\wt{\Riem}_{a b}{}^{c d} \wt{\tau}_{b c}
	= - 2 \i \wt{\Riem}_{a b \gamma}{}^{\gamma}$.		
	Hence, plugging \eqref{eq:R-DbT-Tr} into \eqref{eq-crucial-Bp} and solving for $B_{\alpha \beta}$ gives
	\begin{align}\label{eq:Bab}
		\wt{B}_{\alpha \beta}  & =   -  \tfrac{\i}{2(m+2)} \nabla^{\gamma} \Nh_{\alpha \beta \gamma} \, .
	\end{align}
	Similarly, plugging \eqref{eq:R-ST-Tr} and \eqref{eq:WWm} into \eqref{eq-crucial-Bm} and solving for $\wt{B}_{\alpha \bar{\beta}}$ leads to
	\begin{align*}
		\wt{B}_{\alpha \bar{\beta}}	& = - \tfrac{\i}{2(m+2)} \left( \Ric_{\alpha \bar{\beta}} - \Nh_{\gamma \delta \alpha} \Nh^{ \gamma \delta}{}_{\bar{\beta}} + \Nh_{\alpha \gamma \delta} \Nh_{\bar{\beta}}{}^{\gamma \delta} \right) 
		- \tfrac{1}{m+2} \wt{B}_{\gamma}{}^{\gamma} h_{\alpha \bar{\beta}} + \tfrac{2(t - 4 s)}{m+2}  \i \| \Nh \|^2  h_{\alpha \bar{\beta}} \, .
	\end{align*}
	Taking the trace and solving for $\wt{B}_{\gamma}{}^{\gamma}$ gives
	\begin{align*}
		\wt{B}_{\gamma}{}^{\gamma} & = - \tfrac{1}{2} \i \Rho + (t - 4 s) \tfrac{m}{m+1} \i \| \Nh \|^2  \, .
	\end{align*}
	By substituting back into the last but one equation, we obtain
	\begin{align*}
		\wt{B}_{\alpha \bar{\beta}} & = - \tfrac{\i}{2(m+2)} \left( \Ric_{\alpha \bar{\beta}} - \Nh_{\gamma \delta \alpha} \Nh^{ \gamma \delta}{}_{\bar{\beta}} + \Nh_{\alpha \gamma \delta} \Nh_{\bar{\beta}}{}^{\gamma \delta}  - \Rho h_{\alpha \bar{\beta}} \right) + \tfrac{(t - 4 s) }{m+1} \i \| \Nh \|^2 h_{\alpha \bar{\beta}} \, .
	\end{align*}
	On the other hand, condition \eqref{eq:kkW} also gives us, with reference to \cite[equations~(A.8) and (A.21)]{TaghaviChabert2022},
	\begin{align*}
		16 s \| \Nh \|^2 & =  \tfrac{1}{m(2m+1)} \left( 4m (t - 4 s) -1 \right) \| \Nh \|^2  \, .
	\end{align*}
	Assuming that $\| \Nh \|^2$ nowhere vanishes, this implies
	\begin{align*}
		t - 4 s & = \tfrac{1 + 16 (2m+1) s}{4m} \, , & \mbox{i.e.} & & 32m(m+1)s & = 4m t -1 \, . 
	\end{align*}
	Thus,
		\begin{align}\label{eq:Babb}
		\wt{B}_{\alpha \bar{\beta}} & = - \tfrac{\i}{2(m+2)} \left( \Ric_{\alpha \bar{\beta}} - \Nh_{\gamma \delta \alpha} \Nh^{ \gamma \delta}{}_{\bar{\beta}} + \Nh_{\alpha \gamma \delta} \Nh_{\bar{\beta}}{}^{\gamma \delta}  - \Rho h_{\alpha \bar{\beta}} \right) + \tfrac{1 + 16 (2m+1) s}{4m(m+1)} \i \| \Nh \|^2  h_{\alpha \bar{\beta}} \, .
	\end{align}
	Finally, by plugging \eqref{eq:R-D0B-Tr}, \eqref{eq:WW0} and \eqref{eq:WB} into \eqref{eq-crucial-C} and solving for $\wt{C}_{\alpha}$, we find
	\begin{align}\label{eq:Ca}
		\wt{C}_{\alpha} & = \tfrac{1}{2}  T_{\alpha} +  \tfrac{1 + 16 (2m+1) s}{4m(m+1)} \nabla_{\alpha}  \| \Nh \|^2 \, ,
	\end{align}
	where $T_{\alpha}$ is given by \eqref{eq:CRT}.
	
	Let $(\wt{\mc{M}}',\wt{\mbf{c}}') \accentset{\varpi'}{\longrightarrow} (\mc{M},H,J)$ be the $\alpha$-Fefferman space of $(\mc{M},H,J)$. Let $\wt{\omega}_{\theta}$ be the induced Webster connection on $\wt{\mc{M}}$ compatible with the contact form $\theta$, and denote $\wt{g}'$ its corresponding Fefferman metric. We set
	\begin{align*}
		\wt{\lambda}' & = \wt{\omega}_{\theta} - \tfrac{1}{m+2}  \left( \Rho + \tfrac{\alpha(m+2)}{2m(m+1)} \| \Nh \|^2 \right) \theta  \, ,
	\end{align*}
	so that $\wt{g}'  = 4 \theta \odot \wt{\lambda}' + h$. We find that
	\begin{align*}
		\d \wt{\lambda}' & =
		  - \wt{B}'_{\alpha \beta} \theta^{\alpha} \wedge \theta^{\beta} - 2  \wt{B}'_{\alpha \bar{\beta}} \theta^{\alpha} \wedge \overline{\theta}{}^{\bar{\beta}} - \wt{B}'_{\bar{\alpha} \bar{\beta}} \overline{\theta}{}^{\bar{\alpha}} \wedge \overline{\theta}{}^{\bar{\beta}} - 2 \wt{C}'_{\bar{\alpha}} \overline{\theta}{}^{\bar{\alpha}} \wedge  \theta  - 2 \wt{C}'_{\alpha}  \theta^{\alpha} \wedge \theta \, ,
	\end{align*}
	where
	\begin{subequations}
	\begin{align}
		\wt{B}'_{\alpha \bar{\beta}} & = - \tfrac{\i}{2} {\Rho}_{\alpha \bar{\beta}} - \tfrac{\i}{2(m+2)} \left( {\Nh}_{\alpha \gamma \delta} {\Nh}_{\bar{\beta}}{}^{\gamma \delta} - {\Nh}_{\gamma \delta \alpha} {\Nh}^{ \gamma \delta}{}_{\bar{\beta}} \right) + \alpha  \tfrac{\i}{4m(m+1)} \| \Nh \|^2  h_{\alpha \bar{\beta}} \, ,  \label{eq:B'abb} \\
		\wt{B}'_{\gamma \delta} & =  - \tfrac{\i}{2(m+2)} {\nabla}^{\alpha} {\Nh}_{\gamma \delta \alpha} \, ,  \label{eq:B'ab} \\
		\wt{C}'_{\alpha} & = \tfrac{1}{2} \mathsf{T}_{\alpha} + \alpha \tfrac{1}{4m(m+1)} \nabla_{\alpha} \left( \| \Nh \|^2  \right)  \, , \label{eq:C'a}
	\end{align}
	\end{subequations}
Hence, setting
\begin{align*}
	s & = \tfrac{\alpha - 1}{16(2m+1)} \, , &
	t & = \tfrac{1}{4m} \left( 1 + \tfrac{2m(m+1)}{2m+1} (\alpha - 1) \right) \, ,
\end{align*}
so that $\alpha = 1 + 16 (2m+1) s$, we see that conditions \eqref{eq:Rickk_neg}, \eqref{eq:kkW}, \eqref{eq:kkY} and \eqref{eq:cond_Weyl} are none other than the respective hypotheses  \eqref{eq-Rho_sc}, \eqref{eq-Wkk}, \eqref{eq-Ykk} and \eqref{eq-int_cond} of Theorem \ref{thm:Fefferman-CRA}.

Now choose some nowhere vanishing density $\sigma \in \Gamma(\mc{E}(1,0))$ such that $\theta = (\sigma \ol{\sigma})^{-1} \bm{\theta}$ and with corresponding local coordinate $\phi'$ so that $\wt{\omega}_\theta$ takes the form
\begin{align*}
	\wt{\omega}_\theta & = \d \phi' + \tfrac{\i}{2} \left( \sigma^{-1} \nabla \sigma - \ol{\sigma}^{-1} \nabla \ol{\sigma} \right) \, .
\end{align*}
Define $\mr{\lambda} := \wt{\lambda}' - \d \phi'$. Then $\mr{\lambda}$ is a one-form on $\mc{M}$, and, on comparing \eqref{eq:Babb}, \eqref{eq:Bab} and \eqref{eq:Ca} with \eqref{eq:B'abb}, \eqref{eq:B'ab} and \eqref{eq:C'a}, we find that $\d \wt{\lambda} = \d \mr{\lambda}$, which locally implies that
	\begin{align*}
		\wt{\lambda} - \mr{\lambda} & = \d \phi \, .
	\end{align*}
	for some smooth function $\phi$ on $\wt{\mc{M}}$. This allows us to define a map $\iota$ from $(\wt{\mc{M}}, \wt{g})$ to $(\wt{\mc{M}}',\wt{g}')$ by $\iota(\wt{p})=(\varpi(\wt{p}),\e^{\i \phi(\wt{p})}\sigma^{-1}(\varpi(\wt{p})))$ for any $\wt{p} \in \wt{\mc{M}}$. Then $\iota$ is a bundle map with $\varpi' \circ \iota = \varpi$, which is also a local isometry, i.e.\ $\iota^* \wt{g}' = \wt{g}$, and extends to a local conformal isometry. It now follows that $(\wt{\mc{M}},\wt{\mbf{c}})$ and $(\wt{\mc{M}}',\wt{\mbf{c}}')$ are locally conformally isometric as claimed.

	%
	%
	%
	
	\bibliographystyle{abbrv}
	\bibliography{biblio}
	
	
\end{document}